\numberwithin{equation}{section}
\definecolor{myred}{rgb}{0.75,0,0}
\definecolor{mygreen}{rgb}{0,0.5,0}
\definecolor{myblue}{rgb}{0,0,0.65}
    \def\BM{{\mathbb{B}}}
    \def\CM{{\mathbb{C}}}
\def\DG{{\mathfrak D}}    
\def\FG{{\mathfrak F}}
\def\MG{{\mathfrak M}}
    \def\QM{{\mathbb{Q}}}
    \def\RM{{\mathbb{R}}}
    \def\ZM{{\mathbb{Z}}}
    \def\AC{{\mathcal{A}}}
    \def\BC{{\mathcal{B}}}
    \def\CC{{\mathcal{C}}}
    \def\FC{{\mathcal{F}}}
    \def\IC{{\mathcal{I}}}
    \def\LC{{\mathcal{L}}}
    \def\MC{{\mathcal{M}}}
    \def\NC{{\mathcal{N}}}
    \def\OC{{\mathcal{O}}}
\def\PB{{\mathbf P}}    \def\PC{{\mathcal{P}}}
    \def\RC{{\mathcal{R}}}
    \def\TC{{\mathcal{T}}}
    \def\VC{{\mathcal{V}}}
\def\a{\alpha}
\def\b{\beta}
\def\g{\gamma}
\def\G{\Gamma}
\def\d{\delta}
\def\D{\Delta}
\def\e{\varepsilon}
\def\O{\Omega}
\def\r{\rho}
\def\s{\sigma}
\def\z{\zeta}
\newcommand{\nc}{\newcommand} \newcommand{\renc}{\renewcommand}
\newcommand{\rdots}{\mathinner{ \mkern1mu\raise1pt\hbox{.}
    \mkern2mu\raise4pt\hbox{.}
    \mkern2mu\raise7pt\vbox{\kern7pt\hbox{.}}\mkern1mu}}
\def\grdim{{\mathrm{grdim}}}
\def\wt{\widetilde}
\def\ov{\overline}
\def\un{\underline}
\def\to{\rightarrow}
\def\longto{\longrightarrow}
\nc{\triright}{\stackrel{[1]}{\to}}
\nc{\longtriright}{\stackrel{[1]}{\longto}}
\nc{\Br}{\mathcal{B}}
\nc{\HotRR}{{}_R\mathcal{K}_R}
\nc{\HotR}{\mathcal{K}_R}
\nc{\excise}[1]{}
\nc{\defect}{\text{df}}
\nc{\h}[1]{\underline{H}_{#1}}
\nc{\Ga}{\mathbb{G}_a} 
\nc{\Gm}{\mathbb{G}_m} 
\nc{\Perv}{{\mathbf{P}}}
\nc{\IH}{{\mathrm{IH}}}
\nc{\ic}{\mathbf{IC}}
\nc{\gl}{{\mathfrak{gl}}}
\renc{\sl}{{\mathfrak{sl}}}
\renc{\sp}{{\mathfrak{sp}}}
\nc{\HBM}{H^{BM}}
\newtheorem{thm}{Theorem}[section]
\newtheorem{lem}[thm]{Lemma}
\newtheorem{prop}[thm]{Proposition}
\newtheorem{cor}[thm]{Corollary}
\theoremstyle{definition}
\newtheorem{defi}[thm]{Definition}
\newtheorem{ex}[thm]{Example}
\theoremstyle{remark}
\newtheorem{remark}[thm]{Remark}
\newtheorem{question}[thm]{Question}
\DeclareMathOperator{\Tor}{Tor}
\newcommand{\into}{\hookrightarrow}
\newcommand{\fib}{\twoheadrightarrow}
\def\iff{\Leftrightarrow}
\def\op{{\mathrm{op}}}
\newcommand{\eq}[1]{\begin{align*} #1 \end{align*}}
\DeclareMathOperator{\Sym}{Sym}
\newcommand{\ext}{\mathrm{ext}}
\newcommand{\norm}[1]{\lVert#1\rVert}
\DeclareMathOperator{\ord}{ord}
\newcommand{\fl}{\FG}
\newcommand{\flem}{\fl_{em}}
\newcommand{\fllv}{\fl_{lv}}
\begin{document}

\begin{abstract}
We associate to a sufficiently generic oriented matroid program and choice of linear system of parameters a finite dimensional algebra, whose representation theory is analogous to blocks of Bernstein--Gelfand--Gelfand category $\OC$.  When the data above comes from a generic linear program for a hyperplane arrangement, we recover the algebra defined by Braden--Licata--Proudfoot--Webster.

Applying our construction to nonlinear oriented matroid programs provides a large new class of algebras.  For Euclidean oriented matroid programs, the resulting algebras are quasi-hereditary and Koszul, as in the linear setting.  In the non-Euclidean case, we obtain algebras that are not quasi-hereditary and not known to be Koszul, but still have a natural class of standard modules and satisfy numerical analogues of quasi-heredity and Koszulity on the level of graded Grothendieck groups.
\end{abstract}

\title{A category $\OC$ for oriented matroids}

\author{Ethan Kowalenko}\address{ Department of Mathematics, University of California, Riverside, CA 92521, USA}
\email{kowalenko@math.ucr.edu}

\author{Carl Mautner} \address{ Department of Mathematics, University of California, Riverside, CA 92521, USA}
\email{mautner@math.ucr.edu}


\maketitle


\section{Introduction}

In \cite{GDKD, BLPWtorico}, Braden--Licata--Proudfoot--Webster introduced a class of finite-dimensional algebras related to the combinatorics of hyperplane arrangements, whose representation theory is closely analogous to the integral blocks of Bernstein--Gelfand--Gelfand (BGG) category $\OC$.  Recall that BGG category $\OC$ plays an important role in Lie theory and can be described using the geometry of the Springer resolution.  Braden--Licata--Proudfoot--Webster discovered their algebras by analogy, motivated by the geometry of toric hyperk\"ahler (or hypertoric) varieties, but the algebras can be defined from basic linear algebra data.  The input for their definition was the data of a \textit{polarized arrangement} $\VC = (V,\eta,\xi)$, where $V \subset \RM^n$ is a $d$-dimensional linear subspace, $\eta \in \RM^n/V$ is a (generic) vector and $\xi\in V^*$ is a (generic) covector. 
 Braden--Licata--Proudfoot--Webster~\cite{BLPWgco} and others (e.g., \cite{Losev}) have since introduced and studied other such geometric categories $\OC$ associated to conical symplectic resolutions.

In this paper we extend the definition of Braden--Licata--Proudfoot--Webster in a different, more combinatorial direction: from the setting of polarized arrangements to the combinatorics of oriented matroids.  More precisely, the role of $V \subset \RM^n$ is replaced by a rank $d$ orientable matroid $M$ with parameter space $U$ and the role of $\eta$ and $\xi$ by an oriented matroid program $(\wt\MC,g,f)$ that extends and lifts an orientation of $M$.  One motivation for our work was a desire to categorify and better understand the matroidal Schur algebras of~\cite{BMMatroid,BMHyperRingel}.

To explain our results and motivation, we first recall in more detail the results of Braden--Licata--Proudfoot--Webster.

\subsection{Hypertoric category $\OC$}

In \cite{GDKD}, Braden--Licata--Proudfoot--Webster defined a quadratic algebra $A(\VC)$.  One motivation was a description of a regular block of category $\OC$ as arising from a quantization of the structure sheaf of $T^*(G/B)$, the cotangent bundle of a flag variety.  When $\VC$ is rational (meaning that $V,\eta,$ and $\xi$ are defined over $\QM$), one may associate to $\VC$ a hyperk\"ahler variety $\MG$\footnote{More precisely, the subspace $V$ and vector $\eta$ alone determine the variety $\mathfrak M$. The covector $\xi$ can be used to endow $\MG$ with a $\CM^*$-action.}, sometimes called a hypertoric variety, which behaves in various ways like the cotangent bundle of a flag variety.  Braden--Licata--Proudfoot--Webster show that in this case the category of representations of $A(\VC)$ is equivalent to that obtained by applying the same sort of quantization construction to the hypertoric variety for $\VC$.  Moreover they show:
\begin{thm}[Braden--Licata--Proudfoot--Webster]\label{BLPW}
Let $\VC = (V,\eta,\xi)$ be a polarized arrangement where $\eta$ and $\xi$ are generic.\footnote{See the following paragraph for the meaning of the word generic used here.}
\begin{enumerate}
\item The algebra $A(\VC)$ is quadratic with quadratic dual $A(\VC^\vee)$, where $\VC^\vee = (V^\perp,-\xi,-\eta)$ denotes the \textit{Gale dual} polarized arrangement.
\item The algebra $A(\VC)$ is quasi-hereditary.
\item The algebra $A(\VC)$ is Koszul (and thus by the first result, Koszul dual to $A(\VC^\vee)$).
\item Up to derived Morita equivalence, the algebra $A(\VC)$ depends only on $V \subset \RM^n$ and not on $\eta$ or $\xi$.
\end{enumerate}
\end{thm}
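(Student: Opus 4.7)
My plan is to work throughout with the explicit presentation of $A(\VC)$ as a path algebra on a quiver with quadratic relations: the vertices are the sign vectors $\a\in\{+,-\}^n$ that are simultaneously \emph{feasible} for $\eta$ and \emph{bounded} for $\xi$, the arrows join sign vectors differing in a single coordinate, and the relations encode (i) vanishing of backtracks through infeasible signs, (ii) commuting squares, and (iii) a loop rotation identity matching the two ways of going out and back through adjacent coordinates.

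For part (1), I would write out the span $R \subset A_1 \otimes A_1$ of the quadratic relations and compute its annihilator $R^\perp$ inside the dual space of degree two paths, which is again spanned by the same combinatorial data because the quiver is symmetric under reversal. Passing to the Gale dual $\VC^\vee = (V^\perp,-\xi,-\eta)$ exchanges feasibility with boundedness and flips signs accordingly. I would set up a sign bijection between the arrows of the two quivers and then verify, relation by relation, that it carries $R$ to $R^\perp$, yielding $A(\VC^\vee) \cong A(\VC)^!$.

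For part (2), I would order the feasible-and-bounded sign vectors by the partial order induced by $\xi$, declaring $\b < \a$ when $\b$ is obtained from $\a$ by flipping signs in directions where $\xi$ decreases. Define the standard module $\Delta(\a)$ as the quotient of the indecomposable projective $P(\a)$ by the span of paths going through some $\b$ with $\b \not\leq \a$. Direct inspection of the relations shows $\Delta(\a)$ has simple head $L(\a)$, all other composition factors $L(\b)$ with $\b < \a$, and that $P(\a)$ is filtered by such $\Delta(\b)$'s, which gives the highest weight structure. For part (3), I would then build, for each $L(\a)$, a linear projective resolution modeled on the BGG resolution: the $i$-th term is a direct sum of $P(\b)$ for $\b$ at combinatorial distance $i$ from $\a$ in a coordinate hypercube, with differentials assembled from quiver arrows using a Koszul sign rule. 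A linear resolution forces $\Ext^i(L(\a),L(\b))$ to be concentrated in internal degree $i$, which is Koszulity.

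For part (4), I would handle wall-crossing in $\eta$ by exhibiting an explicit tilting bimodule between $A(\VC)$ and $A(\VC')$ when $\VC$ and $\VC'$ differ by a single wall, its underlying projective generated by the sign vectors that remain feasible on both sides; iterating gives derived Morita equivalence for any two generic $\eta$, and independence from $\xi$ then follows by combining parts (1) and (3), since Koszul duality swaps the two parameters. The main obstacle is part (3): writing down a linear resolution in the generality of an arbitrary polarized arrangement forces one to make compatible sign choices across all local hypercubes, to verify that the differential squares to zero using only the quadratic relations, and to control the overlapping standard filtrations needed for exactness. It is also the step that binds the other three parts together into a coherent structural theorem.
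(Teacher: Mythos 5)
This theorem is not proved in the paper at all: it is quoted from Braden--Licata--Proudfoot--Webster, and what the paper does prove are the oriented-matroid generalizations (Theorems~\ref{thm-quad}, \ref{thm-A-B}, \ref{thm-projfilt}, \ref{thm-Koszul}, \ref{thm-derivedequiv}), whose proofs follow the BLPW strategy; so that is the natural benchmark for your sketch. Your part (1) is essentially that route: choose signs on the edges of the cube so that every square has an odd number of negative edges, and match $e_\alpha W e_\beta$ with the annihilator of the corresponding block for the Gale dual; the only delicate point, which your sketch passes over, is the diagonal block $e_\alpha W e_\alpha$, where one needs the linear-algebra identity $(pr_S V)^\perp = V^\perp \cap k^S$ together with the exchange of feasibility and boundedness under Gale duality. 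The genuine gaps are in (2) and (3).

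In (2), ``direct inspection of the relations'' does not give the standard filtration of $P_\alpha$; that is precisely the hard step. The order is the transitive closure of the cone relation ($\beta \preceq \alpha$ iff $\beta$ lies in the cone of $\alpha$ at its optimal vertex), not simply ``flip signs where $\xi$ decreases,'' and the proof filters $P_\alpha$ by the submodules of paths factoring through chambers $\gamma \geq \beta$, obtaining surjections $V_\beta \twoheadrightarrow M_\alpha^\beta$ onto the subquotients. To show these are isomorphisms one needs an a priori upper bound on $\dim P_\alpha$, and in both settings this comes from the second presentation of $A$ as the convolution algebra $B$ of face rings $R_{\alpha\beta}$ (Theorem~\ref{thm-A-B}), the fact that each $\wt R_{\alpha\beta}$ is free over $\Sym U$ of rank equal to the number of common feasible vertices of the two chambers (Cohen--Macaulayness of the balls/spheres in Lemma~\ref{lem-feas Las Vergnas} and Lemma~\ref{lem-size}), and complementary slackness; the quiver presentation alone does not bound $\dim P_\alpha$ from above. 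In (3), the cube-shaped complex whose $i$-th term is $\bigoplus P_\beta$ over $\beta$ at distance $i$ from $\alpha$ (i.e.\ $\beta = \alpha^S$ for $S$ contained in the basis indexing $\alpha$) is a linear resolution of the \emph{standard} module $V_\alpha$, not of the simple $L_\alpha$: its zeroth homology is $V_\alpha$ by construction, and its exactness is proved by filtering the multicomplex using the standard filtrations from (2). Simples do not admit such a small resolution, so your stated construction would fail; Koszulity is instead deduced from quasi-heredity plus the criterion that a quasi-hereditary algebra whose standard modules admit linear projective resolutions is Koszul (as in Theorems~\ref{thm-KoszulStan} and~\ref{thm-Koszul}). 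Your part (4) has the right overall shape (one-parameter moves in $\eta$, then the Koszul/Gale duality swap to handle $\xi$), but asserting a ``tilting bimodule'' is not a proof: one must show the wall-crossing functor sends projectives to modules with standard filtrations, identify the reorientation move with Ringel duality via self-dual tilting projectives, and control compositions of these functors, which is where the real content of Theorem~\ref{thm-derivedequiv} lies.
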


To give a feeling for the representation theory of these algebras, we will describe a labelling of the simple modules for $A(\VC)$.  It is convenient to consider the following hyperplane arrangement defined by $\VC= (V,\eta,\xi)$.  Note that $\eta \in \RM^n/V$ can be viewed as the affine subspace $\eta+V \subset \RM^n$, and we consider the arrangement of hyperplanes in $\eta+V$ cut out by the coordinate hyperplanes of $\RM^n$.
The genericity condition on $\eta$ is the requirement that the resulting arrangement be \emph{simple}, meaning that the nonempty intersection of $m$ hyperplanes has codimension $m$.  The covector $\xi \in V^*$ lifts to an affine linear functional on $\eta+V$.  The genericity condition on $\xi$ is the requirement that $\xi$ be nonconstant on any positive dimensional intersection of $V$ and a coordinate subspace.

\begin{ex}\label{ex-GDKD} The polarized arrangement from Example 2.2 of~\cite{GDKD} consists of a two-dimensional subspace $V \subset \RM^4$ together with some chosen $\eta$ and $\xi$. These choices produce the hyperplane arrangement depicted in Figure~\ref{fig-arrangement}(a).
\end{ex}

\begin{figure}
\begin{center}
\begin{tikzpicture}
 \def\x{1.6}
\draw (-1*\x,0) -- (2*\x,0);
\draw (-1*\x,.5*\x) -- (2*\x,.5*\x);
\draw (0,-1*\x) -- (0,2*\x);
\draw (-.5*\x,2*\x) -- (2*\x,-.5*\x);
\draw[red,thick] (-1*\x,1.6*\x) -- (2*\x,.1*\x);
\draw[->,red,thick] (-.8*\x,1.5*\x) -- (-.7*\x,1.7*\x);
\path (-1*\x,1.6*\x) node [left,red] {$\xi$};
\path (-.5*\x,-.5*\x) node {$\epsilon$};
\path (.75*\x,-.5*\x) node {$\delta$};
\path (-.5*\x,.25*\x) node {$\gamma$};
\path (.5*\x,.25*\x) node {$\beta$};
\path (.375*\x,.825*\x) node {$\alpha$};
\path (.5*\x,-1.5*\x) node {(a)};
\draw[fill=gray, opacity=.2] (-1*\x,.5*\x) -- (0,.5*\x) -- (0,1.5*\x) -- (2*\x,-.5*\x) -- (2*\x,-1*\x) -- (-1*\x,-1*\x) -- cycle;
\end{tikzpicture}\quad
\begin{tikzpicture}
  \def\radius{2.4cm}
  \draw[blue] circle[radius=\radius];
\path (0:\radius) coordinate (1+) node [circle,fill,inner sep=1pt]{};
\path (90:\radius) coordinate (2+) node [circle,fill,inner sep=1pt]{};
\path (135:\radius) coordinate (3+) node [circle,fill,inner sep=1pt]{};
\path (180:\radius) coordinate  (1-) node [circle,fill,inner sep=1pt]{};
\path (270:\radius) coordinate (2-) node [circle,fill,inner sep=1pt]{};
\path (315:\radius) coordinate (3-) node [circle,fill,inner sep=1pt]{};
\path (160:\radius) coordinate (f1) node [circle,fill,inner sep=1pt,red] {} node [left,red] {f};
\draw[->,red,thick] (f1) arc (160:152:\radius);
\path (250:\radius) coordinate (g) node [below,blue] {g};
\draw[->,blue,thick] (g) -- (250:.85*\radius);
\path (340:\radius) coordinate (f2) node [circle,fill,inner sep=1pt,red] {};
\draw[->,red,thick] (f2) arc (340:348:\radius);
\draw[red,thick,rotate=-20] (f2) arc [start angle=0, end angle=180, x radius=\radius, y radius=.7cm];

\draw (1-) arc [start angle=180, end angle=360, x radius=\radius, y radius=1cm];
\draw (1+) -- (1-);
\draw (2-) -- (2+);
\draw[rotate=-45] (3-) arc [start angle=0, end angle=180, x radius=\radius, y radius=1cm];

\draw[fill=gray, opacity=.2, rotate=-45] (3-) arc [radius = \radius, start angle =  0,  end angle = 112.5, x radius=\radius, y radius=1cm] -- (0,0) -- (1-) arc [radius = \radius, start angle =  225,  end angle = 360] --  cycle;

\path (240:.7*\radius) node {$\epsilon$};
\path (300:.7*\radius) node {$\delta$};
\path (215:.4*\radius) node {$\gamma$};
\path (-35:.4*\radius) node {$\beta$};
\path (45:.25*\radius) node {$\alpha$};
\path (270:1.35*\radius) node {(b)};
\end{tikzpicture}

\end{center}
\caption{Hyperplane arrangement and corresponding pseudosphere arrangement.}
\label{fig-arrangement}
\end{figure}

The set $\PC$ of chambers of the hyperplane arrangement in $\eta+V$ that are bounded with respect to $\xi$ parametrize the simple modules $\{L_\a\}_{\a \in \PC}$ for $A(\VC)$.  
  In Example~\ref{ex-GDKD}, we can label these chambers $\alpha,\beta,\gamma,\delta,\epsilon$ as in Figure~\ref{fig-arrangement}(a).  
  
  For each bounded chamber $\alpha \in \PC$, let $\beta \preceq \alpha$ if $\beta$ is contained in the cone generated by $\alpha$ originating from its maximal vertex.  The transitive closure of this relation gives the highest weight partial order on simple objects for the quasihereditary structure in the theorem.  In the example above, this produces the poset described by the following Hasse diagram:
\begin{center}
\begin{tikzpicture}
    \node (a) at (0,0) {$\alpha$};
    \node [below of=a] (b) {$\beta$};
    \node [below left  of=b] (g)  {$\gamma$};
\node [below right of=b] (d) {$\delta$};
\node [below right of=g] (e) {$\epsilon$};

\draw [thick] (a) -- (b);
\draw [thick] (b) -- (g);
\draw [thick] (b) -- (d);
\draw [thick] (g) -- (e);
\draw [thick] (d) -- (e);
\end{tikzpicture}
\end{center}

More precisely, Braden--Licata--Proudfoot--Webster define standard modules $V_\a$ for every $\a \in \PC$ and prove (see the proof of~\cite[Theorem 5.23]{GDKD}):

\begin{thm}[Braden--Licata--Proudfoot--Webster]\label{thm-BLPWfilt}
For any $\a \in \PC$ the indecomposable projective cover $P_\a$ of $L_\a$ has a filtration with successive subquotients isomorphic to $V_\b$ for each $\b \succeq \a$ and each such standard module appears exactly once.
\end{thm}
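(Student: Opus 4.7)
The plan is to combine the quasi-hereditary structure of $A(\VC)$ provided by Theorem~\ref{BLPW}(2) with a direct combinatorial analysis of the standard modules $V_\b$. The general theory of quasi-hereditary algebras guarantees that every indecomposable projective $P_\a$ admits a filtration by standard modules, and moreover the multiplicities are determined by BGG reciprocity
\[
(P_\a : V_\b) \;=\; [V_\b : L_\a].
\]
Thus the theorem reduces to the composition multiplicity identity
\[
[V_\b : L_\a] \;=\; \begin{cases} 1 & \text{if } \a \preceq \b, \\ 0 & \text{otherwise.} \end{cases}
\]

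To establish this identity I would use Braden--Licata--Proudfoot--Webster's explicit presentation of the standard modules: $V_\b$ is a cyclic quotient of $P_\b$ whose underlying vector space has a canonical basis labelled by bounded chambers $\a \in \PC$ with $\a \preceq \b$, with exactly one basis vector in each idempotent weight space $e_\a V_\b$. The vanishing $[V_\b : L_\a] = 0$ for $\a \not\preceq \b$ is then immediate, since $e_\a V_\b = 0$ and so $L_\a$ cannot appear as a composition factor. The upper bound $[V_\b : L_\a] \leq 1$ for $\a \preceq \b$ follows because $\dim e_\a V_\b = 1$ while each simple $L_\gamma$ (being one-dimensional, as $A(\VC)$ is basic) contributes exactly one dimension to the $e_\gamma$-weight space of any module in which it appears.

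The main obstacle will be the matching lower bound: showing that each $L_\a$ with $\a \preceq \b$ actually appears, with multiplicity at least one, in a Jordan--H\"older series of $V_\b$. The most direct approach is a dimension count using the identity $\dim e_\a V_\b = \sum_\gamma [V_\b : L_\gamma] \cdot \dim e_\a L_\gamma = [V_\b : L_\a]$; the explicit basis then forces $[V_\b : L_\a] = 1$ for every $\a \preceq \b$. If that argument proves unwieldy, one can instead invoke the Koszul structure of Theorem~\ref{BLPW}(1,3) and read off the multiplicities from the graded $\Ext$-algebra between simples, or alternatively exhibit an explicit filtration of $V_\b$ by successively collapsing the basis according to the order. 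Either way, combining the resulting composition identity with BGG reciprocity yields the desired filtration of $P_\a$ with each $V_\b$ for $\b \succeq \a$ appearing exactly once.
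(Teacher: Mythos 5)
Your reduction is not the argument the paper (or~\cite{GDKD}) uses, and as written it has a genuine gap at the reciprocity step. The identity $(P_\a : V_\b) = [V_\b : L_\a]$ is \emph{not} the general form of BGG reciprocity for quasi-hereditary algebras: the correct statement is $(P_\a : V_\b) = [\nabla_\b : L_\a]$, with the costandard module $\nabla_\b$ on the right-hand side. Replacing $\nabla_\b$ by $V_\b$ requires a simple-preserving duality identifying $\nabla_\b$ with the dual of $V_\b$; such a duality does exist here (the functor built from the arrow-reversing isomorphism $A\cong A^{\mathrm{op}}$, cf.\ the discussion preceding Theorem~\ref{thm-selfdual}), but you never invoke it, and without it your displayed identity is false for a general quasi-hereditary algebra. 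There is also a circularity concern: in~\cite{GDKD}, and in this paper's generalization (Theorem~\ref{thm-projfilt}), the filtration statement is precisely \emph{how} quasi-heredity is proved. The bare statement of Theorem~\ref{BLPW}(2) does not by itself identify the $V_\b$ as the standard objects of that highest weight structure, nor the (closure of the) cone relation as its order, so quoting it and then applying reciprocity comes close to assuming what is to be shown. Your computation of $[V_\b : L_\a]$ from the taut-path basis of $V_\b$ is fine, but on its own it only reproves Corollary~\ref{cor-hwStandard}.

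For contrast, the paper's proof of the matroid analogue (Theorem~\ref{thm-projfilt}, following the proof of~\cite[Theorem 5.23]{GDKD}) is direct and does not presuppose the highest weight structure: it filters $P_\a$ by the submodules generated by paths passing through a fixed $\g\in\PC$, uses the taut-path calculus (Proposition~\ref{prop-tautification}, Corollary~\ref{cor-thru}, Lemma~\ref{lem-icross}) to show the subquotient attached to $\b$ vanishes unless $\b\succeq\a$ and is a quotient of $V_\b$ otherwise, and then upgrades these surjections to isomorphisms by the global dimension count $\dim_k A = \dim_k B(\PB^\vee,U^\perp)$ furnished by Theorem~\ref{thm-A-B}, Lemma~\ref{lem-size} and complementary slackness (Proposition~\ref{prop-compslack}); quasi-heredity is then a corollary. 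If you want to keep your route, you must add the duality input making $[\nabla_\b:L_\a]=[V_\b:L_\a]$ legitimate and be explicit that you are taking the full highest weight structure (order and standard objects) from BLPW as a black box; otherwise the missing piece is exactly the dimension count that the paper supplies.
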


\subsection{Matroidal setting}

Fix a field $k$ and a finite index set $E$.  In this paper we will begin with an orientable matroid $M$ of rank $d$ and a choice of parameter space $U \subset k^E$ for $M$.  By parameter space, we mean a subspace $U\subset k^E$ such that the composition $U\into k^E \fib \textrm{Span}\{t_i\mid i\in b\}$ is an isomorphism for any basis $b$ of $M$.

\begin{ex}\label{ex-polar1} Note that the subspace $V \subset \RM^n$ in a polarized arrangement of Braden--Licata--Proudfoot--Webster provides such a pair for $k=\RM$: let $M$ be the matroid on the index set $E=\{1,\ldots,n\}$ represented by the coordinate functions of $\RM^n$ restricted to $V$, viewed as vectors $x_1,\ldots,x_n \in V^*$, and let $U=V$.
\end{ex}

Let $\MC$ be an orientation of $M$, meaning an oriented matroid $\MC$ such that $\un\MC = M$, where $\un\MC$ denotes the underlying unoriented matroid.  (In the polarized arrangement example, there is a natural choice for $\MC$, as $M$ is represented by vectors in a real vector space.) 

The remaining input data we need is the structure of a oriented matroid program $\PB=(\wt\MC,g,f)$, meaning $\wt\MC$ is an oriented matroid on the underlying set $E \sqcup\{g,f\}$ such that $g$ is not a loop, $f$ is not a coloop, and $(\wt\MC\backslash f )/g = \MC$.  Like we did for $\eta$ and $\xi$, we ask that $g$ and $f$ be sufficiently generic (see Definiton~\ref{def-generic}).

The matroid $M$ is determined by $\PB$, so we can and will omit it from our notation and consider pairs $(\PB,U)$ where $\PB=(\wt\MC,g,f)$ is a sufficiently generic oriented matroid program and $U\subset k^E$ is a parameter space for the underlying (unoriented) matroid $M= \un{(\wt\MC\backslash f )/g}$.

\begin{ex}\label{ex-polar2}
Polarized arrangements give a natural class of examples.  For a $d$-dimensional polarized arrangement $(V,\eta,\xi)$, consider the $(d+1)$-dimensional subspace $\wt V$ of $\RM^n \times \RM_g \times \RM_f$ spanned by the graph of $\xi: V \to \RM_f$ and the vector $(\overline\eta,1,0) \in \RM^n \times \RM_g \times \RM_f$, where $\overline\eta$ is any representative of the coset $\eta \in \RM^n/V$.  Let $\wt\MC$ be the oriented matroid on the set $\{1,\ldots,n\} \sqcup \{g,f\}$ defined by the coordinate functions $x_1,\ldots,x_n,x_g,x_f \in \wt V^*$.
\end{ex}

Not every oriented matroid program $\PB$ comes from a polarized arrangement, but by the Topological Realization Theorem of Folkman--Lawrence, every loop-free program $\PB$ can be expressed as a \textit{pseudosphere arrangement} - a topological representation generalizing the notion of a hyperplane arrangement.

\begin{ex}
Figure~\ref{fig-arrangement}(b) shows the feasible region of the pseudosphere arrangement corresponding to the polarized arrangement from Example~\ref{ex-GDKD}.
\end{ex}

\begin{ex} \label{ex-ringel}
Figure~\ref{fig-ringel} depicts the feasible part of a pseudosphere arrangement, where $|E|=8$ and $M$ is the uniform rank 2 matroid on 8 points, that defines a non-realizable oriented matroid program $\PB=(\wt\MC,g,f)$.  Here the oriented submatroid $\wt\MC\backslash f$, a rank 3 oriented matroid on 9 points, is the non-stretchable simple arrangement of 9 pseudolines defined by Ringel~\cite{Ringel} as a perturbation the Pappus matroid.
\end{ex}

\begin{figure}
\begin{center}
\begin{tikzpicture}
  \def\radius{2.8cm}
  \draw[blue] circle[radius=\radius];
\path (0:\radius) coordinate (2+) node [circle,fill,inner sep=1pt]{};
\path (20:\radius) coordinate (7+) node [circle,fill,inner sep=1pt]{};
\path (40:\radius) coordinate (8+) node [circle,fill,inner sep=1pt]{};
\path (60:\radius) coordinate  (6+) node [circle,fill,inner sep=1pt]{};
\path (80:\radius) coordinate (5+) node [circle,fill,inner sep=1pt]{};
\path (100:\radius) coordinate (9+) node [circle,fill,inner sep=1pt]{};
\path (120:\radius) coordinate (4+) node [circle,fill,inner sep=1pt]{};
\path (140:\radius) coordinate (3+) node [circle,fill,inner sep=1pt]{};
\path (160:\radius) coordinate (f1) node [circle,fill,inner sep=1pt,red] {} node [left,red] {f};
\draw[->,red,thick] (f1) arc (160:152:\radius);
\path (180:\radius) coordinate (2-)  node [circle,fill,inner sep=1pt]{};
\path (200:\radius) coordinate (7-)  node [circle,fill,inner sep=1pt]{};
\path (220:\radius) coordinate (8-)  node [circle,fill,inner sep=1pt]{};
\path (240:\radius) coordinate (6-)  node [circle,fill,inner sep=1pt]{};
\path (250:\radius) coordinate (g) node [below,blue] {g};
\draw[->,blue,thick] (g) -- (250:2.4cm);
\path (260:\radius) coordinate (5-)  node [circle,fill,inner sep=1pt]{};
\path (280:\radius) coordinate (9-)  node [circle,fill,inner sep=1pt]{};
\path (300:\radius) coordinate (4-)  node [circle,fill,inner sep=1pt]{};
\path (320:\radius) coordinate (3-)  node [circle,fill,inner sep=1pt]{};
\path (340:\radius) coordinate (f2) node [circle,fill,inner sep=1pt,red] {};
\draw[->,red,thick] (f2) arc (340:348:\radius);
\draw[red] (f1) .. controls (125:1.4cm) and (-30:1.4cm) .. (f2);

\coordinate (a) at (10:2.1cm);
\coordinate (79) at (32.5:2.1cm);
\coordinate (89) at (55:2.1cm);
\coordinate (69) at (77.5:2.1cm);
\coordinate (59) at (100:2.1cm);
\coordinate (45) at (122.5:2.1cm);
\coordinate (b) at (145:2.1cm);
\coordinate (36) at (167.5:2.1cm);
\coordinate (26) at (190:2.1cm);
\coordinate (67) at (212.5:2.1cm);
\coordinate (68) at (235:2.1cm);
\coordinate (c) at (257.5:2.1cm);
\coordinate (25) at (280:2.1cm);
\coordinate (29) at (302.5:2.1cm);
\coordinate (24) at (325:2.1cm);
\coordinate (23) at (347.5:2.1cm);

\coordinate (39) at (0:1.4cm);
\coordinate (78) at (40:1.4cm);
\coordinate (46) at (80:1.4cm);
\coordinate (56) at (120:1.4cm);
\coordinate (37) at (160:1.4cm);
\coordinate (27) at (200:1.4cm);
\coordinate (28) at (240:1.4cm);
\coordinate (58) at (280:1.4cm);
\coordinate (49) at (320:1.4cm);

\coordinate (38) at (5:.7cm);
\coordinate (47) at (77:.7cm);
\coordinate (57) at (149:.7cm);
\coordinate (35) at (221:.7cm);
\coordinate (48) at (293:.7cm);

\draw [rounded corners] (2+) .. controls (23) .. (24) .. controls (29) .. (25) .. controls (28) .. (27) .. controls (26) .. (2-);
\draw [rounded corners] (7+) .. controls (79) .. (78) .. controls (47) .. (57) .. controls (37) .. (27) .. controls (67) .. (7-);
\draw [rounded corners] (8+) .. controls (89) .. (78) .. controls (38) .. (48) .. controls (58) .. (28) .. controls (68) .. (8-);
\draw [rounded corners] (6+) .. controls (69) .. (46) .. controls (56) .. (36) .. controls (26) .. (67) .. controls (68) .. (6-);
\draw [rounded corners] (5+) .. controls (59) .. (45) .. controls (56) .. (57) .. controls (35) .. (58) .. controls (25) .. (5-);
\draw [rounded corners] (9+) .. controls (59) .. (69) .. controls (89) .. (79) .. controls (39) .. (49) .. controls (29) .. (9-);
\draw [rounded corners] (4+) .. controls (45) .. (46) .. controls (47) .. (0,0) .. controls (48) .. (49) .. controls (24) .. (4-); 
\draw [rounded corners] (3+) .. controls (36) .. (37) .. controls (35) .. (0,0) .. controls (38) .. (39) .. controls (23) .. (3-); 

\draw[fill=gray, opacity=.2] (2-) arc [radius = \radius, start angle =  180,  end angle = 320] .. controls (23) .. (39) .. controls (79) .. (89) .. controls (69) .. (98:2.14cm) -- (45) -- (56) -- (36) -- (26) -- cycle;

\end{tikzpicture}

\end{center}
\caption{Ringel example}
\label{fig-ringel}
\end{figure}

\begin{remark}
Every oriented matroid program $\PB$ where $d=2$ and $|E| \leq 7$ is realizable, so the program described in Example~\ref{ex-ringel} is a minimal non-realizable example.
\end{remark}

For a pair $(\PB=(\wt\MC,g,f),U)$ as above we define the dual pair $(\PB^\vee,U^\perp)$, where $\PB^\vee = (\wt\MC^\vee,f,g)$ is the dual oriented matroid program (here the roles of $f$ and $g$ are swapped),  and $U^\perp \subset k^E$ is the orthogonal complement.  

\begin{remark}
It is an exercise in linear algebra to check that when the oriented matroid program $\PB=(\wt\MC,g,f)$ comes from a polarized arrangement $\VC = (V,\eta,\xi)$ as in  Example~\ref{ex-polar2}, this duality agrees with the standard Gale duality of linear programming.  In other words, the dual program $\PB^\vee=(\wt\MC^\vee,f,g)$ is the oriented matroid program associated to the Gale dual polarized arrangement $\VC^\vee = (V^\perp,-\xi,-\eta)$.
\end{remark}

\subsection{Main results}

As above, let the pair $(\PB,U)$ consist of a sufficiently generic oriented matroid program $\PB=(\wt\MC,g,f)$ together with a parameter space $U\subset k^E$ for the (unoriented) matroid $M= \un{(\wt\MC\backslash f )/g}$.  Modifying the definition of Braden--Licata--Proudfoot--Webster to this setting, we introduce a finite-dimensional algebra $A(\PB,U)$  over $k$.  In particular, in the realizable case of Example~\ref{ex-polar1} and~\ref{ex-polar2}, one recovers the original algebra $A(\PB,V) = A(\VC)$.

In the more general setting, we show that part (1) of Theorem~\ref{BLPW} extends without modification:

\begin{thm}\label{thm-quad-intro}
Let $(\PB,U)$ be a pair as above. The algebra $A(\PB,U)$ is quadratic with quadratic dual $A(\PB^\vee,U^\perp)$ corresponding to the dual pair.
\end{thm}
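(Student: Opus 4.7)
The plan is to follow the approach of Braden--Licata--Proudfoot--Webster in~\cite{GDKD} for the realizable case, verifying that their argument is entirely combinatorial and so survives the passage to an arbitrary oriented matroid program. The definition of $A(\PB,U)$ being extended to this setting should come equipped with an explicit quadratic presentation: orthogonal idempotents $e_\a$ indexed by the feasible bounded sign vectors $\a \in \PC$ of $\PB$; degree-one ``crossing'' arrows $p(\a,\b)$ between pairs of such sign vectors that differ in a single coordinate of their support; and a copy of $U$ (or its dual, depending on conventions) placed in degree one at each idempotent. The quadratic relations then split into three families: commutations among crossings supported in disjoint coordinates and between polynomial generators and crossings, straightening relations expressing that the polynomial generators are constrained to $U \subset k^I$, and ``wall'' relations equating $p(\a,\b)p(\b,\a)$ with a specific polynomial element determined by the pseudosphere separating $\a$ from $\b$.

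The first step of the proof is to set up the parallel presentation for $A(\PB^\vee, U^\perp)$. The crucial input is the LP-duality bijection between the bounded feasible sign vectors of $\PB$ and those of $\PB^\vee$; under this bijection, walls on the two sides correspond, so the idempotent and crossing generators of the two algebras can be identified canonically. The polynomial generators of $A(\PB,U)$ are drawn from $U \subset k^I$, while those of $A(\PB^\vee, U^\perp)$ are drawn from $U^\perp \subset k^I$; these two subspaces are annihilators of one another under the standard pairing on $k^I$, which is precisely the pairing used to define the quadratic dual on degree-one generators.

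With the parallel presentations in hand, the quadratic duality reduces to checking, inside the tensor square of the degree-one part, that the relations of each algebra form the orthogonal complement of the relations of the other. The commuting and wall relations are symmetric under the LP-duality bijection, and the straightening relations dualize tautologically because $U$ and $U^\perp$ pair to zero. I expect the main obstacle to be not this final bookkeeping but rather confirming that LP-duality acts on wall data in the required way: that the coordinate separating $\a$ from $\b$ in $\PB$ is the same as the coordinate separating the corresponding sign vectors in $\PB^\vee$, with a compatible sign convention. In the realizable case of~\cite{GDKD} this is a consequence of Gale duality; in general it should follow from the duality of cocircuits and circuits in $\wt\MC$ and $\wt\MC^\vee$, obtained by translating the relevant lemmas of~\cite{GDKD} from hyperplane language into the language of sign vectors and oriented matroid cocircuits.
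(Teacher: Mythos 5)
Your plan correctly identifies the model (\cite[Theorem 3.11]{GDKD}) and the role of $\PC=\PC^\vee$, but the presentation you propose to dualize is not in fact a quadratic presentation, and the real content of quadraticity is missing. In $A(\PB,U)$ the arrows have degree $1$ while the polynomial elements have degree $2$ (relation (A3) reads $p(\a,\a^i,\a)=e_\a u_i$, so $\deg u_i=2$). Placing ``a copy of $U$ in degree one at each idempotent'' is therefore incompatible with the wall relation, which would become inhomogeneous, and relations of the form $x=0$ for $x\in U^\perp$ are linear, not quadratic; so the three families of relations you list do not give a quadratic presentation of anything isomorphic to $A(\PB,U)$. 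The step that actually has to be proved first is that the polynomial generators are \emph{redundant}: every $\a\in\PC$ has a feasible cocircuit face $Y$ with $z(Y)$ a basis $b$ of $M$, and since $U$ is a parameter space the images of $\{u_i\mid i\in b\}$ span $U^*=(k^n)^*/U^\perp$, while $e_\a u_i=p(\a,\a^i,\a)$ for $i\in b$ because $\a^i$ is then feasible. Hence $A$ is generated in degree one by the crossing arrows between bounded feasible topes alone, with purely quadratic relations (the images of (A2) and the combination of (A3) with (A4)). Without this elimination there is no quadratic presentation to dualize, and the degree-one part of the would-be dual would not match $A(\PB^\vee,U^\perp)_1$.

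Second, the obstacle you flag is the trivial part, while the genuinely delicate points are passed over. Since $\PC=\PC^\vee$ as subsets of $\{+,-\}^n$, the coordinate separating two adjacent bounded feasible topes is literally the same on both sides; no compatibility of ``wall data'' needs to be confirmed. What does need proof: (i) the pairing $(A^\vee)_1\times A_1\to k$ must be twisted by signs $\e(\a\leftrightarrow\b)$ chosen so that every square of the cube carries an odd number of minus signs; with the untwisted ``canonical'' identification the two commutation relations pair to $2\neq 0$, so orthogonality fails in characteristic $\neq 2$. (ii) For $\a,\b$ differing in coordinates $i,j$ one must run the case analysis according to whether $\a^i,\a^j$ lie in $\PC$, in $\FC\setminus\PC$, or in $\BC\setminus\PC$, using $\FC^\vee=\BC$ and $\BC^\vee=\FC$ to see that a missing relation on one side corresponds to a missing generator on the other. (iii) On the diagonal blocks the dualization is not ``tautological because $U$ and $U^\perp$ pair to zero'': writing $J_\a=\{i\mid\a^i\in\PC\}$, $I_\a=\{i\mid\a^i\in\FC\}$, $K_\a=\{i\mid\a^i\in\BC\}$, one needs $\bigl(\mathrm{pr}_{J_\a}(U^\perp\cap k^{I_\a})\bigr)^\perp=(\mathrm{pr}_{I_\a}U)\cap k^{J_\a}$ together with $I_\a=K_\a^\vee$ and $J_\a=J_\a^\vee$, which again rests on the feasible/bounded exchange under program duality. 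Until these ingredients are supplied, the proposal is an outline of the right strategy but not a proof.
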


Similarly to the realizable case, the simple modules for $A(\PB,U)$ are labelled by the set $\PC$ of bounded, feasible topes. For example, in Example~\ref{ex-ringel} the bounded, feasible topes correspond to shaded regions in  Figure~\ref{fig-ringel}.  Again one can define a cone relation on $\PC$ and standard modules $V_\a$ for each $\a \in \PC$.  

However, unlike in the realizable case, the transitive closure of the cone relation need not define a poset.  An oriented matroid program $\PB = (\wt\MC,g,f)$ is said to be \textit{Euclidean} if the transitive closure of the cone relation on bounded, feasible topes of $\PB$ is a poset.

Using this condition, we obtain the following analogue of Theorems~\ref{BLPW}(2), \ref{BLPW}(3) and~\ref{thm-BLPWfilt}.

\begin{thm}\label{thm-qherKos}
For a pair $(\PB,U)$ as above with the additional assumption that the program $\PB$ is \textbf{Euclidean}, the algebra $A(\PB,U)$ is quasi-hereditary and Koszul.

Moreover, for any $\a \in \PC$ the indecomposable projective cover $P_\a$ of $L_\a$ has a filtration with successive subquotients isomorphic to $V_\b$ for each $\b \succeq \a$ and each such standard module appears exactly once.
\end{thm}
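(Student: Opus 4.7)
The plan is to transport the strategy of Braden--Licata--Proudfoot--Webster to the oriented matroid setting, isolating each point at which the Euclidean hypothesis is used. The pivotal step is to construct the standard filtration of each indecomposable projective $P_\a$. I expect $A(\PB,U)$ to carry a combinatorial basis indexed by certain pairs of bounded feasible topes, with each standard $V_\b$ described by an analogous (smaller) combinatorial set, directly mirroring the realizable situation of \cite{GDKD}. The Euclidean hypothesis allows one to refine the transitive closure $\preceq$ of the cone relation to a total order on $\PC$; I then filter $P_\a$ by taking at each stage the submodule spanned by those basis elements whose ``first tope'' lies above the current threshold. Identifying each successive subquotient as $V_\b$ for a unique $\b \succeq \a$ should reduce to a combinatorial check about how pairs of topes decompose relative to the cone relation. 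Without Euclideanness no linear refinement of $\preceq$ exists, so this is the first essential use of the hypothesis.

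From the standard filtration, quasi-heredity follows by verifying the usual axioms: $\End(V_\a) = k$ and that $[V_\a : L_\b] \neq 0$ forces $\b \preceq \a$, both of which should fall out of the explicit description of $V_\a$, together with the projective filtration just constructed. That $\preceq$ is a genuine partial order (rather than merely a relation) is precisely guaranteed by Euclideanness, so this hypothesis is again essential. The ``each standard appears exactly once'' count will drop out of the bijection between basis elements of $P_\a$ and pairs $(\b,\g)$ with $\a \preceq \b$ and $\g \preceq \b$.

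For Koszulity, the approach is to construct, for each simple $L_\a$, a linear projective resolution whose terms are indexed by chains in $(\PC,\preceq)$ beginning at $\a$, with differentials built from cone-relation data; Euclideanness supplies the length grading that makes the resolution finite. The main obstacle will be proving exactness: in the realizable setting BLPW could cross-check Koszulity against the geometry of hypertoric varieties, but in the matroidal setting one must argue purely combinatorially, presumably by induction along $\preceq$ and repeated appeals to the Euclidean property to extract ``next larger'' topes at each step. Once Koszulity of $A(\PB,U)$ is established, the quadratic duality from the preceding theorem identifies the Koszul dual with $A(\PB^\vee,U^\perp)$ for free.
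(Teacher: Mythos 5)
Your treatment of quasi-heredity follows the paper's route in outline: fix a total order refining the transitive closure of $\preceq$ (the first real use of Euclideanness), filter $P_\a$ by the submodules of paths passing through topes above a threshold, kill the subquotients attached to $\b \not\succeq \a$ by taut-path manipulations, and identify the surviving subquotients with the $V_\b$ by a surjection plus a count. But the count is exactly the point you defer with ``I expect $A(\PB,U)$ to carry a combinatorial basis indexed by pairs of bounded feasible topes.'' In the paper this is not an expectation but a theorem chain: $A(\PB,U)\cong B(\PB^\vee,U^\perp)$ (Theorem~\ref{thm-A-B}), freeness of each $\wt R_{\a\b}$ over $\Sym U$ with rank the number of common feasible cocircuit faces (Lemma~\ref{lem-size}, resting on the topological fact that the feasible Las Vergnas face lattices are PL balls or spheres, hence Cohen--Macaulay), and Complementary Slackness (Proposition~\ref{prop-compslack}) to convert those cocircuit faces into topes $\d\succeq\a,\b$. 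Without this input the inequality $\dim M_\a^\b\le\dim V_\b$ cannot be promoted to the equality that makes each $V_\b$ appear exactly once, so your filtration argument is incomplete as stated, though repairable by the paper's methods.

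The Koszulity part is a genuine gap. You propose a linear projective resolution of each simple $L_\a$ with terms indexed by chains in $(\PC,\preceq)$ and exactness ``by induction along $\preceq$,'' but the terms of a linear resolution of a simple are governed by the quadratic dual, not by chains in the highest-weight order, and proving exactness of such a resolution directly is precisely the content of Koszulity, so the plan is circular as it stands (and ``Euclideanness supplies the length grading'' does not address this). The paper instead resolves the \emph{standard} modules: for $\a\in\PC$ with $a=\mu^{-1}(\a)$ it builds the cube-shaped multicomplex with terms $P_{\a^S}$, $S\subseteq a$, and differentials given by multiplication by the edge paths $p(\a^{S\backslash i},\a^S)$; its total complex is linear with $H_0 = V_\a$, and exactness in positive degrees is proved by filtering the multicomplex by the subquotients $M_\a^\b$ furnished by the quasi-heredity theorem just proved, each graded piece having a manifestly exact differential (Theorem~\ref{thm-KoszulStan}). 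Koszulity of $A$ then follows from the general result of~\cite{quasi-hered2003} that a quasi-hereditary algebra whose standard modules admit linear projective resolutions is Koszul, and only at that point does quadratic duality (Theorem~\ref{thm-quad}) identify the Koszul dual with $A(\PB^\vee,U^\perp)$. Your sketch is missing both the pivot from simples to standards and any workable mechanism for exactness.
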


\begin{remark}\label{rem-Euclid}
 While oriented matroid programs are not always Euclidean, every oriented matroid program of rank at most 3 (equivalently $d$ at most 2) is Euclidean.  Thus there are plenty of Euclidean, non-realizable programs, such as Example~\ref{ex-ringel}.

We do not know whether or not every non-realizable oriented matroid $\MC$ admits a Euclidean program $\PB=(\wt\MC,g,f)$ such that $\wt\MC/g\backslash f=\MC$.  For connections to a well-known conjecture of Las Vergnas, see the discussion surrounding Proposition~\ref{prop-lasvergnas}.
\end{remark}

We observe in Example~\ref{ex-EFM(8)} that in the non-Euclidean case, $A(\PB,U)$ need not be quasi-hereditary.  In particular, we give an example of a non-Euclidean program $\PB$ and projective $A(\PB,U)$-module which does not admit a standard filtration.

However, in Theorem~\ref{thm-Kgroup} we do prove that for any oriented matroid program $\PB$ the following analogue of Theorem~\ref{thm-BLPWfilt} holds on the level of the Grothendieck group of graded $A(\PB,U)$-modules.

\begin{thm}\label{thm-Kgroup-intro}
For any generic oriented matroid program $\PB$ and any $\a \in \PC$, the class of the indecomposable projective $P_\a$ in the Grothendieck group can be expressed as the sum:
\[ [P_\a] = \sum_{\g \succeq \a} q^{d_{\a\g}}[V_\g],\]
where $d_{\a\g}$ denotes the distance between the topes $\a$ and $\g$.
\end{thm}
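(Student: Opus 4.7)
The plan is to reduce the identity to a purely numerical equality of graded Hilbert series. The graded Grothendieck group of $A(\PB,U)\text{-mod}$ is freely generated as a $\ZM[q,q^{-1}]$-module by the classes $\{[L_\b]\}_{\b \in \PC}$, with the coefficient of $[L_\b]$ in $[M]$ equal to $\dim_q e_\b M$ for any graded module $M$. Writing $P_\a = A(\PB,U) e_\a$ and unpacking both sides in this basis, the target identity becomes the assertion that for every pair $\a,\b \in \PC$,
\[ \dim_q\bigl(e_\b A(\PB,U)\, e_\a\bigr) \;=\; \sum_{\g \succeq \a} q^{d_{\a\g}}\, \dim_q\bigl(e_\b V_\g\bigr). \]

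First I would make explicit the combinatorial basis of the algebra $A(\PB,U)$, modeled on the path-like basis of Braden--Licata--Proudfoot--Webster, together with the induced bases of the $V_\g$. Both sides of the identity then become Hilbert series of explicit graded vector spaces indexed by sign-vector data from the oriented matroid program $\PB$ and the parameter space $U$. The heart of the proof is then a degree-preserving bijection
\[ \text{basis of } e_\b A(\PB,U)\, e_\a \;\longleftrightarrow\; \bigsqcup_{\g \succeq \a}\bigl(\text{basis of } e_\b V_\g,\; \text{shifted by } q^{d_{\a\g}}\bigr). \]
To construct this map, for each basis vector $x \in e_\b A(\PB,U) e_\a$ I would extract a distinguished tope $\g(x) \succeq \a$ encoded by the combinatorics of $x$, and canonically factor $x$ as a fixed cone-minimal morphism $\a \to \g(x)$ of degree $d_{\a\g(x)}$ followed by a residual element of $V_{\g(x)}$ targeting $L_\b$. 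In the Euclidean case, this factorization can be lifted to an honest module filtration of $P_\a$, recovering the preceding filtration theorem; crucially, however, the bijection is defined in terms of the sign-vector data of $\PB$ alone and survives in the non-Euclidean setting where $\succeq$ may have cycles.

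The main obstacle is combinatorial: verifying that for each $\g \succeq \a$ the cone-minimal path $\a \to \g$ has length exactly equal to the tope-distance $d_{\a\g}$, and that every basis element of $A(\PB,U)\, e_\a$ admits a unique such canonical factorization independent of any choice of linear extension of $\succeq$. This is where the distinction between Euclidean and non-Euclidean programs manifests: in the Euclidean case uniqueness is automatic from acyclicity of $\succeq$, while in general one must rely on a careful analysis of the parameter space $U$ and of the feasible sign-vector conditions to rule out double-counting. Once this combinatorial bijection is in hand, summing the degree-preserving identification over $\b$ yields the required equality in the graded Grothendieck group.
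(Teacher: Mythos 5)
Your opening reduction is the same as the paper's: expand $[P_\a]$ and the $[V_\g]$ in the basis of simples, so that the theorem becomes the graded-dimension identity $\grdim\, e_\a A e_\b = \sum_{\g \succeq \a,\; \b \preceq \g} q^{d_{\a\g}+d_{\g\b}}$ for all $\a,\b\in\PC$ (here I am using the paper's right-module conventions; the fact that $[V_\g]=\sum_{\b\preceq\g}q^{d_{\g\b}}[L_\b]$ is Lemma~\ref{lem-standard}). The gap is that you never actually compute the left-hand side: everything is delegated to an unspecified ``path-like basis'' of $e_\b A e_\a$ together with a ``canonical factorization'' of each basis element through a distinguished tope $\g(x)\succeq\a$, and you yourself flag the uniqueness and choice-independence of that factorization as the main obstacle without offering a mechanism for it. That obstacle is precisely the hard point. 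In the non-Euclidean case the module-level factorization genuinely fails (Example~\ref{ex-EFM(8)} shows $P_\a$ need not have a standard filtration), and even an explicit monomial basis of $e_\a A e_\b$ is not available in general: via Theorem~\ref{thm-A-B} one has $e_\a A e_\b \cong R^\vee_{\a\b}\langle -d_{\a\b}\rangle$, and the paper only knows how to extract an explicit basis of such face-ring quotients under the Euclidean/shellability hypothesis (Remark~\ref{rmk-face ring basis}). So your very first step is already an unresolved problem, and ``a careful analysis of the parameter space $U$'' is not an argument.

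The paper circumvents all of this by never choosing a basis or a factorization: it computes the graded dimension of $R^\vee_{\a\b}$ as an $h$-vector, using the fact that the relevant complex $z(\D_{\a\b})$ is \emph{partitionable} --- the partition being indexed by the feasible cocircuit faces of $\a\wedge\b$ via their (unique, by genericity, Theorem~\ref{thm-optimal}) optimal solutions --- which needs no Euclidean hypothesis (Lemma~\ref{lem-gradedsize}, Corollary~\ref{cor-gradedsize}); it then converts common feasible cocircuit faces of $\a$ and $\b$ in $\PB^\vee$ into topes $\d$ with $\a,\b\preceq\d$ by Complementary Slackness (Proposition~\ref{prop-compslack}), and finishes with the elementary identity $d_{\a\d}+d_{\d\b}=d_{\a\b}+2|S^\d_{\a\b}|$. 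If you want to salvage your bijective strategy, you would in effect need to reprove this partitionability/Cohen--Macaulay input and the complementary-slackness dictionary; as written, the proposal assumes the conclusion of that analysis rather than supplying it.
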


While our proof of Koszulity in the Euclidean case relies on $A(\PB,U)$ being quasi-hereditary, it is conceivable that $A(\PB,U)$ is Koszul more generally.  As evidence in this direction, in Theorem~\ref{thm-numKoszul} we prove the Hilbert series of $A(\PB,U)$ and $A(\PB^\vee,U^\perp)$ satisfy the numerical identity discussed in~\cite[Lemma 2.11.1]{BGS}.

\subsection{Derived Morita equivalence}\label{subsec-morita} 
In light of Theorem~\ref{BLPW}(4) it seems natural to ask:

\begin{question}\label{q-Morita}
Let $M$ be an orientable matroid and $U$ a choice of parameter space for $M$.  For any two orientations $\MC_1, \MC_2$ of $M$ and generic oriented matroid programs $\PB_1 = (\wt\MC_1,g_1,f_1), \PB_2=(\wt\MC_2,g_2,f_2)$ such that $\wt\MC_\ell/g_\ell \backslash f_\ell=\MC_\ell,~\ell=1,2$, are the algebras
$A(\PB_1,U)$ and $A(\PB_2,U)$ derived Morita equivalent?
\end{question}

If the answer to this question is yes, it would appear to give a rather interesting algebraic invariant of the matroid $M$.  Or weaker, one might still hope for an affirmative answer under the assumption that $\MC_1=\MC_2$:

\begin{question}\label{q-MoritaOr}
Let $\MC$ be an oriented matroid and $U$ a choice of parameter space for $M=\un\MC$.  For any two  generic oriented matroid programs $\PB_1 = (\wt\MC_1,g_1,f_1), \PB_2=(\wt\MC_2,g_2,f_2)$ such that $\MC=\wt\MC_1/g_1 \backslash f_1=\wt\MC_2/g_2 \backslash f_2$, are the algebras
$A(\PB_1,U)$ and $A(\PB_2,U)$ derived Morita equivalent?
\end{question}

If the answer to one or both of these questions is no, the number of derived Morita equivalence classes could also provide a interesting invariant of $M$ or $\MC$.

As a partial result in this direction, following the strategy of Braden--Licata--Proudfoot--Webster, we prove the following theorem in Section~\ref{sec-derived}.

\begin{thm}\label{thm-derivedequiv}
Fix $\MC$ and let $\PB_1=(\wt\MC_{1},g_1,f_1)$ and $\PB_2=(\wt\MC_{2},g_2,f_2)$ be Euclidean such that $\wt\MC_i/g_i \backslash f_i=\MC$ for $i=1,2$.  Suppose in addition that the oriented matroid program $\PB_{\text{mid}}=(\wt\MC_{\text{mid}},g_2,f_1)$ is also Euclidean, where $\PB_{\text{mid}}$ is a generic oriented matroid program\footnote{Note that such an oriented matroid program $(\wt\MC_{\text{mid}},g_2,f_1)$ always exists and there will typically be many such oriented matroid programs.  However the particular choice will not matter for us, because, as mentioned in Remark~\ref{rem-eft}, all of our constructions depend only on the contraction and restriction oriented matroids $\wt\MC_{\text{mid}}/g_2$ and $\wt\MC_{\text{mid}}\backslash f_1$.}
 such that $$\wt\MC_{\text{mid}}/g_2=\wt\MC_1/g_1, \quad \wt\MC_{\text{mid}} \backslash f_1=\wt\MC_2 \backslash f_2.$$
Then the bounded derived categories of graded finitely generated $A(\PB_1,U)$- and $A(\PB_2,U)$-modules are equivalent.
\end{thm}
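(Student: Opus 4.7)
The strategy, following Braden--Licata--Proudfoot--Webster in the realizable setting, is to factor the desired derived equivalence through $A(\PB_{\text{mid}},U)$ and establish the two intermediate equivalences
\[ D^b(A(\PB_1,U)\text{-gmod}) \simeq D^b(A(\PB_{\text{mid}},U)\text{-gmod}) \simeq D^b(A(\PB_2,U)\text{-gmod}). \]
The first of these compares programs sharing the contraction $\wt\MC/g$, while the second compares programs sharing the restriction $\wt\MC\setminus f$. The Euclidean hypothesis on all three programs is essential: by the preceding theorem it ensures that each of the three algebras is quasi-hereditary and Koszul, with a well-behaved highest weight structure indexed by bounded feasible topes and projectives admitting standard filtrations.

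A key observation is that these two intermediate equivalences are exchanged under Koszul duality. Since passing from $\PB=(\wt\MC,g,f)$ to $\PB^\vee=(\wt\MC^\vee,f,g)$ swaps the roles of $g$ and $f$, it exchanges the notions of contraction and restriction; combined with the quadratic/Koszul duality between $A(\PB,U)$ and $A(\PB^\vee,U^\perp)$ from the first theorem (and Koszulity of all algebras involved), it therefore suffices to construct one of the two intermediate equivalences, with the other following by dualization on both sides.

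To construct, say, the equivalence $D^b(A(\PB_1,U)\text{-gmod}) \simeq D^b(A(\PB_{\text{mid}},U)\text{-gmod})$, the plan is to build an explicit tilting complex $T$ in $D^b(A(\PB_1,U)\text{-gmod})$ whose endomorphism algebra is $A(\PB_{\text{mid}},U)$ and then invoke Rickard's theorem. The shared contraction $\wt\MC_1/g_1=\wt\MC_{\text{mid}}/g_2$ provides a common combinatorial scaffold on which standard modules for the two algebras are organized; the building blocks of $T$ will be shifts of standard modules $V_\beta$ for $A(\PB_1,U)$, indexed by the bounded feasible topes of $\PB_{\text{mid}}$ and assembled so as to mirror the standard filtrations of indecomposable projectives produced by the preceding theorem. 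A convenient way to organize this is via a one-parameter family of programs interpolating between $\PB_1$ and $\PB_{\text{mid}}$ (obtained by moving $f$), with $T$ built by iterating elementary mutation/shuffling functors across each wall-crossing.

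The main obstacle is the verification that $T$ is genuinely tilting: one must establish the required $\Ext$-vanishing between its summands and identify $\End(T)$ with $A(\PB_{\text{mid}},U)$ on the nose. Both tasks require a careful combinatorial comparison of the bounded feasible tope sets of $\PB_1$ and $\PB_{\text{mid}}$ and of the respective cone relations. In the realizable case of Braden--Licata--Proudfoot--Webster this reduces to tracking chamber structures as $\xi$ varies across a wall, but in the general oriented-matroidal setting one must work with pseudosphere arrangements, where the Euclidean hypothesis on $\PB_{\text{mid}}$ is indispensable to ensure that the highest weight order persists at the intermediate stage so that the standard filtrations of projectives — and with them the vanishing of higher $\Ext$'s between standards in the correct order — continue to hold.
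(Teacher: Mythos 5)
Your global reduction is the same as the paper's: factor through $A(\PB_{\text{mid}},U)$, note that the shared-contraction and shared-restriction comparisons are exchanged by Koszul duality (using quasi-heredity and Koszulity of all three algebras in the Euclidean case), and so construct only one intermediate equivalence. The genuine gap is in that construction, which is where all the content lies. You propose a tilting complex $T$ built from shifts of standard modules of $A(\PB_1,U)$, assembled by wall-crossing along a one-parameter family of programs ``obtained by moving $f$,'' and you yourself flag the $\Ext$-vanishing and the identification $\End(T)\cong A(\PB_{\text{mid}},U)$ as the main obstacle; neither is carried out, and these are precisely the hard steps. Moreover the proposed mechanism is problematic in this setting: $\PB_1$ and $\PB_{\text{mid}}$ share the element $f_1$ and the contraction $\wt\MC_1/g_1=\wt\MC_{\text{mid}}/g_2$, so what differs between them is the lift $g$, not $f$; and, unlike the realizable case where $\eta$ or $\xi$ varies continuously in a real vector space, an oriented matroid program has no ambient parameter space in which to interpolate between two single-element lifts, so there is no evident chain of generic Euclidean programs along which to iterate mutation/shuffling functors. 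Without either that chain or a direct verification of the tilting conditions, the argument does not go through.

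For comparison, the paper's proof of the intermediate equivalence (Proposition~\ref{prop-equiv}) avoids any interpolation: it defines an explicit $(A_1,A_2)$-bimodule $N=e_{g_1}A_\ext e_{g_2}\cong\bigoplus_{(\a,\b)\in\PC_1\times\PC_2}R^\vee_{\a\b}\langle -d_{\a\b}\rangle$ and sets $\Phi=-\overset{L}{\otimes}_{A_1}N$; it shows each $\Phi(P^1_\a)$ has a filtration by standard modules (Lemma~\ref{lem-projfiltfunctor}), proves that the analogous functor $\Phi^-$ to the reoriented program $\overline{\PB_1}$ sends projectives to self-dual tilting modules and is therefore the Ringel duality equivalence (Theorem~\ref{thm-ringel}, resting on Theorem~\ref{thm-selfdual}), and then factors $\Phi^-\cong\Phi_{2\overline{1}}\circ\Phi_{12}$ (Lemma~\ref{lem-factor}) to deduce formally that $\Phi_{12}$ is full, faithful and essentially surjective. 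If you wish to retain a Rickard-style formulation, you would still need to exhibit a concrete bimodule or complex and perform these verifications; as written, your proposal records the correct reduction but leaves the essential equivalence unproven.
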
 

This allow us to answer Questions~\ref{q-Morita} and~\ref{q-MoritaOr} in some simple cases.

\begin{cor}\label{cor-rank2}
Question~\ref{q-MoritaOr} has an affirmative answer for any oriented matroid $\MC$ of rank 2.
\end{cor}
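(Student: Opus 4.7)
The plan is to derive Corollary~\ref{cor-rank2} as a direct consequence of Theorem~\ref{thm-derivedequiv}, using Remark~\ref{rem-Euclid} to verify each Euclidean hypothesis automatically in the rank-$2$ setting. There is essentially no new content beyond bookkeeping, because rank $2$ forces everything in sight to be of rank at most $3$, where the Euclidean property is free.

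Concretely, let $\MC$ have rank $d=2$ and let $\PB_1=(\wt\MC_1,g_1,f_1)$ and $\PB_2=(\wt\MC_2,g_2,f_2)$ be two generic oriented matroid programs with $\wt\MC_i/g_i\backslash f_i=\MC$.  Since $g_i$ is not a loop and $f_i$ is not a coloop, each ambient $\wt\MC_i$ has rank $d+1=3$, so by Remark~\ref{rem-Euclid} both $\PB_1$ and $\PB_2$ are Euclidean.  Next, I would apply the footnote to Theorem~\ref{thm-derivedequiv} to produce a generic oriented matroid program $\PB_{\text{mid}}=(\wt\MC_{\text{mid}},g_2,f_1)$ satisfying
\[
\wt\MC_{\text{mid}}/g_2=\wt\MC_1/g_1, \qquad \wt\MC_{\text{mid}}\backslash f_1=\wt\MC_2\backslash f_2.
\]
The oriented matroid $\wt\MC_{\text{mid}}$ again has rank $3$, so Remark~\ref{rem-Euclid} guarantees that $\PB_{\text{mid}}$ is also Euclidean.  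All hypotheses of Theorem~\ref{thm-derivedequiv} are therefore in place, and applying that theorem yields an equivalence between the bounded derived categories of graded finitely generated modules for $A(\PB_1,U)$ and $A(\PB_2,U)$, which is exactly Question~\ref{q-MoritaOr} in this case.

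The only step I expect to require real care is the existence, in the footnote, of a \emph{generic} middle program $\PB_{\text{mid}}$ with the prescribed single-element contraction and deletion.  Existence of some oriented matroid $\wt\MC_{\text{mid}}$ with these properties follows from standard single-element extension theory for oriented matroids, but one must also ensure that genericity in the sense of Definition~\ref{def-generic} can be arranged; since the non-generic locus in the space of single-element extensions is typically cut out by a finite list of closed conditions, a perturbation argument should suffice.  Once that point is granted — or simply imported from the footnote — the proof is complete.
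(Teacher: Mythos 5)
Your proposal is correct and follows essentially the same argument as the paper: rank $2$ for $\MC$ forces all three programs $\PB_1$, $\PB_2$, $\PB_{\text{mid}}$ to have rank $3$, hence to be Euclidean by Remark~\ref{rem-Euclid}, so Theorem~\ref{thm-derivedequiv} applies directly (the existence of a generic $\PB_{\text{mid}}$ being exactly what the footnote to that theorem supplies).
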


\begin{proof}
Recall from Remark~\ref{rem-Euclid}, that any oriented matroid program of rank 3 is Euclidean, so for any $\PB_1$ and $\PB_2$, the three oriented matroid programs $\PB_1,\PB_2$ and $\PB_{\text{mid}}$ are all Euclidean and the result follows from Theorem~\ref{thm-derivedequiv}.
\end{proof}

\begin{cor}
Question~\ref{q-Morita} has an affirmative answer for $M=U_{2,n}$, the uniform matroid of rank 2 defined on a set $E$ of $n\geq2$ elements.
\end{cor}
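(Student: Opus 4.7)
The plan is to reduce the statement to Corollary~\ref{cor-rank2}, which already handles derived Morita equivalence within a single orientation class, and then to argue that different orientations of $U_{2,n}$ give rise to the same derived Morita class. By Corollary~\ref{cor-rank2}, the class $[A(\PB,U)]$ depends only on the orientation $\MC=\wt\MC/g\backslash f$ (with the fixed $U$), not on the particular generic program $\PB$ extending $\MC$. Writing this class as $[\MC]$, the task becomes to show that $[\MC_1]=[\MC_2]$ for any two orientations $\MC_1,\MC_2$ of $M=U_{2,n}$.

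The special feature of the uniform rank-2 matroid that I would exploit is the explicit description of its orientations: up to reorientation of individual elements, an orientation of $U_{2,n}$ is given by a cyclic ordering of $I$ modulo reversal, and any two orientations can be connected by a sequence of \emph{elementary moves} of two types: (a) reorientation at a single element $i\in I$, and (b) a local swap of two cyclically adjacent elements. I would then show that each such move preserves the derived Morita equivalence class of $A(\PB,U)$. For a swap of type (b), one picks an intermediate oriented matroid program $\PB_{\text{mid}}$ realizing the swap; since all rank-3 programs are Euclidean by Remark~\ref{rem-Euclid}, the hypotheses of Theorem~\ref{thm-derivedequiv} are automatic, so the theorem (or an iterated application of it) produces the desired derived equivalence.

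For moves of type (a), reorienting at $i\in I$ is realized by the coordinate-negation automorphism of $k^I$, and this yields a canonical isomorphism $A(\PB,U)\cong A(\PB^{-i},U^{-i})$, where $U^{-i}$ is the image of $U$ under the sign-flip. Since $U$ is held fixed in the statement, the remaining step is to compare $A(\PB^{-i},U^{-i})$ with $A(\PB^{-i},U)$, i.e.\ to show that $[A(\PB',-)]$ is unchanged as the parameter space moves within the set of parameter spaces for $U_{2,n}$. This is the main obstacle, and I expect it to rest on a deformation/continuity argument tailored to uniform matroids: every $2$-dimensional subspace of $k^I$ with no vanishing and no proportional coordinate projections is a parameter space for $U_{2,n}$, and these form an irreducible family, so one would argue that the algebra $A(\PB',U)$ varies flatly in $U$ and that the derived Morita class is locally constant in such a family. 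Establishing this deformation invariance — or alternatively producing an explicit isomorphism via the $S_n$-symmetry of $U_{2,n}$ that absorbs the change of parameter space — is the technical heart of the proof, after which iteration over the elementary moves closes the argument.
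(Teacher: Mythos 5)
Your first step---using Corollary~\ref{cor-rank2} to make the derived Morita class depend only on the orientation $\MC$ (for the fixed $U$), and then connecting any two orientations of $U_{2,n}$ by reorientations and transposition relabellings---is the same reduction the paper makes. The execution of the two elementary moves, however, contains genuine gaps.

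First, Theorem~\ref{thm-derivedequiv} cannot handle your move (b). Its hypotheses fix a single oriented matroid $\MC$ and require $\wt\MC_i/g_i\backslash f_i=\MC$ for both programs (and for $\PB_{\text{mid}}$); it only lets you vary the lift $g$ and the extension $f$ over that fixed $\MC$, which in rank $2$ is exactly the content of Corollary~\ref{cor-rank2} that you have already used. A swap of two cyclically adjacent elements is relabelling by a transposition and changes $\MC$ to a \emph{different} orientation of $U_{2,n}$, so no choice of $\PB_{\text{mid}}$ ``realizes the swap'' within the scope of that theorem. The paper instead handles relabelling and reorientation by the canonical isomorphisms obtained by transporting the quiver, the relations of Definition~\ref{def-A}, and the associated data along the permutation or sign change. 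Second, the ``main obstacle'' you isolate for move (a) is not where you think it is, and you do not resolve it: reorienting at $i$ flips the $i$-th sign of every covector but leaves the relations (A3), $p(\a,\a^i,\a)=e_\a u_i$, and (A4), $x=0$ for $x\in U^\perp$, literally unchanged, so the induced bijection on bounded feasible topes already gives $A(\PB,U)\cong A({}_{-i}\PB,U)$ with the \emph{same} $U$---no flat family or deformation argument is needed, and in any case you explicitly leave that argument unproved, so the proposal is incomplete on its own terms. (The step where the parameter space genuinely moves is the relabelling, where the transported algebra is $A(\sigma\PB,\sigma U)$; this is what the paper's phrase ``canonical isomorphism between the associated algebras'' refers to, and your proposal never confronts it because relabellings were routed through Theorem~\ref{thm-derivedequiv}.)
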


\begin{proof}
By Corollary~\ref{cor-rank2}, it suffices to show that for any two orientations $\MC_1$ and $\MC_2$ of $U_{2,n}$, there are generic oriented matroid programs $\PB_1 = (\wt\MC_1,g_1,f_1), \PB_2=(\wt\MC_2,g_2,f_2)$ such that $\MC_1=\wt\MC_1/g_1 \backslash f_1$ and $\MC_2=\wt\MC_2/g_2 \backslash f_2$, for which $A(\PB_1,U)$ and $A(\PB_2,U)$ are derived equivalent.

But any two orientations of $U_{2,n}$ are related by a relabeling of $E$ and reorientation.  Note that relabelling and reorientation each induce a canonical isomorphism between the associated algebras.
\end{proof}

The same sort of argument gives a handful of similar examples.

\subsection{Matroidal Schur algebras}

Motivated in part by~\cite{GDKD}, Braden and the second author defined a \textit{hypertoric Schur algebra}~\cite{BMHyperRingel} - an analogue of the Schur algebra associated to affine hypertoric varieties.  Recall that one can construct an affine hypertoric variety $\MG_0$ with the input of a rational subspace $V\subset \RM^n$.  In this setting the resulting hypertoric Schur algebra $R(V)$ can be interpreted as a convolution algebra for a union of resolutions of stratum slices of $\MG_0$.  In particular, for a rational polarized arrangement $(V,\eta,\xi)$ with the same underlying subspace $V \subset \RM^n$, the convolution algebra for the resolution $\MG \to \MG_0$ is a subalgebra of the associated hypertoric Schur algebra.  Braden--Proudfoot--Webster showed in~\cite[Proposition 6.16, Example 6.18]{BPW1} that the convolution algebra of the resolution $\MG \to \MG_0$ is categorified by Harish-Chandra bimodules for hypertoric category $\OC$.  One expects the entire hypertoric Schur algebra to be similarly categorified by Harish-Chandra bimodules with more general support and similarly to obtain a natural $q$-deformation of the hypertoric Schur algebra, or $q$-hypertoric Schur algebra.

In~\cite{BMMatroid}, Braden and the second author observed that the hypertoric Schur algebras studied in~\cite{BMHyperRingel} can be defined in terms of the underlying matroid.  Following this observation, they defined a \textit{matroidal Schur algebra} $R(M)$ associated to any  matroid $M$.

One motivation for defining the category $\OC$ for oriented matroid programs in the present paper was to provide the foundation to categorify and find natural $q$-deformations of matroidal Schur algebras for orientable, but non-realizable matroids using an appropriate category of Harish-Chandra bimodules.

\subsection{Outline of paper}
In Section~\ref{sec-comb} we describe the combinatorial set-up of oriented matroid programs and parameter spaces.  In Section~\ref{sec-defA} we define the algebra $A(\PB,U)$ and in Section~\ref{sec-quaddual} we prove Theorem~\ref{thm-quad-intro} (Lemma~\ref{lem-quad} and Theorem~\ref{thm-quad}). In Section~\ref{sec-defB} we define the algebra $B(\PB,U)$ and prove Theorem~\ref{thm-A-B}, which is a key ingredient in the proof of Theorem~\ref{thm-qherKos}. Section~\ref{section-center} develops more topology, resulting in a nice description of the center of $B(\PB,U)$. 

In Section~\ref{sec-modules}, we study the category of finitely-generated (right) $A(\PB,U)$-modules and prove Theorem~\ref{thm-qherKos}. In particular, under the Euclidean assumption, we show $A(\PB,U)$ is quasihereditary (Theorem~\ref{thm-projfilt}) and Koszul (Theorem~\ref{thm-Koszul}). In the non-Euclidean setting, we prove Theorem~\ref{thm-Kgroup-intro} (Theorem~\ref{thm-Kgroup}), prove the Koszulity condition on Hilbert series (Theorem~\ref{thm-numKoszul}) and show that Theorem~\ref{thm-projfilt} requires the Euclidean assumption (Example~\ref{ex-EFM(8)}). In Section~\ref{sec-derived}, we study the derived categories of graded finitely-generated $A(\PB,U)$-modules for varying Euclidean $\PB$ and a fixed $\MC$ and prove Theorem~\ref{thm-derivedequiv}.

\subsection{Acknowledgements}
We would especially like to thank Tom, Tony, Nick and Ben, whose paper~\cite{GDKD} was a major source of inspiration for us and forms the foundation of this paper.

The second author is grateful to Jens Eberhardt for introducing him to the world of oriented matroids.  Thanks to Federico Ardila who put us in touch with Jim Lawrence and thanks to Jim for answering our silly questions.  The second author is also grateful to Catharina Stroppel for helpful conversations about finite-dimensional algebras.

The second author was supported in part by a Simons Foundation Collaboration Grant and NSF grant DMS-1802299.  He would also like to thank the Mathematics department at Dartmouth College for its hospitality.

\section{Combinatorial Setup}
\label{sec-comb}

In this section we briefly introduce the notation we will need to work with oriented matroids, but assume some familiarity with the basic notions.  To the uninitiated reader, we recommend \cite{OMbook} (particularly the first chapter) for a survey and as a useful reference.

\subsection{Generic oriented matroid programs}

For an index set $I$ and any function $Z\colon I\to \{0,+,-\}$ , let $\un Z:=\{i\mid Z(i)\neq0\}\subset I$ be the \emph{support} of $Z$ and let $z(Z):=I\backslash\un Z$ be the \emph{zero set} of $Z$.

Let $M$ be an orientable matroid of rank $d$ on the finite set $E$.  Let $\MC$ be an oriented matroid such that $\un\MC=M$ is its underlying unoriented matroid.

Let $\BM$ denote the set of bases for $M$.   We let $C=C(\MC)$ denote the set of signed circuits and $C^*=C^*(\MC)$ the set of signed cocircuits, both regarded as subsets of the set of functions $E \to\{0,+,-\}$. Note the unoriented matroid $M=\un\MC$ has circuits $\{\un X\mid X\in C\}$ and cocircuits $\{\un Y\mid Y\in C^*\}$. The dual oriented matroid $\MC^\vee$ is given by switching the roles of the circuits and cocircuits (i.e. $C(\MC^\vee)=C^*(\MC)$ and $C^*(\MC^\vee)=C(\MC)$), while the bases $\BM^\vee$ of the underlying matroid $M^\vee = \un\MC^\vee$ are the complements in $E$ of the elements of $\BM$.

Let $S\subset E$. 
Then the set 
  \[\{X\in C(\MC)\mid \underline{X}\subset E\backslash S\}\]
is the set of circuits of an oriented matroid $\MC\backslash S$ on $E\backslash S$, called the \emph{deletion} of $S$ from $\MC$.
The set 
  \[\{X|_S:S\to\{0,+,-\}\mid X\in C(\MC)\text{ and }\un{X}\cap S\text{ is inclusion minimal and nonempty}\}\]
gives the set of circuits of an oriented matroid $\MC/(E\backslash S)$ on $S$, called the \emph{contraction} of $\MC$ to $S$. Duality exchanges contraction and deletion:
  \[(\MC/S)^\vee=\MC^\vee\backslash S \quad \text{and}\quad (\MC\backslash S)^\vee=\MC^\vee/S.\]

An element $i\in E$ is a \textit{loop} of $\MC$ if $\{i\}$ is the support of a circuit of $\MC$.  Dually, $i\in E$ is a \textit{coloop} of $\MC$ if $i$ is not contained in the support of any circuit of $\MC$.

An \textit{oriented matroid program} $\PB = (\wt\MC,g,f)$ is an oriented matroid $\wt\MC$ on the set $E\sqcup \{f,g\}$ such that $(\wt\MC\backslash f )/g = \MC$, $g$ is not a loop and $f$ is not a coloop.  In particular, the rank of $\wt\MC$ is $d+1$, and this is defined to be the rank of $\PB$.

The deletion $\NC = \wt\MC \backslash f$ of $f$ from $\wt\MC$ is called the corresponding \textit{affine oriented matroid}.

\begin{remark}\label{rem-eft}
Our constructions will only depend on the contraction $\wt\MC /g$, which is a single element extension of $\MC$ on $E \sqcup \{f\}$, and the deletion $\wt\MC \backslash f$, which is a single element lift of $\MC$ on $E \sqcup \{g\}$.  Thus for our purposes it would be more natural to define an oriented matroid program as a pair, which we have taken to affectionately calling an \textit{eft}, of a single element extension and single element lift of $\MC$.  We will refrain from doing so in this paper as the original notion appears to be standard in the field.
\end{remark}

\begin{defi}\label{def-generic} We say that the oriented matroid program $\PB = (\wt\MC,g,f)$ is \textit{generic} for $\MC$ if 
\begin{enumerate}
\item for any cocircuit $Y$ of $\NC = \wt\MC\backslash f$, if $|z(Y)|>d$, then $Y(g)=0$, and
\item for any circuit $X$ of $\wt\MC/g$, if $|z(X)|>n-d$, then $X(f)=0$.
\end{enumerate} 
\end{defi}

\begin{remark}\label{rem-equality}
As the rank of the oriented matroid $\wt\MC/g$ on $E \sqcup \{f\}$ is $d$, for any circuit $X$ of $\wt\MC/g$, $|\un{X}|\leq d+1$ and so $X$ has \textit{at least} $n-d$ zero entries.  In the case of equality, $\un{X}$ contains a basis of $\un{\wt\MC/g}$ and so $z(X)$ is independent in $(\un{\wt\MC/g})^\vee$.  

Dually, for any cocircuit $Y$ of $\NC$, $|z(Y)|\geq d$ and if equality holds $z(Y)$ is independent in $\un\NC$.
\end{remark}

\begin{ex} \label{ex-linear}
As explained in Example~\ref{ex-polar2}, every polarized arrangement $\VC=(V,\eta,\xi)$ naturally gives rise to an oriented matroid program $\PB$.  If $\eta$ and $\xi$ are generic in the sense of Theorem~\ref{BLPW}, then $\PB$ is generic as well.
\end{ex}

We now deduce some simple consequences of genericity.

\begin{lem}\label{lem-loops}
Suppose $\PB = (\wt\MC,g,f)$ is generic.  Then $\NC = \wt\MC\backslash f$ has no loops and $\wt\MC/g$ has no coloops.  
\end{lem}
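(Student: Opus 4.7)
The plan is to reduce both halves of the lemma to a single claim via the duality $\PB \leftrightarrow \PB^\vee$, and then to derive that claim from Remark~\ref{rem-equality}. First I would check that Definition~\ref{def-generic} is self-dual under $\PB = (\wt\MC,g,f) \mapsto \PB^\vee = (\wt\MC^\vee, f, g)$. This is bookkeeping: the rank of $\wt\MC^\vee$ is $n - d + 1$, so the role of $d$ for $\PB^\vee$ is played by $n - d$, and combined with the identities $(\wt\MC \backslash f)^\vee = \wt\MC^\vee / f$ and $(\wt\MC/g)^\vee = \wt\MC^\vee\backslash g$, one sees that condition~(1) for $\PB$ corresponds exactly to condition~(2) for $\PB^\vee$, and vice versa. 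The same duality identifies coloops of $\wt\MC/g$ with loops of $(\wt\MC/g)^\vee = \wt\MC^\vee\backslash g$, i.e.\ with loops of the affine oriented matroid of $\PB^\vee$. Thus it suffices to prove only the first statement, that $\NC = \wt\MC \backslash f$ has no loops.

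Suppose for contradiction that $i$ is a loop of $\NC$, so $\{i\}$ is a circuit of $\NC$ and hence also of $\wt\MC$. The case $i = g$ directly contradicts the assumption that $g$ is not a loop of $\wt\MC$, so $i \in E_n$. The same argument shows that $g$ is not a loop of $\NC$, and hence there must exist a hyperplane of $\un\NC$ avoiding $g$; equivalently, there is a cocircuit $Y \in C^*(\NC)$ with $Y(g) \neq 0$. At this point the proof is essentially done: the contrapositive of Definition~\ref{def-generic}(1) applied to $Y$ forces $|z(Y)| \leq d$, and Remark~\ref{rem-equality} gives the reverse inequality $|z(Y)| \geq d$ together with the consequence that, in the equality case, $z(Y)$ is an independent subset of $\un\NC$. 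But any loop of $\un\NC$ lies in every flat, in particular in the hyperplane $z(Y)$, giving $i \in z(Y)$, which contradicts the independence of $z(Y)$.

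I do not anticipate any serious obstacle. The only step with any potential for confusion is the verification of the self-duality of Definition~\ref{def-generic}, but this amounts to correctly tracking the rank change and the swap $f \leftrightarrow g$ between $\PB$ and $\PB^\vee$; everything else is immediate from Remark~\ref{rem-equality} and the fact that every flat contains every loop.
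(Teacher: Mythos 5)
Your proof is correct and follows essentially the same route as the paper: since $g$ is not a loop of $\NC$ there is a cocircuit $Y$ with $Y(g)\neq 0$, genericity plus Remark~\ref{rem-equality} force $|z(Y)|=d$ with $z(Y)$ independent, and a loop would lie in $z(Y)$, a contradiction; the coloop statement then follows by duality. Your explicit verification that genericity is self-dual under $\PB\mapsto\PB^\vee$ is a useful spelling-out of what the paper leaves as a remark, but it is not a different argument.
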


\begin{proof}
We prove the first statement and the second follows by duality.  By our assumption in the definition of an oriented matroid program, $g$ is not a loop of $\NC$, so $g \in \un{Y}$ for some cocircuit $Y$ of $\NC$.  By Definition~\ref{def-generic}(1), $|z(Y)|=d$ and so Remark~\ref{rem-equality} implies $z(Y)$ is independent.  If there were a loop $i$ of $\NC$, then $i \in z(Y)$ contradicting the fact that $z(Y)$ is independent.
\end{proof}

\begin{lem} \label{lem-BasFCocirc}
Assume $\PB$ is generic.  Then there is a natural bijection between the set $\BM$ of bases for $\un\MC$ and the set of feasible cocircuits for $\NC = \wt\MC\backslash f$.
\end{lem}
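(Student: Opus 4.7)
The plan is to define the bijection via $Y \mapsto z(Y)$, where feasibility of $Y$ means $Y(g) = +$. For well-definedness, suppose $Y$ is a feasible cocircuit, so $Y(g) \neq 0$. Definition~\ref{def-generic}(1) then forces $|z(Y)| \leq d$, while Remark~\ref{rem-equality} gives $|z(Y)| \geq d$, so $|z(Y)| = d$ and $z(Y)$ is independent in $\un\NC$. Since $g \in \un Y$ we have $z(Y) \subset E_n$. Moreover, $z(Y)$ is the complement of the support of a cocircuit, hence a hyperplane (and in particular a flat) of $\un\NC$, so $\overline{z(Y)} = z(Y)$ and in particular $g \notin \overline{z(Y)}$. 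Therefore $z(Y) \cup \{g\}$ is an independent $(d+1)$-element subset of $\un\NC$, i.e., a basis, and $z(Y)$ is a basis of the contraction $\un\MC = \un\NC/g$.

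For the inverse, given $b \in \BM$, the set $b \cup \{g\}$ is a basis of $\un\NC$. Let $\un{Y}_b$ be the fundamental cocircuit of $\un\NC$ with respect to $b \cup \{g\}$ at the element $g$: the unique unoriented cocircuit whose support meets $b \cup \{g\}$ only in $g$. Its zero set $Z$ contains $b$ and excludes $g$. Lift $\un{Y}_b$ to an oriented cocircuit $Y_b$ of $\NC$ with $Y_b(g) = +$; the argument above, applied to $Y_b$, gives $|Z| = d = |b|$, and since $b \subset Z$ we conclude $Z = b$. Injectivity is clear: two feasible cocircuits with equal zero sets have equal supports, hence differ by a global sign, and both being $+$ at $g$ forces equality.

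The crux of the proof is pinning $|z(Y)| = d$ for feasible $Y$ using the genericity hypothesis. Without it, the fundamental cocircuit attached to a basis $b$ could have zero set strictly larger than $b$, and the map would fail to be surjective onto $\BM$. Once this size control is established, the rest is routine: the standard matroid dictionary between bases of $\un\NC$ containing the non-loop element $g$ and bases of the contraction $\un\NC/g = \un\MC$, together with the complementary relationship between cocircuit supports and hyperplanes.
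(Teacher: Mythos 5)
Your proof is correct and follows essentially the same route as the paper: you send a feasible cocircuit $Y$ to $z(Y)$, use genericity condition (1) together with Remark~\ref{rem-equality} to pin $|z(Y)|=d$, and for surjectivity produce the unique cocircuit avoiding $b$ with positive sign at $g$, again invoking genericity to force its zero set to be exactly $b$. The only difference is cosmetic: you phrase the forward direction via closures/hyperplanes and the inverse via the fundamental cocircuit of $b\cup\{g\}$ at $g$, where the paper instead dualizes to bases and circuits of $\un\NC^\vee$; the key inputs are identical.
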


\begin{proof}
Consider the map that takes a feasible cocircuit $Y$ for $\NC$ to its zero set $b:=z(Y)$.
As $Y$ is feasible, $Y(g)=+$ and so by condition (1), $Y$ must have $d$ zero entries.  Then $Y$ has $n+1-d$ non-zero entries and is a circuit of $\NC^\vee$ (which has rank $n-d$), so any subset of $Y$ of size $n-d$ is a basis for $\un\NC^\vee$.  In particular, $\un Y \backslash \{g\}$ is a basis for $\un\NC^\vee$, so its complement $b \sqcup \{g\}$ is basis for $\un\NC$.  Thus $b$ is a basis for $\un\MC = \un\NC/g$.

To show that this is a bijection, suppose $b$ is a basis for $\un\MC$.  Then $b \sqcup \{g\}$ is a basis for $\un\NC$, its complement $E \backslash b$ is a basis for $\un\NC^\vee$ and so $(E \backslash b)\cup g$ must contain a cocircuit $Y$ for $\NC$.  By condition (1), either $\un Y=(E \backslash b)\cup g$ or $\un Y \subset E \backslash b$.  But the latter is not possible as $E \backslash b$ is a basis for $\un\NC^\vee$.  We conclude that there is a unique choice of feasible cocircuit $Y$ with support $\un Y = (E \backslash b)\cup g$.
\end{proof}

For $b\in \BM$, we let $Y_b$ be denote the corresponding feasible cocircuit.

\medskip

We will often use three constructions to obtain new generic oriented matroid programs from a generic oriented matroid program $\PB$: duality, deletion, and contraction.
Recall that duality for oriented matroid programs takes the program $\PB=(\wt\MC,g,f)$ to the program $\PB^\vee = (\wt\MC^\vee,f,g)$ with underlying oriented matroid 
 \eq{(\wt\MC^\vee \backslash g)/f = ((\wt\MC /g)\backslash f)^\vee = \MC^\vee.}  
For any $S\subset E$, we denote the contraction and deletion of $S$ by
 \eq{\PB/S:=(\wt\MC/S,g,f) \quad\text{and}\quad \PB\backslash S:=(\wt\MC\backslash S,g,f),}
respectively. 

Note that $\PB$ is generic if and only if $\PB^\vee$ is generic. If $\PB$ is generic and $S\subset b$ for some $b\in\BM$, then $\PB/S$ is generic and has rank $d+1-|S|$. If $\PB$ is generic and $S\cap b=\varnothing$ for some $b\in\BM$, then $\PB\backslash S$ is also generic of rank $d+1$.

\begin{lem} For any oriented matroid $\MC$ there exists a generic oriented matroid program $\PB=(\wt\MC,g,f)$ such that $(\wt\MC/g)\backslash f = \MC$.  
\end{lem}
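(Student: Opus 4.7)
The plan is to construct $\wt\MC$ in two steps. First, by the classical theory of single-element extensions of oriented matroids (see, e.g., \cite[Ch.~7]{OMbook}), one may choose a \emph{free} (generic) single-element extension $\MC_f$ of $\MC$ by $f$, characterized by the property that every circuit $X$ of $\MC_f$ with $f \in \un X$ has support of maximum size $|\un X| = d+1$, equivalently $|z(X)| = n - d$. In particular, $f$ is not a coloop of $\MC_f$.

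Second, dually, one may choose a generic single-element lift $\wt\MC$ of $\MC_f$ by $g$; this exists by applying the extension theorem to the dual $\MC_f^\vee$ and then dualizing. The result is an oriented matroid of rank $d+1$ satisfying $\wt\MC/g = \MC_f$, such that every cocircuit $Y$ of $\wt\MC$ with $g \in \un Y$ has minimum zero-set size $|z(Y)| = d$, and $g$ is not a loop of $\wt\MC$. Setting $\PB = (\wt\MC, g, f)$, the required identity $(\wt\MC/g) \backslash f = \MC_f \backslash f = \MC$ holds by construction, as do the conditions that $g$ is not a loop and $f$ is not a coloop.

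It remains to verify the two genericity conditions of Definition~\ref{def-generic}. Condition~(2) is immediate: any circuit $X$ of $\wt\MC/g = \MC_f$ with $f \in \un X$ has $|z(X)| = n - d$ by the first step. For condition~(1), let $Y'$ be a cocircuit of $\NC = \wt\MC \backslash f$ with $g \in \un{Y'}$. By the standard description of cocircuits under deletion, $Y' = Y|_{E_n \sqcup \{g\}}$ for some cocircuit $Y$ of $\wt\MC$ with $g \in \un Y$, so $|z(Y)| = d$ by the second step. If $f \in z(Y)$, then $z(Y) = z(Y') \sqcup \{f\}$ forces $|z(Y')| = d - 1$, contradicting the fact that cocircuits of the rank-$(d+1)$ matroid $\NC$ have zero sets of size at least $d$. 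Hence $f \in \un Y$, and so $z(Y') = z(Y)$ has size exactly $d$, as required. The one substantive input is the existence of a free single-element extension with the stated property in the first step; once this classical fact is in hand, the remaining verifications and the dual construction of the lift are routine.
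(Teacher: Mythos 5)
Your proof is correct and takes essentially the same route as the paper: a single-element extension by $f$ in general position followed by a dually constructed generic lift by $g$, where the abstract ``free extension'' you invoke is exactly what the paper realizes concretely via lexicographic extensions and Todd's lemma~\cite[Lemma 7.2.6]{OMbook}. Your verification of condition (1), passing from cocircuits of $\wt\MC$ to cocircuits of $\NC=\wt\MC\backslash f$, is if anything spelled out in more detail than in the paper's own argument.
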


\begin{proof}
For example, for any order on $E$, consider the lexicographic extension $\MC' = \MC[E]$ by a point $f$ with respect to this order (Note that this is the same as taking the extension $\MC[b_{\min}]$ where $b_{\min}$ is a lexicographically minimal basis of $\MC$).  By a Lemma of Todd~\cite[Lemma 7.2.6]{OMbook}, any circuit $X$ of $\MC'$ with more than $n-d$ zero entries satisfies $X(f)=0$.  It then remains to define $\wt\MC$ as a single element of lifting of $\MC'$ by a point $g$, such that $\NC = \wt\MC\backslash f$ satisfies property (1) above.  This can be done via the dual construction: consider the \textit{colocalization} $\tau : C(\MC') \to \{+,-,0\}$ defined for any $X \in C(\MC')$ by $\tau(X)=X_i$, where $i$ is the minimal element of $E$ such that $X_i \neq 0$.  Let $\wt\MC$ be the lexicographic lifting of $\MC'$ defined by $\tau$ (in other words the dual of the lexicographic extension of $(\MC')^\vee$ associated to $\tau$).
\end{proof}

For the rest of the paper we assume that $\PB$ is generic.

\subsection{Bounded feasible topes and sign vectors}\label{subsec-P}
In this section we recall the notions of bounded and feasible topes and show in Corollary~\ref{cor-bij} that when $\PB$ is generic there is a natural bijection between bases $\BM$ of $M$ and bounded feasible topes for $\PB$. 

Let $I$ be any index set. For any functions $Z,Z'\colon I\to\{0,+,-\}$, their \emph{composition} $Z\circ Z'\colon I\to\{0,+,-\}$ is defined by
  \[ Z\circ Z'(i)=\begin{cases} Z(i) & \text{if}\; Z(i)\neq0\\ Z'(i) &\text{otherwise}\end{cases} \]
We say that $Z$ is a \emph{face} of $Z'$ if $Z\circ Z'=Z'$.

The nonzero \emph{covectors} of an oriented matroid on the set $E$ are the functions $E\to \{0,+,-\}$ which can be written as the composition of cocircuits. The set of covectors of $\MC$ is denoted by $\LC(\MC)$, and includes the zero function $0$. It has a natural poset structure defined by $Y \leq X$ if $Y$ is a face of $X$. 
The poset $\LC(\MC)$ is pure with minimal element $0$, the zero function. 
The maximal elements of $\LC(\MC)$ are called \emph{topes}, while the minimal elements of $\LC(\MC)\backslash \{0\}$ are the cocircuits of $\MC$.

The rank of $Y\in\LC(\MC)$ is given as $\r(Y)=d-r(z(Y))$, where $r$ is the rank function of the underlying matroid $\un\MC$. For $Y_1,Y_2 \in \LC(\MC)$, the \emph{join} $Y_1\vee Y_2$ is the minimal covector that has both $Y_1$ and $Y_2$ as faces, which only exists if there is a tope $T$ with both $Y_1$ and $Y_2$ as faces. The \emph{meet} $Y_1\wedge Y_2$ is the maximal covector face of both $Y_1$ and $Y_2$. Note that the meet of $Y_1,Y_2\in\LC(\MC)$ always exists, but is the zero function when $Y_1$ and $Y_2$ do not have a common cocircuit face.

\begin{defi}\label{def-covect} (Feasible covectors and affine space)
Let $\PB=(\wt\MC,g,f)$ be a generic program and let $\LC=\LC(\NC)$ denote the set of covectors for $\NC = \wt\MC\backslash f$.  The \textit{affine space} of $\PB$ is:
\[\AC = \{Y \in \LC ~ | ~ Y(g) = +\}.\]
We call elements of $\AC$ \textit{feasible} covectors.

We say that the \textit{boundary of affine space} is:
\[ \AC^\infty = \{Y \in \LC ~|~ Y(g) = 0 \} \]
\end{defi}

Notice that $\AC^\infty$ defines an oriented matroid on $E \cup \{g\}$ which is equal to $\MC$ with $g$ adjoined as a loop. Also, notice that the join of covectors in $\AC$ is also in $\AC$ if it exists, while their meet is in $\AC$ if and only if they share a common cocircuit face in $\AC$; Otherwise, their meet is in $\AC^\infty$.

\begin{defi} (Feasible topes)
Let $\TC(\LC)$ denote the set of topes of $\NC$.  We let \[\FC = \AC \cap \TC(\LC) \] denote the set of feasible topes. 
\end{defi}

Notice that the definition of feasible topes does not depend on $f$. 

\begin{remark}
By Lemma~\ref{lem-loops} the topes of $\NC$ are the covectors $T$ such that $z(T)=\varnothing$.   
\end{remark}

A \textit{sign vector} is a function $\a\colon E\to \{+,-\}$, usually written as $\a\in\{+,-\}^E$. There is an obvious injective map from $\FC$ to $\{+,-\}^E$ given by forgetting the value at $g$ (which is always $+$).  We may refer to the sign vectors in the image as \textit{feasible} sign vectors, and in a slight abuse of notation we identify feasible topes with the corresponding sign vectors.  When there is a risk of confusion, we will write $T_\a$ to denote the feasible tope of $\NC$ corresponding to a sign vector $\a$.

\begin{defi} (Directions and optimality)

\begin{enumerate}
\item We refer to covectors in the boundary of affine space $Z \in \AC^\infty$ as \textit{directions} in $\AC$.  We say that a direction is \textit{increasing} (resp. decreasing or constant) with respect to $f$ if $Z(f)=+$ (resp. $Z(f) = -$ or $Z(f)=0$).
\item For a feasible tope $T \in \FC$ and a feasible covector face $Y$ of  $T$, we say that the direction $Z \in \AC^\infty$ is \textit{feasible} for $Y$ in $T$ if $Y \circ Z$ is a face of $T$.
\item A feasible covector $Y$ that is a face of $T \in \FC$ is an \textit{optimal solution} for $T$ if there is no feasible increasing direction for $Y$ in $T$.
\end{enumerate}
\end{defi}

\begin{defi} (Bounded sign vectors)
A sign vector $\a\in\{+,-\}^E$ is \textit{unbounded} if there exists a increasing direction $Z\in\AC^\infty$ such that $Z|_{E}\circ \a=\a$.
If no such $Z$ exists, we say $\a$ is \emph{bounded}.  Similarly, a tope $T$ is bounded if the sign vector $T|_{E}$ is bounded.

Let $\BC$ denote the set of bounded sign vectors and $\PC = \FC \cap \BC$ denote the set of bounded and feasible sign vectors.
\end{defi}

\begin{remark} Let $Y$ be a bounded, feasible tope.  Note that if $Y_1,Y_2$ are optimal solutions for $Y$, then so is $Y_1 \circ Y_2$.  If $Y_1$ is an optimal solution and $Y_2 \in \AC$ is a face of $Y_1$, then $Y_2$ is also optimal.  It follows that if $Y$ has an optimal solution, then it has an optimal cocircuit.
\end{remark}

\begin{thm}\label{thm-optimal}
Assume $\PB$ is generic. Then every feasible, bounded tope $Y$ has a unique optimal solution (cocircuit) and the resulting map from $\PC$ to the set of feasible cocircuits for $\NC$ is a bijection.
\end{thm}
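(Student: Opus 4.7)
The statement has two parts: existence and uniqueness of an optimal cocircuit for each $\alpha \in \PC$, and bijection of the resulting map with feasible cocircuits of $\NC$. I would address these in turn, and expect the existence step in the non-Euclidean case to be the main obstacle.

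For \emph{existence}, given $\alpha \in \PC$, I would first observe that $T_\alpha$ admits at least one feasible cocircuit face: any tope is a composition of its cocircuit faces, and since $T_\alpha(g) = +$, at least one such cocircuit $Y_0$ must satisfy $Y_0(g) = +$. If $Y_0$ is not optimal, there is a feasible increasing direction $Z \in \AC^\infty$ with $Z \circ Y_0$ a face of $T_\alpha$, and since $g$ is not a loop by Lemma~\ref{lem-loops} the covector $Z \circ Y_0 \in \AC$ admits some feasible cocircuit face $Y_1 \ne Y_0$. In the Euclidean case, iterating this pivot yields a terminating descent; in general one must work harder. My plan is to argue that an infinite orbit under such pivots would, via repeated application of the weak elimination axiom for cocircuits, produce a single direction $\widetilde Z \in \AC^\infty$ with $\widetilde Z(f) = +$ and $\widetilde Z \circ T_\alpha = T_\alpha$, contradicting $\alpha \in \BC$. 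This termination argument is the main obstacle.

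For \emph{uniqueness}, suppose $Y_1 \ne Y_2$ were both optimal cocircuit faces of $T_\alpha$. By Lemma~\ref{lem-BasFCocirc} and genericity, $b_1 = z(Y_1)$ and $b_2 = z(Y_2)$ would be distinct bases of $M$. Choosing $i \in b_1 \setminus b_2$ and letting $C$ denote the fundamental circuit of $M$ on $b_2 \cup \{i\}$, I would lift $C$ to a signed circuit of $\NC$ and use cocircuit--circuit orthogonality to produce a direction $Z \in \AC^\infty$; a sign analysis using $Y_1, Y_2 \le T_\alpha$ should show that $Z$ or $-Z$ is feasible and increasing for $Y_1$ in $T_\alpha$, contradicting optimality of $Y_1$.

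For the \emph{bijection}, the map $\Phi \colon \PC \to \{Y \in C^*(\NC) : Y(g) = +\}$, $\alpha \mapsto Y_\alpha$, is injective because optimality of $Y_\alpha$ forces $T_\alpha(i)$ for each $i \in z(Y_\alpha)$ to be the unique sign making the elementary pivot of $Y_\alpha$ along $i$ non-increasing, so any two topes sharing an optimal cocircuit must agree on $b = z(Y_\alpha)$ and hence everywhere. For surjectivity, given a feasible cocircuit $Y$ with $z(Y) = b$, I would construct $T$ by this same rule and verify, using genericity of $\PB$, that $T$ is a well-defined feasible tope, and that $T$ is bounded since any increasing direction feasible for $T$ would, after composition with $Y$, yield an increasing feasible direction for $Y$ in $T$, contradicting the construction.
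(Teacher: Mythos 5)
Your overall architecture (existence, uniqueness, bijection) matches the theorem, but the way you propose to carry out the first two steps has genuine gaps, and they are exactly the points where the paper leans on outside input. For \emph{existence}, the paper does not argue by pivoting at all: it simply invokes Bland and Lawrence's Main Theorem of Oriented Matroid Programming \cite[10.1.13]{OMbook}, which guarantees an optimal solution for every (not necessarily Euclidean) program. Your proposed descent does not obviously terminate: since there are finitely many feasible cocircuits, non-termination just means the pivoting revisits a vertex, i.e.\ produces a directed cycle, and the absence of directed cycles in $G_\PB$ is precisely the Euclidean hypothesis, which this theorem does not assume (non-Euclidean generic programs such as $\mathrm{EFM}(8)$ in Example~\ref{ex-EFM(8)} really do have directed cycles). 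In particular an ``infinite orbit'' does not by itself yield a single increasing direction $\wt Z\in\AC^\infty$ with $\wt Z\circ T_\a=T_\a$; extracting one would essentially amount to reproving the Main Theorem, whose proof requires a carefully chosen pivot rule. You should cite that theorem, as the paper does, rather than attempt this.

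For \emph{uniqueness}, two issues. First, ``lift the fundamental circuit of $b_2\cup\{i\}$ to a signed circuit of $\NC$ and use circuit--cocircuit orthogonality to produce a direction'' is not a construction: a circuit is not a covector, and orthogonality constrains signs but does not manufacture an element of $\AC^\infty$. The paper instead reduces to two \emph{adjacent} optimal cocircuits (cocircuit faces of $Y_1\circ Y_2$) and obtains $\pm Z$ by eliminating $g$ between them along the connecting pseudoline. Second, and more seriously, your sign analysis cannot show that $Z$ or $-Z$ is \emph{increasing}: nothing in your construction refers to $f$, and for non-generic $f$ a bounded feasible tope can have a whole optimal edge, so $Z(f)=0$ is a live possibility. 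This is exactly where condition (2) of Definition~\ref{def-generic} must enter; in the paper, optimality forces $\pm Z$ to be constant, and then $z(Y_1\circ Y_2)\cup\{f\}$ supports a circuit $X$ of $\wt\MC/g$ with $X(f)\neq 0$ and at least $n-d+1$ zeros, contradicting genericity. Your bijection step is closer to the paper's (the forced signs on $z(Y)$ are the same idea as the paper's cocircuits $Z_i$ in the contractions $\PB/(z(Y)\backslash\{i\})$), but the boundedness claim ``any increasing feasible direction for $T$ contradicts the construction'' assumes that an increasing feasible direction at $Y$ would be detected by one of the $d$ elementary pivot directions, i.e.\ a local-to-global optimality statement that is not obvious for oriented matroids; the paper avoids it by using the bounded cone criterion \cite[Definition 10.1.8(ii), Corollary 10.1.10(ii)]{OMbook}, namely that $T$ agrees on $z(Y)$ with a bounded tope of $\PB\backslash(E_n\backslash z(Y))$.
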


\begin{proof}
Let $Y \in \PC$.  Recall that an optimal solution exists, without any condition on $\PB$, by Bland and Lawrence's Main Theorem of Oriented Matroid Programming (see~\cite[10.1.13]{OMbook} for a survey).  

We first show that if $\PB$ is generic, then such a solution is unique. 

Suppose that $Y$ has two distinct optimal cocircuits $Y_1$ and $Y_2$.  Then $Y_1 \circ Y_2 \in \AC$ must also be an optimal solution.  Replacing $Y_2$ by another cocircuit face of $Y_1 \circ Y_2$ if necessary, without loss of generality, we may assume that $Y_1$ and $Y_2$ are joined by an edge (i.e., the rank of $Y_1 \circ Y_2$ is 2).  There then exist two cocircuits at the boundary $\pm Z \in \AC^\infty$ on the pseudoline $\overline{Y_1Y_2}$ such that $Y_1 \circ Y_2 = Y_1 \circ Z$ and $Y_1 \circ Y_2 = Y_2 \circ -Z$. (The cocircuit $Z$ can by obtained via elimination of $g$ from the pair $Y_1$, $Y_2$ and this elimination is unique up to sign as $Y_1,Y_2$ form a modular pair).  By optimality of $Y_1$ and $Y_2$, we conclude that $\pm Z$ must be constant directions.  Note that $z(Y_1\circ Y_2)$ is an independent set in $\un\MC$ of cardinality $d-1$. The contraction $\wt \MC/(\{g\}\cup z(Y_1\circ Y_2))$ is a rank 1 oriented matroid where $f$ is a loop since the cocircuits are both zero on $f$. Thus $z(Y_1\circ Y_2) \cup \{f\}$ contains a circuit $X$ of $\wt\MC/g$ such that $X(f)\neq0$.  This contradicts condition (2) since $X$ is zero on at least $n-d+1$ entries.

It remains to show that the map from $\PC$ to feasible cocircuits is a bijection.  Given a cocircuit $Y \in \AC$, we would like to show that $Y$ is the optimal solution for a unique tope $T \in \PC$. We construct such a $T$ as follows. For any $i\in z(Y)$, we know that $\PB_i:=\PB/(z(Y)\backslash\{i\})$ is a generic program whose affine space is one-dimensional. There is a unique cocircuit $\overline{Z}_i$ in $\PB_i$ such that $\overline{Z}_i(f)=-$, and this cocircuit is the restriction of a cocircuit $Z_i$ in $\wt\MC/g$. Then $Z_i(j)=0$ for $j\in z(Y)\backslash\{i\}$, while $Z_i(i)\neq 0$ and $Z_i(f)=-$.
Then $T$ is defined to be the composition $Y\circ Z$, where $Z$ is the composition of all $Z_i$, $i\in z(Y)$, taken in any order. 

This $T$ is feasible since $T(g)=Y(g)=+$, and unique since it agrees on $z(Y)$ with the unique bounded feasible tope of $\PB\backslash(E\backslash z(Y))$. To show that $T$ is bounded, recall that an equivalent definition for a feasible tope to be bounded is that it must be in the \emph{bounded cone} of some $b\in\BM$ (see \cite[Definition 10.1.8.ii, Corollary 10.1.10.ii]{OMbook}), meaning that it agrees on $b$ with a bounded tope of $\PB\backslash(E\backslash b)$ (see also Definition \ref{def-bounded cone}). Since $T$ is in the bounded cone of $z(Y)\in\BM$, we have that $T$ is bounded.
\end{proof}

Combining the bijections of Lemma~\ref{lem-BasFCocirc} and Theorem~\ref{thm-optimal} we obtain our desired correspondence.

\begin{cor}\label{cor-bij}
There is a natural bijection between the set $\BM$ of bases for $\un\MC$ and the set $\PC$ of bounded feasible topes:
$$ \mu : \BM \to \PC,$$
which takes a basis $b$ to the tope whose optimal cocircuit is $Y_b$.  (Recall that $Y_b$ is the feasible cocircuit with $z(Y_b)=b$.)
\end{cor}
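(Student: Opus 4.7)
The plan is simply to obtain $\mu$ by composing the two bijections already established. From Lemma~\ref{lem-BasFCocirc} we have a bijection
\[
\BM \longrightarrow \{\text{feasible cocircuits of }\NC\}, \qquad b \longmapsto Y_b,
\]
where $Y_b$ is characterized by $z(Y_b) = b$ (and $Y_b(g) = +$). From Theorem~\ref{thm-optimal} we have a bijection
\[
\PC \longrightarrow \{\text{feasible cocircuits of }\NC\}, \qquad T \longmapsto \text{opt}(T),
\]
sending a bounded feasible tope to its unique optimal cocircuit.

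I would then define $\mu \colon \BM \to \PC$ to be $\text{opt}^{-1} \circ (b \mapsto Y_b)$. This is a bijection as a composition of bijections, and by construction $\mu(b)$ is the unique bounded feasible tope whose optimal cocircuit is $Y_b$, matching the description in the statement.

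There is no real obstacle here; the substance has already been done. The nontrivial content was proving that (i) feasible cocircuits are exactly those with zero set a basis (Lemma~\ref{lem-BasFCocirc}, where genericity condition~(1) ensured $|z(Y)| = d$), and (ii) every bounded feasible tope has a \emph{unique} optimal cocircuit (Theorem~\ref{thm-optimal}, where genericity condition~(2) was used to rule out two distinct optimal cocircuits). The corollary is the immediate packaging of these two results, so the proof is a one-sentence composition.
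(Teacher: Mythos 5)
Your proof is correct and is exactly the paper's argument: the corollary is stated there as the composition of the bijection $b \mapsto Y_b$ from Lemma~\ref{lem-BasFCocirc} with the bijection of Theorem~\ref{thm-optimal} sending a bounded feasible tope to its unique optimal cocircuit. Nothing further is needed.
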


We conclude this section with a discussion of the effect of duality on the bijection $\mu$.

Recall the following result about duality for oriented matroid programs.

\begin{prop}\cite[Corollary 10.1.11]{OMbook}\label{prop-FBP}
Let $\FC^\vee$, $\BC^\vee$, and $\PC^\vee$  respectively denote the sets of feasible, bounded, and bounded feasible sign vectors for the dual program $\PB^\vee=(\wt\MC^\vee,f,g)$. Then 
  \[\FC^\vee=\BC \quad\text{and}\quad \BC^\vee=\FC, \quad \text{and~so} \quad \PC^\vee=\PC.\]
\end{prop}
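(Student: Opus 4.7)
The plan is to reduce this to the standard vector/covector duality in oriented matroids, which is precisely the content of \cite[Corollary 10.1.11]{OMbook}. First I would translate both identities into conditions on vectors and covectors of $\wt\MC$. Covectors of the ``affine'' oriented matroid $\wt\MC^\vee \backslash g$ of $\PB^\vee$ are exactly the restrictions to $E_n \cup \{f\}$ of covectors of $\wt\MC^\vee$, i.e., of vectors of $\wt\MC$. Since Lemma~\ref{lem-loops} applied to the dual program $\PB^\vee$ shows that $\wt\MC^\vee \backslash g$ has no loops, its topes are exactly its full-support covectors. Thus $\a \in \FC^\vee$ iff there exists a vector $V$ of $\wt\MC$ with $V|_{E_n} = \a$ and $V(f) = +$ (with $V(g)$ arbitrary). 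On the other side, $\a \in \BC$ iff there is no covector $Z$ of $\wt\MC$ with $Z(g)=0$, $Z(f)=+$, and $Z|_{E_n}$ conformal with $\a$ (meaning $Z(i) \in \{0, \a(i)\}$ for each $i \in E_n$).

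With this dictionary the forward inclusion $\FC^\vee \subset \BC$ is immediate from orthogonality of vectors and covectors. Given witnesses $V$ and $Z$ as above, each element of $\un V \cap \un Z$ contributes a $+$ to the product $V(i)Z(i)$: on $E_n$ both entries are nonzero and equal to $\a(i)$; at $f$ both are $+$; and $g$ lies outside $\un Z$. Since $f \in \un V \cap \un Z$, the common support is nonempty, yet every product is $+$, contradicting the orthogonality requirement that the products take both signs.

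The reverse inclusion $\BC \subset \FC^\vee$ is the substantive direction and is the Farkas lemma for oriented matroids (see, e.g., \cite[Corollary 3.4.6]{OMbook}): the absence of a certifying covector forces the existence of a certifying vector. This is the step I expect to be the main obstacle, since it requires nontrivial combinatorial input rather than a direct calculation, though it is classical. The identity $\BC^\vee = \FC$ then follows from the same argument applied to $\PB^\vee$ in place of $\PB$ (using $(\PB^\vee)^\vee = \PB$), and $\PC^\vee = \PC$ follows by intersection.
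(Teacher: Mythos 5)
Your translations and the easy inclusion are fine: identifying $\FC^\vee$ with the sign vectors of full-support vectors of $\wt\MC$ on $E_n\cup\{f\}$ with value $+$ at $f$ (using Lemma~\ref{lem-loops} for the dual program), identifying $\BC$ with the absence of a covector $Z$ of $\wt\MC$ with $Z(g)=0$, $Z(f)=+$ and $Z|_{E_n}$ conformal to $\a$, and the orthogonality argument for $\FC^\vee\subseteq\BC$ are all correct. (For context: the paper gives no proof at all here, it simply cites \cite[Corollary 10.1.11]{OMbook}.) The genuine gap is the reverse inclusion. The implication you reduce it to --- ``no such covector exists, hence there is a vector $V$ of $\wt\MC$ with $V|_{E_n}=\a$ and $V(f)=+$'' --- is not an instance of the Farkas lemma \cite[Corollary 3.4.6]{OMbook}, and it is in fact false for non-generic programs. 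Take $n=2$ and $\wt\MC$ realized in $\RM^3$ by $w_1=e_1$, $w_2=e_2$, $w_g=e_3$, $w_f=-e_1$, and $\a=(+,+)$. Every covector satisfies $Z(f)=-Z(1)$ as signs, so an increasing direction conformal to $\a$ would need $Z(1)\in\{0,+\}$ and $Z(1)=-$ simultaneously; hence $\a\in\BC$. But every linear dependence among $w_1,w_2,w_g,w_f$ has zero coefficient on $w_2$, so no vector of $\wt\MC$ has full support on $E_2\cup\{f\}$ and $\a\notin\FC^\vee$. This program violates Definition~\ref{def-generic}(2) (the circuit of $\wt\MC/g$ supported on $\{1,f\}$ has $|z(X)|=1>n-d=0$ and $X(f)\neq0$), so it does not contradict the Proposition, but it does show that your reduction cannot be closed by a general oriented-matroid Farkas argument: genericity must be used in this direction, and your sketch never invokes it there.

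Concretely, the reason a one-element Farkas dichotomy does not suffice is that $\FC^\vee$ is defined via topes, so you must produce, for every $i\in E_n$, a circuit of $\wt\MC/g$ through $i$ conformal to $\a$ (the sign at $f$ being handled by one more application at $f$); the Farkas alternative at such an $i$ is a cocircuit through $i$ that is nonnegative after reorienting by $\a$, and boundedness only forbids such cocircuits when they are positive at $f$. Constant directions (value $0$ at $f$) conformal to $\a$ are not excluded by boundedness, and in the example above exactly such a cocircuit blocks $i=2$. Ruling these obstructions out, or working around them, is precisely where the genericity conditions of Definition~\ref{def-generic} must enter (compare how genericity is exploited in Lemma~\ref{lem-BasFCocirc} and Theorem~\ref{thm-optimal}); alternatively one really does need \cite[Corollary 10.1.11]{OMbook} itself, together with a check (again using genericity) that the book's notions of feasible and bounded sign vectors agree with the paper's tope-based ones. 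As written, the main step of your proof is carried by a citation that does not contain it.
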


Let $\BM^\vee$ denote the set of bases of $\un\MC^\vee$.  Then $b \mapsto b^c :=E\backslash b$ defines a bijection $\BM \to \BM^\vee$.  Let $\mu^\vee: \BM^\vee \to \PC^\vee$ be the bijection for the dual program $\PB^\vee$ defined as above. Recall that $\PC^\vee = \PC$.

\begin{prop}[Complementary Slackness]\label{prop-comp}
For any $b\in\BM$, $\mu(b) = \mu^\vee(b^c)$.
\end{prop}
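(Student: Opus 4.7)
The plan is to verify that $T := \mu(b)$ satisfies the defining property of $\mu^\vee(b^c)$. By Proposition~\ref{prop-FBP}, $T$ lies in $\PC^\vee$, and by Theorem~\ref{thm-optimal} applied to $\PB^\vee$, the tope $\mu^\vee(b^c)$ is characterized as the unique element of $\PC^\vee$ whose optimal cocircuit in $\PB^\vee$ is $Y^\vee_{b^c}$. So it is enough to show that $Y^\vee_{b^c}$ is the optimal cocircuit of $T$ in $\PB^\vee$, which splits into (i) $Y^\vee_{b^c}$ being a cocircuit face of the $\NC^\vee$-extension of $T$, and (ii) no feasible increasing direction for $\PB^\vee$ at $Y^\vee_{b^c}$ producing a face of $T$. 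I expect part (ii) --- translating primal optimality into dual optimality --- to be the main obstacle.

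The setup is based on the basis $B := b \cup \{g\}$ of $\un{\wt\MC}$. With the normalizations from the proof of Theorem~\ref{thm-optimal}, $Y_b$ is (the restriction to $E_n \cup \{g\}$ of) the fundamental cocircuit $Y^*(B, g)$ of $\wt\MC$, fixed by $Y_b(g) = +$; each $Z_i$ (for $i \in b$) appearing in the expression $T = Z_{i_1} \circ \cdots \circ Z_{i_d} \circ Y_b$ is the restriction to $E_n \cup \{f\}$ of the fundamental cocircuit $Y^*(B, i)$, fixed by $Z_i(f) = -$. Dually, $Y^\vee_{b^c}$ is the restriction to $E_n \cup \{f\}$ of the fundamental circuit $X(B, f)$ of $\wt\MC$, fixed by $X(B, f)(f) = +$.

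For (i): since $\un{Y^\vee_{b^c}} \cap E_n = b$ and the $f$-values agree by convention, it suffices to check $T(i) = Y^\vee_{b^c}(i)$ for $i \in b$. The composition formula gives $T(i) = Z_i(i) = Y^*(B, i)(i)$ (the other $Z_{i'}$ and $Y_b$ vanish at $i$). Then circuit--cocircuit orthogonality applied to $Y^*(B, i)$ (cocircuit of $\wt\MC$) and $X(B, f)$ (circuit of $\wt\MC$), whose supports meet only at $\{i, f\}$, together with the sign normalizations $Y^*(B, i)(f) = -$ and $X(B, f)(f) = +$, forces $Y^*(B, i)(i) = X(B, f)(i) = Y^\vee_{b^c}(i)$.

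For (ii): suppose for contradiction that $Z^\vee$ --- a covector of $\wt\MC^\vee$, equivalently a vector of $\wt\MC$ --- satisfies $Z^\vee(f) = 0$, $Z^\vee(g) = +$, and that $Z^\vee \circ Y^\vee_{b^c}$ is a face of $T$ strictly larger than $Y^\vee_{b^c}$. Since $B$ is independent in $\un{\wt\MC}$, no nonzero vector of $\wt\MC$ is supported in $B$, forcing $\un{Z^\vee} \cap b^c \neq \emptyset$. The face condition then gives $T(i) = Z^\vee(i)$ for every $i \in \un{Z^\vee} \cap E_n$, while $Y_b$ being a face of $T$ in $\NC$ gives $T(i) = Y^*(B, g)(i)$ for $i \in b^c$. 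Combining, $Z^\vee(i) = Y^*(B, g)(i)$ on $\un{Z^\vee} \cap b^c$. Then on their common support $\{g\} \cup (\un{Z^\vee} \cap b^c)$, the pointwise product $Z^\vee \cdot Y^*(B, g)$ equals $+$ at every element --- at $g$ (both values $+$) and on $b^c \cap \un{Z^\vee}$ (products equal $Y^*(B, g)(i)^2 = +$) --- contradicting vector--covector orthogonality in $\wt\MC$. This contradiction establishes the optimality and completes the plan.
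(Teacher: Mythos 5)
Your argument is correct, but it takes a genuinely different route from the paper: the paper disposes of this proposition in one line by quoting Bland's Complementary Slackness theorem for oriented matroid programs (\cite[Theorem 10.1.12]{OMbook}), after noting that $\PB$ is generic iff $\PB^\vee$ is, whereas you give a self-contained verification that $Y^\vee_{b^c}$ is the optimal cocircuit of the tope $\mu(b)$ viewed in $\PB^\vee$. Your two key steps --- computing $\mu(b)(i)$ for $i\in b$ via the cocircuits $Z_i$ of $\wt\MC/g$ and transferring this to the fundamental circuit $X(B,f)$ by circuit--cocircuit orthogonality (the common support being exactly $\{i,f\}$), and ruling out an increasing feasible direction in $\PB^\vee$ by pairing a putative vector $Z^\vee$ of $\wt\MC$ against the cocircuit $Y^*(B,g)$ and getting an all-$+$ product on the common support --- are sound, and in effect reprove the generic case of Bland's theorem from the covector axioms, which makes the statement independent of the external reference. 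A few points worth tightening if you keep this version: the identifications $Y_b=Y^*(B,g)|_{E_n\cup\{g\}}$, $Z_i=Y^*(B,i)|_{E_n\cup\{f\}}$ (with $Z_i(f)\neq 0$) and $Y^\vee_{b^c}=X(B,f)|_{E_n\cup\{f\}}$ with zero set exactly $b^c$ all silently use the genericity conditions of Definition~\ref{def-generic} to pin down the supports, so say so; your appeal to ``the normalizations from the proof of Theorem~\ref{thm-optimal}'' leans on proof-internal data rather than the theorem's statement (alternatively, $\mu(b)(i)=Z_i(i)$ for $i\in b$ can be extracted directly from optimality of $Y_b$ along the $d$ edges of $\mu(b)$ at $Y_b$); and the hypothesis that $Z^\vee\circ Y^\vee_{b^c}$ is \emph{strictly} larger than $Y^\vee_{b^c}$ is never used and should be dropped, since the paper's definition of optimality excludes every increasing feasible direction, not just those that move off $Y^\vee_{b^c}$. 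What the paper's citation buys is brevity and alignment with the realizable case in~\cite{GDKD}; what your argument buys is a proof entirely internal to the paper's setup.
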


\begin{proof}
Recall that $\PB$ is generic if and only if $\PB^\vee$ is generic. This is then the ``Complementary Slackness'' theorem of Bland applied to generic programs (cf. \cite[Theorem 10.1.12]{OMbook}).
\end{proof}

\subsection{Cone relation and Euclidean oriented matroid programs}
\label{subsec-euclid}

In this section we consider a binary relation on the set $\PC$ of bounded feasible topes (or via $\mu$ equivalently on the set of bases $\BM$).

\begin{defi}\label{def-bounded cone}
For any basis $b \in \BM$ we define the negative cone as
\[ \BC_b = \{ \b \in \{+,-\}^E \;|\; \mu(b)(i) = \b(i)~\mathrm{for~all}~i\in b \}. \]
Notice that this set of sign vectors depends on $\MC$ and $f$ but does not depend on $g$, in the sense that the signs $\mu(b)(i)$ for $i\in b$ only depend on the cocircuits of $\wt\MC/g$.
\end{defi}

\begin{prop}[Complementary Slackness]\label{prop-compslack}
For any $b\in\BM$, let $X_{b^c}$ be the feasible cocircuit of $\PB^\vee$ with $z(X_{b^c})=b^c = E \backslash b$. Then for any $\a\in\PC=\PC^\vee$, $\a\in\BC_b$ if and only if $X_{b^c}$ is a face of $\a$. 
\end{prop}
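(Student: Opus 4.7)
The plan is to unpack what ``$X_{b^c}$ is a face of $\a$'' means concretely as a condition on sign vectors, and then apply the already-established duality $\mu(b) = \mu^\vee(b^c)$ from Proposition~\ref{prop-comp} together with the defining property of $\mu^\vee(b^c)$ from Theorem~\ref{thm-optimal}. The only real subtlety is keeping track of the two ground sets $E_n \sqcup \{g\}$ (for $\NC$) and $E_n \sqcup \{f\}$ (for $\NC^\vee$), which are identified on their common part $E_n$ through the bijection $\PC = \PC^\vee$ of Proposition~\ref{prop-FBP}; otherwise the argument is a direct unwinding of definitions.

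First I would identify the support of $X_{b^c}$. By Lemma~\ref{lem-BasFCocirc} applied to the dual generic program $\PB^\vee$, the feasible cocircuit $X_{b^c}$ has $z(X_{b^c}) = b^c$ inside $E_n \sqcup \{f\}$, and $X_{b^c}(f) = +$ by feasibility. Thus $X_{b^c}$ is nonzero exactly on $b \sqcup \{f\}$. Identifying $\a \in \PC = \PC^\vee$ with its corresponding tope $T_\a^\vee$ in $\NC^\vee$, which takes value $+$ at $f$ by feasibility, the condition that $X_{b^c}$ is a face of $\a$ reduces to the single requirement that
\[ X_{b^c}(i) = \a(i) \quad \text{for every } i \in b, \]
since agreement at $f$ is automatic.

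Next I would compute what $X_{b^c}$ looks like on $b$. By Theorem~\ref{thm-optimal} applied to $\PB^\vee$, the feasible cocircuit $X_{b^c}$ is the unique optimal cocircuit of the bounded feasible tope $\mu^\vee(b^c) \in \PC^\vee$, and so in particular it is a face of $\mu^\vee(b^c)$. Proposition~\ref{prop-comp} then gives $\mu^\vee(b^c) = \mu(b)$ as elements of $\PC = \PC^\vee$, and so
\[ X_{b^c}(i) = \mu^\vee(b^c)(i) = \mu(b)(i) \quad \text{for every } i \in b. \]

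Combining the two displays, the condition ``$X_{b^c}$ is a face of $\a$'' is equivalent to $\a(i) = \mu(b)(i)$ for all $i \in b$, which is precisely the defining condition for $\a \in \BC_b$ from Definition~\ref{def-bounded cone}. The main ``obstacle'', such as it is, is purely notational: making sure that the three different incarnations of the sign vector $\mu(b) = \mu^\vee(b^c) \in \PC$ — as a tope of $\NC$, as a tope of $\NC^\vee$, and as a sign vector on $E_n$ — are correctly matched up, so that the face relation in $\NC^\vee$ can be read off on the common index set $b \subset E_n$.
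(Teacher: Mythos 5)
Your proof is correct and follows essentially the same route as the paper: unwind the face condition on the support $b\sqcup\{f\}$ of $X_{b^c}$, use that $X_{b^c}$ is the optimal cocircuit of $\mu^\vee(b^c)$ (Corollary~\ref{cor-bij}/Theorem~\ref{thm-optimal}) so it agrees with $\mu^\vee(b^c)$ on $b$, and invoke Proposition~\ref{prop-comp} to replace $\mu^\vee(b^c)$ by $\mu(b)$, recovering the defining condition of $\BC_b$. The paper runs the same chain of equivalences starting from the cone side rather than the face side, so no substantive difference.
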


\begin{proof}
By definition, the tope $\a$ is in the cone $\BC_b$ if and only if
\[\a(i) = \mu(b)(i) \quad \mathrm{for~all}~i\in b.\]
By Proposition~\ref{prop-comp}, we can rewrite the above condition as
\[\a(i) = \mu^\vee(b^c)(i) \quad \mathrm{for~all}~i\in b.\]
Under the bijection $\mu^\vee$ from Corollary~\ref{cor-bij} for the dual program $\PB^\vee$, we have that $\mu^\vee(b^c)(i) = X_{b^c}(i)$ for all $i \in b$.  Thus the previous condition becomes
\[\a(i) = X_{b^c}(i) \quad \mathrm{for~all}~i\in b,\]
which in turn is equivalent to $X_{b^c}$ is a face of $\a$ in the affine space associated to $\PB^\vee$. 
\end{proof}

\begin{defi}[Cone relation] For $\a,\b \in \PC$, we write $\b \preceq \a$ whenever $\b \in \BC_{\mu^{-1}(\a)}$.
\end{defi}

The binary relation $\preceq$ on $\PC$ is reflexive and anti-symmetric, but not necessarily transitive.  Let $\leq$ denote the transitive closure of $\preceq$.  In general, the binary relation $\leq$ on $\PC$ does not define a poset, as the closure may no longer be anti-symmetric.

In the following sections we will define algebras associated to the program $\PB$.  To ensure that these algebras are quasi-hereditary, we will need the relation $\leq$ to define a poset.  It turns out that this is equivalent to a well-known condition on the oriented matroid program $\PB$, namely we ask that $\PB$ be \textit{Euclidean}.  To recall the definition, we first define the following graph associated to $\PB$.

\begin{defi}\label{def-graph}
Let $G_\PB$ be the graph whose vertices are vertices in $\AC$ (i.e., the feasible cocircuits for $\NC$) and whose edges are the edges in $\AC$ (i.e., the feasible covectors of $\NC$ of rank 2).  By our genericity condition, the graph is naturally directed by orienting each edge in increasing direction with respect to $f$.
\end{defi}

\begin{defi}\label{def-graphrelation}
For $b,b' \in \BM$, we write $b \leq b'$ if there is a directed path from $Y_b$ to $Y_{b'}$ in the graph $G_\PB$.
\end{defi}

\begin{defi}
The program $\PB$ is \textit{Euclidean} if the directed graph $G_\PB$ contains no directed cycles.  Equivalently, $\PB$ is Euclidean if the binary relation on $\BM$ from Definition~\ref{def-graphrelation} is anti-symmetric.
\end{defi}

By a result of Edmonds and Fukuda, the Euclidean property is well-behaved under duality:

\begin{prop}\label{prop-dualEuc} \cite[Corollary 10.5.9]{OMbook}
An oriented matroid program $\PB = (\wt\MC,g,f)$ is Euclidean if and only if its dual program $\PB^\vee = (\wt\MC^\vee,f,g)$ is Euclidean.
\end{prop}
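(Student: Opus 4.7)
The plan is to prove the equivalence by exhibiting a direction-preserving bijection between $G_\PB$ and $G_{\PB^\vee}$, so that directed cycles in the two graphs correspond.

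First, I would identify the vertex sets. Corollary~\ref{cor-bij} applied to $\PB$ indexes the vertices of $G_\PB$ (feasible cocircuits $Y_b$ of $\NC$) by $\BM$, and applied to $\PB^\vee$ it indexes the vertices of $G_{\PB^\vee}$ (feasible cocircuits $X_{b^c}$ of $\wt\MC^\vee \backslash g$) by $\BM^\vee$. Complementation $b \mapsto b^c = E_n \backslash b$ is a canonical bijection $\BM \leftrightarrow \BM^\vee$, which I transport to a bijection $Y_b \leftrightarrow X_{b^c}$ on vertex sets.

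Second, I would match edges. An edge in $G_\PB$ between $Y_b$ and $Y_{b'}$ exists precisely when the zero sets $b$ and $b'$ differ in one element, i.e.\ when $|b \triangle b'| = 2$. The same criterion holds for the dual edge between $X_{b^c}$ and $X_{(b')^c}$ since complementation preserves symmetric differences, so edges of $G_\PB$ and $G_{\PB^\vee}$ are in natural bijection.

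Third, and the heart of the argument, verify that orientation is preserved. For adjacent bases $b, b'$ the direction of the $G_\PB$-edge is encoded in the $f$-sign of the unique (up to $\pm$) boundary cocircuit $Z \in \AC^\infty$ on the pseudoline through $Y_b$ and $Y_{b'}$, obtained by eliminating $g$ from $Y_b$ and $Y_{b'}$ as in the proof of Theorem~\ref{thm-optimal}. The direction of the matched edge in $G_{\PB^\vee}$ is likewise encoded in the $g$-sign of the boundary covector obtained by eliminating $f$ from $X_{b^c}$ and $X_{(b')^c}$. Both data live in the rank-$2$ minor of $\wt\MC$ obtained by contracting $b \cap b'$ and deleting $E_n \backslash (b \cup b')$, a minor with ground set $(b \triangle b') \cup \{f, g\}$. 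I would use the orthogonality of circuits and cocircuits of $\wt\MC$ in this minor to conclude that the $f$-sign and the $g$-sign determine the same orientation of the edge.

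The main obstacle is Step 3: the sign analysis in the rank-$2$ minor, which relies on genericity (Definition~\ref{def-generic}) to ensure that the relevant boundary covectors have nonzero entries at both $f$ and $g$, so the directions are well-defined. With Step 3 in hand, the bijection of vertices and edges is direction-preserving, so $G_\PB$ has a directed cycle if and only if $G_{\PB^\vee}$ does, and the equivalence follows.
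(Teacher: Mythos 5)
The paper itself does not prove this proposition; it is quoted from Edmonds--Fukuda via \cite[Corollary 10.5.9]{OMbook}. Your proposal, however, has a genuine gap at Step 2, and it is fatal to the whole strategy. An edge of $G_\PB$ is a feasible covector of rank $2$ whose two feasible cocircuit faces are $Y_b$ and $Y_{b'}$; this does force $|b \triangle b'|=2$, but the converse is false. Two bases related by a single exchange give adjacent vertices of $G_\PB$ only when $Y_b$ and $Y_{b'}$ are \emph{consecutive} along the pseudoline through them, i.e.\ when no other vertex of $\AC$ separates them, and which exchange pairs are realized as actual edges is different in the affine spaces of $\PB$ and of $\PB^\vee$. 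Concretely, let $M=U_{1,3}$, realized by three points labelled $1,2,3$ in this order on an affine line with a generic objective (so $d=1$, $n=3$). Then $G_\PB$ is a path: $Y_{\{1\}}$ and $Y_{\{3\}}$ are not adjacent, and there are only two edges joining pairs of vertices. The dual program has $M^\vee = U_{2,3}$ and its affine space is a plane in which the three pseudolines form a triangle, so $G_{\PB^\vee}$ has three such edges; in particular the vertices corresponding to $\{1\}^c$ and $\{3\}^c$ \emph{are} adjacent there. Since the two graphs need not even have the same number of edges, no bijection matching edges, let alone a direction-preserving one, exists, and Steps 2--3 cannot be carried out.

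Step 3 cannot repair this: passing to the rank-$2$ minor on $(b\triangle b')\cup\{f,g\}$ by contracting $b\cap b'$ and deleting the rest artificially makes $Y_b$ and $Y_{b'}$ adjacent, so what it produces is only a local comparison of objective values for each single exchange, not an identification of the two edge sets. The entire content of the Edmonds--Fukuda theorem is that such local comparisons need not assemble into an acyclic global relation, so acyclicity of $G_\PB$ cannot be transported to $G_{\PB^\vee}$ by purely local sign bookkeeping. If you want an actual proof rather than the citation used in the paper, the known argument proceeds through the characterization of Euclideanness in terms of single-element extensions in general position (equivalently, via duality, single-element lifts), as developed in \cite[Section 10.5]{OMbook}, where duality exchanges the two kinds of data and the roles of $g$ and $f$; it is not obtained from a comparison of the graphs $G_\PB$ and $G_{\PB^\vee}$ themselves.
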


Importantly for us, for Euclidean programs, the transitive closure of the cone relations is a poset. In fact these two conditions are equivalent:

\begin{lem}\label{lem-EucCone}
The oriented matroid program $\PB = (\wt\MC,g,f)$ is Euclidean 
if and only if the transitive closure of the cone relation is anti-symmetric.
\end{lem}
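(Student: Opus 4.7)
The plan is to show that the transitive closures of the cone relation $\preceq$ on $\PC$ and of the directed-edge relation in $G_\PB$ (transported to $\PC$ via the bijection $\mu$ of Corollary~\ref{cor-bij}) coincide. Since Euclidean is by definition antisymmetry of the latter relation, this equivalence of transitive closures establishes the lemma.

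For the inclusion that every direct cone relation $\mu(b') \preceq \mu(b)$ implies a directed path from $Y_{b'}$ to $Y_b$ in $G_\PB$: the hypothesis places $\mu(b')$ in the ``descending cone'' $\BC_b$ around $Y_b$, so the maximum vertex $Y_{b'}$ of $\mu(b')$ also lies in $\BC_b$, strictly below $Y_b$ with respect to $f$. One constructs the directed path by simplex-style pivoting---iteratively moving from the current vertex along an upward-directed edge in $G_\PB$ whose other endpoint remains in $\BC_b$---until reaching $Y_b$, which is the unique $f$-maximum feasible cocircuit in $\BC_b$ by construction. The existence of a valid upward pivot at any non-maximal vertex within $\BC_b$ follows from the Main Theorem of Oriented Matroid Programming (as invoked in the proof of Theorem~\ref{thm-optimal}) applied to the sub-program cut out by the cone. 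Chaining these paths along a cone cycle produces a directed cycle in $G_\PB$, yielding one implication: if $\PB$ is Euclidean, then the transitive closure of $\preceq$ is antisymmetric.

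For the reverse inclusion, that every directed edge $Y_b \to Y_{b'}$ lies in the transitive closure of $\preceq$: the tope $\mu(b')$ has $Y_{b'}$ as its $f$-maximum and contains the edge on its downward boundary, so $Y_b$ is a face of $\mu(b')$. From this local configuration, one identifies a short chain of bounded feasible topes $\mu(b) = \mu(c_0), \mu(c_1), \ldots, \mu(c_\ell) = \mu(b')$ with $\mu(c_j) \preceq \mu(c_{j+1})$ by walking in $G_\PB$ from $Y_b$ up to $Y_{b'}$ through a path whose individual edges are each direct-cone relations, and tracking the resulting sign changes. Concatenating such chains along a directed cycle in $G_\PB$ produces a non-trivial cone-closure cycle, contradicting antisymmetry. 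The main obstacle is the simplex-pivoting step: verifying that upward moves within a cone $\BC_b$ remain available at each non-maximal vertex and terminate precisely at $Y_b$, which requires careful combined use of the genericity of $\PB$ and the local structure of rank-$2$ covectors in the cone.
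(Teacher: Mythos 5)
Your overall plan---showing that the transitive closure of $\preceq$ coincides, under the bijection $\mu$ of Corollary~\ref{cor-bij}, with the transitive closure of the reachability relation of $G_\PB$---would certainly imply the lemma, but neither inclusion is actually established, and the two places you gesture at are exactly the hard points. For the pivoting direction, the claims that every feasible cocircuit of $\BC_b$ other than $Y_b$ admits an increasing edge whose upper endpoint is again a feasible cocircuit lying in $\BC_b$, and that $Y_b$ is the unique in-cone sink, do not follow from the Main Theorem of Oriented Matroid Programming: that theorem asserts existence of optimal solutions for bounded feasible programs, and the region $\BC_b$ is not the feasible region of a program of the form $(\wt\MC',g',f)$, so there is no ``sub-program cut out by the cone'' to which it applies. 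In the realizable case these facts come from convexity of the cone and linearity of $\xi$; for a general oriented matroid they need a genuine covector-level argument (local pivotability and termination are precisely the phenomena the Euclidean condition is designed to control), and you concede yourself that this ``main obstacle'' is left unverified.

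The converse direction has a structural problem beyond vagueness. From a directed edge $Y_b\to Y_{b'}$ one can indeed deduce that $Y_b$ is a face of $\mu(b')$ (this uses that the rank-$2$ contraction along the edge is realizable, so the edge itself is a face of $\mu(b')$), but by Complementary Slackness (Proposition~\ref{prop-compslack}, applied to the dual program) that statement says exactly that $\mu(b)$ and $\mu(b')$ agree on $E_n\backslash b$ --- a cone relation for $\PB^\vee$, not a primal cone step: one can draw a realizable rank-$3$ example of an edge $Y_b\to Y_{b'}$ in which $\mu(b)$ and $\mu(b')$ lie on opposite sides of an element of $\mu^{-1}(\mu(b'))$. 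Hence the promised ``path whose individual edges are each direct-cone relations'' does not exist in general, and the chain of intermediate bounded feasible topes is never constructed; as written, this step presupposes what is to be proved. The paper's proof avoids both difficulties by never comparing $\preceq$ with the primal graph at all: it invokes the Edmonds--Fukuda theorem (Proposition~\ref{prop-dualEuc}) to replace Euclideanness of $\PB$ by Euclideanness of $\PB^\vee$, and uses Complementary Slackness to identify a single cone step $\mu(b_2)\preceq\mu(b_1)$ with the condition that the dual cocircuit $X_{b_1^c}$ is a face of the dual tope $\mu^\vee(b_2^c)$; the remaining graph argument then lives inside a single bounded feasible tope, between one of its vertices and its unique optimal cocircuit (Theorem~\ref{thm-optimal}), where the cone-wide pivoting problems you face do not arise. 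To make your primal-only route work you would have to supply both of these missing ingredients yourself, which is essentially the content of the lemma.
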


\begin{proof}
By Proposition~\ref{prop-dualEuc}, it suffices to show that the dual program $\PB^\vee$ is Euclidean if and only if the transitive closure on the cone relation is anti-symmetric.  By definition, $\PB^\vee$ is Euclidean if and only if the binary relation $(\BM^\vee,\leq)$ is anti-symmetric.\footnote{Here by $(\BM^\vee,\leq)$ we are referring to the binary relation on $\BM^\vee$ coming from the directed graph $G_{\PB^\vee}$.}

Thus it suffices to show that the bijection $\mu^\vee: \BM^\vee \to \PC^\vee = \PC$ is order reversing.  In other words, we wish to show that for $b_1,b_2 \in \BM$, \[\mu(b_1) \geq \mu(b_2) \quad \textrm{if~and~only~if} \quad b_1^c \leq b_2^c.\]

As $(\PC,\leq)$ is the transitive closure of $\preceq$, without loss of generality we may suppose $\mu(b_1) \succeq \mu(b_2)$.  Then $\mu(b_2) \in \BC_{b_1}$ and by Proposition~\ref{prop-compslack}, $X_{b_1^c}$ is a face of $\mu(b_2) = \mu^\vee(b_2^c)$.  As $X_{b_2^c}$ is the (unique) optimal cocircuit of the tope $\mu^\vee(b_2^c)$, there is a directed path from $X_{b_2^c}$ to $X_{b_1^c}$.  Thus $b_1^c \leq b_2^c$ as desired.

For the other direction, it suffices to consider the case where there is a directed edge from $X_{b_2^c}$ to $X_{b_1^c}$.  Then $X_{b_1^c}$ is a cocircuit face of $\mu^\vee(b_2^c) = \mu(b_2)$.  By Proposition~\ref{prop-compslack} this implies $\mu(b_2)\in \BC_{b_1}$ and so $\mu(b_2) \leq \mu(b_1)$, which completes the proof.
\end{proof}

\subsection{On the existence of Euclidean generic programs} Unlike genericity, it is not clear (at least to the authors) that any oriented matroid $\MC$ can be extended to a Euclidean generic oriented matroid program $\wt \MC$.  

On the other hand, we have already seen in Example~\ref{ex-ringel} a non-realizable Euclidean program $(\wt\MC,g,f)$ for which $\MC = \wt\MC/g \backslash f$ is realizable.  There also appear to be many examples for which the oriented matroid $\MC$ is non-realizable, and so our setting significantly generalizes that of~\cite{GDKD}.

In this section we first give such an example and then give a criterion on $\MC$ for the existence of a Euclidean generic oriented matroid program lifting and extending $\MC$.

\begin{ex}
The V\' amos matroid $\VC$ (rank 4 on 8 elements) is not representable over any field, but is orientable by \cite{1978Orientability}.  We now define a Euclidean generic program $\wt\VC$ by adjoining elements to $\VC$ as labelled and orientated in \cite{1978Orientability}. Let $(\wt\VC,g,f)$ be the generic oriented matroid program obtained from $\VC$ by adjoining $g$ using the lexicographic one-element lift defined by the cobasis $\{3,6,7,8\}$, and adjoining $f$ by a lexicographic one-element extension defined by the basis $\{1,2,4,5\}$. One can verify that the graph $G_\PB$ of this program has no directed cycles, so this program is Euclidean and generic.
\end{ex}

We are grateful to Jim Lawrence for suggesting that we consider the following statement.

\begin{prop}\label{prop-lasvergnas}
Let $\MC$ be an oriented matroid.  If one can adjoin elements $g$ and $f$ to $\MC$ to obtain a Euclidean generic oriented matroid program $(\wt\MC,g,f)$, then $\MC$ has a simplicial tope.
\end{prop}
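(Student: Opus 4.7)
My plan is to extract a simplicial tope of $\MC$ from a sink of the directed graph $G_\PB$. Since $\PB$ is Euclidean, $G_\PB$ is a finite directed acyclic graph and so admits a sink. Fix such a sink $Y$ and let $b := z(Y) \in \BM$ be the corresponding basis via Lemma~\ref{lem-BasFCocirc}. The sink condition states that $Y$ has no adjacent feasible cocircuit $Y' \in \AC$ with $f(Y') > f(Y)$.

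I would then consider the tope $T^+$ of $\NC$ defined as the ``$+f$-orthant at $Y$'': for each $i \in b$, let $\epsilon_i \in \{+,-\}$ be the sign of the $i$-coordinate in the direction along the pivot line at $Y$ on which $f$ increases (well-defined by genericity), and take $T^+$ to be the tope of $\NC$ with $T^+(g) = +$, $T^+(i) = \epsilon_i$ for $i \in b$, and $T^+(j) = Y(j)$ for $j \in E_n \setminus b$. The crux of the argument is to show that $Y$ is the only cocircuit of $\NC$ in $\AC$ that is a face of $T^+$. By construction, the $d$ edges of $T^+$ incident to $Y$ all lie on the $+f$-rays of the pivot lines at $Y$; if any such edge ended at a feasible cocircuit $Y_1 \in \AC$, then $Y_1$ would be an $f$-increasing adjacent neighbor of $Y$, contradicting the sink property. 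Hence each of these $d$ edges is an unbounded ray reaching the equator $\{g = 0\}$. Any second feasible vertex $Y''$ of $T^+$ would be connected to $Y$ through the $1$-skeleton of $T^+$ (which is connected, as the face lattice of a tope of an oriented matroid has connected $1$-skeleton), and the first edge of such a path would produce precisely a finite $Y_1$ of the kind just ruled out.

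Finally, the face $X$ of $T^+$ with $X(g) = 0$ restricts to a tope $T^\infty$ of $\MC = \NC/g$. The cocircuit faces of $T^\infty$ in $\MC$ correspond to cocircuits of $\NC$ with $g$ in their zero set that are faces of $X$; since $Y$ is the unique finite vertex of $T^+$, these in turn correspond precisely to the $d$ ``asymptotic cocircuits'' of the unbounded $+f$-rays at $Y$, giving exactly $d$ cocircuit faces of $T^\infty$, so $T^\infty$ is simplicial. The main technical obstacle I anticipate is the uniqueness step ``$Y$ is the only finite vertex of $T^+$'': in the realizable case it follows immediately from convexity, but in general it must be argued combinatorially from the connectedness of the $1$-skeleton of a tope together with the sink condition, and some care is needed in translating between the pseudo-geometric language of ``unbounded rays'' and the oriented-matroid notions of cocircuits in $\AC^\infty$.
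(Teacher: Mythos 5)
Your construction is, up to reorienting $f$, the same as the paper's: the paper takes a \emph{minimal} vertex $Y_{\min}$ of $G_\PB$ and the bounded feasible tope whose optimal cocircuit is $Y_{\min}$ (the $f$-decreasing orthant at $Y_{\min}$), while you take a sink and the $f$-increasing orthant $T^+$; these coincide after replacing $\PB$ by $({}_{-f}\wt\MC,g,f)$. The problem is your justification of the crucial step, namely that $Y$ is the only feasible cocircuit face of $T^+$. The $1$-skeleton of $T^+$ contains its cocircuit faces with $g=0$ and its unbounded edges, and you have just shown that all $d$ edges of $T^+$ at $Y$ are unbounded; hence any path in that $1$-skeleton from a putative second feasible vertex $Y''$ to $Y$ must arrive at $Y$ through a vertex of $\AC^\infty$, so ``the first edge of such a path'' never produces the finite $f$-increasing neighbor $Y_1$ you want, and no contradiction results. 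Connectivity of the $1$-skeleton is simply not enough: you need to rule out paths that escape through infinity. The missing ingredients are (a) that $T^+$ has no feasible $f$-\emph{decreasing} unbounded direction — equivalently, $T^+$ is the bounded feasible tope of the reoriented generic program $({}_{-f}\wt\MC,g,f)$ with optimal cocircuit $Y$, which is exactly what Theorem~\ref{thm-optimal} provides — and (b) the full Euclidean hypothesis (acyclicity of $G_\PB$), used to follow $f$-decreasing \emph{bounded} edges inside $T^+$ from $Y''$ until the monotone path terminates at $Y$, whose last edge then is a bounded edge at $Y$ contradicting the sink property. As written, you use Euclideanness only to produce a sink, which is strictly weaker and would not distinguish your situation from a non-Euclidean program with a sink, where monotone paths can cycle.

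Your concluding step also leaves two sub-claims unproved. First, that the face $X$ of $T^+$ with $X(g)=0$ restricts to a \emph{tope} of $\MC$: a priori $(T^+)^\infty$ could have $z(X)$ strictly larger than $\{g\}$, i.e.\ be of too small rank. Second, that the cocircuit faces of $X$ are exactly the $d$ asymptotic directions of the unbounded edges at $Y$; in the realizable case this is the statement that the extreme rays of the recession cone of a polyhedron with a unique (simple) vertex are its unbounded edges at that vertex, and it needs an oriented-matroid argument rather than an appeal to the picture. The paper sidesteps both issues by counting the subtopes covered by the tope: exactly $d$ feasible facets through the unique feasible vertex, plus at most one facet with zero set $\{g\}$, compared against the lower bound of $d+1$ facets for any tope from \cite[Exercise 4.4]{OMbook}; this forces the facet at infinity to exist, shows the tope is simplicial, and hence exhibits its facet as a simplicial tope of $\MC$. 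You would need either that counting argument or a genuine combinatorial substitute for the recession-cone fact to finish along your lines.
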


Before beginning the proof, we note that it is an open conjecture of Las Vergnas~\cite{LV} (see also~\cite[7.3.10]{OMbook}) that any oriented matroid $\MC$ should have a simplicial tope.  Thus, the proposition implies that if for any oriented matroid $\MC$ there is a Euclidean generic oriented matroid program $(\wt\MC,g,f)$ for which $\wt\MC/g\backslash f = \MC$, then Las Vergnas' conjecture holds.

\begin{proof}
Suppose $(\wt\MC,g,f)$ is a Euclidean generic oriented matroid program such that $(\wt\MC/g) \backslash f = \MC$.

As $\PB$ is Euclidean, there exists a minimal vertex of $G_\PB$.  Let $Y_{\mathrm{min}}$ be the feasible cocircuit of $\NC$ corresponding to such a minimal vertex.  
Let $T_{\mathrm{min}}$ be the tope whose optimal cocircuit is $Y_{\mathrm{min}}$, which exists by Theorem~\ref{thm-optimal}.  Then $Y_{\mathrm{min}}$ is the only feasible cocircuit face of $T_{\mathrm{min}}$, so for all other cocircuit faces $Y$, $Y(g)=0$.  Consider the subtopes covered by $T_{\mathrm{min}}$.  There are $d$ such subtopes that have $Y_{\mathrm{min}}$ as a face, all of which are feasible.  Any other subtope $Z$ covered by $T_{\mathrm{min}}$ cannot be feasible and as all non-feasible cocircuit faces $Y$ of $T$ satisfy $Y(g)=0$, then $z(Z)=g$.  There can be at most one such subtope covered by $T_{\mathrm{min}}$.  By~\cite[Exercise 4.4]{OMbook}, any tope of $\wt\MC$ covers at least $d+1$ subtopes, so there does exist a subtope $Z$ and $T_{\mathrm{min}}$ covers exactly $d+1$ subtopes.  By the same exercise, it follows that $T_{\mathrm{min}}$ is simplicial, hence the subtope $Z$ is as well, which is in turn a tope of $\MC$.
\end{proof}

\subsection{Linear systems of parameters}
Recall that $M=\un\MC$ denotes the underlying matroid of $\MC$.
Consider the \emph{matroid complex} $\Delta$ of $M$ - the simplicial complex of independent sets of $M$. 

Fix a field $k$.  Let $k^E$ be the standard $n$-dimensional vector space with basis $\{t_i~|~i\in E\}$, so we may identify the symmetric algebra $\Sym k^E$ with the polynomial algebra $k[t_i~|~i\in E]$.

\begin{defi} For any field $k$, the \textit{face ring} of  the matroid complex $\Delta$ of $M$ is defined as
  \eq{k[M]:&= \Sym k^E/(t_{S} \mid S\not\in\Delta)\\
    &=\Sym k^E/(t_{\un X} \mid X\in C)}
where $t_{S}:=\prod_{i\in S}t_i$ for any $S\subset E$. We give $k[M]$ a $\ZM_{\geq0}$-grading by setting $\deg(t_i)=2$ for all $i$.
\end{defi}

\begin{defi} Recall $d$ denotes the rank of $M$.  A \textit{linear system of parameters} (l.s.o.p.) for $k[M]$ is a set $$\Omega = \{\omega_1, \omega_2,\ldots,\omega_d\} \subset k^E$$ such that $k[M]$ is a finitely-generated $k[\omega_1, \omega_2,\ldots,\omega_d]$-module.  Equivalently $k[M]/(\overline{\Omega})$ is a zero-dimensional ring, where $\overline{\omega}$ is the image of $\Omega$ in $k[M]_2$.
\end{defi}

\begin{remark}
Stanley~\cite{StanleyComb} defines an l.s.o.p.\ as a subset of $k[M]_2 \subset k[M]$.  The set $\Omega$ is a l.s.o.p.\ in the sense we define above if its image in $k[M]_2$ is an l.s.o.p.\ in the sense used by Stanley.
\end{remark}

We introduce the following perhaps non-standard definition:
\begin{defi}\label{def-param}
We call a subspace $U\subset k^E$ a \textit{parameter space} for $k[M]$ if the composition $U\into k^E \fib \textrm{Span}\{t_i\mid i\in b\}$ is an isomorphism for any basis $b\in\BM$.
\end{defi}

\begin{ex}
If $M$ is realizable as a hyperplane arrangement coming from a $k$-vector subspace $V \subset k^E$, then $V$ is a parameter space for $k[M]$.
\end{ex}

\begin{lem}
If $U\subset k^E$ is a parameter space for $k[M]$, then any basis of $U$ is a l.s.o.p.\ for $k[M]$.  If $\Omega$ is an l.s.o.p.\ for $k[M]$, then its span $\textrm{Span}(\Omega) \subset k^E$ is a parameter space for $k[M]$.
\end{lem}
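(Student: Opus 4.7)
The plan is to reduce both implications to the following standard characterization of l.s.o.p.s for the Stanley--Reisner ring $k[M]$: a set $\Theta = \{\theta_1,\dots,\theta_d\} \subset k^n$ is an l.s.o.p.\ for $k[M]$ if and only if for every basis $b \in \BM$, the images of $\theta_1,\dots,\theta_d$ under the projection $\pi_b \colon k^n \twoheadrightarrow \mathrm{Span}\{t_i \mid i \in b\}$ are linearly independent. Granting this, both directions fall out immediately from the definition of a parameter space.

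To establish the characterization, I would recall that the minimal primes of $k[M]$ are exactly the ideals $P_b = (t_i \mid i \notin b)$ indexed by bases $b$, with quotients $k[M]/P_b \cong k[t_i \mid i \in b]$ polynomial rings of Krull dimension $d$. For any reduced Noetherian ring $R$ and ideal $I$, one has $\dim R/I = \max_P \dim R/(P+I)$ where $P$ ranges over minimal primes of $R$ (since every prime containing $I$ contains some minimal prime of $R$, and chains in $R/I$ lift to chains in $R/P$ for such $P$). Applied here,
\[ \dim k[M]/(\Theta) \;=\; \max_{b \in \BM}\, \dim k[t_i \mid i \in b]/(\pi_b(\theta_1),\dots,\pi_b(\theta_d)). \]
For a polynomial ring in $d$ variables, the quotient by $d$ linear forms is zero-dimensional if and only if those forms are linearly independent, so $k[M]/(\Theta)$ is Artinian iff the $\pi_b(\theta_j)$ are linearly independent for every basis $b$.

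With this characterization in hand, the first direction is: if $U$ is a parameter space and $\theta_1,\dots,\theta_d$ is any $k$-basis of $U$, the parameter space condition says each $\pi_b|_U$ is an isomorphism, so the $\pi_b(\theta_j)$ form a basis of $\mathrm{Span}\{t_i \mid i \in b\}$ and are in particular linearly independent; by the characterization, $\Theta$ is an l.s.o.p. Conversely, if $\Theta$ is an l.s.o.p., applying the characterization to any single basis $b$ already shows the $\theta_j$ themselves are linearly independent in $k^n$, so $\dim \mathrm{Span}(\Theta) = d$. Then for each $b$ the restriction $\pi_b|_{\mathrm{Span}(\Theta)}$ is a linear map between $d$-dimensional spaces sending a basis to a linearly independent set, hence an isomorphism, so $\mathrm{Span}(\Theta)$ is a parameter space.

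The only non-routine ingredient is the l.s.o.p.\ characterization for Stanley--Reisner rings, and even that reduces to the dimension identity for reduced rings combined with the explicit minimal prime decomposition of the Stanley--Reisner ideal; everything else is a linear-algebra dimension count.
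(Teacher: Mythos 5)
Your argument is correct and takes essentially the same route as the paper: both reduce the statement to the criterion that $\Theta\subset k^n$ is an l.s.o.p.\ for $k[M]$ if and only if, for every basis $b$, the projections of $\theta_1,\dots,\theta_d$ to $\mathrm{Span}\{t_i\mid i\in b\}$ span that space, after which both directions are the same linear-algebra dimension count. The only difference is that the paper cites this criterion directly from Stanley (\cite{StanleyComb}, Lemma 2.4(a)), whereas you reprove it via the minimal-prime decomposition of the Stanley--Reisner ideal of the matroid complex; your derivation of that step is sound.
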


\begin{proof}
Suppose $U$ is a parameter space.  By definition $$\dim U= \dim \textrm{Span}\{t_i\mid i\in b\} = d.$$  Suppose $\omega_1,\ldots,\omega_d$ is a basis of $U$.   By~\cite[Lemma 2.4(a)]{StanleyComb}, $\omega_1,\ldots,\omega_d$ is an l.s.o.p.\ for $k[M]$ if and only if for every facet of $\Delta$, that is basis $b \in \BM$, the list $\omega_1,\ldots,\omega_d \in k^E$ projects to a spanning set of $\textrm{Span}\{t_i\mid i\in b\}$.  This is true by the definition of $U$.

Similarly, if $\omega_1,\ldots,\omega_d \in k^E$ is an l.s.o.p., then for any basis $b\in \BM$, the projections to $\textrm{Span}\{t_i\mid i\in b\}$ of $\omega_1,\ldots,\omega_d$ are a spanning set.  Thus the projection from $\textrm{Span}(\Omega)$ to $\textrm{Span}\{t_i\mid i\in b\}$ is an isomorphism and $\textrm{Span}(\Omega)$ is a parameter space.
\end{proof}

By the Noether Normalization Lemma, if $k$ is an infinite field, then a l.s.o.p.\ for $k[M]$ exists.  From now on, we assume that an l.s.o.p.\ exists and fix a choice of l.s.o.p.\ $\Omega$ and its span $U$.

\subsection{Linear systems of parameters and duality}
\label{lsopduality}
Let $\{u_i~|~i\in E\}$ be the basis of $(k^E)^*$ dual to $\{t_i ~|~i\in E\}$.  It will be convenient for us to view the matroid complex $k[M^\vee]$ of the dual matroid $M^\vee$ as the appropriate quotient of $$\Sym (k^E)^* = k[u_i~|~i\in E].$$

Let $U^\perp$ be the kernel of the natural map $(k^E)^* \to U^*$.

\begin{lem}
$U^\perp$ is a parameter space for  the face ring of the dual matroid $k[M^\vee]$.
\end{lem}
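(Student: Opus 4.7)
The plan is to verify Definition~\ref{def-param} directly for $U^\perp \subset (k^n)^*$ with respect to $M^\vee$. Recall that the bases of $M^\vee$ are exactly the complements $b^c = E_n \setminus b$ as $b$ ranges over $\BM$, so the task reduces to showing that for every $b \in \BM$, the composition
\[
U^\perp \hookrightarrow (k^n)^* \twoheadrightarrow \mathrm{Span}\{u_i \mid i \in b^c\}
\]
is an isomorphism.

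First I would carry out a dimension count. Since $U$ has dimension $d$ (this follows from $U$ projecting isomorphically onto a $d$-dimensional space), we have $\dim U^\perp = n - d$, which matches $\dim \mathrm{Span}\{u_i \mid i \in b^c\} = n - d$. So it is enough to verify injectivity, i.e., that
\[
U^\perp \cap \mathrm{Span}\{u_i \mid i \in b\} = 0.
\]

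The main (and only substantive) step is the following. Let $f = \sum_{i \in b} c_i u_i$ lie in $U^\perp$. Under the natural pairing $\langle\cdot,\cdot\rangle$ between $(k^n)^*$ and $k^n$, the functional $f$ depends only on the coordinates indexed by $b$, so we may write $f = \tilde f \circ \pi_b$, where $\pi_b\colon k^n \twoheadrightarrow \mathrm{Span}\{t_i \mid i\in b\}$ is the coordinate projection and $\tilde f$ is the functional on $\mathrm{Span}\{t_i \mid i\in b\}$ with components $(c_i)_{i \in b}$. The hypothesis $f|_U = 0$ becomes $\tilde f \circ (\pi_b|_U) = 0$. But the parameter space condition for $U$ says precisely that $\pi_b|_U$ is an isomorphism, and therefore $\tilde f = 0$, i.e., all $c_i = 0$, so $f = 0$.

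I don't expect any real obstacle: the argument is purely the observation that Definition~\ref{def-param} is self-dual under the standard pairing between $k^n$ and $(k^n)^*$ (isomorphism of $U \to k^n / \mathrm{Span}\{t_i \mid i \in b^c\}$ is equivalent, by taking annihilators, to the isomorphism for $U^\perp$ and $b^c$). The only mild care needed is to keep track of which span corresponds to which basis under the duality $\BM \leftrightarrow \BM^\vee$, namely that the cobasis $b^c$ of $M^\vee$ pairs with the basis $b$ of $M$.
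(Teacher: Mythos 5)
Your proof is correct and follows essentially the same route as the paper: both reduce to checking that $U^\perp \cap \mathrm{Span}\{u_i \mid i \in b\} = 0$ via a dimension count, and both deduce this by pairing such an element against $U$ and using that $\mathrm{pr}_b|_U$ is an isomorphism. No issues.
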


\begin{proof}
For any $b \in \BM$ a basis for $M$, let $b^c:=E\backslash b$ be its complement, which is a basis for $M^\vee$.  It suffices to show that the projection  $U^\perp \to \textrm{Span}\{u_i\mid i\in b^c\}$ is an isomorphism.  As the two vector spaces have the same dimension, it suffices to show the null space is trivial.  Suppose $x$ is in the null space, so $$x \in U^\perp \cap  \textrm{Span}\{u_i\mid i\in b\}.$$  Then for any $u \in U$, $$0=\langle x, u\rangle = \langle x, \mathrm{pr}_{b} u\rangle,$$
where $\mathrm{pr}_{b}: U \to \textrm{Span}\{t_i\mid i\in b\}$ is the projection.  As $U$ is a parameter space for $k[M]$, the projection $\mathrm{pr}_{b}$ is an isomorphism and so $\langle x, w\rangle=0$ for any $w \in \textrm{Span}\{t_i\mid i\in b\}$.  Thus $x=0$ and we may conclude that the projection $U^\perp \to \textrm{Span}\{t_i\mid i\in b^c\}$ is an isomorphism.
\end{proof}

We define the dual of the pair $(\PB,U)$ to be $(\PB,U)^* = (\PB^\vee,U^\perp)$.

\begin{prop}
$k[M]$ is a free algebra over $k[\omega_1,\ldots,\omega_d] = \Sym U \subset \Sym k^E$, while
$k[M^\vee]$ is a free algebra over $\Sym U^\perp \subset \Sym (k^E)^*$. Both have rank $|\BM|$.
\end{prop}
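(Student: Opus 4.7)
The plan is to reduce both assertions to the Cohen--Macaulay property of the matroid complex. By a classical theorem of Provan--Billera (also proved independently by Bj\"orner), the matroid complex $\Delta$ of any matroid is pure shellable, and hence Cohen--Macaulay over any field. Thus $k[M]$ is a Cohen--Macaulay graded $k$-algebra of Krull dimension $d$; applying the same theorem to the dual matroid $M^\vee$ gives that $k[M^\vee]$ is also Cohen--Macaulay, of Krull dimension $n-d$.

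Once Cohen--Macaulayness is in hand, the next step is the standard fact that in a graded Cohen--Macaulay $k$-algebra any linear system of parameters automatically forms a regular sequence, which is equivalent (via a graded Nakayama argument, or by comparing Hilbert series) to the ring being free as a module over the polynomial subring generated by the l.s.o.p. Applied to any basis $\Theta$ of $U$ this gives freeness of $k[M]$ over $\Sym U$, and applied to a basis of $U^\perp$ (which is a parameter space for $k[M^\vee]$ by the immediately preceding discussion) it gives freeness of $k[M^\vee]$ over $\Sym U^\perp$.

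For the rank I would use Hilbert series. Freeness gives the factorization
\[ H(k[M];t) = H(\Sym U;t) \cdot H(k[M]/(\Theta);t), \]
so the rank equals $\dim_k k[M]/(\Theta)$, which by the standard formula $H(k[M];t) = h(t^2)/(1-t^2)^d$ is $h(1) = \sum_i h_i$, with $h(t) = \sum_i h_i t^i$ the $h$-polynomial of $\Delta$. A basic property of shellings is that each shelling step contributes exactly one summand to the $h$-vector, so $\sum_i h_i$ equals the number of facets of $\Delta$, which is $|\BM|$. The argument for $M^\vee$ is identical, using $|\BM^\vee| = |\BM|$ via the complementation bijection $b \mapsto E_n \backslash b$. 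The only ingredient that is not a purely formal consequence of Cohen--Macaulayness is the Provan--Billera/Bj\"orner shellability of matroid complexes, which should be regarded as the main input to the proof.
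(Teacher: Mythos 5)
Your proposal is correct and rests on the same essential input as the paper's proof, namely shellability of the matroid complex (the paper cites \cite[Proposition III.3.1]{StanleyComb} and then \cite[Theorem III.2.5]{StanleyComb}, which gives freeness over the l.s.o.p.\ subring with an explicit basis indexed by facets). Your only deviation is bookkeeping: you pass through Cohen--Macaulayness and the regular-sequence criterion to get freeness and then read off the rank from the $h$-vector via Hilbert series, rather than invoking the shelling-basis theorem directly, which yields the same conclusion.
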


\begin{proof} Recall that the face ring of a matroid complex is shellable (e.g.,~\cite[Proposition III.3.1]{StanleyComb}) and so the result follows~\cite[Theorem III.2.5]{StanleyComb}.
\end{proof}

\section{The algebra $A$}\label{sec-defA}

\subsection{The definition of $A$} \label{subsec-defA}
Recall that $\PB=(\wt\MC,g,f)$ is a generic oriented matroid program and $U \subset k^E$ is a parameter space for $M = \un\MC$.

Let $Q$ denote the quiver with vertex set $\FC$, the set of feasible topes, and arrows between topes that differ by exactly one sign.  We say that two topes $\a,\b$ that differ by exactly one sign are \textit{adjacent} and write $\a \leftrightarrow \b$.  If $\a \leftrightarrow \b$ and differ in the $i$-th component, we write $\b=\a^i$.

Let $P(Q)$ denote the path algebra for $Q$, which is generated by orthogonal idempotents $\{e_\a \mid \a \in \FC\}$ and edge paths $\{p(\a,\b)\}$ where $\a$ and $\b$ are adjacent and $p(\a,\b)$ is the path from $\a$ to $\b$.
We write $p(\a_1, \ldots, \a_k)$ for the element in the quiver algebra obtained as the composition $p(\a_1,\a_2) \cdot \ldots \cdot p(\a_{k-1},\a_k)$.

\begin{defi} \label{def-A}
  Let $\wt A=\wt A(\PB) = \wt A(\wt\MC,g,f)$ be the quotient of $P(Q)\otimes_k \Sym(k^E)^* = P(Q) \otimes_k k[u_i~|~i\in E]$ by the two-sided ideal generated by the following relations:
  \begin{enumerate}
   \item[$(A1)$] $e_\alpha=0$ for $\alpha\not\in\PC$,
   \item[$(A2)$] $p(\a,\g,\b)=p(\a,\d,\b)$ for any four distinct topes $\a,\b,\g,\d \in \FC$ where $\a$ and $\b$ are each connected to $\g$ and $\d$ by an edge, and 
   \item[(${A3}$)] $p(\a,\a^i,\a)=e_\alpha u_i$ whenever $\a,\a^i \in \FC$ differ only in the sign of $i\in E$.
  \end{enumerate}
We let $A=A(\PB,U) = A(\wt\MC,g,f,U)$ be
	\[ A:=\wt A \otimes_{\Sym (k^E)^*} \Sym((k^E)^*/(U^\perp)) =\wt A\otimes_{\Sym(k^E)^*} \Sym U^*, \]
or equivalently, the quotient of $\wt A$ by the additional relations
 \begin{enumerate}
   \item[($A4$)] $x=0$ for any $x \in U^\perp \subset \Sym(k^E)^*$.
  \end{enumerate}
\end{defi}

\begin{remark}
When the pair $(\PB,U)$ comes from a polarized arrangement as in Examples~\ref{ex-polar1} and~\ref{ex-polar2}, there is an equality $A(\PB,U) = A(V,\eta,\xi)$.
\end{remark}

As in~\ref{lsopduality}, for bookkeeping, we will use the dual coordinates for the dual matroid program, so we view $\wt A(\PB^\vee)$ and $A(\PB^\vee,U^\perp)$ as the analogous quotients of $P(Q)\otimes_k \Sym(k^E)= P(Q)\otimes_k k[t_i ~|~ i\in E]$.

\subsection{Expressions for elements of $A$}

We first introduce some terminology for paths in the quiver $Q$.  To distinguish between paths in $Q$ and their images in $A$, we will use the notation $\alpha_0\to \alpha_1 \to \cdots\to\alpha_r$ for a path of length $r$ in the quiver $Q$.

A path $\alpha_0\to\cdots\to\alpha_r$ in $Q$ is \emph{taut} if $\alpha_0$ and $\alpha_r$ differ in exactly $r$ coordinates. 
To relate paths, we use the following notion.

\begin{defi}
Two paths $P,P'$ in $Q$ are related by an \emph{elementary homotopy} (a symmetric relation) if
\begin{enumerate}
\item[(i)] $P$ is the path $\alpha_0\to\cdots\to\alpha_j\to\alpha_{j+1}\to\cdots\to\alpha_r$ of length $r$ while $P'$ is the path $\alpha_0\to\cdots\to\alpha_j\to\beta\to\alpha_j\to\alpha_{j+1}\to\cdots\to\alpha_r$ of length $r+2$ for some $\beta$ adjacent to $\alpha_j$, or
\item[(ii)] $P$ is the path $\alpha_0\to\cdots\to\alpha_{j-1}\to\alpha_j\to\alpha_{j+1}\to\cdots\to\alpha_r$ while $P'$ is the path $\alpha_0\to\cdots\to\alpha_{j-1}\to\alpha_j'\to\alpha_{j+1}\to\cdots\to\alpha_r$ of the same length, where we assume $\alpha_{j-1}$ and $\alpha_{j+1}$ differ in exactly 2 coordinates.
\end{enumerate}
\end{defi}

\begin{remark}
Under our assumption that $(\wt\MC,g,f)$ is generic, any feasible covector for $\NC = \wt\MC\backslash f$ of rank $d-1$ has exactly two zero coordinates. Thus, in this setting, this definition of elementary homotopy coincides with that given in~\cite[Section 4.4, page 184]{OMbook} for paths in the tope graph, because every elementary homotopy of type (ii) in the sense of~\cite{OMbook} between feasible paths is of the form above.
\end{remark}

We will use the following result:

\begin{prop}\label{prop-taut}
Let $P$ and $P'$ be any two taut paths in $Q$ with the same start and end points.  Then $P$ and $P'$ are related by a sequence of elementary homotopies of type (ii) such that every intermediate path is also taut.
\end{prop}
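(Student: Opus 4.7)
The plan is to reduce the statement to a classical homotopy theorem for reduced paths in the tope graph of an oriented matroid, and then check that the reduction respects feasibility. The two inputs are: (a) taut paths in $Q$ correspond exactly to reduced paths in the tope graph of $\NC=\wt\MC\backslash f$ whose vertices all lie in $\FC$, and (b) feasibility is preserved along any such path because the value at $g$ is never altered.

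First, I would identify $Q$ with the induced subgraph of the tope graph of $\NC$ spanned by $\FC$. By Definition, an edge of $Q$ between $\alpha,\beta\in\FC$ records a pair of feasible topes differing in exactly one $E_n$-coordinate. Since adjacent feasible topes necessarily agree on $g$ (both have $g$-value $+$), this edge set is precisely the restriction of the tope-graph edge set to $\FC$. A taut path of length $r$ then flips exactly the $r$ separating coordinates of its endpoints, which is the standard notion of a reduced path in the tope graph.

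Second, I would invoke the general homotopy theorem for reduced paths in the tope graph of an oriented matroid, which says that any two reduced paths with the same endpoints are connected by a sequence of elementary homotopies of type (ii) through reduced intermediate paths (cf.\ \cite[Section 4.4]{OMbook}). Under the genericity assumption on $\PB$, every rank $d{-}1$ covector $Y$ of $\NC$ with $Y(g)=+$ has $|z(Y)|=2$ with both zeros in $E_n$ (by Remark~\ref{rem-equality} applied to the cocircuits of the rank-two quotient), so the general definition of type (ii) elementary homotopy in \cite{OMbook} restricts to exactly the definition given above.

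Third, I would verify feasibility of all intermediate topes produced by the homotopy sequence. Any tope $T$ on a reduced path from $\alpha$ to $\beta$ satisfies $T(i)=\alpha(i)=\beta(i)$ for every $i$ not in the separator of $\alpha$ and $\beta$. In particular, since $\alpha(g)=\beta(g)=+$ and $g$ is not a separating coordinate, every such $T$ satisfies $T(g)=+$, hence $T\in\FC$. The same argument applies to the single tope $\alpha_j'$ introduced by each type (ii) move, since $\alpha_{j-1}$ and $\alpha_{j+1}$ are feasible and differ in two $E_n$-coordinates only. Thus every intermediate path in the homotopy sequence lies entirely in $Q$ and is taut.

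The main obstacle is establishing the underlying oriented-matroid homotopy theorem in the form required; once available, the passage to the feasible subgraph is immediate from the observation that $g$ is never a separating coordinate. A reader who prefers to be self-contained can prove the homotopy theorem directly by induction on $r$ and on the rank of the interval $[\alpha\wedge\beta,\,\alpha\vee\beta]$ in $\LC(\NC)$: the inductive step replaces an initial edge of $P$ by that of $P'$ using a rank-two reduction inside this interval, which is precisely a type (ii) move, and genericity guarantees the move is the one we defined.
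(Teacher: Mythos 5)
Your proposal is correct and takes essentially the same route as the paper: both reduce to the Cordovil--Moreira homotopy theorem for reduced (taut) paths in the full tope graph (\cite[Proposition 4.4.7]{OMbook}), note that genericity makes the general type (ii) moves coincide with the ones defined here, and then check that all intermediate topes remain feasible. Your direct separator argument (the coordinate $g$ is never flipped along a taut path, so feasibility is automatic) is just an unwound version of the paper's appeal to the $T$-convexity of the set of feasible topes.
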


\begin{proof}
If we consider instead paths in the entire tope graph and the more general notion of elementary homotopy, this is a result of Cordovil-Moreira~\cite{CordMor} (see also~\cite[Proposition 4.4.7]{OMbook}). Recall that a subset $\RC \subset \TC$ is $T$-convex if it contains every shortest path between any two of its members and the set of feasible topes is $T$-convex (see \cite[Definition 4.2.5]{OMbook} and the discussion that follows). The result then follows for paths in $Q$.
\end{proof}

\begin{prop} \label{prop-tautification}
Given a path $P=(\alpha_0\to\cdots\to\alpha_s)$ in $Q$, let $d_i$ be the number of times the $i$-th coordinate changes twice. Then for any taut path $P'=(\alpha_0=\beta_0\to\cdots\to\beta_r=\alpha_s)$, we have
\[p(\alpha_0,\dots,\alpha_s)=p(\beta_0,\cdots,\beta_r)\cdot\prod_{i\in E}u_i^{d_i}\]
in $\wt A$.
\end{prop}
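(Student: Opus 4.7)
The plan is to induct on the length $s$ of $P$, with the base case being the taut case and the inductive step reducing length by $2$ via relation $(A3)$.

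When $P$ is itself taut (so $s = r$ and all $d_i = 0$), the claim reduces to $p(\alpha_0,\dots,\alpha_s) = p(\beta_0,\dots,\beta_r)$ in $\wt A$. This follows immediately from Proposition~\ref{prop-taut}, which produces a sequence of type-(ii) elementary homotopies through taut paths connecting $P$ to $P'$; each such homotopy preserves the image in $\wt A$ by relation $(A2)$.

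For the inductive step, suppose $s > r$, so that some coordinate changes at least twice in $P$. The strategy is to exhibit, possibly after first replacing $P$ by a homotopic path via type-(ii) moves (which leave the image in $\wt A$ unchanged by $(A2)$), a consecutive subpath of the form $\gamma \to \gamma^i \to \gamma$. Once produced, relation $(A3)$ replaces this subpath by $e_\gamma u_i$, yielding a path $P_1$ of length $s-2$ with the same endpoints and whose double-back counts are $d_i - 1$ for coordinate $i$ and $d_j$ for $j \neq i$. The inductive hypothesis applied to $P_1$ gives $p(P_1) = p(\beta_0,\dots,\beta_r) \cdot u_i^{d_i - 1}\prod_{j \neq i} u_j^{d_j}$, and restoring the factor $u_i$ extracted from the $(A3)$-move yields the claim.

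The main obstacle is producing the backtrack $\gamma \to \gamma^i \to \gamma$. I would identify the smallest $j$ such that some coordinate has toggled twice in the initial segment $\alpha_0 \to \cdots \to \alpha_j$; let $i$ be that coordinate and $j^* < j$ its earlier toggle. By minimality of $j$, every non-$i$ coordinate toggles at most once in $\alpha_{j^*} \to \cdots \to \alpha_{j-1}$, so this subpath is taut in the contracted setting with the $i$-coordinate fixed. I then aim to commute the trailing $i$-toggle at step $j$ past each of the intervening non-$i$ toggles in turn via type-(ii) moves, arriving at the desired configuration $\alpha_{j^*-1} \to \alpha_{j^*} \to \alpha_{j^*-1} \to \cdots \to \alpha_j$, where the tail is a taut path of length $j - j^* - 1$. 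The subtle point will be verifying that each commutation is a legal type-(ii) homotopy, i.e., that the alternate intermediate tope is feasible at every step; this should follow from the $T$-convexity of $\FC$ invoked in the proof of Proposition~\ref{prop-taut}, combined with the genericity of $\PB$ (which, via Remark~\ref{rem-equality}, ensures that the relevant codimension-$2$ corners of $\NC$ have the expected four feasible face topes).
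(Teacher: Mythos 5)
Your base case and the overall inductive skeleton (find a repeated coordinate, create an adjacent backtrack, apply $(A3)$, drop the length by two) match the paper, but the way you propose to create the backtrack has a real gap. Each of your "commutations" of the trailing $i$-toggle past a neighbouring $k$-toggle requires the fourth corner of the square, e.g.\ $\alpha_{j-2}^i$, to be a vertex of $Q$; but that sign vector need not be a tope of $\NC$ at all, let alone feasible. Neither $T$-convexity nor genericity rescues this: $T$-convexity only says that topes on shortest paths between feasible topes are feasible, and genericity (via Remark~\ref{rem-equality}) only says that \emph{if} two topes share a feasible corank-two covector face then all four surrounding topes exist and are feasible --- it does not produce such a common face for two topes that merely differ in two signs. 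Concretely, take $\MC$ uniform of rank $2$ on $\{1,2,3\}$ and $\NC$ the affine arrangement of three lines in general position with triangle interior $(+,+,+)$ (any generic $f$), so the topes are the seven sign vectors other than $(-,-,-)$. The path $(+,-,-)\to(+,-,+)\to(-,-,+)\to(-,+,+)\to(-,+,-)$ toggles $3,1,2,3$, with the second $3$-toggle at the last edge and the initial three-edge segment taut. Your first commutation, at $(-,-,+)\to(-,+,+)\to(-,+,-)$, needs the corner $(-,-,-)$, which is not a tope; pushing the \emph{leading} $3$-toggle forward instead needs the same missing corner. So the step-by-step scheme gets stuck, even though the identity itself is true.

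The fix is the paper's argument, which avoids local square moves entirely. Let $\alpha_0\to\cdots\to\alpha_\ell$ be the maximal taut initial segment, so the edge $\alpha_\ell\to\alpha_{\ell+1}$ repeats coordinate $i$. By $T$-convexity of $\FC$ (or Cordovil's lemma) there is \emph{some} taut path in $Q$ from $\alpha_0$ to $\alpha_{\ell+1}$, of length $\ell-1$; appending the edge $\alpha_{\ell+1}\to\alpha_\ell$ gives a second taut path from $\alpha_0$ to $\alpha_\ell$. Proposition~\ref{prop-taut} applied to these two taut paths (a global statement, which may rearrange all the intermediate toggles, not just slide one of them) together with $(A2)$ shows the initial segment equals this rerouted path in $\wt A$; then the backtrack $\alpha_{\ell+1}\to\alpha_\ell\to\alpha_{\ell+1}$ is present and $(A3)$ extracts $u_i$, after which your bookkeeping and induction go through unchanged. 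In the triangle example this corresponds to rerouting the initial segment through $(+,+,-)$ and $(-,+,-)$, a rearrangement no sequence of single commutations of the offending toggle can reach.
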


\begin{proof}
Note that $s-r \geq 0$ with equality if and only if $P$ is a taut path.  We prove the proposition by induction on $s-r$. If $s=r$, then both paths are taut, and so by Proposition~\ref{prop-taut} they are related by a sequence of elementary homotopies of type (ii).  But an elementary homotopy of type (ii) descends to an equality in the path algebra by definition, so
\[p(\alpha_0,\dots,\alpha_r)=p(\beta_0,\dots,\beta_r),\]
as desired.

Assume that the statement holds whenever $s-r < k$ for some positive integer $k$. Suppose that $s-r=k$. There exists a minimal $\ell$ such that $\alpha_0\to\cdots\to\alpha_\ell$ is taut, while $\alpha_0\to\cdots\to\alpha_{\ell+1}$ is not taut. Then $\alpha_0$ and $\alpha_{\ell+1}$ differ in $\ell-1$ coordinates and for some $i$, $\alpha_0(i)=\alpha_{\ell+1}(i)\neq\alpha_\ell(i)$. 

Notice that any taut path between $\a_0$ and $\a_\ell$ will have length exactly one more than the length of a taut path from $\a_0$ to $\a_{\ell+1}$. Therefore, by Proposition~\ref{prop-taut}, using a sequence of elementary homotopies of type (ii) we can replace $\a_0\to\cdots\to\a_{\ell-1}\to\a_\ell$ with a taut path $\a_0\to\a_1'\to\cdots\to\a_{\ell-2}'\to\a_{\ell+1}\to\a_\ell$. 

This gives an equality in the algebra $A$: 
\begin{align*}
p(\alpha_0,\a_1,\dots,\a_{\ell-2},\alpha_{\ell-1},\alpha_{\ell},\a_{\ell+1})
&=p(\a_0,\a_1',\dots,\a_{\ell-2}',\a_{\ell+1},\a_\ell,\a_{\ell+1}) \\
&=p(\a_0,\a_1',\dots,\a_{\ell-2}',\a_{\ell+1})u_i.
\end{align*}
 We are then reduced to considering the path \[\a_0\to\a_1'\to\cdots\to\a_{\ell-2}'\to\a_{\ell+1}\to\a_{\ell+2}\to\cdots\to\a_s\] of length $s-1$ and the number of times the $i$-th coordinate changes twice is $d_i-1$, while the number of times every $j$-th coordinate changes twice remains $d_j$ for all $j\neq i$.  We can then invoke the induction hypothesis to complete the proof.
\end{proof}

The following two corollaries are analogous to~\cite[Corollary 3.10]{GDKD}.

\begin{cor}\label{cor-thru}
Consider an element 
\[a=p\cdot\prod_{i\in E}u_i^{d_i}\in \wt A,\]
where $p$ is a taut path in $Q$ from $\alpha$ to $\beta$.
Suppose $\gamma$ denotes a feasible tope such that if $\alpha(i)=\beta(i)$ and $d_i=0$, then $\gamma(i)=\alpha(i)=\beta(i)$. Then 
\[a=a' \cdot m,\]
where $a'$ is the concatenation of a taut path from $\alpha$ to $\gamma$ with a taut path from $\gamma$ to $\beta$ and $m$ is a product of $u_i$'s.

In particular, if $\gamma$ is not bounded, then $a=0$ in both $\wt A$ and $A$.
\end{cor}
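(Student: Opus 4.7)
The plan is to reduce the statement directly to Proposition~\ref{prop-tautification} by replacing the taut path $p$ with a possibly non-taut path that passes through $\gamma$. Since the set $\FC$ of feasible topes is $T$-convex, I can choose taut paths $P_1\colon \alpha\to\gamma$ and $P_2\colon \gamma\to\beta$ in the quiver $Q$, concatenate them to obtain a path $P' = P_1\cdot P_2$ from $\alpha$ to $\beta$, and define $a' := p(P')\in \wt A$. This element will serve as the $a'$ claimed in the statement.

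Next I would count how often each coordinate changes twice along $P'$. Because each $P_j$ is taut, no coordinate changes more than once inside a single segment, so coordinate $i$ changes twice along $P'$ precisely when $\alpha(i) = \beta(i) \neq \gamma(i)$. Writing $e_i\in\{0,1\}$ for this indicator and applying Proposition~\ref{prop-tautification} to $P'$ with the taut reference path underlying $p$, I obtain
\[ a' \;=\; p(P') \;=\; p\cdot\prod_{i\in E_n} u_i^{e_i}. \]
The hypothesis on $\gamma$ is exactly the contrapositive of the implication ``$e_i = 1 \Rightarrow d_i \geq 1$,'' so $d_i - e_i \geq 0$ for every $i$. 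Defining $m := \prod_{i\in E_n} u_i^{d_i - e_i}$, which is a genuine monomial in the $u_i$'s, I get $a = p\cdot\prod_i u_i^{d_i} = a'\cdot m$, as required.

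For the ``in particular'' clause, suppose $\gamma$ is not bounded, so $\gamma \in \FC\setminus \PC$. Relation $(A1)$ forces $e_\gamma = 0$ already in $\wt A$, and since the path $P'$ visits the vertex $\gamma$, the element $a' = p(P')$ factors through the idempotent $e_\gamma$ and therefore vanishes. Hence $a = a'\cdot m = 0$ in $\wt A$, and its image in the quotient $A$ is also zero.

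The argument amounts to bookkeeping around Proposition~\ref{prop-tautification}, and I do not expect a serious obstacle. The one point that requires a moment's care is the recognition that the somewhat awkward hypothesis on $\gamma$ is precisely the contrapositive of the inequality $d_i \geq e_i$ that is needed to keep the exponents in $m$ non-negative; everything else is straightforward.
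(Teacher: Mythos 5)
Your proof is correct and follows essentially the same route as the paper: form the concatenation $a'$ of taut paths $\alpha\to\gamma\to\beta$, apply Proposition~\ref{prop-tautification} to see $a'=p\cdot\prod_i u_i^{e_i}$ with $e_i\in\{0,1\}$, and use the hypothesis on $\gamma$ to conclude $d_i\geq e_i$, so $a=a'\cdot m$; the vanishing when $\gamma\notin\PC$ then follows from relation $(A1)$ exactly as you say.
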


\begin{proof}
For all $j \in E$, either:
\begin{enumerate}
\item $\alpha(i) \neq \beta(j)$ and the $j$-th coordinate changes in the concatenation $a'$ exactly once.
\item $\gamma(j)=\alpha(j)=\beta(j)$ and the $j$-th coordinate does not change in the concatenation $a'$.
\item $\gamma(j)\neq\alpha(j)=\beta(j)$ and so the $j$-th coordinate changes exactly twice in the concatenation $a'$.
\end{enumerate}
Proposition~\ref{prop-tautification} then says:
\[a' = p\cdot\prod_{i\in E}u_i^{d_i'},\]
where $d_i'\in\{0,1\}$ and by our assumption on $\g$, we have $d_i \geq d_i'$.  Thus 
\[a = a' \cdot \prod_{i \in E} u_i^{d_i-d_i'},\] 
as desired.
\end{proof}

\begin{cor}\label{cor-lin combo}
Let $b$ be the zero set of any feasible cocircuit face of $\a\in \PC$.  For any $j\in E$, $e_\a u_j \in A$ can be written as a $k$-linear combination of paths $\{p(\a,\a^i,\a)\mid i\in b\}$.

In particular, the image in $A$ of the element $a \in \wt A$ described in Corollary~\ref{cor-thru} can be expressed a linear combination of paths in $Q$ that pass through $\g$.
\end{cor}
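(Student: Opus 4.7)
The plan is to exploit the parameter-space duality: for every basis $b \in \BM$, the parameter-space condition on $U$ gives a direct sum decomposition
\[
(k^n)^* \;=\; U^\perp \,\oplus\, \operatorname{Span}\{u_i \mid i \in b\}.
\]
Modulo $U^\perp$, this lets us rewrite any $u_j$ uniquely as a $k$-linear combination of $\{u_i \mid i \in b\}$; combined with relations $(A3)$ and $(A4)$ in $A$, the desired expression for $e_\a u_j$ follows at once.

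To prove the first statement, I would set $b = z(Y)$ and invoke Lemma~\ref{lem-BasFCocirc} to conclude $b \in \BM$. Since $b^c$ is then a basis of $M^\vee$ and $U^\perp$ is a parameter space for $k[M^\vee]$, the projection $U^\perp \to \operatorname{Span}\{u_i \mid i \in b^c\}$ is an isomorphism, which is equivalent to the displayed decomposition above. Writing $u_j = \sum_{i \in b} c_i u_i + v$ with $v \in U^\perp$ and imposing relation $(A4)$ yields $e_\a u_j = \sum_{i \in b} c_i \, e_\a u_i$ in $A$. The last check is that $\a^i \in \FC$ for each $i \in b$: it is a tope of $\NC$ because $i \in z(Y)$ makes the $i$-th hyperplane a wall of $\a$, and $\a^i(g) = \a(g) = +$ since $g \neq i$. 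Relation $(A3)$ then rewrites each $e_\a u_i$ as $p(\a, \a^i, \a)$, completing the first part.

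For the ``in particular'' claim, if $\g \notin \PC$ then $a = 0$ in $A$ by the final sentence of Corollary~\ref{cor-thru} and there is nothing to show. Otherwise I would write $a = a' \cdot m$ with $a' = p_1 \cdot p_2$ the concatenation of taut paths $\a \to \g$ and $\g \to \b$ and $m$ a monomial in the $u_i$, and induct on $\deg m$. The base case $\deg m = 0$ is immediate, since then $a = a'$ is itself a path through $\g$. In the inductive step, factor $m = u_j m'$; using that the $u_i$'s are central, slide the factor $u_j$ so that it lands at the vertex $\g$, giving
\[
a \;=\; p_1 \cdot (e_\g u_j) \cdot p_2 \cdot m'.
\]
The first part of the corollary, applied at $\g$, rewrites $e_\g u_j$ as $\sum_i c_i \, p(\g, \g^i, \g)$, so $a$ becomes a $k$-linear combination of terms $p_1 \cdot p(\g, \g^i, \g) \cdot p_2 \cdot m'$. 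Each term is a (no longer taut) path through $\g$ multiplied by a $u$-monomial of strictly smaller degree, so the same insertion trick iterates until the $u$-monomial disappears, leaving a linear combination of plain paths through $\g$.

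The one step that is not purely formal is establishing the direct sum decomposition $(k^n)^* = U^\perp \oplus \operatorname{Span}\{u_i \mid i \in b\}$; it is best viewed as a manifestation of the parameter-space condition applied to $k[M^\vee]$ and the complementary basis $b^c$. Everything else is bookkeeping with centrality of the $u_i$'s and the defining relations of $A$.
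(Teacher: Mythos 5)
Your proposal is correct and follows essentially the paper's own argument: for $i\in b$ the feasibility of $\a^i$ together with relation $(A3)$ gives $e_\a u_i = p(\a,\a^i,\a)$, and the parameter-space condition (which you phrase dually, via $U^\perp$ being a parameter space for $k[M^\vee]$ and the basis $b^c$, rather than directly via the fact that $\{u_i \mid i\in b\}$ restricts to a basis of $U^*$) combined with $(A4)$ reduces a general $u_j$ to this case. Your sliding-and-induction argument for the ``in particular'' clause is the intended one, which the paper leaves implicit, and it is carried out correctly.
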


\begin{proof}
For $j\in b$, the tope $\a^j$ is feasible, so $e_\a u_j = p(\a,\a^j,\a)$.  On the other hand, as $U$ is a parameter space for $k[M]$, the set $\{u_i| i\in b\}$ restricts to a basis of $U^*$. Thus for any $j\not\in b$, $u_j \in A$ can be expressed as a linear combination of $\{u_i\mid i\in b\}$.
\end{proof}

\subsection{Alternative description of $A$}\label{section-quiver}
We conclude this section with a slightly different description of our algebra $A$, which will make it easier to describe how $A$ changes when we modify the choice of generic oriented matroid program.

Let $D$ be the path algebra over $k$ of the quiver with two vertices labelled by $+$ and $-$ and an arrow in each direction.  Let $D_E=D^{\otimes E}$ denote the $E$-fold tensor product of $D$ with itself.  In particular, $D_E$ is the path algebra on the quiver with vertices labelled by the set $\{+,-\}^E$ of sign vectors, or equivalently vertices of an $|E|$-cube, and edges connecting any two sign vectors that differ in exactly one position, modulo the relations that whenever $\a,\b \in \{+,-\}^E$ differ in exactly two positions $i$ and $j$, we have an equivalence of paths in $D_E$:
\[p(\a,\a^i,\b) = p(\a , \a^j , \b).\]
(As before, $\a^i\in\{+,-\}^E$ denotes the sign vector that differs from $\a$ in exactly the $i$-th coordinate.)

For any sign vector $\a$, we again let $e_\a$ denote the idempotent defined as the trivial path at the vertex labelled by $\a$.  Let $e_\PC = \sum_{\a\in \PC} e_\a$ be the sum of idempotents corresponding to bounded, feasible topes and $e_f = \sum_{\a \not\in \BC} e_\a$ be the sum of idempotents corresponding to unbounded sign vectors.

For each $i \in E$, we consider the element $\theta_i \in D_E$ defined as the sum $$\theta_i  = \sum_{\a \in \{+,-\}^E} p(\a,\a^i,\a).$$  Note that the center $Z(D_E)$ of $D_E$ is a polynomial algebra with generators $\theta_i$.  Let $\vartheta: (k^E)^* \to k\{\theta_i \mid i\in E\}$ be the isomorphism sending $u_i$ to $\theta_i$.

\begin{lem}\label{lem-defv2}
The algebra $\wt A$ from Definition~\ref{def-A} is isomorphic to the quotient of $e_\PC D_E e_\PC$ by the relation:
\begin{enumerate}
\item[$(A1')$] $e_\a = 0$ for $\a \not\in \BC$,
\end{enumerate}
and the algebra $A$ is obtained by adding the additional relation:
\begin{enumerate}
\item[$(A4')$] $\vartheta(x)=0$ for $x \in U^\perp$.
\end{enumerate}

Equivalently, there are isomorphisms $$\wt A \cong e_\PC D_E e_\PC/\langle e_f e_\PC \rangle, \quad  \quad
A \cong e_\PC D_E e_\PC/\langle e_f e_\PC \rangle + \langle \vartheta(U^\perp)e_\PC \rangle. $$
\end{lem}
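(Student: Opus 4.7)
The plan is to construct mutually inverse algebra homomorphisms between $\wt A$ and $e_\PC D_n e_\PC/\langle e_f e_\PC\rangle$; the statement for $A$ then follows by further quotienting both sides by the corresponding linear-parameter ideals.

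First, I would define $\phi\colon \wt A \to e_\PC D_n e_\PC/\langle e_f e_\PC\rangle$ on the generators of $P(Q)\otimes_k k[u_1,\ldots,u_n]$ by $e_\a\mapsto e_\a$, $p(\a,\b)\mapsto p(\a,\b)$ (viewing feasible topes as sign vectors in $\{+,-\}^n$), and $u_i\mapsto \theta_i$. Well-definedness amounts to checking the three defining relations in the target, which is routine: $(A1)$ holds because $e_\PC\cdot e_\a\cdot e_\PC=0$ whenever $\a\notin\PC$; $(A2)$ is precisely the length-2 relation built into $D_n$; and $(A3)$ follows from the identity $e_\a\theta_i = e_\a\sum_\gamma p(\gamma,\gamma^i,\gamma) = p(\a,\a^i,\a)$.

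For the inverse $\psi$, I would exploit a taut-path normal form. The argument of Proposition~\ref{prop-tautification} is formal — it uses only the length-2 relation and the centrality of the $\theta_i$'s — so its analogue in $D_n$ shows that every element of $e_\PC D_n e_\PC$ can be rewritten as a $k$-linear combination of expressions $p(\beta_0,\ldots,\beta_s)\cdot\prod_i \theta_i^{d_i}$ where the path is taut and $\beta_0,\beta_s\in\PC$. The map $\psi$ would then send each such normal form to its counterpart $p(\beta_0,\ldots,\beta_s)\cdot\prod_i u_i^{d_i}$ in $\wt A$; well-definedness follows from the analogous normal form for $\wt A$ (uniqueness modulo the defining relations), and a direct check on generators verifies that $\phi$ and $\psi$ are mutually inverse.

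The step I anticipate being the main obstacle is exactly this normal-form reduction inside the quotient — specifically, arguing that any taut path between two vertices of $\PC$ in $D_n$ can, modulo the relations and the ideal $\langle e_f e_\PC\rangle$, be rerouted to a taut path whose intermediate vertices also lie in $\PC$. This requires combining the T-convexity of $\FC$ (used in the proof of Proposition~\ref{prop-taut}) with the observation that any path visiting a vertex outside $\BC$ is absorbed into $\langle e_f e_\PC\rangle$, then iterating the length-2 relation to reroute around any remaining vertex in $\{+,-\}^n\setminus\PC$, in exact parallel with the interplay of $(A1)$ and $(A2)$ inside $\wt A$. The statement for $A$ is then immediate: under $\phi$, relation $(A4)$ corresponds to $\vartheta(x)e_\PC = 0$ for $x\in U^\perp$, so quotienting both algebras by these matching ideals yields the desired isomorphism.
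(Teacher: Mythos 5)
Your forward map $\phi$ and your surjectivity mechanism are exactly the paper's proof: the paper defines the same homomorphism (paths in $Q$ to paths in the cube, $u_i\mapsto\theta_i$) and obtains surjectivity from the existence of a taut path in $Q$ between any two topes of $\PC$ (Cordovil, i.e.\ $T$-convexity of $\FC$); it then simply asserts injectivity. One small caveat on your well-definedness checks: for $(A2)$ and $(A3)$ you only treat the cases where all topes involved lie in $\PC$. When, say, $\d\in\FC\setminus\PC$, the needed identity in the target is not the bare square relation of $D_n$ but the square relation combined with the ideal $\langle e_f e_\PC\rangle$ (the rerouted path passes through an unbounded vertex, hence dies), so relation $(A1')$ enters already here.

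The part that goes beyond the paper, the explicit inverse $\psi$, is where your sketch has a genuine gap. First, the rerouting statement you single out is stronger than what is true or needed: one cannot in general reroute a taut path between $\a,\b\in\PC$ so that all intermediate vertices lie in $\PC$ (all taut paths may be forced through topes of $\FC\setminus\BC$, in which case the class is zero on both sides); what $T$-convexity gives, and what suffices for surjectivity, is a taut path inside $\FC$, and ``iterating the length-2 relation to reroute around bad vertices'' is not a mechanism that produces this. Second, and more seriously, the well-definedness of $\psi$ on the quotient --- which is exactly the injectivity the paper leaves unargued --- is not delivered by ``uniqueness modulo the defining relations.'' Since $D_n$ is free over $k[\theta_1,\dots,\theta_n]$ with one taut-path class for each ordered pair of sign vectors, $\psi$ is well defined as a linear map on $e_\PC D_n e_\PC$; the real issue is that it must annihilate $\langle e_f e_\PC\rangle$. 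Writing a generator $x\,e_\g\,y$ with $\g\notin\BC$ in normal form, this amounts to showing that a taut path from $\a$ to $\b$ times $\prod_i u_i^{d_i}$ vanishes in $\wt A$ whenever some unbounded sign vector $\g$ agrees with $\a$ and $\b$ at every $i$ with $\a(i)=\b(i)$ and $d_i=0$. Corollary~\ref{cor-thru} gives this only when $\g$ is a \emph{feasible} tope; for an infeasible unbounded $\g$ you need an extra step, e.g.\ composing $T_\a$ with an increasing direction $Z\in\AC^\infty$ witnessing the unboundedness of $\g$ to produce an unbounded feasible tope satisfying the same agreement condition, and then applying Corollary~\ref{cor-thru}. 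None of the ingredients you cite supplies this point, and without it the claim that $\phi$ and $\psi$ are mutually inverse is unestablished.
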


\begin{proof}
To distinguish between the two definitions, let $\wt A_1$ and $A_1$ denote the original algebras and $\wt A_2$ and $A_2$ be the algebras defined as in the lemma.  Note that there is an injective homomorphism from $\wt A_1$ to $\wt A_2$ uniquely defined by sending a path in $Q$ to the corresponding path in the cube quiver $\{+,-\}^E$ and sending $u_i$ to $\theta_i$.  To see that it is surjective, it suffices to observe that for any two topes $\a,\b \in \PC$, there exists a taut path in $Q$ from $\a$ to $\b$, which follows from~\cite[Lemme 3.7]{Cord}.
\end{proof}

\section{The quadratic dual of $A$}\label{sec-quaddual}
In this section we observe that $A(\PB,U)$ is a quadratic algebra and that its quadratic dual is isomorphic to $A(\PB^\vee,U^\perp)$.  To ease notational clutter, in this section we will write $A$ for $A(\PB,U)$ and $A^\vee$ for $A(\PB^\vee,U^\perp)$.

To state and prove the results of this section, we will need some additional notation.  Let $Q_\PC \subset Q$ be the full subquiver with vertices $\PC \subset \FC$.  For $\a \in \PC$, we let 
   $$J_\a := \{i \mid \a^i \in \PC\},$$
$$I_\a := \{i \mid \a^i \in \FC\},$$
$$K_\a := \{i \mid \a^i \in \BC\}.$$

\begin{lem}\label{lem-quad}
The algebra $A = A(\PB,U)$ is the quotient of the path algebra $P(Q_\PC)$ by the following relations:
  \begin{enumerate}
   \item[$(A2'')$] For distinct topes $\a,\b,\g \in \PC$, $\d \in\FC$, where $\a$ and $\b$ are each connected to $\g$ and $\d$ by an edge,
   $$p(\a,\g,\b)= \begin{cases}
        p(\a,\d,\b) & \mathrm{if~}\d\in\PC \\
        0 & \mathrm{otherwise}
        \end{cases}$$
   \item[(${A3/4''}$)] 
For any $\a \in \PC$ and $w \in U^\perp \cap \mathrm{Span}\{u_i \mid i\in I_\a\}$, if $w = \sum_{i\in I_\a} w_i u_i$ for some $w_i \in k$, then
\[ \sum_{i\in J_\a} w_i p(\a,\a^i,\a)= 0.\]
  \end{enumerate}
In particular, it follows that $A$ is a quadratic algebra.
\end{lem}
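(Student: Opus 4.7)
The plan is to identify the alleged quotient $A'' := P(Q_\PC)/\langle (A2''), (A3/4'')\rangle$ with $A$ by constructing mutually inverse algebra homomorphisms $\bar\phi\colon A'' \to A$ and $\psi\colon A \to A''$.  First I define $\bar\phi$ by sending each idempotent $e_\a$ and arrow $p(\a,\b)$ of $Q_\PC$ to its namesake in $A$.  To verify that this descends through the quotient, I check both relations hold in $A$.  For $(A2'')$: when $\d \in \PC$, this is the content of $(A2)$ from Definition~\ref{def-A}; when $\d \in \FC \setminus \PC$, relation $(A1)$ forces $e_\d = 0$ so $p(\a,\d,\b) = 0$, and then $(A2)$ gives $p(\a,\g,\b) = 0$.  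For $(A3/4'')$: multiplying the $(A4)$ relation $w = 0$ by $e_\a$ yields $\sum_{i \in I_\a} w_i\,e_\a u_i = 0$; relation $(A3)$ converts $e_\a u_i$ into $p(\a,\a^i,\a)$ when $i \in J_\a$, while for $i \in I_\a \setminus J_\a$ relation $(A1)$ forces both $e_\a u_i$ and $p(\a,\a^i,\a)$ to vanish.  Surjectivity of $\bar\phi$ follows from Corollary~\ref{cor-lin combo}, which expresses every $e_\a u_j$ as a $k$-linear combination of length-two paths $p(\a,\a^i,\a)$.

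Next I construct the inverse $\psi\colon A \to A''$.  I send each $e_\a$ and each arrow $p(\a,\b)$ to its namesake in $Q_\PC$, and to $0$ when any involved vertex lies outside $\PC$.  To define $\psi(u_j)$, for each $\a \in \PC$ I fix a feasible cocircuit face of $\a$ with zero set $b_\a \subset I_\a$; such a choice exists since, by Theorem~\ref{thm-optimal}, $\a$ has a unique optimal cocircuit whose zero set is a basis of $M$ all of whose walls at $\a$ remain feasible.  Since $\{u_i \mid i \in b_\a\}$ projects to a basis of $U^*$, I can expand $u_j \equiv \sum_{i \in b_\a} c_{ij}^\a u_i \pmod{U^\perp}$ and set
\[
\psi(e_\a u_j) \;:=\; \sum_{i \in b_\a} c_{ij}^\a \, p(\a,\a^i,\a),
\]
where summands with $\a^i \notin \PC$ are $0$ by convention.

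The main technical obstacle is showing that $\psi$ is well-defined and respects all the defining relations of $A$.  Well-definedness (independence of the choice of $b_\a$) will follow because the difference of two such expansions lies in $U^\perp \cap \mathrm{Span}\{u_i \mid i \in I_\a\}$, and $(A3/4'')$ kills precisely those combinations.  Compatibility with $(A1)$ is built in; with $(A3)$ it is immediate from the formula when $i \in J_\a$; with $(A2)$ it follows because cube relations inside $Q_\PC$ are exactly $(A2'')$ while any path in $A$ through a vertex in $\FC \setminus \PC$ maps to $0$.  The subtlest check is that $\psi(u_j)$ commutes with the edges $p(\a,\b)$, which translates into an identity between length-three paths $\sum_{i\in b_\a}c_{ij}^\a\,p(\a,\a^i,\a,\b)$ and $\sum_{i\in b_\b}c_{ij}^\b\,p(\a,\b,\b^i,\b)$; to verify this I will use Corollary~\ref{cor-thru} to kill paths factoring through non-bounded-feasible topes and then apply $(A2'')$ and $(A3/4'')$ to reconcile the two basis expansions at $\a$ and $\b$.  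Once $\psi$ is established, $\bar\phi \circ \psi$ and $\psi \circ \bar\phi$ agree with the identity on generators.  Finally, since both $(A2'')$ and $(A3/4'')$ are homogeneous of degree $2$ in the path-length grading on $P(Q_\PC)$, which is generated in degrees $0$ and $1$, the algebra $A$ is quadratic.
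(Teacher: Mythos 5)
Your two-sided strategy (a surjection $\bar\phi\colon A''\to A$ plus an explicit inverse $\psi$) is a legitimate, more explicit route than the paper's, and the forward half is fine: verifying $(A2'')$ and $(A3/4'')$ inside $A$ and deducing surjectivity from Corollary~\ref{cor-lin combo} parallels the paper's own surjectivity argument, and your well-definedness check for the choice of $b_\a$ via $(A3/4'')$ is correct. The gap is in the construction of $\psi$. Since $A$ is presented as a quotient of $P(Q)\otimes_k\Sym(k^n)^*$, for $\psi$ to be an algebra homomorphism you must verify that the elements $\psi(u_j)=\sum_{\a\in\PC}\sum_{i\in b_\a}c^\a_{ij}\,p(\a,\a^i,\a)$ are central in $A''$, i.e.\ commute with every arrow $p(\a,\b)$ (and with one another), before you can even speak of descending through $(A1)$--$(A4)$. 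You correctly identify this as ``the subtlest check,'' but you never carry it out; you only state a plan. Moreover, the plan invokes Corollary~\ref{cor-thru}, which is a statement about elements of $\wt A$ and $A$: you cannot apply it to elements of $A''$ without already knowing $A''\cong A$, which is exactly what is to be proved, so as written the plan is circular. What is actually required is a derivation, carried out entirely inside $A''$ using only $(A2'')$ and $(A3/4'')$ and the combinatorics of which walls and squares remain in $\PC$ or $\FC$, of the identity $\sum_{i\in b_\a}c^\a_{ij}\,p(\a,\a^i,\a,\b)=\sum_{i\in b_\b}c^\b_{ij}\,p(\a,\b,\b^i,\b)$ (with your convention of dropping terms through walls outside $\PC$). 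That computation is the real content of the lemma on your route, and it is missing.

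For comparison, the paper does not construct an inverse at all: it notes that the natural map $P(Q)\to A$ kills $e_\a$ for $\a\notin\PC$ and hence factors through $P(Q_\PC)$, proves surjectivity exactly as you do (for $\a\in\FC$ the set $\{u_i\mid i\in z(Y)\}$ for a feasible cocircuit face $Y$ spans $U^*$, so each $e_\a w$, $w\in U^*$, is a combination of the $p(\a,\a^i,\a)$), and then identifies the kernel of $P(Q_\PC)\to A$ with the ideal generated by the images of $(A2)$ and of $(A3)$ combined with $(A4)$, which are precisely $(A2'')$ and $(A3/4'')$. If you wish to keep your two-map architecture, you must supply the centrality computation in $A''$ from scratch; otherwise restructure the argument along the paper's lines. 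Your final observation that quadraticity follows from the homogeneity of the relations is fine once the presentation is established.
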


\begin{proof}
  Note that there is a surjection from $P(Q)$ to $P(Q_\PC)$ (by setting $e_\a = 0$ for all $\a\not\in \PC$) and that this map factors through the natural map from $P(Q)$ to $A$.

We first show that the map from $P(Q)$ to $A$ (and hence from $P(Q_\PC)$ to $A$) is surjective.  To do so, it suffices to show that the image in $A$ of any element of $w\in U^*$ is in the image of the map from $P(Q)$ to $A$.  In $A$, we can express $w$ as the sum $w= \sum_{\a\in\FC} w e_\a$, so it is enough to show that for any $\a \in \FC$, $we_\a$ is in the image.

Because $\a$ is feasible, there exists a feasible cocircuit face $Y$ of $\a$ and $z(Y)$ is a basis of $M$.  Thus
$z(Y) \subset I_\a$ and so by our assumption that $U$ is a parameter space, the image of $\{u_i | i\in I_\a\} \supset \{u_i | i \in z(Y)\}$ is a spanning set of $U^*$.  For any $w \in U^*$, we can therefore write $w e_\a$ as a linear combination of elements of the form $p(\a,\a^i,\a)$ where $\a^i \in \FC$.  We conclude that map from $P(Q)$ to $A$ is surjective.

To identify the kernel, note that $(A2'')$ is simply the image of the relation $(A2)$ in $P(Q_\PC)$ and similarly the relations $(A3)$ and $(A4)$ combine to give $(A3/4'')$.

As $P(Q_\PC)$ is generated over its degree zero component by its degree one component and the relations above are quadric, we conclude that $A$ is a quadratic algebra.  
\end{proof}

Recall that $A_1$ is an $A_0$-bimodule.  Since $A$ is quadratic, $A$ is a quotient of the tensor algebra $T(A_1)$ over $A_0$ of the form
  \[A=T(A_1)/T(A_1)\cdot W\cdot T(A_1)\]
where $W\subset N:=(T(A_1)\otimes_{A_0} T(A_1))_2$ is the space of quadratic relations in $A$. The \emph{quadratic dual} of $A$ is defined to be
  \[A^!=T(A_1^*)/T(A_1^*)\cdot W^\perp\cdot T(A_1^*)\]
where $W^\perp\subset N^*:=(T(A_1^*)\otimes_{A_0} T(A_1^*))_2$ is the set of elements orthogonal to $W$. 

\begin{thm}\label{thm-quad}
There is an isomorphism $A(\PB^\vee,U^\perp)\simeq A(\PB,U)^!$.
\end{thm}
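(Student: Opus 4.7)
The plan is to exploit the presentations of $A = A(\PB, U)$ and $A^\vee = A(\PB^\vee, U^\perp)$ as quadratic quotients of the path algebra $P(Q_\PC)$ given by the preceding lemma. The first observation is that the two algebras share the same underlying quiver: since $\PC^\vee = \PC$ by complementary slackness (Proposition~\ref{prop-FBP}) and the adjacency $J_\a = I_\a \cap K_\a$ is symmetric in $I_\a, K_\a$, which interchange under $\PB \leftrightarrow \PB^\vee$, both algebras are quadratic quotients of the same path algebra $P(Q_\PC)$. In particular, we obtain a canonical identification $A_1 = A^\vee_1$ via the common edge basis of $Q_\PC$.

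To realize the quadratic duality, I will fix an edge orientation $\epsilon$ of the $n$-cube $\{+, -\}^{E_n}$ whose sign product around every $2$-face with all four corners in $\PC$ equals $-1$; an explicit choice is $\epsilon(\a, \a^i) := \prod_{j < i} \a(j)$ for a fixed total order on $E_n$. Define $\phi \colon A_1^* \to A^\vee_1$ by $p(\a, \b)^* \mapsto \epsilon(\a, \b)\, p(\a, \b)$; this is an isomorphism of $A_0$-bimodules, and the proof reduces to showing that its tensor square carries $W^\perp$ onto $W^\vee$, where $W \subset N := A_1 \otimes_{A_0} A_1$ and $W^\vee \subset N$ are the quadratic relations of $A$ and $A^\vee$, respectively.

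Both $W$ and $W^\vee$ respect the decomposition $N = \bigoplus_{\a, \b \in \PC} N_{\a, \b}$ by source and target of length-$2$ paths, so I will verify the claim componentwise. For loops ($\a = \b$), $N_{\a, \a}$ has basis $\{p(\a, \a^i, \a) : i \in J_\a\} \cong k^{J_\a}$, and the relations $(A3/4'')$ and $(A3/4''^\vee)$ cut out the subspaces $R_\a = \pi_{J_\a}(U^\perp \cap \mathrm{Span}\{u_i : i \in I_\a\})$ and $R^\vee_\a = \pi_{J_\a}(U \cap \mathrm{Span}\{t_i : i \in K_\a\})$. Since $J_\a = I_\a \cap K_\a$, the supports of any $w$ in the former and $v$ in the latter overlap only in $J_\a$, so $\langle w, v \rangle$ reduces to $\sum_{i \in J_\a} w_i v_i$ and vanishes by $w \in U^\perp$ and $v \in U$; orthogonality of $R_\a$ and $R^\vee_\a$ in $k^{J_\a}$ follows. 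The dimension identity $\dim R_\a + \dim R^\vee_\a = |J_\a|$ is then a straightforward computation using the parameter space property of $U$ and the modular law. For squares ($\b = \a^{ij} \neq \a$), a case analysis over the statuses of $\a^i$ and $\a^j$ (in $\PC$, in $\FC \setminus \BC$, or in $\BC \setminus \FC$) matches the $(A2'')$ and $(A2''^\vee)$ relations with the appropriate orthogonal complements under $\phi$; the sign in $\phi$ is precisely what is needed, when both interior vertices lie in $\PC$, to transform the commuting-square relation $p(\a, \a^i, \b) = p(\a, \a^j, \b)$ in $A$ into the corresponding relation in $A^\vee$ by orthogonal complementation.

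The main obstacle is the square case analysis, which requires ruling out the degenerate configuration in which $\a^j$ lies outside both $\FC$ and $\BC$ while $\a, \a^i, \a^{ij} \in \PC$ --- such a configuration would disrupt the componentwise dimension count. Eliminating this case relies on combinatorial convexity properties of the feasible and bounded regions under the genericity hypothesis, and is where the symmetry between feasibility and boundedness supplied by Proposition~\ref{prop-FBP} plays its central role.
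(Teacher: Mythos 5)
Your strategy coincides with the paper's: present both $A(\PB,U)$ and $A(\PB^\vee,U^\perp)$ as quadratic quotients of the common path algebra $P(Q_\PC)$, twist the identification of degree-one pieces by edge signs whose product around every square is $-1$ (your explicit choice $\e(\a,\a^i)=\prod_{j<i}\a(j)$ does have this property, and is the sign the paper's footnote alludes to; note only that the paper pairs $p^\vee(\a,\b)$ against the \emph{reversed} path $p(\b,\a)$, a bookkeeping convention you will also need), and compare relation spaces componentwise; your loop-case orthogonality argument is correct. However, as written the proposal does not prove the theorem, because the step you yourself call the main obstacle is never carried out, and the tools you point to cannot carry it. The degenerate configuration $\a^j\notin\FC\cup\BC$ is \emph{self-dual} under the exchange $\FC^\vee=\BC$, $\BC^\vee=\FC$ of Proposition~\ref{prop-FBP}, so that symmetry alone cannot exclude it; and if it occurred, then $e_\a W e_\b$ and $e_\b W^\vee e_\a$ would both vanish inside one-dimensional components, so orthogonal complementation would genuinely fail, exactly as you note. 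Worse, the same missing fact is hidden in your loop case: the identity $\dim R_\a+\dim R_\a^\vee=|J_\a|$ is \emph{not} a consequence of the parameter-space property and the modular law alone. Writing $\dim\pi_{J_\a}(U^\perp\cap k^{I_\a})=\dim(U^\perp\cap k^{I_\a})-\dim(U^\perp\cap k^{I_\a\setminus J_\a})$ and the analogous formula for $\pi_{J_\a}(U\cap k^{K_\a})$, the two contributions only telescope to $|J_\a|$ once one knows $I_\a\cup K_\a=E_n$; for abstract sets with $I_\a\cup K_\a\neq E_n$ and generic $U$ the count fails.

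So the one fact your argument needs, in both cases, is: for every $\a\in\PC$ and every $j\in E_n$, the flip $\a^j$ lies in $\FC\cup\BC$ (equivalently $I_\a\cup K_\a=E_n$). This is true, and the proof is short enough that you should simply include it: suppose $\a^j\notin\BC$ and let $Z\in\AC^\infty$ be an increasing direction with $Z|_{E_n}\circ\a^j=\a^j$. If $Z(j)=0$ then also $Z|_{E_n}\circ\a=\a$, contradicting $\a\in\BC$; hence $Z(j)=\a^j(j)\neq 0$. Then the composition $Z\circ T_\a$ is a covector of $\NC=\wt\MC\backslash f$ with no zero entries (since the tope $T_\a$ has none), with value $+$ at $g$, and with restriction to $E_n$ equal to $\a^j$; hence $\a^j\in\FC$. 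With this lemma in hand your case analysis closes and the proposal becomes essentially the paper's proof — which, for what it is worth, also uses this dichotomy implicitly, both when it splits the square case into ``$\a^j\in\FC\setminus\PC$'' versus ``$\a^j\in\BC\setminus\PC$'' and when it identifies $e_\a We_\a=\pi_{J_\a}(U^\perp\cap k^{I_\a})$ with $(\mathrm{pr}_{K_\a}U^\perp)\cap k^{J_\a}$.
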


\begin{proof}
Mimicking the proof of~\cite[Theorem 3.11]{GDKD}, we define an isomorphism between $(A^\vee)_1$ and  $(A_1)^*$ and show that the space $W^\vee$ of quadratic relations in $A^\vee$ coincides with the space $W^\perp$ of quadratic relations in $A^!$ under this identification.

In degree zero, we have a canonical identification $$A_0=k\{e_\a\mid \a\in\PC=\PC^\vee\} = (A^\vee)_0.$$

In degree one, \[\{p(\a,\b) \mid \a,\b \in \PC~\mathrm{such~that}~\a \leftrightarrow \b\} \subset A_1\] is a natural basis for $A_1$.

As $\PC = \PC^\vee$, to distinguish elements of $A$ and $A^\vee$, we let $p^\vee(\a,\b)$ denote the element of $(A^\vee)_1$ associated to the arrow $\a \rightarrow \b$ in $\PC^\vee$.

We now identify $(A^\vee)_1$ with $(A_1)^*$ as follows.  First we attach a sign $\e(\a \leftrightarrow \b)$ to each pair $\a \leftrightarrow \b$ of adjacent topes in $\PC$ such that for distinct topes $\a,\a^i,\a^j,(\a^i)^j$, an odd number of the edges of the square 
\[
\begin{array}{c}
\xymatrix{
\a \ar@{<->}[r] \ar@{<->}[d]& \a^i \ar@{<->}[d] \\
\a^j \ar@{<->}[r] & (\a^i)^j
}
\end{array}
\]
 are attached a negative sign.\footnote{This can be done for the edges of the n-cube $\{+,-\}^E$ by identifying its vertices with monomials in the exterior algebra $\Lambda k\{e_1,\ldots,e_n\}$ and then using the standard differential to attach signs to edges.  Restricting to $\PC$ then gives a collection of signs as desired.}

We then identify $(A^\vee)_1$ with $(A_1)^*$ via the perfect pairing
$$ (A^\vee)_1 \times A_1 \to k$$
$$ \langle p^\vee(\a,\b),p(\g,\d) \rangle = \begin{cases}
\e(\a \leftrightarrow \b) & \mathrm{if}~\a=\d,\b=\g, \\
0 & \mathrm{otherwise.}
\end{cases}$$

For the remainder of the proof, we let 
  \eq{N:&=A_1 \otimes_{A_0} A_1=T(A_1)_2,\\
  N^\vee:&= (A^\vee)_1 \otimes_{A_0} (A^\vee)_1=T((A^\vee)_1)_2.}
We wish to show that our choice of perfect pairing induces an isomorphism between $W^\vee\subset N^\vee$ and $W^\perp\subset N^*$.

Note that the relations of type ($A2''$) lie in $e_\a W e_\b$ and the relations of type ($A3/4''$) lie in $e_\a W e_\a$.  As these relations are homogeneous with respect to the idempotents $e_\a$ for $\a\in\PC$, we have a direct sum decomposition $$W = \bigoplus_{\a,\b \in \PC} e_\a W e_\b.$$  Moreover, $e_\a W e_\b$ and $e_\g W^\vee e_\d$ are orthogonal unless $\a=\d$ and $\b=\g$.  Thus it is enough to check that $e_\b W^\vee e_\a \subset e_\b N^\vee e_\a$ is the perpendicular complement to $e_\a W e_\b \subset e_\a N e_\b$ for any $\a,\b\in \PC$.

Note that $e_\a N e_\b, e_\b N^\vee e_\a$ are zero unless $\a=\b$ or $\a$ and $\b$ differ in exactly two positions and there is a path from $\a$ to $\b$ in $\PC$.

We first deal with the latter case. Suppose the two elements of $E$ where $\a$ and $\b$ differ are $i\neq j$. We must have that at least one of $\a^i$ or $\a^j$ is in $\PC$ by assumption. We can assume that $a^j\in\PC$.

If $a^i\in\PC$, then we have that $e_\a Ne_\b$ is a two dimensional $k$-vector space with basis $\{p(\a,\a^i)\otimes p(\a^i,\b),p(\a,\a^j)\otimes p(\a^j,\b)\}$ and 
  \eq{
    e_\a We_\b&=k\{p(\a,\a^i)\otimes p(\a^i,\b)-p(\a,\a^j)\otimes p(\a^j,\b)\} \subset e_\a Ne_\b,,\;\text{ and} \\
    e_\b W^\vee e_\a&=k\{p^\vee(\b,\a^i)\otimes p^\vee(\a^i,\a)-p^\vee(\b,\a^j)\otimes p^\vee(\a^j,\a)\} \subset e_\b N^\vee e_\a.}
Pairing the two basis vectors together using the form defined above we get $${\e(\a \leftrightarrow \a^i)}{\e(\a^i \leftrightarrow \b)}+{\e(\a \leftrightarrow \a^j)}{\e(\a^j \leftrightarrow \b)}.$$  By our choice of signs, the terms cancel and we conclude that $e_\a W^\perp e_\b=e_\b W^\vee e_\a.$

If $\a^j\not\in\PC$, then either $\a^j\in\FC\backslash \PC$ and $\a^j\not\in\BC=\FC^\vee$, or $\a^j\in\BC\backslash\PC=\FC^\vee\backslash \PC^\vee$ and $\a^j\not\in \FC=\BC^\vee$. Assume that $\a^j\in\FC\backslash \PC$ and $\a^j\not\in\BC=\FC^\vee$. Then we have that 
  \[e_\a Ne_\b = k\{p(\a,\a^i)\otimes p(\a^i,\b)\} = e_\a W e_\b\]
since $p(\a,\a^i,\b)=p(\a,\a^j,\b)=0$ in $A$. On the other hand, $\a^j\not\in\FC^\vee$ means that $e_\b N^\vee e_\a=k\{p^\vee(\b,\a^i)\otimes p^\vee(\a^i,\a)\}$ and ($A2$) does not impose any relations, so $e_\b W^\vee e_\a=0$. Therefore $e_\b W^\perp e_\a=0=e_\b W^\vee e_\a$. The case $\a^j\in\BC\backslash\PC=\FC^\vee\backslash \PC^\vee$ and $\a^j\not\in \FC=\BC^\vee$ follows from the same argument on the dual side.

Finally, consider the case where $\a=\b$. Note that 
\[e_\a N e_\a = k\{p(\a,\a^i)\otimes p(\a^i,a) \mid i \in J_\a\}. \]
We identify $e_\a N e_\a$ with $k^{J_\a}$ by regarding $p(\a,\a^i)\otimes p(\a^i,a)$ as the standard basis element labelled by $i\in J_\a$. We also use the standard pairing on $k^E$ to view $U^\perp$ as a subspace of $k^E$.
From the relations ($A3/4''$), we find that $e_\a W e_\a$ is given by $pr_{J_\a}(U^\perp \cap k^{I_\a})$ or equivalently by $(pr_{K_\a} U^\perp) \cap k^{J_\a}$, where $pr_{S}$ denotes the orthogonal projection from $k^E$ to $k^S$ for any $S \subset E$.

Taking the orthogonal complement of $e_\a W e_\a \subset e_\a N e_\a$ using the first description gives:
\eq{(e_\a W e_\a)^\perp = (pr_{J_\a}(U^\perp \cap k^{I_\a}))^\perp &= (pr_{I_\a} U) \cap k^{J_\a} \\
&= (pr_{K_\a^\vee} U) \cap k^{J_\a^\vee}= e_\a W^\vee e_\a.}
Here the second equality follows from the fact that $(pr_S V)^\perp = V^\perp \cap k^S$, the third equality uses the fact that $I_\a = K_\a^\vee$ and $J_\a=J_\a^\vee$ (see Proposition~\ref{prop-FBP}) and the final equality follows from the second description of $W$ above.
\end{proof}

\section{The algebra $B$}\label{sec-defB}

Following Braden--Licata--Proudfoot--Webster, we define in this section another algebra $B=B(\PB,U)$ associated to the pair $(\PB,U)$ and prove that $B$ is isomorphic to the quadratic dual $A^!$ of $A(\PB,U)$. We also consider a deformed version $\wt B(\PB)$ such that $\wt B(\PB) \cong \wt A(\PB^\vee)$.
The algebras $B$ and $\wt B$ defined in~\cite[Section 4.1]{GDKD} coincide with those defined here in the special case when $(\PB,U)$ comes from a linear subspace as in Example~\ref{ex-linear}.  

\subsection{A topological lemma} In this section we assume some familiarity with regular cell complexes, posets and their geometric realizations and refer the reader to~\cite[Section 4.7]{OMbook} for more on these topics.

Recall from Section~\ref{subsec-P} that $\LC = \LC(\NC)$ denotes the poset of covectors for $\NC = \wt\MC\backslash f$. This poset is pure with a unique minimal element $0$ and a rank function $\r$. Every covector is uniquely determined by its cocircuit faces.

We will define the algebra $B(\PB,U)$ from the poset structure of $\LC$, using an affine version of the following notion~\cite[Definition 4.1.2(ii)]{OMbook}:

\begin{defi}
For any covector $Y\in\LC$, the \emph{Edmonds--Mandel face lattice of $Y$}, denoted $\flem(Y)$, is the set of all faces of $Y$ in $\LC$.
The opposite poset $\fllv(Y):=\flem(Y)^\op$ is called the \emph{Las Vergnas face lattice of $Y$}. 
\end{defi}

Both $\flem(Y)$ and $\fllv(Y)$ are graded posets and by theorems of Folkman--Lawrence and Edmonds--Mandel, they have the following topological interpretation~\cite[Theorem 4.3.5]{OMbook}:

\begin{thm}\label{thm-FLEM}
The lattices $\flem(Y)$ and $\fllv(Y)$ are each isomorphic to the face lattices (or augmented face posets) of PL regular cell decompositions of the $(\r(Y)-2)$-sphere.
\end{thm}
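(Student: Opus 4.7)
The plan is to deduce Theorem~\ref{thm-FLEM} from two classical results: the Folkman--Lawrence Topological Representation Theorem, which realizes any loop-free oriented matroid of rank $r$ as a signed pseudosphere arrangement on $S^{r-1}$ whose stratification matches its covector poset, and the Edmonds--Mandel theorem, which handles the opposite (Las Vergnas) poset. Both are developed in detail in~\cite{OMbook}, so my job is to reduce the statement about the interval $[0,Y]$ in $\LC(\NC)$ to these inputs and to identify the resulting cell complex with a sphere of the claimed dimension.

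First I would reduce to the case where $Y$ is a tope. The faces of $Y$ are precisely the covectors $Z \leq Y$, so $\flem(Y)$ is the interval $[0,Y]$ in $\LC(\NC)$. A standard contraction argument identifies this interval with the full covector poset $\LC(\NC/z(Y))$ of the contraction by the zero set of $Y$; this contracted oriented matroid has rank $\r(Y) = d - r(z(Y))$, and under the identification $Y$ becomes a tope. Thus it suffices to prove: for any tope $T$ of a loop-free oriented matroid of rank $r$, the poset $\flem(T)$ (with its adjoined minimum $0$) is the augmented face poset of a regular CW decomposition of $S^{r-2}$.

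For this, I would invoke Folkman--Lawrence to represent the relevant oriented matroid as a pseudosphere arrangement on $S^{r-1}$. The tope $T$ corresponds to a top-dimensional open cell whose closure is a closed PL $(r-1)$-ball, and the non-zero covectors that are faces of $T$ correspond exactly to the cells of the induced CW structure on this closed ball. Restricting the cell structure to the boundary $(r-2)$-sphere and prepending the empty face $0$ produces precisely $\flem(T)$ as the augmented face poset of a regular cell decomposition of $S^{r-2}$. The Las Vergnas case $\fllv(Y) = \flem(Y)^{\op}$ then reduces to the Edmonds--Mandel theorem, which asserts that the opposite poset is itself realized as the augmented face poset of a ``dual'' pseudosphere arrangement on a sphere of the same dimension; equivalently, one uses the cellularity of a second regular CW structure built from the covector poset read upside down.

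The main obstacle is the Edmonds--Mandel half: that the opposite of the face lattice of a regular CW sphere is again the face lattice of a regular CW sphere is highly non-obvious in general, and this is precisely where the oriented matroid hypothesis is essential. Since both inputs are entirely standard, I would simply cite~\cite[Theorem 4.3.5]{OMbook} rather than rework either here, noting that the only new content is the contraction-based reduction to the tope case.
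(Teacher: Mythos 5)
The paper gives no proof of this statement at all: it is recorded as the classical Folkman--Lawrence/Edmonds--Mandel theorem and cited directly as \cite[Theorem 4.3.5]{OMbook}, and since your argument also ultimately rests on that same citation (together with the standard contraction reduction from a general covector to the tope case), you are taking essentially the same route. One small correction to your wording: the interval $[0,Y]$ is identified with the face lattice $\flem(\ov Y)$ of $Y$ viewed as a tope $\ov Y$ of the contraction $\NC/z(Y)$, not with the full covector poset $\LC(\NC/z(Y))$; your subsequent use of the pseudosphere representation (cells in the closure of the tope cell and its boundary $(\r(Y)-2)$-sphere) relies on the correct identification, so nothing downstream is affected.
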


We will use the following affine (or feasible) version of the Edmonds--Mandel and Las Vergnas face lattice.

\begin{defi}\label{def-feaslattices} Let $Y\in\AC$.
The \emph{feasible Edmonds--Mandel face lattice of $Y$} is 
    \eq{\flem^\FC(Y):=(\flem(Y)\cap\AC)\cup \{0\},}
while the \emph{feasible Las Vergnas face lattice of $Y$} is
    \eq{\fllv^\FC(Y):=\flem^\FC(Y)^\op.}
\end{defi}

For any feasible covector $Y\in\AC$, let $Y^\infty\in\AC^\infty$ be the unique maximal face of $Y$ in the boundary. The face $Y^\infty$ is equal to the composition of all cocircuit faces of $Y$ in $\AC^\infty$.

Note that $\AC$ does not have a cellular interpretation, since the faces of a feasible covector need not be feasible. The same is true of $\flem^\FC(Y)$ when $Y^\infty\neq0$.
On the other hand, $\fllv^\FC(Y)$ will always have a cellular interpretation, even if the program $\PB$ is not generic.
 The assumption that $g$ is generic implies $\fllv^\FC(Y)$ is the face lattice of a pure simplicial complex, whose vertices correspond to the feasible facets of $Y$ and whose maximal simplices correspond to the feasible cocircuits faces of $Y$. 

We will use the following lemma on the topology of $\fllv^\FC(Y)$ to show that the algebra $B(\PB,U)$ we define below is finite-dimensional (which will imply that $A(\PB,U)$ is finite dimensional as well).

\begin{lem}\label{lem-feas Las Vergnas}
    Let $Y\in\AC$. 
    \begin{itemize}
\item If $Y^\infty=0$, then the geometric realization $\|\fllv^\FC(Y)\backslash\{0,Y\}\|$ of the proper part of the feasible Las Vergnas face lattice of $Y$ is a PL $(\r(Y)-2)$-sphere. 
\item If $Y^\infty\neq 0$, then $\|\fllv^\FC(Y)\backslash\{0,Y\}\|$ is a PL $(\r(Y)-2)$-ball.
\end{itemize}
\end{lem}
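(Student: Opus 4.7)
The plan is to handle the cases $Y^\infty=0$ and $Y^\infty\neq 0$ separately. For Case 1, the key observation is that feasibility is upward-closed in $\flem(Y)$: the face relation $Y_1\leq Y_2$ forces $Y_2$ to agree with $Y_1$ at every nonzero coordinate of $Y_1$, so $Y_1(g)=+$ implies $Y_2(g)=+$. When $Y^\infty=0$ (equivalently, every cocircuit face of $Y$ is feasible), every face of $Y$ is a composition of cocircuit faces and is therefore feasible, so $\fllv^\FC(Y)=\fllv(Y)$. Theorem~\ref{thm-FLEM} applied to $Y$ then yields the desired PL $(\r(Y)-2)$-sphere directly.

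For Case 2 the strategy is to identify $\|\fllv^\FC(Y)\backslash\{0,Y\}\|$ with the antistar of a distinguished cell in the full Las Vergnas cell decomposition and then invoke a standard PL-topology fact. First, Theorem~\ref{thm-FLEM} applied to $Y$ produces a PL regular cell decomposition $\Lambda_Y$ of the $(\r(Y)-2)$-sphere whose face lattice is $\fllv(Y)\backslash\{0,Y\}$; each $Y'\in\flem(Y)\backslash\{0,Y\}$ corresponds to a cell $c(Y')$ of dimension $\r(Y)-\r(Y')-1$, and in particular $c(Y^\infty)$ is a cell. By the upward-closure property from Case 1, the non-feasible proper nonzero faces of $Y$ form exactly the lower set $\{Y':Y'\leq Y^\infty\}$ in $\flem(Y)\backslash\{0,Y\}$, so the feasible cells of $\Lambda_Y$ are closed under passing to boundary cells and hence form a subcomplex. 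Translating the face-lattice identifications shows this subcomplex is precisely the antistar $\mathrm{ast}(c(Y^\infty))$: indeed $c(Y^\infty)\leq c(Y')$ in the face lattice of $\Lambda_Y$ iff $Y^\infty\leq Y'$ in $\fllv$ iff $Y'\leq Y^\infty$ in $\flem$ iff $Y'$ is non-feasible. Since the induced face poset of $\mathrm{ast}(c(Y^\infty))$ coincides with $\fllv^\FC(Y)\backslash\{0,Y\}$, the two geometric realizations are homeomorphic.

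The main obstacle is the final PL-topological step: that the antistar of any cell in a PL regular CW decomposition of $S^n$ is a PL $n$-ball. This is standard (the closed star of a cell is a regular neighborhood, hence a PL $n$-ball whose boundary is the PL link sphere, and the complement of an open PL $n$-ball in a PL $n$-sphere is again a PL $n$-ball via the PL collaring theorem), but care is needed in the regular CW setting rather than the more familiar simplicial one. Any remaining subtleties can be handled by passing to the barycentric subdivision of $\Lambda_Y$ to reduce to the classical simplicial version of the antistar-in-sphere fact. Applying this PL fact to $c(Y^\infty)\subset\Lambda_Y$ then gives that $\|\fllv^\FC(Y)\backslash\{0,Y\}\|$ is a PL $(\r(Y)-2)$-ball, completing Case 2.
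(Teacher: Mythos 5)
Your Case 1 is exactly the paper's argument, and your combinatorial identification in Case 2 is correct and nicely put: the infeasible nonzero proper faces of $Y$ are precisely the nonzero faces of $Y^\infty$ (every such face has all its cocircuit faces in $\AC^\infty$, hence is a face of $Y^\infty$), so under the Las Vergnas cell decomposition $\Lambda_Y$ of the $(\r(Y)-2)$-sphere the feasible part $\fllv^\FC(Y)\backslash\{0,Y\}$ is indeed the subcomplex of cells whose closures do not contain $c(Y^\infty)$, i.e.\ the antistar of $c(Y^\infty)$. The gap is the final step. The ``standard PL fact'' you invoke --- that the antistar of a cell in a PL regular cell decomposition of $S^n$ is a PL $n$-ball --- is not standard and is false at that level of generality: decompose $S^2$ regularly with two vertices $v,w$, two edges joining them, and two $2$-cells; this is a PL regular cell decomposition, yet the antistar of $v$ is the single point $\{w\}$, and the closed star of $v$ is all of $S^2$, so it is neither a regular neighborhood nor a ball. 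Thus the assertion ``the closed star of a cell is a regular neighborhood, hence a PL ball'' is exactly the point that fails, and it cannot be fixed by barycentric subdivision: after subdividing, the antistar of the cell $c(Y^\infty)$ becomes the full subcomplex spanned by barycenters of cells not lying above $c(Y^\infty)$, which is not the antistar (or star complement) of any vertex or simplex of the subdivision, so the classical simplicial antistar-in-a-combinatorial-sphere statement does not apply.

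To close the gap you would have to prove, for these particular oriented-matroid spheres, that the closed star of $c(Y^\infty)$ is a PL ball (after which Newman's theorem on complements of balls in spheres finishes the job), and that requires combinatorial input beyond PL generalities. This is in effect what the paper supplies by a different route: it reduces to the case that $Y$ is a tope by passing to $\PB/z(Y)$, notes that $\fllv^\FC(Y)\backslash\{0,Y\}$ is the face poset of a pure simplicial complex, obtains shellability of its order complex from a recursive coatom ordering of $\flem^\FC(Y)$ (\cite[Proposition 4.5.4]{OMbook} via \cite[Lemma 4.7.19]{OMbook}), and then applies the Danaraj--Klee type criterion \cite[Proposition 4.7.22(ii)]{OMbook}: every codimension-one simplex lies in at most two facets, and (because $Y^\infty\neq 0$) some edge of $Y$ has a unique feasible cocircuit face, giving a codimension-one simplex in exactly one facet, so the shellable complex is a ball rather than a sphere. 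Your antistar picture is a good way to see \emph{why} the statement should be true, but as written it assumes the essential content of the lemma; some shellability or star-is-a-ball argument of this kind still has to be made.
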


\begin{proof}
If $Y^\infty=0$, then $\fllv^\FC(Y)=\fllv(Y)$ and so the statement reduces to Theorem~\ref{thm-FLEM}.

   Assume that $Y^\infty\neq0$, and let 
        \eq{ \D:=\fllv^\FC(Y)\backslash\{0,Y\} } 
denote the proper part of the feasible Las Vergnas face lattice of $Y$.

The geometric realization $\|\D\|$ of $\D$ is homeomorphic to the geometric realization $\|\D_{\ord}(\D)\|$ of the order complex $\D_{\ord}(\D)$ of $\D$ (which is a subdivision of the former).  Further, using the canonical identification $\D_{\ord}(\D) \cong \D_{\ord}(\D^\op)$, we find that $\|\D\|$ is homeomorphic to $\|\D_{\ord}(\D^\op)\|$.

It will thus suffice to prove that $\|\D_{\ord}(\D^\op)\|$ is a PL ball.  By restricting to $\PB/z(Y)$ if necessary, we may assume that $Y$ is a tope, so $\r(Y)=d+1$ and $\D_{\ord}(\D^\op)$ is a $(d-1)$-dimensional simplicial complex. 

We first note that $\D_{\ord}(\D^\op)\cong \D_{\ord}(\D)$ is pure because $\D$ is the face poset of a pure simplicial complex.  By~\cite[Proposition 4.5.4]{OMbook}, the graded poset $\flem^\FC(Y)$ admits a recursive coatom ordering, which implies by~\cite[Lemma 4.7.19]{OMbook} that its open interval $\D^\op=(0,Y)$ is shellable (meaning that the order complex $\D_{\ord}(\D^\op)$ is shellable).
 
    To conclude that the shellable $(d-1)$-dimensional simplicial complex $\D_{\ord}(\D^\op)$ is a PL $(d-1)$-ball we use the criterion of~\cite[Proposition 4.7.22(ii)]{OMbook}: namely we must show that every $(d-2)$-simplex is the face of one or two $(d-1)$-simplices and at least one $(d-2)$-simplex is the face of exactly one $(d-1)$-simplex.

Note that a $(d-1)$-simplex of $\D_{\ord}(\D^\op)$ is a maximal chain $x_0<x_1<\dots<x_{d-1}$ of feasible covector faces of $Y$, where $x_i<x_{i+1}$ means that $x_i$ is a proper face of $x_{i+1}$.  Now consider any $(d-2)$-simplex of $\D_{\ord}(\D^\op)$. It will similarly be a chain of the form $x_0<x_1<\dots<x_{i-1}<x_{i+1}<\dots<x_{d-1}$ for some $i$ between $0$ and $d-1$.

If $i>0$, then the chain can be completed to a maximal chain in exactly two ways because $\D$ is a pure simplicial complex.

If $i=0$, then the feasible edge $x_1$ either has one or two feasible cocircuit faces and so the $(d-2)$-simplex can be completed to either one or two $(d-1)$-simplices.  As $Y^\infty \neq 0$, there exists an edge $x_1$ of $Y$ with only one feasible cocircuit face and thus a $(d-2)$-simplex of $\D_{\ord}(\D^\op)$ that is the face of a unique $(d-1)$-simplex.

We conclude that $\norm{\D_{\ord}(\D^\op)} \cong \norm{\D}$ is a PL $(d-1)$-ball.
\end{proof}

\subsection{The algebra $\wt B$}\label{section-B}
Recall that the face ring of a simplicial complex $\mathbf{\D}$ is defined as
\[k[\mathbf{\D}]=k[t_i~|~i\in E]/(t_S\mid S\not\in \mathbf{\D})=\Sym k^E/(t_S\mid S\not\in \mathbf{\D}),\]
where here and in what follows $t_S=\prod_{i\in S}t_i$ with the convention $t_\varnothing =1$. As before, we let $k[M]$ denote the face ring of the matroid complex of $M$.

For any bounded feasible topes $\a_1,\dots,\a_r\in\PC$, let $Y=\a_1\wedge\cdots\wedge\a_r$ denote the unique maximal common feasible covector face of all $\a_1,\dots,\a_r$. Then 
    \eq{\flem^\FC(Y)=\bigcap_{\ell=1}^r\flem^\FC(\a_\ell).}
  We define  
    \eq{\D_{\a_1 \cdots \a_r} := \fllv^\FC(Y) \backslash \{0\} }
and 
    \eq{z(\D_{\a_1\cdots\a_r}):=\{S\subset E\mid S\subset z(X)\;\text{for some}\:X\in\D_{\a_1\cdots\a_r}\}. } 
Note that $z(\D_{\a_1\cdots\a_r})$ is a simplicial complex and $\D_{\a_1\cdots\a_r}$ can be realized as the face poset of a simplicial complex.

\begin{defi} 
For $\a_1,\dots,\a_r\in\PC$, let \eq{\wt R_{\a_1\cdots \a_r} := k[z(\D_{\a_1\cdots\a_r})].}
\end{defi}

\begin{remark}
When $(\PB,U)$ comes from a linear subspace as in Example~\ref{ex-linear}, then each feasible tope $\alpha$ corresponds to a bounded feasible chamber in the the corresponding hyperplane arrangement and the ring $\wt R_{\a_1\cdots \a_r}$ defined here agrees with the corresponding ones defined in~\cite[Definition 4.1]{GDKD}.
\end{remark}

For any feasible covector $Y$, the zero set $z(Y) \subset E$ is an independent set of $M$, so there are natural quotient maps $k[M]\to\wt R_{\a_1\cdots \a_r}$.  Notice that for any $\b \in \FC$ there is also a natural quotient map $\wt R_{\a_1\cdots \a_r}\to \wt R_{\a_1\cdots \a_r \b}$ compatible with the maps from $k[M]$. Furthermore, the quotient $k[M]\to\wt R_{\a_1\cdots \a_r}$ makes $\wt R_{\a_1\cdots\a_r}$ a $\Sym U$-module.

\begin{lem}\label{lem-size}
For $\a_1,\dots,\a_r\in\PC$, let $Y=\a_1\wedge\cdots\wedge\a_r$.
The ring $\wt R_{\a_1\cdots \a_r}$ is a free $\Sym U$-module whose rank is equal to the number of feasible cocircuit faces of $Y$. 
\end{lem}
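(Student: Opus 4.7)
The plan is to realize $\wt R_{\a_1\cdots \a_r}=k[z(\D_{\a_1\cdots\a_r})]$ as the face ring of a pure Cohen-Macaulay simplicial complex of dimension $d-1$, and then invoke Stanley's theorem (\cite[Theorem III.2.5]{StanleyComb}): the face ring of such a complex is a free module over any l.s.o.p.\ of size $d$ of rank equal to the number of facets.

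First I would identify the combinatorial structure of $z(\D_{\a_1\cdots \a_r})$. The maximal elements of $\D_{\a_1\cdots \a_r}=\fllv^\FC(Y)\setminus\{0\}$ are the feasible cocircuit faces of $Y$ (the atoms of $\flem^\FC(Y)$ become the relevant maxima in $\fllv^\FC(Y)\setminus\{0\}$). For each such cocircuit $X$, the zero set $z(X)$ is a basis of $M$ by Lemma~\ref{lem-BasFCocirc} applied to $X$ as a feasible cocircuit of $\NC$, so the facets of $z(\D_{\a_1\cdots\a_r})$ are these bases and the complex is pure of dimension $d-1$. The map $X\mapsto z(X)$ is bijective onto the set of facets: if $z(X_1)=z(X_2)$ then $X_1=\pm X_2$, and $-X$ is infeasible whenever $X$ is. Moreover, $U$ will be automatically an l.s.o.p.\ for $\wt R_{\a_1\cdots\a_r}$: since every facet is a basis of $M$, the parameter-space hypothesis for $k[M]$ gives that the projection $U\to\mathrm{Span}\{t_i\mid i\in z(X)\}$ is an isomorphism for each facet $z(X)$, which is exactly the criterion (\cite[Lemma 2.4(a)]{StanleyComb}) for an l.s.o.p.\ of $\wt R_{\a_1\cdots\a_r}$. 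Thus once Cohen-Macaulay-ness is established, Stanley's theorem yields freeness over $\Sym U$ with the claimed rank.

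The crux is proving $z(\D_{\a_1\cdots \a_r})$ is Cohen-Macaulay. The set $z(Y)$ is contained in every facet (since $X\leq Y$ in $\flem$ implies $z(X)\supseteq z(Y)$) and is independent in $M$ (being a subset of any basis $z(X)$), so the complex factors as a join $\sigma_{z(Y)} * \Gamma$ where $\Gamma$ is the link of $z(Y)$ in $z(\D_{\a_1\cdots\a_r})$; the face ring of a join is a tensor product of face rings, so it suffices to show $\Gamma$ is CM. For this I will invoke Lemma~\ref{lem-feas Las Vergnas}: the geometric realization $\|\fllv^\FC(Y)\setminus\{0,Y\}\|$ is a PL ball (if $Y^\infty\neq 0$) or sphere (if $Y^\infty=0$) of dimension $\r(Y)-2=\dim\Gamma$. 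The plan is to adapt the recursive coatom ordering of $\flem^\FC(Y)$ from the proof of that lemma (which comes from \cite[Proposition 4.5.4]{OMbook}) to produce a shelling of $\Gamma$ whose shelling order refines the induced ordering of the feasible cocircuit faces of $Y$.

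The hardest part will be this last step: Lemma~\ref{lem-feas Las Vergnas} is a statement about the realization of the Las Vergnas poset, i.e., about the order complex of $\fllv^\FC(Y)\setminus\{0,Y\}$, whereas $\Gamma$'s face poset genuinely differs from $\fllv^\FC(Y)\setminus\{0,Y\}$ — a proper subset $T\subsetneq z(X)\setminus z(Y)$ of a facet need not arise as $z(X')\setminus z(Y)$ for any feasible covector face $X'$ of $Y$. Bridging the two combinatorial structures will require either constructing the shelling of $\Gamma$ directly from the recursive coatom ordering of $\flem^\FC(Y)$, or inductively verifying Reisner's criterion for $\Gamma$ by applying Lemma~\ref{lem-feas Las Vergnas} to suitable contractions $\PB/z(Y')$ arising as $Y'$ ranges over minimal feasible faces of $Y$ with prescribed zero sets.
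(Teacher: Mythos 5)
Your overall skeleton agrees with the paper's: the facets of $z(\D_{\a_1\cdots\a_r})$ are the zero sets of the feasible cocircuit faces of $Y$ (bases of $M$, by Lemma~\ref{lem-BasFCocirc}), the parameter-space hypothesis makes $U$ an l.s.o.p.\ for the face ring, and Cohen--Macaulayness then gives freeness over $\Sym U$ of rank equal to the number of facets (the paper invokes Theorem I.5.10 and Lemma III.2.4 of Stanley rather than Theorem III.2.5, which is the shellability statement, but that is a citation detail). The genuine gap is that the crux --- Cohen--Macaulayness --- is precisely the step you leave open, and the obstruction you cite to justify a harder route is not actually there. Under the genericity hypothesis, every subset $T\subseteq z(X)\setminus z(Y)$ of a facet \emph{does} arise as $z(X')\setminus z(Y)$ for a feasible covector face $X'$ of $Y$: since $z(X)$ is a $d$-element independent set, the covectors of $\NC$ having the feasible cocircuit $X$ as a face restrict to \emph{all} sign vectors on $z(X)$, so for any $z(Y)\subseteq S\subseteq z(X)$ there is a covector $W\geq X$ with $W(i)=Y(i)$ for $i\in z(X)\setminus S$ and $W(i)=0$ for $i\in S$; one checks $W\circ Y=Y$, $W(g)=+$, and $z(W)=S$. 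This is exactly what the paper encodes in the assertion, after Definition~\ref{def-feaslattices}, that genericity makes $\fllv^\FC(Y)$ the face lattice of a pure simplicial complex. Consequently the face poset of your link $\Gamma$ of $z(Y)$ is isomorphic to $\fllv^\FC(Y)\setminus\{0\}$ (with $Y$ as the empty face), so the order complex of $\fllv^\FC(Y)\setminus\{0,Y\}$ is the barycentric subdivision of $\Gamma$; hence $\|\Gamma\|$, and therefore $\|z(\D_{\a_1\cdots\a_r})\|$ after the $|z(Y)|$-fold cone, is a PL ball or sphere directly by Lemma~\ref{lem-feas Las Vergnas}, and Cohen--Macaulayness follows from the Reisner--Munkres topological criterion. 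This is the paper's argument.

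As written, your plan instead defers to either (a) a shelling of $\Gamma$ extracted from the recursive coatom ordering of $\flem^\FC(Y)$, or (b) an inductive verification of Reisner's criterion, and carries out neither. Route (a) is not routine: the recursive coatom ordering shells the \emph{order complex}, and shellability does not descend from a subdivision to the original complex; indeed the paper only claims shellability of $z(\D_{\a_1\cdots\a_r})$ under the Euclidean hypothesis (Remark~\ref{rmk-face ring basis}), whereas Lemma~\ref{lem-size} must hold for all generic programs since it feeds into Theorems~\ref{thm-Kgroup} and~\ref{thm-numKoszul}. Route (b) could be completed by identifying links of faces of $z(\D_{\a_1\cdots\a_r})$ with the analogous complexes in contractions $\PB/z(X')$, but that identification is exactly the poset comparison you declared problematic, and once it is available the direct homeomorphism argument above already finishes the proof. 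So what is missing is not a new shelling but the observation that the two combinatorial structures you were trying to bridge coincide under genericity.
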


\begin{proof}
Lemma \ref{lem-feas Las Vergnas} tells us that $\|\fllv^\FC(Y) \backslash \{0,Y\}\|=\|\D_{\a_1\dots\a_r}\|$ is a $(\r(Y)-2)$-sphere or $(\r(Y)-2)$-ball. If $z(Y)=\varnothing$, then the posets $z(\D_{\a_1\cdots\a_r})$ and $\D_{\a_1\cdots\a_r}$ are isomorphic. More generally, the geometric realization $\|z(\D_{\a_1\cdots\a_r})\|$ is the $|z(Y)|$-fold cone over $\|\D_{\a_1\cdots\a_r}\|$.  In any case, $\|z(\D_{\a_1\cdots\a_r})\|$ is either a $(d-1)$-ball or $(d-1)$-sphere. 

By results of Hochster, Reisner, and Munkres (cf \cite[Section II.4]{StanleyComb}) it follows that $\wt R_{\a_1\cdots\a_r} = k[z(\D_{\a_1\cdots\a_r})]$ is a Cohen-Macaulay ring. Hence~\cite[Theorem I.5.10]{StanleyComb} implies that $\wt R_{\a_1\cdots\a_r}$ is a free module of finite rank over the symmetric algebra of any parameter space. Here $U$ is a parameter space for $\wt R_{\a_1\cdots\a_r}$ by \cite[Lemma III.2.4]{StanleyComb} because the composition $U\hookrightarrow k^E\fib k^{z(X)}$ is an isomorphism for any cocircuit $X\in\flem^\FC(Y)$.  Also by \cite[Lemma III.2.4]{StanleyComb}, the rank is equal to the number of maximal simplices of $z(\D_{\a_1\cdots\a_r})$, which are in bijection with the cocircuits of $\flem^\FC(Y)$.
\end{proof}

\begin{remark}\label{rmk-face ring basis}
While we will not need it in what follows, we note that if $\PB$ is Euclidean one can prove that $z(\D_{\a_1\cdots\a_r})$ is in fact shellable and then \cite[Theorem III.2.5]{StanleyComb} gives an explicit basis of $\wt R_{\a_1\cdots\a_r}$ as a free $\Sym U$-module. 
\end{remark}

For any $\a,\b\in\PC$, we let
\[ d_{\a\b}:=|\{i\mid\a(i)\neq\b(i)\}|,\]
which coincides with the length of any taut path from $\a$ to $\b$. For $\a,\b,\g$, we let 
\[ S^\b_{\a\g}:=\{i\mid \a(i)=\g(i)\neq\b(i)\}, \]
which is the set of $i\in E$ such that the concatenation of a taut path $\a$ to $\b$ and a taut path $\b$ to $\g$ changes the $i$-th coordinate exactly twice.

For a graded vector space (or module) $M$ and integer $k$, we write $M\langle k \rangle$ to denote the graded vector space shifted down by $k$, that is $(M\langle k \rangle)_i = M_{i+k}$.

\begin{defi}
Let $\wt B=\wt B(\PB)$, as a graded vector space in non-negative degree, be defined as 
	\[ \wt B:=\bigoplus_{(\a,\b)\in\PC\times \PC}\wt R_{\a\b}\langle -d_{\a\b} \rangle \]
	where the variables $t_i$ are given degree 2. 
	
Following~\cite{GDKD} we define a multiplication $\star \colon \wt B\otimes \wt B\to \wt B$ as zero on $\wt R_{\a\b}\otimes \wt R_{\d\g}$ if $\b\neq\d$ and for $\a,\b,\g\in\PC$,
by the composition 
	\[\wt R_{\a\b}\otimes \wt R_{\b\g}\to \wt R_{\a\b\g}\otimes \wt R_{\a\b\g}\to \wt R_{\a\b\g} \stackrel{f_{\a\b\g}}{\to} \wt R_{\a\g} \]
where the first map is the product of the restrictions, the second is multiplication in $\wt R_{\a\b\g}$, and the third map $f_{\a\b\g}$ is induced by multiplication by $t_{S^\b_{\a\g}}$.  
\end{defi}

\begin{lem}
The map $f_{\a\b\g}$ is well-defined.
\end{lem}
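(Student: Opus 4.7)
The plan is to unpack what well-definedness means in terms of the defining ideals of the face rings and then reduce the claim to an elementary sign-comparison on covectors.

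Concretely, $\wt R_{\a\b\g}$ is the quotient of $k[t_1,\dots,t_n]$ by the ideal generated by $\{t_S \mid S\notin z(\D_{\a\b\g})\}$ and similarly for $\wt R_{\a\g}$. Multiplication by $t_{S^\b_{\a\g}}$ gives a map out of $k[t_1,\dots,t_n]$, and to see that it descends to $f_{\a\b\g}\colon \wt R_{\a\b\g}\to \wt R_{\a\g}$, it suffices to check that whenever $t_S=0$ in $\wt R_{\a\b\g}$ (i.e.\ $S\notin z(\D_{\a\b\g})$), the image $t_{S\cup S^\b_{\a\g}}$ vanishes in $\wt R_{\a\g}$ (i.e.\ $S\cup S^\b_{\a\g}\notin z(\D_{\a\g})$). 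Contrapositively, the task is to show that for any $T\in z(\D_{\a\g})$ with $T\supseteq S^\b_{\a\g}$, we have $T\in z(\D_{\a\b\g})$; then since $z(\D_{\a\b\g})$ is downward closed, any $S\subseteq T$ lies in $z(\D_{\a\b\g})$ as well.

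The heart of the argument is the following pointwise observation about covectors: if $X\in\LC$ is a feasible covector face of both $\a$ and $\g$ and if $S^\b_{\a\g}\subseteq z(X)$, then $X$ is automatically a face of $\b$. Indeed, for any $i$ with $X(i)\neq 0$, the face conditions force $X(i)=\a(i)=\g(i)$; should $\b(i)$ disagree with this common value, then by definition $i\in S^\b_{\a\g}\subseteq z(X)$, contradicting $X(i)\neq 0$. Hence $\b(i)=\a(i)=X(i)$ for every $i\in \un X$, so $X$ is a face of $\a\wedge\b\wedge\g$.

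Combining these two steps gives the result: a witness $X$ for $T\in z(\D_{\a\g})$ (so $T\subseteq z(X)$ with $X$ a feasible face of $\a\wedge\g$) that satisfies $S^\b_{\a\g}\subseteq z(X)$ is automatically a feasible face of $\a\wedge\b\wedge\g$, putting $T$ in $z(\D_{\a\b\g})$. The only mildly subtle point I anticipate is bookkeeping the distinction between $S\notin z(\D_{\a\b\g})$ and $S\cup S^\b_{\a\g}\notin z(\D_{\a\g})$ — that is, remembering to pass from $S$ to $T=S\cup S^\b_{\a\g}$ and then to use downward closure to get back to $S$ — but this is purely formal and no oriented-matroid input beyond the definitions of $\D_{\a\b\g}$ and $S^\b_{\a\g}$ is needed.
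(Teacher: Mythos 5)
Your proof is correct and is essentially the paper's argument run in the contrapositive: the paper fixes $Y'\in\D_{\a\g}$ and splits into the cases $Y'\notin\D_\b$ (where the same pointwise sign comparison shows some $i\in S^\b_{\a\g}$ has $Y'(i)\neq 0$) and $Y'\in\D_{\a\b\g}$, which is the same content as your key observation that a witness $X\in\D_{\a\g}$ with $S^\b_{\a\g}\subseteq z(X)$ is automatically a face of $\b$ and hence lies in $\D_{\a\b\g}$. No gap; the two write-ups differ only in bookkeeping.
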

\begin{proof} 
It suffices to check that if $t_S = 0$ in $\wt R_{\a\b\g}$, then $t_{S^\b_{\a\g}} t_S= 0$ in $\wt R_{\a\g}$.  In other words, we want to show if $S\not\subset z(Y)$ for all $Y \in \D_{\a\b\g}$, then $S \cup S^\b_{\a\g}\not\subset z(Y')$ for all $Y' \in \D_{\a\g}.$  Suppose $Y' \in \D_{\a\g}$ and either: (1) $Y' \not\in \D_{\b}$ or (2) $Y' \in \D_{\b}$.

In the first case, $Y' \not\in \D_\b$ means that there exists an $i$ such that $Y'(i)=-\b(i)$, in particular $Y'(i)\neq 0$ and $i\not\in z(Y')$.  As $Y'$ is a face of $\a$ and $\g$, it follows that $Y'(i)=\a(i)=\g(i)$ and so $i \in S^\b_{\a\g}$. Thus $S \cup S^\b_{\a\g}\not\subset z(Y')$.  

In the second case, we have $Y' \in \D_{\a\b\g}$ hence by assumption $S \not\subset z(Y')$ and so again $S \cup S^\b_{\a\g} \not\subset z(Y')$.
\end{proof}

\begin{prop}
The multiplication $*$ gives $\wt B$ the structure of a graded ring.
\end{prop}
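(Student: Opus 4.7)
The plan is to verify three axioms of a graded ring for the multiplication $\star$: grading compatibility, existence of a two-sided identity, and associativity. Well-definedness of each intermediate product is already guaranteed by the preceding lemma, applied in each of the instances below.

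For grading, one needs $\deg_{\mathrm{sh}}(x\star y)=\deg_{\mathrm{sh}}(x)+\deg_{\mathrm{sh}}(y)$ for $x\in \wt R_{\a\b}$ and $y\in\wt R_{\b\g}$. Since multiplication by $t_{S^\b_{\a\g}}$ raises the internal degree by $2|S^\b_{\a\g}|$ while the shift brackets $\langle -d_{\bullet\bullet}\rangle$ account for an adjustment of $d_{\a\b}+d_{\b\g}-d_{\a\g}$, this reduces to the numerical identity $d_{\a\b}+d_{\b\g}-d_{\a\g}=2|S^\b_{\a\g}|$, which is immediate coordinate-by-coordinate: the only indices contributing to either side are those $i\in E_n$ with $\a(i)=\g(i)\neq\b(i)$, each contributing exactly $2$ to both sides. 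For the identity, take $1:=\sum_{\a\in\PC}1_\a$ where $1_\a\in\wt R_{\a\a}$ is the unit; since $\a\wedge\a\wedge\b=\a\wedge\b$ we have $\wt R_{\a\a\b}=\wt R_{\a\b}$, and $S^\a_{\a\b}=\varnothing$, so $1_\a\star y=y$ for all $y\in\wt R_{\a\b}$, and $y\star 1_\b=y$ analogously.

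The main step is associativity. Given $x\in\wt R_{\a\b}$, $y\in\wt R_{\b\g}$, $z\in\wt R_{\g\d}$, pick lifts $\tilde x,\tilde y,\tilde z\in k[t_1,\ldots,t_n]$. Unwinding the definition of $\star$ twice (restriction to the triple-intersection face ring, multiplication, then lift-and-multiply by the appropriate $t_S$), the two bracketings produce elements of $\wt R_{\a\d}$ represented respectively by the polynomials
\[t_{S^\b_{\a\g}}\, t_{S^\g_{\a\d}}\,\tilde x\tilde y\tilde z \qquad\text{and}\qquad t_{S^\g_{\b\d}}\,t_{S^\b_{\a\d}}\,\tilde x\tilde y\tilde z.\]
Hence associativity reduces to the monomial identity $t_{S^\b_{\a\g}}t_{S^\g_{\a\d}}=t_{S^\g_{\b\d}}t_{S^\b_{\a\d}}$ in $k[t_1,\ldots,t_n]$. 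Coordinate by coordinate, this amounts to checking that, for each $i\in E_n$, the number of sets from the pair $\{S^\b_{\a\g},S^\g_{\a\d}\}$ containing $i$ equals the number from $\{S^\g_{\b\d},S^\b_{\a\d}\}$. This is a straightforward case analysis on the sixteen sign patterns of $(\a(i),\b(i),\g(i),\d(i))$, which by the $+\leftrightarrow -$ symmetry reduces to eight. The only conceptual subtlety throughout is checking that the "lift, multiply, project" prescription for $\star$ is compatible with composition, and this is precisely what the preceding well-definedness lemma supplies, applied in turn to the triples $(\a,\b,\g)$, $(\a,\g,\d)$, $(\b,\g,\d)$, and $(\a,\b,\d)$.
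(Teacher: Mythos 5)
Your proof is correct and follows essentially the same route as the paper: both reduce associativity to the monomial identity $t_{S^\b_{\a\g}}\cdot t_{S^\g_{\a\d}}=t_{S^\g_{\b\d}}\cdot t_{S^\b_{\a\d}}$ (using the well-definedness lemma for the four triples) and compatibility of gradings to $d_{\a\b}+d_{\b\g}-d_{\a\g}=2|S^\b_{\a\g}|$. The only differences are cosmetic: you check the monomial identity by a sign-pattern case analysis where the paper counts the double sign changes of each coordinate along a concatenation of taut paths $\a\to\b\to\g\to\d$, and you additionally verify unitality, which the paper leaves implicit.
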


\begin{proof}
We need to check associativity and compatibility with grading. For associativity, the map $\wt R_{\a\b}\otimes \wt R_{\b\g}\otimes \wt R_{\g\d}\to \wt R_{\a\d}$ given by $x\otimes y\otimes z\mapsto (x\star y)\star z$ is equal to the map given by restricting each of the components to $\wt R_{\a\b\g\d}$, multiplying in order, and then multiplying by $t_{S^\b_{\a\g}}\cdot t_{S^\g_{\a\d}}$ to get back into $R_{\a\d}$. For $x\star(y\star z)$, the only change is that we multiply by $t_{S^\g_{\b\d}}\cdot t_{S^\b_{\a\d}}$. To show
  \[t_{S^\b_{\a\g}}\cdot t_{S^\g_{\a\d}}=t_{S^\g_{\b\d}}\cdot t_{S^\b_{\a\d}},\]
note that the power of $t_i$ appearing on each side is equal to the number of times a path given by the concatenation of taut paths from $\a$ to $\b$, $\b$ to $\g$, and $\g$ to $\d$ changes the $i$-th coordinate twice.

For the compatibility of gradings, note that $$d_{\a\b}+d_{\b\g}-d_{\a\g}=2|S^\b_{\a\g}|.$$ It follows that multiplication $\star$ gives a graded preserving map
  \[\wt R_{\a\b}\langle-d_{\a\b} \rangle \otimes_k\wt R_{\b\g} \langle -d_{\b\g} \rangle \to\wt R_{\a\g}\langle -d_{\a\g} \rangle.\] 
(Recall that $\deg(t_i)=2$ for all $i$.)
\end{proof}

Note that the map $\z\colon \Sym (k^E)\to\wt B$ given by the composition
\[\Sym (k^E)\into\bigoplus_{\a\in\PC}\Sym (k^E)\fib\bigoplus_{\a\in\PC} \wt R_{\a\a} \into \wt B\]
makes $\wt B$ into a graded $\Sym (k^E)$-algebra.  Moreover, this map factors through the projection $\Sym (k^E) \to k[M]$ and so we may also view $\wt B$ as a graded $k[M]$-algebra.

Let	\[ R_{\a_1\cdots \a_r}:= \wt R_{\a_1\cdots \a_r}\otimes_{\Sym U} k =\wt R_{\a_1\cdots \a_r}\otimes_{k[M]} k[M]/(U). \]

We define $B=B(\PB,U)$ via
\[ B:=\wt B\otimes_{\Sym U} k=\wt B\otimes_{k[M]} k[M]/(U).\]

Note, $B$ is itself a graded ring whose multiplication we will also denote by $\star$.

\begin{thm}\label{thm-A-B}
There is a natural isomorphism $\wt A(\PB^\vee)\to \wt B(\PB)$ as graded $\Sym (k^E)$-algebras, and this induces an isomorphism $A(\PB^\vee,U^\perp)\simeq B(\PB,U)$ as graded rings.
\end{thm}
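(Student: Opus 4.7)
I would construct an isomorphism $\Phi\colon \wt A(\PB^\vee)\to \wt B(\PB)$ by specifying it on generators:
\[
e_\a\mapsto 1_{\wt R_{\a\a}}\;(\a\in\PC),\qquad p(\a,\b)\mapsto 1_{\wt R_{\a\b}}\in \wt R_{\a\b}\langle -1\rangle\;(\a\leftrightarrow\b\text{ in }\PC),\qquad t_i\mapsto \zeta(t_i),
\]
sending idempotents and edges involving vertices outside $\PC$ to zero. The induced statement $A(\PB^\vee,U^\perp)\simeq B(\PB,U)$ follows by tensoring $\Phi$ with $\Sym(k^n/U)$ over $\Sym(k^n)$, since $A(\PB^\vee,U^\perp)=\wt A(\PB^\vee)\otimes_{\Sym(k^n)}\Sym(k^n/U)$ and $B(\PB,U)=\wt B(\PB)\otimes_{\Sym U}k$ agrees with $\wt B(\PB)\otimes_{\Sym(k^n)}\Sym(k^n/U)$.

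To check well-definedness of $\Phi$, one verifies the relations (A1)--(A3) for $\wt A(\PB^\vee)$. Relation (A1) is by construction. For (A2), given a square $\a,\g,\b,\d\in\BC$, the key observation is that if $\a,\b\in\PC$ then $\a\wedge\b$ is feasible (meet of feasible covectors has $g$-coordinate $+$), so all four surrounding topes lie in $\FC$; thus any such vertex in $\BC$ is in fact in $\PC$, and under $\Phi$ each taut path maps to $1\in\wt R_{\a\b}$ because the relevant sets $S^\g_{\a\b}$ and $S^\d_{\a\b}$ are empty. For (A3), $\Phi(p(\a,\a^i,\a))$ multiplies out in $\wt R_{\a\a}$ to $t_i$ via the formula for $\star$ (here $S^{\a^i}_{\a\a}=\{i\}$), matching $\Phi(e_\a t_i)=\zeta(t_i)|_{\wt R_{\a\a}}$. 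The delicate subcase arises when $\a\in\PC$ but $\a^i\in\BC\setminus\PC$: then (A1) kills $e_{\a^i}$, so $p(\a,\a^i,\a)=0$ and (A3) forces $e_\a t_i=0$, so we must verify $t_i=0$ in $\wt R_{\a\a}$. The crux is that $\a^i$ (bounded but not feasible) does not extend to a feasible tope of $\NC$, from which one deduces that the putative feasible facet of $\a$ in direction $i$ is not a covector, and a parallel analysis of higher-codimension cells shows no feasible face of $\a$ contains $i$ in its zero set, yielding $\{i\}\notin z(\D_{\a\a})$.

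For surjectivity, each component $e_\a\wt B(\PB)e_\b=\wt R_{\a\b}\langle -d_{\a\b}\rangle$ is generated as a $\Sym(k^n)$-module by $1\in\wt R_{\a\b}$ (since $\wt R_{\a\b}$ is a quotient of $k[M]$), and one checks inductively that a taut path in $Q^\vee$ from $\a$ to $\b$ maps under $\Phi$ to $1\in\wt R_{\a\b}$ (all intermediate $S^\gamma$-sets vanish along a taut path). For injectivity, I would compare $\Sym U$-module ranks componentwise: Lemma~\ref{lem-size} computes the rank of $e_\a\wt B(\PB)e_\b$ over $\Sym U$ as the number of feasible cocircuit faces of $\a\wedge\b$, and a dual analysis of $e_\a\wt A(\PB^\vee)e_\b$---via Proposition~\ref{prop-tautification} (tautification), Corollary~\ref{cor-bij}, and Complementary Slackness (Proposition~\ref{prop-comp})---yields an $\Sym U$-basis of the same size. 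Equality of ranks combined with surjectivity gives the isomorphism.

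\textbf{Main obstacle.} The principal technical difficulty is the delicate subcase of (A3) above: propagating the non-existence of a feasible tope with restriction $\a^i$ through the covector lattice of $\a$ to conclude that every feasible face of $\a$ satisfies $i\notin z$. This amounts to a careful analysis of which sign specifications are realized as covectors of $\NC$ under the genericity assumption. The injectivity step is similarly combinatorial, requiring an explicit $\Sym U$-basis construction on the $\wt A(\PB^\vee)$ side matching the topological basis from the face ring of the feasible Las Vergnas lattice on the $\wt B(\PB)$ side.
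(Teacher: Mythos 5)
Your overall architecture is the same as the paper's (same map on generators, check the relations, then surjectivity plus a size comparison), but two steps as written do not work. First, your verification of (A2) rests on a false claim: the meet of two feasible covectors need \emph{not} be feasible, and it is not true that every vertex of the square lying in $\BC$ must lie in $\PC$. The paper points out right after Definition~\ref{def-covect} that the meet of feasible covectors lies in $\AC$ only when they share a common feasible cocircuit face, and otherwise lies in $\AC^\infty$; correspondingly, for $\a,\g\in\PC$ differing in coordinates $i,j$, the intermediate sign vector $\a^i$ can perfectly well lie in $\FC^\vee\setminus\PC^\vee=\BC\setminus\FC$ (bounded but infeasible for $\PB$). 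In that case the relation (A2) of $\wt A(\PB^\vee)$ forces $p^\vee(\a,\a^j,\g)=0$, so for $\phi$ to be well defined you must prove that $1_{\a\a^j}\star 1_{\a^j\g}=0$ in $\wt B(\PB)$; the paper does this by showing that any common face of $T_\a$ and $T_\g$ would have to be a face of $\a^i$, which is infeasible, whence $\wt R_{\a\g}=0$. Your proposal simply denies this case exists, so this part of the well-definedness check is missing. (Your treatment of the delicate subcase of (A3), by contrast, is the same as the paper's and is fine in outline.)

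Second, the injectivity step is the real content of the theorem and your sketch has no mechanism for it. Asserting that tautification, Corollary~\ref{cor-bij} and Complementary Slackness ``yield a $\Sym U$-basis of the same size'' for $e_\a\wt A(\PB^\vee)e_\b$ begs the question: a priori you do not know this module is free (or even generated by that few elements) over $\Sym U$; that is essentially equivalent to the isomorphism you are trying to prove, since on the $B$-side freeness comes from Cohen--Macaulayness of the face rings (Lemma~\ref{lem-size}), and no corresponding structure is available on the $A$-side independently of the theorem. What is actually needed is an upper bound on $e_\a\wt A(\PB^\vee)e_\b$, i.e.\ a vanishing statement: if a monomial $t_S$ with $S$ not contained in $z(Y)$ for any $Y\in\D_{\a\b}$ is multiplied against a taut path, the result is zero in $\wt A(\PB^\vee)$. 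The paper proves exactly this by constructing a graded $\Sym(k^n)$-module surjection $\wt R_{\a\b}\to e_\a\wt A(\PB^\vee)e_\b$ (sending $1$ to a taut path), where the key point is to produce, from the contraction $\PB/S$, a bounded-but-infeasible sign vector $\g$ agreeing with $\a$ off $S$ and then to invoke Corollary~\ref{cor-thru}. Your proposal contains no counterpart to this argument, so even granting surjectivity of $\Phi$ you cannot conclude equality of graded dimensions.
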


In particular the theorem implies that $\wt A(\PB^\vee)$ is in fact a $k[M]$-module.

\begin{proof}
We define a map $\phi\colon \wt A(\PB^\vee)\to \wt B(\PB)$ by 
\begin{enumerate}
\item $e_\alpha\mapsto 1_{\a\a}\in \wt R_{\a\a}$ for all $\a\in\PC$,
\item $p^\vee(\a,\b)\mapsto 1_{\a\b}\in\wt R_{\a\b}\langle -1\rangle$ for adjacent $\a,\b\in\PC$, and 
\item $f\mapsto \z(f)$ for all $f\in \Sym (k^E)=k[t_i~|~i\in E]$.
\end{enumerate}
We will first show that this gives a well-defined homomorphism. We then prove surjectivity and conclude with injectivity.

We must check that the image of the relations ($A2$) and ($A3$) for $\wt\AC(\PB^\vee)$ hold in $\wt B(\PB)$. 

For ($A2$), we consider $\a\in\PC^\vee=\PC$ and $i\neq j$ in $E$ such that $\gamma=(\a^i)^j\in\PC^\vee$ and $\a^i,\a^j\in \FC^\vee$. This means that $S^{\a^i}_{\a\g}=\varnothing=S^{\a^j}_{\a\g}$, so that $t_{S^{\a^i}_{\a\g}}=1$.

If $\a^i$ and $\a^j$ are both in $\PC^\vee=\PC$, then we have $$1_{\a\a^i}\star 1_{\a^i\g}=1_{\a\g}=1_{\a\a^j}\star 1_{\a^j\g}.$$ 

Otherwise, by relabelling $i$ and $j$ if necessary, we may assume without loss of generality that $\a^i \not\in \PC^\vee$.  This means that in $\wt A(\PB^\vee)$, we  have 
\[0=p^\vee(\a,\a^i,\g)=p^\vee(\a,\a^j,\g).\]
On the other hand it also means that $\a^i \in \FC^\vee \backslash \PC^\vee = \BC \backslash \FC$.  But any common face of $\a$ and $\g$ must also be a face of $\a^i$, which is infeasible, hence $\a$ and $\g$ have no common feasible faces and so $\wt R_{\a\g}=0$, which means both products $1_{\a\a^i}\star 1_{\a^i\g}$ and $1_{\a\a^j}\star 1_{\a^j\g}$ must be zero.

We now check the relation (${A3}$). Let $\a\in\PC^\vee$ and $\a^i\in\FC^\vee$. 

If $\a^i\in\PC^\vee$, we have that 
	\eq{\phi(e_\a t_i)
		=1_{\a\a}\star \z(t_i)
		& = (t_i \in \wt R_{\a\a}) \\
		&=(t_{S_{\a\a}^{\a^i}} \in \wt R_{\a\a} )\\
		&=1_{\a(\a^i)}\star1_{(a^i)\a}
		=\phi(p(\a,\a^i,\a)).}

If $\a^i\not\in\PC^\vee$, then $\a^i\in\FC^\vee\backslash\PC^\vee$ and so $e_\a t_i = e_\a p^\vee(\a,\a^i,\a)=0$ in $\wt A(\PB^\vee)$. On the other hand, $\a^i \in \BC\backslash\PC$ implies that $i$ is not in the zero set of any feasible face of the feasible tope $\a$ of $\NC=\wt\MC\backslash f$, so $t_i = 0\in \wt R_{\a\a}$. This completes the proof that the relation (${A3}$) is satisfied.

Thus, the homomorphism $\phi$ is well-defined.

\medskip

To see that $\phi$ is surjective, note that $\phi(e_\a t_i)=1_{\a\a}t_i$ for all $i\in E$ and $\alpha\in\PC^\vee=\PC$. This means that $\bigoplus_\a \wt R_{\a\a}\subset \wt B$ is contained in the image of $\phi$. Since the natural quotient $\wt R_{\a\a}\to\wt R_{\a\a\b}=\wt R_{\a\b}$ is given by multiplication by $1_{\a\b}=\phi(p^\vee_{\a,\b})$ for any $\b\in\PC$ and $p^\vee_{\a,\b}$ representing a taut path from $\a$ to $\b$ in the quiver $Q^\vee$ associated to $\PB^\vee$, we have that $\phi$ is surjective.

\medskip

Finally, we must prove that $\phi$ is injective.  It suffices to show that the dimension of $\wt R_{\a\b}$ in each degree is at least the dimension of the corresponding graded part of $e_\a \wt A(\PB^\vee) e_\b$.   To do so, we construct a surjection of graded $\Sym(k^E)$-modules $\wt R_{\a\b} \to e_\a \wt A(\PB^\vee) e_\b$.

Let $\chi: \Sym(k^E)=k[t_i ~|~i\in E] \to e_\a \wt A(\PB^\vee) e_\b$ be the map that takes $1$ to a taut path $p$ from $\a$ to $\b$ in the quiver $Q^\vee$.  By Proposition~\ref{prop-tautification} and Proposition~\ref{prop-taut} this map is surjective.  It remains to show that $\chi$ factors through $\wt R_{\a\b}$, which is equivalent to showing that for any $S \not\subset z(Y)$ for all $Y \in \D_{\a\b}$, we have $\chi(t_S)=p \cdot t_S=0$. 
Notice that $S\not\subset z(Y)$ for all $Y\in\D_{\a\b}$ if and only if $S\cup S_{\a\a}^\b\not\subset z(Y)$ for all $Y\in\D_{\a}$, so we may reduce to the case $\a=\b$ if we replace $S$ with $S\cup S_{\a\a}^\b$.

By Corollary \ref{cor-thru} it suffices to prove the existence of $\g\in \FC^\vee\backslash \PC^\vee=\BC\backslash\PC$ such that $\g(i)=\a(i)$ for all $i\not\in S$. Since $S\not\subset z(Y)$ for any $Y\in \D_\a$, the image $\overline\a$ of $\a$ in $\PB/S=(\wt\MC/S,g,f)$ is not feasible. However, $\overline\a$ is bounded in $\PB/S$ since any cocircuit face $\overline X\in\AC^\infty(\PB/S)$ of $\overline\a$ comes from a cocircuit face $X\in\AC^\infty(\PB)$ of $\a$, and therefore has $\overline X(f)=X(f)=-$. Hence $\overline\a$ is a feasible but unbounded sign vector in $\PB^\vee\backslash S$, and lifts to at least one $\g\in\FC^\vee\backslash\PC^\vee$ with $\g(i)=\a(i)$ for all $i\not\in S$.
\end{proof}

\section{The center of $B$}\label{section-center} 

We continue to assume $\PB=(\wt\MC,g,f)$ is a generic oriented matroid program and $U \subset k^E$ is a parameter space for $M = \un\MC$. In this section we compute the centers of $\wt B:=\wt B(\PB)$ and $B:=B(\PB,U)$. 

Let $\overline\z$ be the composition 
 \eq{k[M]\hookrightarrow\bigoplus_{\a\in\PC}k[M]\to\bigoplus_{\a\in\PC}\wt R_{\a\a}\hookrightarrow\wt B} 
and recall that this map makes $\wt B$ a graded $k[M]$-algebra. 

\begin{thm}\label{thm-center}
The map $\overline\z\colon k[M]\to\wt B(\PB)$ is injective, and its image is the center of $\wt B(\PB)$. Furthermore, the quotient $\wt B(\PB)\to B(\PB,U)$ induces a surjection of centers and the center of $B(\PB,U)$ is isomorphic to $k[M]/(U)$.
\end{thm}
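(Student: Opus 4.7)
The proof has three key steps: injectivity of $\overline\z$, centrality of its image, and the reverse inclusion that any central element of $\wt B$ lies in the image. The claim for $B$ then follows by adapting the same strategy to the quotient ring.

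For injectivity, $\ker \overline\z$ equals $\bigcap_{\a\in\PC}\ker(k[M]\to\wt R_{\a\a})$, which is an intersection of the monomial ideals $(t_S\mid S\in\Delta,\,S\notin z(\D_{\a\a}))$, and hence is itself monomial. It is trivial because $\bigcup_\a z(\D_{\a\a})=\Delta$: by Corollary~\ref{cor-bij} every basis $b\in\BM$ equals $z(Y_b)$ for the optimal cocircuit of $\mu(b)\in\PC$, and every independent set extends to a basis. For centrality, I would compute both $\overline\z(f)\star x$ and $x\star\overline\z(f)$ directly for $f\in k[M]$ and $x\in\wt R_{\a\b}$: since $S^\a_{\a\b}=\varnothing$, the twist $t_{S^\a_{\a\b}}$ is trivial, and both products reduce to multiplication by the restriction of $f$ in the commutative ring $\wt R_{\a\b}$.

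The substantive step is the reverse inclusion. Commutation of $z\in Z(\wt B)$ with each idempotent $1_{\g\g}$ forces a decomposition $z=\sum_\a z_\a$ with $z_\a\in\wt R_{\a\a}$; commutation with $1_{\a\b}\in\wt R_{\a\b}$ then yields the compatibility $z_\a|_{\wt R_{\a\b}}=z_\b|_{\wt R_{\a\b}}$ for all $\a,\b\in\PC$. It remains to glue $\{z_\a\}$ to an element of $k[M]$. For a monomial $t^u$ with $T:=\supp(u)\in\Delta$, choose any basis $b\supseteq T$ and define $f_u$ to be the coefficient of $t^u$ in $z_{\mu(b)}$; this makes sense because $T\in z(\D_{\mu(b)\mu(b)})$. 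The main obstacle is well-definedness, which requires the following connectedness: any two bases $b_1,b_2\supseteq T$ can be joined by a sequence $b_1=b^{(0)},\dots,b^{(k)}=b_2$ of bases all containing $T$ such that consecutive topes $\mu(b^{(i)}),\mu(b^{(i+1)})$ share a feasible face whose zero set contains $T$. I would extract this from the topology of the contracted program $\PB/T$: by Lemma~\ref{lem-feas Las Vergnas} applied to $\PB/T$, the feasible Las Vergnas face lattice of each bounded feasible tope is a ball or sphere, which together with the connectedness of the tope graph should yield the desired chain. Given well-definedness, $f:=\sum_u f_u\,t^u$ is an element of $k[M]$ mapping to $z$.

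For $B$, the composition $k[M]\xra{\overline\z}\wt B\fib B$ annihilates $(U)$ and factors through a map $k[M]/(U)\to Z(B)$. Running the same three-step analysis inside $B$ shows this map is an isomorphism: central elements of $B$ decompose along the idempotents, the components satisfy the analogous compatibility via the restricted rings $R_{\a\b}=\wt R_{\a\b}\otimes_{\Sym U}k$, and the gluing argument produces a well-defined lift to $k[M]/(U)$. This simultaneously establishes the surjection of centers $Z(\wt B)\twoheadrightarrow Z(B)$ and the isomorphism $Z(B)\cong k[M]/(U)$.
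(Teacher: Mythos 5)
Your skeleton is right as far as it goes: injectivity and centrality are easy exactly as you say, commuting with the idempotents and with the elements $1_{\a\b}$ does reduce surjectivity to a gluing problem, and reducing well-definedness to connecting the topes $\mu(b)$, $b\supseteq T$, through common feasible faces with zero set containing $T$ is a sound reformulation (every other relevant tope is adjacent to some $\mu(b')$ via a feasible cocircuit face). The gap is the connectedness claim itself: it is not a routine consequence of Lemma~\ref{lem-feas Las Vergnas} applied to $\PB/T$ plus connectedness of the tope graph, because neither input sees the \emph{boundedness} constraint, and that is where all the difficulty sits. A path in the (feasible) tope graph of $\PB/T$ produces feasible covectors $Y_0,\dots,Y_m$ of $\NC$ with $z(Y_j)=T$, consecutive ones sharing a feasible face; to convert this into a chain in your graph you must choose for each $j$ a \emph{bounded} feasible tope of $\PB$ having $Y_j$ as a face. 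But a feasible covector vanishing on $T$ need not be a face of any bounded feasible tope (this is exactly why $\AC^\PC\subsetneq\AC$ in Section~\ref{section-center}); already for a realizable rank-$3$ program --- three lines forming an ``H'' (say $x=0$, $x=1$, $y=0$ with $\xi=x+\epsilon y$) and $T$ the index of the horizontal line --- the edge of $y=0$ with $x>1$ is a face of unbounded chambers only. So the chain must be routed to avoid such covectors, and nothing in your two cited ingredients does this; your claim carries essentially the full content of the surjectivity statement. This is precisely where the paper's proof does real work: it first computes the center of the extended algebra $\wt B_\ext$ over \emph{all} feasible topes, identifying the monomial-by-monomial compatibility system with the top Borel--Moore homology of the regular neighborhood $N_S$ (Lemma~\ref{lem-center homology extended}, Proposition~\ref{prop-tubular}), and then descends from $\AC$ to $\AC^\PC$ by the limit argument of Lemma~\ref{lem-limits}, whose engine is the acyclicity of Proposition~\ref{prop-tubular tope} (contractibility of $\s_Y\setminus N_S$ for non-core $Y$, via the cone over $Y^0$). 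An argument of comparable substance is unavoidable at this point.

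A secondary gap: for $B$ you cannot literally ``run the same three-step analysis,'' since the ideal generated by $U$ is not monomial, so ``the coefficient of $t^u$ in $z_\a$'' is no longer meaningful in $R_{\a\a}$ and coefficientwise gluing breaks down; moreover an inverse limit need not commute with $-\otimes_{\Sym U}k$. The paper gets the $B$-statement from the $\wt B$-statement using that the rings $\wt R_{\a\b}$ are free over $\Sym U$ (Cohen--Macaulayness, Lemma~\ref{lem-size}) and that the relevant complexes are acyclic complexes of free $\Sym U$-modules, which stay acyclic after tensoring with $k$ (the augmented complex $\hat C_\bullet$ in Proposition~\ref{prop-center extended} and its relative version in Lemma~\ref{lem-limits}). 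Some such freeness/flatness input is needed to obtain $Z(B)\cong k[M]/(U)$ and the surjection $Z(\wt B)\to Z(B)$.
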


This is the natural generalization of~\cite[Theorem 4.16]{GDKD} and we imitate the structure of their proof, which makes use of the \emph{extended algebras}:
  \[\wt B_\ext=\wt B_\ext(\PB):=\bigoplus_{(\a,\b)\in\FC\times\FC} \wt R_{\a\b} \langle -d_{\a\b}\rangle
  \quad \text{and} \quad
  B_\ext=B_\ext(\PB,U):=\wt B_\ext\otimes_{\Sym U} k,\]
where the product $\star$ is defined as before but we use all feasible topes, not just the bounded feasible topes. 
Braden--Licata--Proudfoot--Webster first prove their result is true when $\wt B$ and $B$ are replaced by the extended algebras $\wt B_\ext$ and $B_\ext$.  This is done by studying a chain complex whose homology is the center $Z(\wt B_\ext)$. 
To get the theorem for $\wt B$ and $B$, they then use a categorical limit argument.

This section is split into two subsections. In the first subsection we define the necessary notation and lay the topological foundation for the proof. In the second subsection we adapt the arguments of \cite{GDKD} to our setting.

\subsection{The topology of affine space and feasible Edmonds--Mandel face lattices}
When $\PB$ realizable by a polarized arrangement $(V,\eta,\xi)$, it is possible to view $\AC$ as the cells of the coordinate hyperplane arrangement in $\eta+V \subset \RM^n$. This allows one to find tubular neighborhoods of intersections of hyperplanes, and compute the relative cellular Borel-Moore homology of these tubular neighborhoods using the decomposition by cells.   In this section we recall definitions and notation to generalize these notions to our setting.

Recall that $\AC$ and $\AC^\infty$ were defined respectively in Definition~\ref{def-covect} as the feasible and boundary covectors. We also define the \emph{core} of $\AC$ to be 
    \[\AC^0:=\{Y\in\AC\mid Y^\infty=0\}.\] 
For any $i\in E $ and $S\subset E $, let 
    \eq{H_i^\FC:=\{X\in\AC\mid X(i)=0\}
    \quad\text{and}\quad
        H_S^\FC:=\bigcap_{i\in S} H_i^\FC.
        }
Our genericity assumption on $g$ implies that $H_S^\FC\neq\varnothing$ if and only if $S$ is independent in the underlying matroid $M$ of $\MC$, in which case any maximal covector in $H_S^\FC$ has rank $d+1-|S|$. Note that $H_S^\FC$ does not have a cellular interpretation\footnote{As was discussed in the case of $H_\varnothing^\FC= \AC$ following Definition~\ref{def-feaslattices}.} unless $S$ is a basis of $M$, in which case $H_S^\FC$ consists of a single feasible cocircuit.

It is known~\cite[Theorem 4.5.7.i]{OMbook} that $\|\D_{\ord}(\AC)\|$ is a shellable $d$-ball. 
Thus its boundary $\|\D_{\ord}(\AC\backslash\AC^0)\|$ is a PL $(d-1)$-sphere.
In $\|\D_{\ord}(\AC)\|$, we have that $\|\D_{\ord}(H_S^\FC)\|$ is a $(d-|S|)$-ball (when nonempty), with boundary $\|\D_{\ord}(H_S^\FC\backslash\AC^0)\|$. 

For any $Y\in\AC$, let 
    \eq{\s_Y:=\|\D_{\ord}(\flem^\FC(Y)\backslash \{0\})\| \subset \| \D_{\ord}(\AC)\|.}
    
\begin{lem}\label{lem-feas EM}
For any $Y \in \AC$, the order complex $\D_{\ord}(\flem^\FC(Y)\backslash \{0\})$ is a shellable $(\rho(Y)-1)$-ball.
\end{lem}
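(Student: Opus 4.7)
The plan is to deduce the statement from Lemma~\ref{lem-feas Las Vergnas} by exploiting the fact that $Y$ is the unique maximum of $\flem^\FC(Y)\backslash\{0\}$, plus the recursive coatom orderability of $\flem^\FC(Y)$ that was already invoked in the proof of Lemma~\ref{lem-feas Las Vergnas}.

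First, I would observe that since $Y$ is the top element of the poset $\flem^\FC(Y)\backslash\{0\}$, every maximal chain passes through $Y$, so the order complex $\D_{\ord}(\flem^\FC(Y)\backslash\{0\})$ is the simplicial cone with apex $\{Y\}$ over the order complex $\D_{\ord}(\flem^\FC(Y)\backslash\{0,Y\})$ of the open interval $(0,Y)$ in $\flem^\FC(Y)$. The canonical poset isomorphism $\flem^\FC(Y) \cong \fllv^\FC(Y)^\op$ induces a simplicial isomorphism $\D_{\ord}(\flem^\FC(Y)\backslash\{0,Y\}) \cong \D_{\ord}(\fllv^\FC(Y)\backslash\{0,Y\})$, so by Lemma~\ref{lem-feas Las Vergnas} the geometric realization $\|\D_{\ord}(\flem^\FC(Y)\backslash\{0,Y\})\|$ is a PL $(\rho(Y)-2)$-sphere when $Y^\infty = 0$ and a PL $(\rho(Y)-2)$-ball otherwise. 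In either case, taking the cone produces a PL $(\rho(Y)-1)$-ball, giving the topological part of the claim.

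For shellability, I would recycle the argument already used in the proof of Lemma~\ref{lem-feas Las Vergnas}: by~\cite[Proposition 4.5.4]{OMbook} the bounded poset $\flem^\FC(Y)$ admits a recursive coatom ordering, and by~\cite[Lemma 4.7.19]{OMbook} this implies the open interval $(0,Y)$ is shellable, i.e.\ $\D_{\ord}(\flem^\FC(Y)\backslash\{0,Y\})$ is a shellable simplicial complex. Shellability is preserved under coning (a shelling of the base extends to a shelling of the cone by coning each facet in order), so the simplicial cone $\D_{\ord}(\flem^\FC(Y)\backslash\{0\})$ is shellable as well.

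No step here looks like a serious obstacle, since the hard topological input (the recursive coatom ordering and the sphere/ball dichotomy for the open interval) is already established in Lemma~\ref{lem-feas Las Vergnas}. The only small point to keep straight is the dimension bookkeeping: chains in $\flem^\FC(Y)\backslash\{0\}$ have at most $\rho(Y)$ elements (ranks $1,2,\dots,\rho(Y)$), giving top-dimensional simplices of dimension $\rho(Y)-1$, consistent with the cone description and with the claim.
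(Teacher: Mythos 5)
Your proof is correct and follows essentially the same route as the paper: both write $\D_{\ord}(\flem^\FC(Y)\backslash\{0\})$ as the cone with apex $Y$ over the order complex of the open interval $(0,Y)$, identify that base (via passing to the opposite poset) with the proper part of $\fllv^\FC(Y)$ treated in Lemma~\ref{lem-feas Las Vergnas}, and conclude that the cone is a shellable $(\rho(Y)-1)$-ball. The only cosmetic difference is that you separate the PL statement (cone over a PL sphere or ball is a PL ball) from shellability (recursive coatom ordering plus shellability being preserved by coning), whereas the paper cites \cite[Theorem 4.3.5(i)]{OMbook} for a shellable sphere when $Y^\infty=0$ and otherwise reuses the shellable ball established in the proof of Lemma~\ref{lem-feas Las Vergnas}.
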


\begin{proof}
Recalling the notation from the proof of Lemma~\ref{lem-feas Las Vergnas}, note that $\D_{\ord}(\flem^\FC(Y)\backslash \{0\})$ is the cone on $\D_{\ord}(\D^\op)$ with vertex $Y$.  If $Y^\infty = 0$, then $\D_{\ord}(\D^\op)$  is a shellable sphere (see~\cite[Theorem 4.3.5(i)]{OMbook}) and if $Y^\infty \neq 0$, then in the proof of Lemma~\ref{lem-feas Las Vergnas} we showed that $\D_{\ord}(\D^\op)$ is a shellable ball.  In both cases, the cone is a shellable ball as claimed.
\end{proof}

The boundary of each $\s_Y$ is the union of cells $\s_X$ for the proper faces $X$ of $Y$ and the geometric simplices in $\D_{\ord}(\flem^\FC(Y)\backslash \{0\})$ corresponding to chains that do not begin with a cocircuit. 
Let $\Xi$ be the regular cell complex of cells $\{\s_Y\}_{Y\in \AC}$ together with the set of (geometric) simplices $\{\s\}_{\s\in\D_{\ord}(\AC\backslash\AC^0)}$.  In particular, $\D_{\ord}(\AC)$ is a subdivision of $\Xi$.  The cells $\s_Y$ for $Y \in \AC^0$ define a subcomplex of $\Xi$, as do those for $Y \in H_S^\FC\cap\AC^0$ when $S$ is independent in $M$.

\begin{remark}
Figure \ref{fig-ordercomplex} shows an example to illustrate some of these definitions. The reason we consider $\D_{\ord}(\AC)$ and $\Xi$ is because $\AC$ does not have a cellular interpretation unless we include a boundary. The natural boundary would be $\AC^\infty$, but this gives us undesirable topology at the boundary. By introducing $\D_{\ord}(\AC)$ and $\Xi$ we resolve this issue. 
\end{remark}

\begin{figure}
\begin{center}
\begin{tikzpicture}
  \def\radius{1cm}
  \draw[blue] circle[radius=\radius];
\path (0:\radius) coordinate (1+) node [circle,fill,inner sep=1pt]{};
\path (90:\radius) coordinate (2+) node [circle,fill,inner sep=1pt]{};
\path (135:\radius) coordinate (3+) node [circle,fill,inner sep=1pt]{};
\path (180:\radius) coordinate  (1-) node [circle,fill,inner sep=1pt]{};
\path (270:\radius) coordinate (2-) node [circle,fill,inner sep=1pt]{};
\path (315:\radius) coordinate (3-) node [circle,fill,inner sep=1pt]{};
\path (250:\radius) coordinate (g) node [below,blue] {g};
\draw[->,blue,thick] (g) -- (250:.85*\radius);

\draw (1-) arc [start angle=180, end angle=360, x radius=\radius, y radius=.4*\radius];
\draw (1+) -- (1-);
\draw (2-) -- (2+);
\draw[rotate=-45] (3-) arc [start angle=0, end angle=180, x radius=\radius, y radius=.4*\radius];
\end{tikzpicture}
\quad
\begin{tikzpicture}
  \def\x{.6}
\coordinate (V14) at (0,-1*\x);
\coordinate (V13) at (2*\x,-1*\x);
\coordinate (V24) at (0,0);
\coordinate (V34) at (0,1.25*\x);
\coordinate (V23) at (\x,0);

\coordinate (E11) at (-1*\x,-1*\x);
\coordinate (E12) at (\x,-1*\x);
\coordinate (E13) at (3*\x,-1*\x);

\coordinate (E21) at (-1*\x,0);
\coordinate (E22) at (.5*\x,0);
\coordinate (E23) at (2*\x,0);

\coordinate (E31) at (-1*\x,1.5*\x);
\coordinate (E32) at (.6*\x,.6*\x);
\coordinate (E33) at (1.5*\x,-.5*\x);
\coordinate (E34) at (2.25*\x,-2*\x);

\coordinate (E41) at (0,2*\x);
\coordinate (E42) at (0,.5*\x);
\coordinate (E43) at (0,-.5*\x);
\coordinate (E44) at (0,-2*\x);

\coordinate (T1) at (-.75*\x,-2*\x);
\coordinate (T2) at (1.25*\x,-2*\x);
\coordinate (T3) at (3*\x,-1.5*\x);
\coordinate (T4) at (-1*\x,-.5*\x);
\coordinate (T5) at (.75*\x,-.5*\x);
\coordinate (T6) at (2.5*\x,-.5*\x);
\coordinate (T7) at (-1*\x,.5*\x);
\coordinate (T8) at (.4*\x,.4*\x);
\coordinate (T9) at (1.5*\x,1.5*\x);
\coordinate (T10) at (-.5*\x,2*\x);

\node [circle,fill,inner sep=1pt] at (V14) {};
\node [circle,fill,inner sep=1pt] at (V13) {};
\node [circle,fill,inner sep=1pt] at (V24) {};
\node [circle,fill,inner sep=1pt] at (V23) {};
\node [circle,fill,inner sep=1pt] at (V34) {};
\node [circle,fill,inner sep=1pt] at (E11) {};
\node [circle,fill,inner sep=1pt] at (E12) {};
\node [circle,fill,inner sep=1pt] at (E13) {};
\node [circle,fill,inner sep=1pt] at (E21) {};
\node [circle,fill,inner sep=1pt] at (E22) {};
\node [circle,fill,inner sep=1pt] at (E23) {};
\node [circle,fill,inner sep=1pt] at (E31) {};
\node [circle,fill,inner sep=1pt] at (E32) {};
\node [circle,fill,inner sep=1pt] at (E33) {};
\node [circle,fill,inner sep=1pt] at (E34) {};
\node [circle,fill,inner sep=1pt] at (E41) {};
\node [circle,fill,inner sep=1pt] at (E42) {};
\node [circle,fill,inner sep=1pt] at (E43) {};
\node [circle,fill,inner sep=1pt] at (E44) {};
\node [circle,fill,inner sep=1pt] at (T1) {};
\node [circle,fill,inner sep=1pt] at (T2) {};
\node [circle,fill,inner sep=1pt] at (T3) {};
\node [circle,fill,inner sep=1pt] at (T4) {};
\node [circle,fill,inner sep=1pt] at (T5) {};
\node [circle,fill,inner sep=1pt] at (T6) {};
\node [circle,fill,inner sep=1pt] at (T7) {};
\node [circle,fill,inner sep=1pt] at (T8) {};
\node [circle,fill,inner sep=1pt] at (T9) {};
\node [circle,fill,inner sep=1pt] at (T10) {};

\draw[very thick] (E11)--(E12)--(E13);
\draw[very thick] (E21)--(E22)--(E23);
\draw[very thick] (E31)--(V34)--(E32)--(V23)--(E33)--(V13)--(E34);
\draw[very thick] (E41)--(E42)--(E43)--(E44);
\draw (T1)--(E11)--(T4)--(E21)--(T7)--(E31)--(T10) --(E41)--(T9)--(E23)--(T6)--(E13)--(T3)--(E34)--(T1);
\draw (T1)--(V14);
\draw (T2)--(V14)--(V13)--(T2)--(E12);
\draw (T3)--(V13);
\draw (V14)--(T4)--(V24)--(E43)--(T4);
\draw (T5)--(E43)--(V24)--(T5)--(E22)--(V23)--(T5)--
(E33)--(V13)--(T5)--(E12)--(V14)--(T5);
\draw (V13)--(T6)--(E33)--(V23)--(T6);
\draw (V24)--(T7)--(E42)--(V34)--(T7);
\draw (V24)--(T8)--(E42)--(V34)--(T8)--(E32)--(V23)--(T8)--(E22);
\draw (V23)--(T9)--(V34)--(E32)--(T9);
\draw (T10)--(V34);
\end{tikzpicture}
\quad
\begin{tikzpicture}
  \def\x{.6}
\coordinate (V14) at (0,-1*\x);
\coordinate (V13) at (2*\x,-1*\x);
\coordinate (V24) at (0,0);
\coordinate (V34) at (0,1.25*\x);
\coordinate (V23) at (\x,0);

\coordinate (E11) at (-1*\x,-1*\x);
\coordinate (E12) at (\x,-1*\x);
\coordinate (E13) at (3*\x,-1*\x);

\coordinate (E21) at (-1*\x,0);
\coordinate (E22) at (.5*\x,0);
\coordinate (E23) at (2*\x,0);

\coordinate (E31) at (-1*\x,1.5*\x);
\coordinate (E32) at (.6*\x,.6*\x);
\coordinate (E33) at (1.5*\x,-.5*\x);
\coordinate (E34) at (2.25*\x,-2*\x);

\coordinate (E41) at (0,2*\x);
\coordinate (E42) at (0,.5*\x);
\coordinate (E43) at (0,-.5*\x);
\coordinate (E44) at (0,-2*\x);

\coordinate (T1) at (-.75*\x,-2*\x);
\coordinate (T2) at (1.25*\x,-2*\x);
\coordinate (T3) at (3*\x,-1.5*\x);
\coordinate (T4) at (-1*\x,-.5*\x);
\coordinate (T5) at (.75*\x,-.5*\x);
\coordinate (T6) at (2.5*\x,-.5*\x);
\coordinate (T7) at (-1*\x,.5*\x);
\coordinate (T8) at (.4*\x,.4*\x);
\coordinate (T9) at (1.5*\x,1.5*\x);
\coordinate (T10) at (-.5*\x,2*\x);

\node [circle,fill,inner sep=1pt] at (V14) {};
\node [circle,fill,inner sep=1pt] at (V13) {};
\node [circle,fill,inner sep=1pt] at (V24) {};
\node [circle,fill,inner sep=1pt] at (V23) {};
\node [circle,fill,inner sep=1pt] at (V34) {};
\node [circle,fill,inner sep=1pt] at (E11) {};
\node [circle,fill,inner sep=1pt] at (E13) {};
\node [circle,fill,inner sep=1pt] at (E21) {};
\node [circle,fill,inner sep=1pt] at (E23) {};
\node [circle,fill,inner sep=1pt] at (E31) {};
\node [circle,fill,inner sep=1pt] at (E34) {};
\node [circle,fill,inner sep=1pt] at (E41) {};
\node [circle,fill,inner sep=1pt] at (E44) {};
\node [circle,fill,inner sep=1pt] at (T1) {};
\node [circle,fill,inner sep=1pt] at (T2) {};
\node [circle,fill,inner sep=1pt] at (T3) {};
\node [circle,fill,inner sep=1pt] at (T4) {};
\node [circle,fill,inner sep=1pt] at (T6) {};
\node [circle,fill,inner sep=1pt] at (T7) {};
\node [circle,fill,inner sep=1pt] at (T9) {};
\node [circle,fill,inner sep=1pt] at (T10) {};

\draw[thick] (E11)--(E12)--(E13);
\draw[thick] (E21)--(E22)--(E23);
\draw[thick] (E31)--(V34)--(E32)--(V23)--(E33)--(V13)--(E34);
\draw[thick] (E41)--(E42)--(E43)--(E44);
\draw (T1)--(E11)--(T4)--(E21)--(T7)--(E31)--(T10) --(E41)--(T9)--(E23)--(T6)--(E13)--(T3)--(E34)--(T1);
\end{tikzpicture}
\end{center}
\caption{$\|\AC\cup\AC^\infty\|$, $\|\D_{ord}(\AC)\|$, $\|\Xi\|$}\label{fig-ordercomplex}
\end{figure}

\begin{defi}\label{def-tubular}
For an independent set $S\subset E$ of $M$, we define
    \eq{\Sigma_S:= \{A \in \D_{\ord}(\AC)| A \subset (x_0<x_1<\dots<x_{i}) \in \D_{\ord}(\AC) \text{~and~} x_j \in H_S^\FC \text{~for~some~}j \} }
and let $N_S:=\|\Sigma_S\|^\circ$ be the interior of $\|\Sigma_S\|$.  In particular:
    \eq{N_S:= \{A \in \D_{\ord}(\AC)| A = (x_0<x_1<\dots<x_{i}) \text{~and~} x_j \in H_S^\FC \cap \AC^0 \text{~for~some~}j \} }
\end{defi}

\begin{prop} \label{prop-tubular}
    The Borel-Moore homology of $N_S$ is 
        \eq{\HBM_m(N_S;\ZM)=\begin{cases} \ZM & m=d\\ 0& m\neq d\end{cases}} and can be computed in $\|\Xi\|$ using the relative cellular Borel-Moore homology of $N_S$ via the decomposition by cells $N_S\cap\s_Y$ for $Y\in\AC$. These $N_S\cap\s_Y$ are nonempty exactly when $\flem^\FC(Y)\cap H_S^\FC\neq\varnothing$.
\end{prop}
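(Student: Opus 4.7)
My plan is to establish the three assertions of the proposition (the nonemptiness criterion, the cellular decomposition, and the value of the homology) in that order.

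\textbf{Step 1 (Nonemptiness criterion).} A point of $N_S \cap \s_Y$ is, by the second description in Definition~\ref{def-tubular}, a chain in $\flem^\FC(Y) \setminus \{0\}$ containing at least one vertex from $H_S^\FC \cap \AC^0$. The implication $N_S \cap \s_Y \neq \varnothing \Rightarrow \flem^\FC(Y) \cap H_S^\FC \neq \varnothing$ is immediate. For the converse, pick any $Z \in \flem^\FC(Y) \cap H_S^\FC$. Since $Z(g) = +$ and $Z$ is the composition of its cocircuit faces, at least one cocircuit face $W$ of $Z$ must itself be feasible. The inequality $W \leq Z \leq Y$ forces $z(W) \supseteq z(Z) \supseteq S$, so $W \in H_S^\FC$, and because the only proper face of a cocircuit is $0$ we have $W^\infty = 0$, i.e.\ $W \in \AC^0$. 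The singleton chain $\{W\}$ is then a point of $N_S \cap \s_Y$.

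\textbf{Step 2 (Cellular decomposition).} The regular cell complex $\Xi$ decomposes $\|\D_{\ord}(\AC)\|$ into the closed cells $\{\s_Y\}_{Y \in \AC}$ together with additional boundary simplices, all of which lie in $\partial\|\D_{\ord}(\AC)\| = \|\D_{\ord}(\AC \setminus \AC^0)\|$. Since every vertex occurring in the chains defining $N_S$ lies in $\AC^0$, $N_S$ is disjoint from the boundary simplices of $\Xi$ and is open in $\|\Xi\|$; thus $N_S$ decomposes along the open cells $\s_Y^\circ$ of $\Xi$, and the cellular Borel--Moore chain complex built from the cells $\{N_S \cap \s_Y^\circ\}$ computes $\HBM_*(N_S; \ZM)$.

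\textbf{Step 3 (Computation of the homology).} I plan to identify $N_S$ with an open $d$-ball. The ambient space $B := \|\D_{\ord}(\AC)\|$ is a PL $d$-ball with boundary $\partial B$, and the subcomplex $B_S := \|\D_{\ord}(H_S^\FC)\|$ is a proper PL $(d-|S|)$-sub-ball with $\partial B_S = B_S \cap \partial B$, by the facts recalled just before the proposition. The set $N_S$ is the open simplicial star of the interior $B_S \setminus \partial B_S = \|\D_{\ord}(H_S^\FC \cap \AC^0)\|$, which lies entirely in $\mathrm{int}(B)$. Standard PL regular neighborhood theory for a proper PL sub-ball then yields that this open star is PL homeomorphic to $\RM^d$, so $\HBM_m(N_S; \ZM) \cong \HBM_m(\RM^d; \ZM)$, which is $\ZM$ for $m = d$ and $0$ otherwise.

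The main technical point is Step 3: verifying that the open star of the open proper PL sub-ball in a PL ball is actually homeomorphic to $\RM^d$. I expect this to require passing to a barycentric subdivision in order to invoke the uniqueness of regular neighborhoods in PL topology. Since barycentric subdivision does not affect the Borel--Moore homology computed via Step 2, this subdivision step introduces no essential difficulty but will need to be articulated carefully.
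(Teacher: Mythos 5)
Your Step 1 is correct and is essentially the (unstated) argument behind the paper's one-line justification of the nonemptiness criterion. The genuine gap is in Step 2. The second assertion of the proposition is not merely that $\HBM_*(N_S;\ZM)$ can be computed somehow; it is that the pieces $N_S\cap\s_Y$ are \emph{cells}, so that the relative cellular Borel--Moore complex with one generator (an orientation class) for each nonempty piece computes $\HBM_*(N_S;\ZM)$ --- this is exactly the form in which the proposition is used later, for the complexes $C^m_\bullet$ in Lemma~\ref{lem-center homology extended}. Note that $N_S$ is a union of open simplices of $\D_{\ord}(\AC)$, not a union of open cells of $\Xi$, so $N_S\cap\s_Y^\circ$ is in general a proper open subset of $\s_Y^\circ$, and there is no general principle by which a chain complex ``built from'' such pieces computes Borel--Moore homology: one must show that each nonempty $N_S\cap\s_Y$ is a ball of dimension $\dim\s_Y$ (equivalently, that its Borel--Moore homology is $\ZM$ concentrated in that degree). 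You assert this and never prove it. The paper's proof does exactly this: $N_S\cap\s_Y$ is a regular neighborhood inside the ball $\s_Y$ of the shellable, hence collapsible, ball $\|\D_{\ord}(H_S^\FC\cap\flem^\FC(Y))\|$, and a regular neighborhood of a collapsible polyhedron in a PL ball is a ball \cite[Corollary 3.27]{RoSa}.

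Your Step 3 is in spirit the paper's argument for the homology claim (the paper applies the same collapsibility/regular-neighborhood reasoning to $N_S$ itself, via shellability of $\|\D_{\ord}(H_S^\FC)\|$), but your execution plan is shaky. The statement you call standard --- that the open star of the interior of a properly embedded PL sub-ball is PL homeomorphic to $\RM^d$ --- is not an off-the-shelf theorem, and the proposed repair by barycentric subdivision does not apply to $N_S$ as defined: subdividing changes which subset of $\|\Xi\|$ is the open star, whereas $N_S$ is the open star in the original order complex, and Borel--Moore homology is not a homotopy invariant, so the computation cannot simply be transferred to the subdivided star. What is actually needed, and what the collapsibility route delivers, is that the closed star $\|\Sigma_S\|$ is a $d$-ball whose complement $\|\Sigma_S\|\setminus N_S$ is its entire boundary sphere; only then does $\HBM_m(N_S;\ZM)\cong H_m(\|\Sigma_S\|,\|\Sigma_S\|\setminus N_S;\ZM)$ give $\ZM$ in degree $d$ and $0$ otherwise. (A minor point: your justification that $N_S$ avoids the boundary simplices of $\Xi$ is misstated --- the chains defining $N_S$ need only contain \emph{one} vertex in $H_S^\FC\cap\AC^0$, not have all vertices in $\AC^0$; the conclusion still holds because the boundary simplices have all vertices in $\AC\setminus\AC^0$.)
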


\begin{proof}
We first show that $N_S$ is a $d$-ball, from which the first statement follows.

By~\cite[Theorem 4.5.7(i)]{OMbook}, $\| \Delta_{\ord} (H^\FC_S) \|$ is a shellable ball.  In particular, $\| \Delta_{\ord} (H^\FC_S) \|$ is collapsible  (i.e., it collapses to a point).

Now $N_S$ is a regular neighborhood of $\| \Delta_{\ord} (H^\FC_S) \|$ in the $d$-ball $\|\Delta_{\ord}(\AC)\|$ and so by \cite[Corollary 3.27]{RoSa}, $N_S$ is a $d$-ball.

To see that each intersection $N_S\cap\s_Y$ for $Y\in\AC$ is a cell, note that $N_S \cap \s_Y$ is a regular neighborhood of the shellable ball $H_S \cap \s_Y$ in $\s_Y$.  The same collapsibility argument from above then implies $N_S\cap\s_Y$ is a ball.

The cells $N_S\cap\s_Y$ provide a cellular decomposition of $N_S$ modulo its boundary and the space $N_S \cap \s_Y$ is nonempty if and only if a face of $Y$ is contained in $H_S^\FC$ or equivalently, $\fllv^\FC(Y)\cap H_S^\FC\neq\varnothing$.
\end{proof}

\begin{prop}\label{prop-tubular tope}
    Let $Y\in\AC\backslash\AC^0$ such that $Y$ has a face in $H_S^\FC\cap\AC^0$. Then 
    \eq{\HBM_m(N_S\cap\s_Y;\ZM)=0} for all $m$. 
    This can be computed in $\|\Xi\|$ using the decomposition of $N_S\cap\s_Y$ by cells $N_S\cap\s_X$ for $X\in\flem^\FC(Y)$, which are nonempty exactly for $X$ which have a face in $H_S^\FC$.
\end{prop}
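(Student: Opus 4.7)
The plan is to follow the template of Proposition~\ref{prop-tubular}'s proof, tracking how the hypothesis $Y^\infty\neq 0$ causes Borel--Moore homology to vanish rather than concentrate in the top degree.

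First, I would verify that the collection $\{N_S\cap\s_X\}_{X\in\flem^\FC(Y)}$ gives a cellular decomposition of $N_S\cap\s_Y$. Any open simplex of $\D_{\ord}(\AC)$ contained in $\s_Y$ corresponds to a chain in $\flem^\FC(Y)\backslash\{0\}$, and hence lies in some $\s_X$ with $X\in\flem^\FC(Y)$. Nonemptiness of $N_S\cap\s_X$ is equivalent to $\flem^\FC(X)\cap H_S^\FC\neq\varnothing$ by exactly the same argument used in Proposition~\ref{prop-tubular}, now restricted to the subposet $\flem^\FC(Y)$.

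For the vanishing, I would compare $N_S\cap\s_Y$ with its closure $\|\Sigma_S\|\cap\s_Y$ inside $\s_Y$. Adapting the regular-neighborhood argument from Proposition~\ref{prop-tubular}, this closure is a regular neighborhood of $H_S^\FC\cap\s_Y$ inside the shellable $(\r(Y)-1)$-ball $\s_Y$ (Lemma~\ref{lem-feas EM}), and hence is itself a collapsible PL ball. The difference $(\|\Sigma_S\|\cap\s_Y)\setminus(N_S\cap\s_Y)$ consists of exactly those points lying on the boundary sphere $\|\D_{\ord}(\AC\backslash\AC^0)\|$ of the ambient ball $\|\D_{\ord}(\AC)\|$; because $Y^\infty\neq 0$, the cell $\s_Y$ genuinely meets this boundary sphere, and using Lemma~\ref{lem-feas Las Vergnas} applied to the part of $\flem(Y)$ touching $\AC^\infty$ one checks that the removed portion is a nonempty shellable PL ball.

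Consequently, $N_S\cap\s_Y$ is a PL $d$-ball with a nonempty contractible subset of its boundary removed. Any such space is homotopy equivalent to the half-open product $B^{d-1}\times[0,1)$, whose Borel--Moore homology vanishes in every degree by the K\"unneth formula (since $\HBM_*([0,1))=0$). Equivalently, one may invoke Poincar\'e--Lefschetz duality: writing $\partial^-$ for the removed contractible boundary piece, $\HBM_m(N_S\cap\s_Y)\cong H^{d-m}(\overline{N_S\cap\s_Y},\partial^-)$, and this relative cohomology vanishes because both spaces in the pair are contractible. The main obstacle will be verifying shellability (or at least collapsibility) of the removed boundary piece, which I expect to handle by identifying it explicitly with a subcomplex of the feasible Las Vergnas face lattice and invoking the recursive coatom orderings guaranteed by~\cite[Proposition 4.5.4]{OMbook} in a manner analogous to Lemmas~\ref{lem-feas Las Vergnas} and~\ref{lem-feas EM}.
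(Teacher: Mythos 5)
Your first paragraph (the cellular decomposition of $N_S\cap\s_Y$ and the nonemptiness criterion) matches the paper and is fine. The vanishing argument, however, has a genuine gap, and it starts with a false identification: the frontier $(\|\Sigma_S\|\cap\s_Y)\setminus(N_S\cap\s_Y)$ is \emph{not} contained in the boundary sphere $\|\D_{\ord}(\AC\backslash\AC^0)\|$. Since a face of a covector lying in $H_S^\FC$ again lies in $H_S^\FC$, this frontier consists of all chains in $\flem^\FC(Y)\backslash\{0\}$ that avoid $H_S^\FC\cap\AC^0$ but extend to chains meeting it, and these include ``lateral'' simplices through core faces of $Y$ that do not vanish on $S$: for example, if $x\in H_S^\FC\cap\AC^0$ is a cocircuit face of $Y$ and $y$ is a core (bounded) edge of $Y$ with $x<y$ but $y\not\in H_S^\FC$, then the vertex $y$ of the order complex lies in the frontier yet not on the boundary sphere. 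So the removed piece is not ``the part of $\flem(Y)$ touching $\AC^\infty$'', the shellability you propose to extract from Lemma~\ref{lem-feas Las Vergnas} and recursive coatom orderings is aimed at the wrong subcomplex, and the contractibility of the actual frontier --- which your argument needs, whether phrased via $\HBM_m(U)\cong H_m(\overline{U},\overline{U}\setminus U)$ or via your duality/K\"unneth heuristic --- is exactly the step you leave unproved and flag as the main obstacle. (Also, the product picture $B^{d-1}\times[0,1)$ and the exponent $d$ should involve $\r(Y)-1$ rather than $d$ for general $Y$.)

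The paper's proof sidesteps all of this by compactifying inside $\s_Y$ itself rather than inside the closure of the tube: $\HBM_m(N_S\cap\s_Y)$ is the relative homology of the pair $(\s_Y,\s_Y\backslash N_S)$; the cell $\s_Y$ is a PL ball by Lemma~\ref{lem-feas EM}, and $\s_Y\backslash N_S$ is the order complex of $\flem^\FC(Y)\backslash(H_S^\FC\cap\AC^0)$, which is nonempty and has $Y$ as its maximum precisely because $Y^\infty\neq 0$ (so $Y\notin\AC^0$); hence it is a cone with apex $Y$ and is contractible, and the long exact sequence of the pair gives the vanishing with no regular neighborhoods or shellability at all. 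To salvage your route you would have to show directly that the full frontier of the tube inside $\s_Y$ (lateral part together with the part at infinity) is contractible, which is harder than, and ultimately equivalent in content to, the paper's one-line cone argument.
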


\begin{proof}
The idea of this proof is to again use relative homology, this time on the pair $(\s_Y,\s_Y\backslash N_S)$. Recall that $\s_Y$ is a PL $(\r(Y)-1)$-ball by Lemma \ref{lem-feas EM}. Then $\s_Y$ is contractible, so we are done if we can prove the same for $\s_Y\backslash N_S$.

There is a unique maximal element of $H_S^\FC\cap\AC^0\cap\flem^\FC(Y)$, which we call $X$.
The complement of $N_S$ in $\s_Y$ is $\|\D\|$, where 
    \eq{\D:=\ord(\flem^\FC(Y)\backslash (H_S^\FC\cap\AC^0))\subset\ord(\flem^\FC(Y)).}
 Notice that $\D$ is nonempty since $Y^\infty\neq0$, and $\D$ is equivalent to the cone over $\ord(\flem^\FC(Y)\backslash\flem^\FC(X))$ with cone point $Y$. Thus $\|\D\|$ is contractible, and we are done.
\end{proof}

\subsection{The proof of Theorem~\ref{thm-center}}
We will construct a chain complex with homology isomorphic to the center $Z(\wt B_\ext)$. 

Let 
  \[\AC_r=\{Y\in\AC\mid \r(Y)-1=r\}=\{Y\in\AC\mid \dim(\s_Y)=r\},\]
where, as in Section~\ref{subsec-P}, $\r$ is the poset rank of $Y$ in $\LC$ and  by Lemma~\ref{lem-feas EM}, $\s_Y=\|\ord(\flem^\FC(Y))\|\subset\|\Xi\|$ is a ball.   
For example, $\AC_{d}$ is the set of feasible topes $\FC$ and $\AC_0$ is the set of feasible cocircuits. 

For all $Y\in\AC_r$, the space of orientations of $\s_Y$ is a one-dimensional vector space 
  \[or(Y):=\HBM_{r}(\s_Y^\circ;k),\]
  where $\s_Y^\circ$ denotes the interior of $\s_Y$.
There is a natural boundary map 
  \[\partial_Y: \;or(Y)\to \bigoplus_{X\in\AC_{r-1}\cap \flem^\FC(Y)} or(X).\]
Assembling all such maps, we obtain a chain complex on $\bigoplus_{Y\in\AC} or(Y)$, graded by $\dim(\s_Y)$, which computes the cellular Borel-Moore homology of $\|\Xi\|^\circ$. As $\|\Xi\|$ is a closed PL $d$-ball, this homology is one-dimensional in degree $d$ and zero in all other degrees.

For $Y \in \AC_r$, let 
    \eq{\wt R_Y :=k[z(\fllv^\FC(Y)\backslash\{0\})] = \wt R_{\a_1\cdots\a_\ell},} for any choice of $\a_1,\dots,\a_\ell \in\FC$ such that $Y=\a_1\wedge\cdots\wedge\a_\ell$. We define a chain complex $C_\bullet$ such that 
  \[C_r=\bigoplus_{Y\in\AC_r}\wt R_Y\otimes_k or(Y)\]
with differentials for each $Y \in \AC_r$
  \[\wt{R}_Y\otimes_k or(Y)\to \bigoplus_{X\in\AC_{r-1}\cap\flem^\FC(Y)} \wt R_X\otimes_k or(X)\]
   induced by the natural boundary maps $or(Y)\to or(X)$ and the quotients $\wt R_Y \to \wt R_X$ for each facet $X$ of $Y$.

\begin{lem}\label{lem-center homology extended}
Fix an orientation class $\O\in H_d^{BM}(\|\Xi\|^\circ;k)$, and let $\O_\a \in or(\a)$ be the restriction of $\O$ for any $\a\in\FC$. Let $\psi_\a : k[\un{\MC}]\to \wt{R}_\a$ denote the natural quotient map.
Then the homology of $C_\bullet$ is zero outside of degree $d$ and $k[\un\MC]\simeq H_d(C_\bullet)$ via the map 
\eq{x\mapsto \sum_{\a\in\FC}\psi_\a(x)\otimes\O_\a.}
\end{lem}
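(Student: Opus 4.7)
The plan is to decompose $C_\bullet$ as a direct sum of subcomplexes indexed by the monomials in $k[M]$, and identify each summand with the cellular Borel--Moore chain complex of a tubular neighborhood $N_S$ whose homology was computed in Proposition~\ref{prop-tubular}. A $k$-basis of $k[M]$ is given by the monomials $\{t^J : \supp(J) \text{ is independent in } M\}$. For each such $J$, writing $S := \supp(J)$, I would set
\[ C^{(J)}_\bullet := \bigoplus_{Y \in \AC} k\cdot \psi_Y(t^J) \otimes or(Y) \;\subset\; C_\bullet. \]
Each quotient map $\wt R_Y \to \wt R_X$ sends a monomial to either itself or zero, so this preserves the differential. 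Hence $C^{(J)}_\bullet$ is a subcomplex and $C_\bullet = \bigoplus_J C^{(J)}_\bullet$.

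The second step is to identify $C^{(J)}_\bullet$ with the cellular Borel--Moore chain complex of $N_S$. Observe that $\psi_Y(t^J)\neq 0$ in $\wt R_Y$ if and only if $S \subseteq z(X)$ for some $X \in \flem^\FC(Y)\backslash\{0\}$, i.e.\ $\flem^\FC(Y) \cap H_S^\FC \neq \varnothing$, which by Proposition~\ref{prop-tubular} is exactly the condition that $N_S \cap \s_Y$ is nonempty. Under the canonical identifications $or(Y) \cong or(\s_Y^\circ) \cong or(N_S \cap \s_Y)$ for such $Y$, the differential of $C^{(J)}_\bullet$ (induced by $\partial_Y\colon or(Y) \to or(X)$ together with the quotients $\wt R_Y \to \wt R_X$) matches the cellular Borel--Moore boundary map for the decomposition of $N_S$ by the cells $\{N_S \cap \s_Y\}_{Y \in \AC}$. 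Proposition~\ref{prop-tubular} then yields $H_r(C^{(J)}_\bullet) = 0$ for $r \neq d$ and $H_d(C^{(J)}_\bullet) \cong H^{BM}_d(N_S;k) = k$.

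The third step is to identify the generator of $H_d(C^{(J)}_\bullet)$ with the cycle arising from the map in the lemma. The fundamental class of the open $d$-ball $N_S$ is represented in the cellular BM complex by the sum of orientations of its top cells $N_S \cap \s_\a$, $\a \in \FC$, inherited from the restrictions $\O|_{\s_\a^\circ} = \O_\a$ of the ambient orientation $\O$. Under the identification of step 2, this fundamental class corresponds to the cycle $\sum_{\a \in \FC} \psi_\a(t^J) \otimes \O_\a \in C^{(J)}_d$; terms with $\psi_\a(t^J) = 0$ drop out automatically. Assembling across all $J$, the map $t^J \mapsto \sum_{\a \in \FC} \psi_\a(t^J) \otimes \O_\a$ sends the monomial basis of $k[M]$ bijectively to generators of the direct summands $H_d(C^{(J)}_\bullet)$, which gives the claimed isomorphism $k[M] \simeq H_d(C_\bullet)$ and simultaneously the vanishing of $H_r(C_\bullet)$ for $r \neq d$.

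The main technical point will be step 2: one must carefully verify that the orientation boundary maps $\partial_Y\colon or(Y) \to or(X)$ of the algebraically defined complex $C_\bullet$ match the topological boundary maps of the cellular Borel--Moore chain complex for $N_S$ under the canonical identifications of orientation classes $or(Y) \cong or(N_S \cap \s_Y)$. Equivalently, one must check that the orientation of each intersection $N_S \cap \s_Y$ is compatible with the restriction of the orientation of $\s_Y^\circ$, which is a standard but delicate bookkeeping exercise. Once this is in place, everything else follows from Proposition~\ref{prop-tubular} and the monomial decomposition.
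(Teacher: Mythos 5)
Your proposal is correct and follows essentially the same route as the paper's proof: decompose $C_\bullet$ into monomial subcomplexes, identify the summand for a monomial with independent support $S$ with the cellular Borel--Moore complex of the neighborhood $N_S$, and invoke Proposition~\ref{prop-tubular}. The only differences are cosmetic — you index directly by the monomial basis of $k[M]$ rather than by all monomials (the dependent-support ones give zero subcomplexes), and you spell out the identification of the degree-$d$ generator with the cycle $\sum_\a \psi_\a(t^J)\otimes\O_\a$, which the paper leaves implicit.
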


\begin{proof} Following the proof of \cite[Lemma 4.17]{GDKD}, for a monomial $m=\prod_i t_i^{s_i}$, let  $S=\{i\mid s_i>0\}$ and $C_\bullet^m\subset C_\bullet$ be the subcomplex consisting of all images of $m$, namely
	\[C_r^m=\bigoplus_{\tiny \begin{array}{c} Y\in\AC_r \\ H_S^\FC\cap\flem^\FC(Y)\neq\varnothing \end{array}} or(Y). \]

Note that the complex $C_\bullet$ decomposes as a direct sum $\oplus_m C^m_\bullet$ of subcomplexes because the terms of $C_\bullet$ are direct sums of quotients of $\Sym k^E = k[t_i | i\in E]$ by monomial ideals, while the differentials are induced by the identity map on $k[t_i | i\in E]$, up to sign.

If the set $S=\{i\mid s_i>0\}$ is dependent in $M$, then $H_S =\varnothing$ and so $C^m_\bullet=0$. 
If $S$ is independent in $M$, then $C_\bullet^m$ is the cellular Borel-Moore complex of the neighborhood $N_S\subset\|\Xi\|$, the homology of which is one-dimensional and concentrated in degree $d$  by Proposition \ref{prop-tubular}.
\end{proof}

\begin{prop}\label{prop-center extended}
    The obvious map $\overline\z_\ext\colon k[M]\to \wt B_\ext$ is injective, and its image is the center of $\wt B_\ext$. The quotient homomorphism $\wt B_\ext\to B_\ext$ induces a surjection of centers, and yields an isomorphism $Z(B_\ext)\cong k[M]/(U)$.
\end{prop}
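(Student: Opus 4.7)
The plan is to adapt the strategy used in the analogous result~\cite[Theorem 4.16]{GDKD}: identify the center of $\wt B_\ext$ with the top homology of the chain complex $C_\bullet$ from Lemma~\ref{lem-center homology extended}. After fixing orientations $\Omega_\a$, there is a canonical identification of $C_d = \bigoplus_{\a \in \FC} \wt R_{\a\a} \otimes or(\a)$ with the ``diagonal subspace'' $\bigoplus_\a \wt R_{\a\a} \subset \wt B_\ext$, and the lemma already provides an isomorphism $k[M] \weq \ker(\partial_d) = H_d(C_\bullet)$ realized by $\overline\z_\ext$. It therefore suffices to show that the subspace $Z(\wt B_\ext)$ of $\wt B_\ext$, viewed inside $C_d$ under this identification, coincides with $\ker(\partial_d)$.

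First I would observe that any central $z = \sum_{\a,\b} z_{\a\b}$ is forced to be diagonal, since comparing the idempotent decompositions of $e_\g z$ and $z e_\g$ forces $z_{\a\b} = 0$ whenever $\a \neq \b$. For a diagonal element $z = \sum_\a z_\a$ with $z_\a \in \wt R_{\a\a}$, the definition of $\star$ and the commutativity of each face ring $\wt R_{\a\b}$ reduce centrality to the condition that the images of $z_\a$ and $z_\b$ under the quotients $\wt R_{\a\a}, \wt R_{\b\b} \to \wt R_{\a\b}$ agree for every $\a, \b \in \FC$ with $\wt R_{\a\b} \neq 0$. On the other hand, the differential $\partial_d$ at a facet $X \in \AC_{d-1}$, which has exactly two feasible tope faces $\a, \a'$ (both feasible since $X(g)=+$), tests precisely the equality of images in $\wt R_X = \wt R_{\a\a'}$, with opposite signs from the boundary maps $or(\a), or(\a') \to or(X)$; thus the cycle condition is exactly the adjacent-pair case of centrality. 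To promote adjacent compatibilities to compatibility across arbitrary pairs, I would join two feasible topes $\a, \b$ sharing a common face $Y = \a \wedge \b$ by a sequence $\a = \g_0, \g_1, \dots, \g_k = \b$ of adjacent feasible topes all containing $Y$ as a face (using that the tope graph of $\MC/z(Y)$ is connected and every tope containing $Y$ is feasible since $Y(g) = +$), and then chain the local compatibilities, observing that $\wt R_{\a\b}$ is a common quotient of each $\wt R_{\g_i \g_{i+1}}$ along the path (since $\a \wedge \b \leq \g_i \wedge \g_{i+1}$).

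This combination identifies $Z(\wt B_\ext)$ with $k[M]$ via $\overline\z_\ext$, establishing both injectivity of $\overline\z_\ext$ and the description of its image. For the $B_\ext$ statement, I would apply the same diagonal/cycle argument to identify $Z(B_\ext)$ with the top homology of the base-changed complex $C_\bullet \otimes_{\Sym U} k$. Since each $\wt R_Y$ is free over $\Sym U$ by Lemma~\ref{lem-size}, and $k[M]$ itself is free over $\Sym U$, all higher Tor groups $\Tor^{\Sym U}_{>0}(k[M], k)$ vanish, so the top homology of $C_\bullet \otimes_{\Sym U} k$ is $k[M] \otimes_{\Sym U} k = k[M]/(U)$. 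Finally, the surjection of centers is automatic because the quotient map $\wt B_\ext \to B_\ext$ intertwines the two identifications with the natural surjection $k[M] \to k[M]/(U)$. The main obstacle I expect is the gallery-connectivity step: verifying that any two feasible topes sharing a common face can be connected by a path of feasible adjacent topes that all contain that face, which is where the topological structure of $\fllv^\FC(Y)$ developed in Section~\ref{section-center} plays an essential role.
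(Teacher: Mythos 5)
Your proposal is correct and follows the same skeleton as the paper's proof: reduce central elements to diagonal tuples, identify the centrality conditions with the top-degree cycle condition of $C_\bullet$, invoke Lemma~\ref{lem-center homology extended}, and then base change along $\Sym U \to k$ for the statement about $B_\ext$ (your flatness/Tor argument is interchangeable with the paper's device of augmenting $C_\bullet$ by $\hat C_{d+1}=\ker(\partial_d)\cong k[M]$ and tensoring the resulting acyclic complex of free $\Sym U$-modules). The one place you genuinely diverge is in showing that compatibility across facet-sharing pairs already forces centrality: the paper gets this from the fact that $\wt B_\ext$ is generated by its diagonal subalgebra together with the degree-one elements $1_{\a\b}$ (the taut-path factorization underlying the proof of Theorem~\ref{thm-A-B}), whereas you promote adjacent compatibility to all-pairs compatibility by chaining along a gallery in the star of the common feasible face $Y$. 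Your chaining does work: every tope having $Y$ as a face is automatically feasible since $Y(g)=+$, and genericity forces every feasible covector of rank $d$ to have a singleton zero set (any feasible cocircuit face of it has an independent zero set), so consecutive chambers of the star differ in exactly one sign and $\wt R_{\a\b}$ is a common quotient of the rings attached to consecutive pairs, exactly as you use. The only slip is in naming the relevant minor: the star of $Y$ is governed by the restriction of $\NC$ to $z(Y)$, i.e.\ the deletion $\NC\backslash \un{Y}$, not the contraction $\MC/z(Y)$; the connectivity you need is that of the tope graph of this restriction, which is standard. In short, your route trades the paper's appeal to a generation statement for a short, self-contained gallery argument; both ultimately rest on the same gallery-connectivity of feasible topes, so the two proofs are equivalent in substance.
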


\begin{proof}
 As in the proof of \cite[Proposition 4.18]{GDKD}, for an element $z \in \wt B_\ext$ to be in the center, it must commute with each idempotent $1_{\a\a}$ and thus be of the form $z= \sum_{\a \in \FC} z_\a$ where $z_\a \in R_\a$.  Similarly, using the fact that $z$ must commute with $1_{\a\b}$ for $\a$ and $\b$ adjacent, we find that 
$$\psi_{\a\b}(z_\a)=\psi_{\b\a}(z_\b)$$
where $\psi_{\a\b}: \wt R_\a \to \wt R_{\a\b}$ and $\psi_{\b\a}: \wt R_\b \to \wt R_{\a\b}$ denote the canonical quotient homomorphisms.  As $\wt B_\ext$ is generated by the elements $1_{\a\a}$ and $1_{\a\b}$ for adjacent $\a,\b$ and the image of $\overline\z_\ext$, it follows that:
    \begin{align}\label{center equation}
        Z(\wt B_{ext})\cong \left\{(z_\a)\in\bigoplus_{\a\in\FC} \wt R_\a\mid \psi_{\a\b}(z_\a)=\psi_{\b\a}(z_\b) \;\text{for all}\;\a \leftrightarrow \b \in \FC \right\}.
    \end{align}
On the other hand, $$H_d(C_\bullet) = \left\{y = \sum_{\a \in \FC} y_\a \otimes \O_\a \in C_d \mid \partial y=0\right\}$$ and the cycle condition $\partial y=0$ is equivalent to $\psi_{\a\b}(y_\a)=\psi_{\b\a}(y_\b)$ for all $\a \leftrightarrow \b \in \FC $.  We conclude that $ \overline\z_\ext$ induces an isomorphism
$$k[M] \cong H_d(C_\bullet) \cong  Z(\wt B_{ext}).$$

Finally, we can define a chain complex $\hat C_\bullet$ of free $\Sym U$-modules with $\hat C_m= C_m$ for $0\leq m\leq d$ and $\hat C_{d+1}=\ker(\partial_{d})\cong k[M]$. This is acyclic, and thus so is $\hat C_\bullet\otimes_{\Sym U} k$. Arguments analogous to above prove that 
$$k[M]/(U)\cong H_d(C_\bullet \otimes_{\Sym U} k)\cong Z(B_\ext)$$ through an isomorphism compatible with the quotient $\wt B_\ext\to B_\ext$.
\end{proof}

We now include boundedness into our considerations.
Define 
    \eq{\AC^\PC:=\bigcup_{Y\in\PC\subset\AC_d}\flem^\FC(Y),}
and notice that we have a chain of proper inclusions $\AC^0\subset\AC^\PC\subset\AC$.

Note that the description of $Z(\wt B_{ext})$ in (\ref{center equation}) can be rewritten as asking that $\psi_{\a\b}(z_\a)=\psi_{\b\a}(z_\b)$ for all $\a,\b \in \FC$, not necessarily adjacent.  This can be rephrased as a limit: 
 \begin{align}\label{center limit 1}
 Z(\wt B_\ext)\cong \varprojlim_{X\in\AC}\wt R_X.
 \end{align}
By the same sort of argument, we find
    \begin{align}\label{center limit 2}
    Z(B_\ext)\cong \varprojlim_{X\in\AC} R_X,
    \quad
    Z(\wt B)\cong \varprojlim_{X\in\AC^\PC}\wt R_X,
    \quad
    \text{and}
    \quad
    Z(B)\cong \varprojlim_{X\in\AC^\PC} R_X.
    \end{align}
The next lemma allows us to conclude that these centers only depend on $\AC^0$.

\begin{lem}\label{lem-limits}
    For any $Y\in\AC^\PC$ and any subcomplex $\DG\subset\flem^\FC(Y)$ with $\flem^\FC(Y)\cap \AC^0\subset\DG$, the restrictions
    \begin{align}\label{center limits lemma equation}
    \varprojlim_{X\in\flem^\FC(Y)}\wt R_X \to\varprojlim_{X\in\DG}\wt R_X
    \quad \text{and} \quad
    \varprojlim_{X\in\flem^\FC(Y)} R_X \to\varprojlim_{X\in\DG} R_X
    \end{align}
are isomorphisms.
\end{lem}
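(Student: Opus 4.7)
The plan is to reduce both limits to a single limit over $\flem^\FC(Y^0)$, exploiting the fact that $\wt R_X$ depends only on $X^0$. Concretely, I would first establish a natural isomorphism $\wt R_X\cong\wt R_{X^0}$ for every $X\in\flem^\FC(Y)$, intertwining the quotient maps $\wt R_X\to\wt R_{X'}$ with the quotients $\wt R_{X^0}\to\wt R_{(X')^0}$. Since $\wt R_X=k[z(\fllv^\FC(X)\setminus\{0\})]$ is determined by the maximal simplices of its defining simplicial complex, which are the $z(Z)$ for feasible cocircuit faces $Z$ of $X$, it suffices to identify these cocircuit faces with those of $X^0$. One inclusion is immediate from $X^0\leq X$. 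For the reverse inclusion, let $Z$ be a feasible cocircuit face of $X$; since the only proper face of the cocircuit $Z$ is $0$, we have $Z^\infty=0$, i.e.\ $Z\in\AC^0$, and the maximality of $X^0$ in $\flem^\FC(X)\cap\AC^0$ forces $Z\leq X^0$. Tensoring with $k$ over $\Sym U$ gives the analogous isomorphism $R_X\cong R_{X^0}$.

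Next, I would use the order-preserving retraction $\phi\colon\flem^\FC(Y)\to\flem^\FC(Y^0)$ given by $X\mapsto X^0$ to reorganize each limit. Under the isomorphisms of the previous step, any compatible system $(z_X)_{X\in\flem^\FC(Y)}$ satisfies $z_X=z_{X^0}$, so restriction along the inclusion $\flem^\FC(Y^0)\hookrightarrow\flem^\FC(Y)$ is a bijection on compatible systems. This yields
\[
\varprojlim_{X\in\flem^\FC(Y)}\wt R_X \;\cong\; \varprojlim_{X\in\flem^\FC(Y^0)}\wt R_X \;=\; \wt R_{Y^0},
\]
where the final equality holds because $Y^0$ is the top of $\flem^\FC(Y^0)$ and all transition maps are surjective. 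The same reasoning applied to $\DG$ gives $\varprojlim_{X\in\DG}\wt R_X\cong\varprojlim_{X\in\phi(\DG)}\wt R_X$.

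The lemma therefore reduces to the identity $\phi(\DG)=\flem^\FC(Y^0)$. One direction is automatic since $X^0\leq Y^0$ for every $X\in\DG\subseteq\flem^\FC(Y)$. For the other, I would first observe that $\AC^0$ is closed under taking feasible faces: if $V\in\AC$ is a face of $W\in\AC^0$, then any face of $V$ in $\AC^\infty$ is also a face of $W$ and hence equals $0$. It follows that $\flem^\FC(Y^0)$ is a downward-closed subset of $\flem^\FC(Y)$; since $\DG$ is a subcomplex containing $Y^0$, this forces $\flem^\FC(Y^0)\subseteq\DG$, and every $Z\in\flem^\FC(Y^0)$ satisfies $Z^0=Z$, giving $\phi(\DG)\supseteq\flem^\FC(Y^0)$. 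Putting the identifications together, the restriction map of the lemma is identified with the identity on $\wt R_{Y^0}$, and the second statement follows by the same argument with $R_X$ in place of $\wt R_X$. The main obstacle is the identification of cocircuit faces in the first paragraph; once that is in place, the topology hidden in $\AC^\infty$ drops out and the rest of the argument is formal.
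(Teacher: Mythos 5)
Your reduction stands or falls with the claim in your first paragraph that every feasible cocircuit face $Z$ of $X$ satisfies $Z\le X^0$, so that $\wt R_X\cong\wt R_{X^0}$. The justification you give silently upgrades ``$X^0$ is a maximal element of $\flem^\FC(X)\cap\AC^0$'' to ``$X^0$ is the maximum of $\flem^\FC(X)\cap\AC^0$''. The paper's parenthetical ``(unique)'' invites this reading, but the maximum property is false in general, and it is exactly where the content of the lemma hides. Concretely, take $d=2$, four lines of slopes $-2,-1,1,2$ placed so that all four appear on their common upper envelope, with $f$ decreasing in the upward direction (a realizable, generic, Euclidean program), and let $T$ be the chamber above the envelope. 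Then $T\in\PC$, its faces in $\AC^0$ are three vertices $v_1,v_2,v_3$ and two bounded edges $e_1$ (with vertices $v_1,v_2$) and $e_2$ (with vertices $v_2,v_3$); both $e_1$ and $e_2$ are maximal in $\flem^\FC(T)\cap\AC^0$, and no core face of $T$ contains all three vertices. Hence $\wt R_T$ has rank $3$ over $\Sym U$ while $\wt R_{e_i}$ has rank $2$ (Lemma~\ref{lem-size}), so $\wt R_X\cong\wt R_{X^0}$ fails, and with it the identifications $\varprojlim_{X\in\flem^\FC(Y)}\wt R_X\cong\wt R_{Y^0}$ and $\phi(\DG)=\flem^\FC(Y^0)$ on which everything else in your argument rests. (The same example shows that if one reads $Y^0$ as a mere choice of maximal core face, the statement itself would fail for $\DG=\flem^\FC(e_1)$: the restriction $\wt R_T\to\wt R_{e_1}$ kills $t_3t_4$. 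The hypothesis that the lemma and its proof actually use is that $\DG$ contains \emph{all} of $\flem^\FC(Y)\cap\AC^0$, so that every face of $Y$ outside $\DG$ has $X^\infty\neq 0$.)

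Once this reduction is removed, the lemma is not an order-theoretic formality. What must be shown is that a compatible family of elements of the rings $\wt R_X$, indexed by a subcomplex whose complement avoids $\AC^0$, glues to an element of $\wt R_Y$; gluing sections of Stanley--Reisner rings along a diagram with no maximum is a connectivity/acyclicity statement, and this is precisely the paper's work: the complex $C^Y_\bullet$ splits monomial by monomial, the vanishing of the Borel--Moore homology of $N_S\cap\s_Y$ for $Y\notin\AC^0$ (Proposition~\ref{prop-tubular tope}) gives acyclicity, and the non-core faces of $Y$ are then added back one at a time in order of nondecreasing rank. Your proposal never touches this input; indeed, if your reduction were correct, the entire center computation of Section~\ref{section-center} would collapse to bookkeeping, which is a sign that the key step has been assumed rather than proved. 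To repair the proposal you would either have to establish the (false) maximum property or supply the gluing statement directly, which leads back to the topological argument of the paper.
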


\begin{proof} 
This proof is simply a rephrasing of \cite[Lemma 4.20]{GDKD} in our setting.

This is trivial if $Y\in \flem^\FC(Y)\cap \AC^0$ (equivalently $Y^\infty=0$), which includes the case where $Y$ is a feasible cocircuit. So we may assume $Y^\infty\neq0$, and we may also inductively assume the statement is true if $Y$ is replaced by any $X\in\flem^\FC(Y)\backslash\{Y\}$. 

We will prove the statement first for $\DG=\flem^\FC(Y)\backslash\{Y\}$. Let $C_\bullet^Y$ be the subcomplex of $C_\bullet$ consisting only of the summands $\wt R_X\otimes_k or(X)$ for $X\in\flem^\FC(Y)$. As in the proof of Lemma \ref{lem-center homology extended}, this complex splits into a direct sum of complexes $C_\bullet^{Y,m}=C_\bullet^Y\cap C_\bullet^m$ for each monomial $m=\prod_i x^{s_i}$. The summand $C_\bullet^{Y,m}$ computes the cellular Borel-Moore homology of $N_S\cap\s_Y\subset \s_Y$, for $S=\{i\mid s_i>0\}$. By Proposition \ref{prop-tubular tope}, this Borel-Moore homology is trivial, so every $C_\bullet^{Y,m}$ is acyclic and so is $C_\bullet^Y$. 

We have that $\varprojlim_{X\in\DG}\wt R_X$ is isomorphic to the kernel of the boundary map $C_{d-1}^Y\to C_{d-2}^Y$. Since $C^Y_\bullet$ is acyclic, the first map of (\ref{center limits lemma equation}) is therefore an isomorphism. Similarly, the second map is an isomorphism since $C^Y_\bullet$ is an acyclic complex of free $\Sym U$-modules, which implies $C_\bullet^Y\otimes_{\Sym U}k$ is an acyclic complex of vector spaces.

For a general $\DG$ containing $\flem^\FC(Y)\cap \AC^0$, pick an ordering $X_1,\dots,X_r$ of the elements of $\flem(Y)\backslash \DG$ such that their ranks are nondecreasing, and let $\DG_\ell=\DG\cup\{X_1,\dots,X_\ell\}$. Then for $1\leq \ell\leq r$, we have $\flem^\FC(X_\ell)\backslash\{X_\ell\}\subset\DG_{\ell-1}$, so an identical argument shows that
    \eq{\varprojlim_{X\in\DG_\ell}\wt R_X \to\varprojlim_{X\in\DG_{\ell-1}}\wt R_X
    \quad \text{and} \quad
    \varprojlim_{X\in\DG_{\ell}} R_X \to\varprojlim_{X\in\DG_{\ell-1}} R_X}
are isomorphisms.
\end{proof}

\begin{proof}[Proof of Theorem \ref{thm-center}]
Put the equations (\ref{center limit 1}) and (\ref{center limit 2}) together with Lemma \ref{lem-limits} to get 
    \eq{
    Z(\wt B_\ext)
    \cong \varprojlim_{X\in\AC}\wt R_X 
    \cong\varprojlim_{X\in\AC^\PC}\wt R_X
    \cong Z(\wt B)}
and 
   \eq{
    Z( B_\ext)
    \cong \varprojlim_{X\in\AC} R_X 
    \cong\varprojlim_{X\in\AC^\PC} R_X
    \cong Z(B).}
All of these isomorphisms are compatible with $\overline\z_\ext$, $\overline\z$ and the natural quotients $\wt B_\ext\to B_\ext$ and $\wt B\to B$, so we are done.
\end{proof}

\section{The module category of $A$}\label{sec-modules}

In this section, we study the simple modules for $A=A(\PB,U)$ and their projective covers using a class of standard modules.  

\begin{defi} For any $\alpha\in\PC$, let 
\[L_\alpha:=A/(e_\beta\mid\beta\neq\alpha).\]
Then $L_\alpha$ is the simple one-dimensional $A$-module supported at $\alpha$ and each simple $A$-module is isomorphic to $L_\alpha$ for some $\a\in \PC$.  Let
\[P_\alpha:=e_\alpha A\]
be the projective cover of $L_\alpha$. We also define $V_\alpha$ to be $P_\alpha/K_\alpha$, where
\[K_\alpha:=\sum_{i\in b}p(\alpha,\alpha^i)\cdot A \subset P_\a \] and $b$ is the basis of $M$ such that $\mu(b) = \a$ under the bijection of Corollary~\ref{cor-bij}.
We refer to $V_\a$ as the \emph{standard module} and $L_\a$ as the \emph{simple module} associated to $\a$.
\end{defi}

\begin{lem}\label{lem-standard}
Let $\a\in\PC$.
 The standard module $V_\alpha$ has a basis consisting of a taut path from $\alpha$ to each $\beta\preceq \a$.  
\end{lem}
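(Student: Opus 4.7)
The proof naturally splits into three parts: spanning of $V_\a$ by taut paths, vanishing of taut paths indexed by $\b \not\preceq \a$, and linear independence of the remaining taut paths. I will tackle them in that order.

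For spanning, by Proposition~\ref{prop-tautification} every element of $P_\a = e_\a A$ can be written as a linear combination of ``fat paths'' of the form $p_{\text{taut}}(\a,\ldots,\b)\cdot\prod_j u_j^{d_j}$, where $p_{\text{taut}}$ is a taut path in $Q$ from $\a$ to some $\b\in\PC$. The crucial observation is that each $u_i$ is central in $A$, since it comes from the central polynomial subalgebra $\Sym((k^n)^*) \subset P(Q)\otimes \Sym((k^n)^*)$ and the defining relations (A1)--(A4) preserve this centrality. Now Corollary~\ref{cor-lin combo} at the source $\a$ gives $e_\a u_j = \sum_{i\in b} c_i\, p(\a,\a^i,\a) = \sum_{i\in b} c_i \,p(\a,\a^i)\cdot p(\a^i,\a) \in K_\a$ for every $j\in E_n$. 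Pulling a $u_j$ past the taut path via centrality lets me rewrite any fat path with nontrivial $u$-monomial as $(e_\a u_j)\cdot(\text{rest}) \in K_\a$, since $K_\a$ is closed under right multiplication. Induction on $\sum_j d_j$ shows that modulo $K_\a$, only pure taut paths survive.

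For vanishing, if $\b\not\preceq\a$ then there exists $i_0\in b$ with $\a(i_0)\neq\b(i_0)$, so every taut path from $\a$ to $\b$ must flip the $i_0$-th coordinate. By Proposition~\ref{prop-taut} I can apply a sequence of type-(ii) elementary homotopies (each realized in $A$ by relation (A2)) to move this flip to the very first edge, rewriting the taut path as $p(\a,\a^{i_0})\cdot p'$. This factorization shows the path lies in $K_\a$, hence vanishes in $V_\a$.

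For linear independence, the decomposition $V_\a = \bigoplus_{\b\in\PC} V_\a e_\b$ (valid because $K_\a$ is a right submodule) separates taut paths to distinct endpoints, so it suffices to show $\dim V_\a e_\b = 1$ for each $\b\preceq\a$. The upper bound $\dim V_\a e_\b \leq 1$ follows again from Proposition~\ref{prop-taut} and (A2): any two taut paths from $\a$ to $\b$ are equal in $A$. The main obstacle is therefore the lower bound, i.e., showing $p_{\text{taut}}(\a,\ldots,\b) \notin K_\a$ for $\b\preceq\a$. My approach is to invoke the isomorphism $A(\PB,U) \cong B(\PB^\vee,U^\perp)$ of Theorem~\ref{thm-A-B}, which identifies $e_\a A e_\b$ with the graded face ring $R^\vee_{\a\b}\langle -d_{\a\b}\rangle$ and sends the taut path to the degree-zero generator $1_{\a\b}$. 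Using the explicit formula for the star product, the image of each generator $p(\a,\a^i)\cdot A e_\b$ of $K_\a e_\b$ gets multiplied by $t_{S^{\a^i}_{\a\b}} = t_{\{i\}}$ (since $\b\preceq\a$ and $i\in b$ force $\a(i)=\b(i)\neq \a^i(i)$), so $K_\a e_\b$ lands in the ideal $(t_i : i\in b)\cdot R^\vee_{\a\b}$. This ideal lies entirely in strictly positive degree, while $1_{\a\b}$ sits in degree zero; hence the class of $p_{\text{taut}}(\a,\ldots,\b)$ survives nontrivially in the quotient $V_\a e_\b$, completing the basis.
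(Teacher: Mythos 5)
Your spanning and vanishing steps coincide with the paper's argument: it too reduces arbitrary elements of $P_\a$ to taut paths times $u$-monomials via Proposition~\ref{prop-tautification}, absorbs the monomials into $K_\a$ using Corollary~\ref{cor-lin combo} together with the centrality of the $u_i$, and kills taut paths to $\b\not\preceq\a$ by rerouting them through $\a^{i_0}$ — the paper invokes Corollary~\ref{cor-thru} for this, which packages exactly the homotopy manipulation you do by hand (note that your "move the flip to the first edge" tacitly needs a taut path from $\a$ through $\a^{i_0}$ to $\b$ to exist in $Q$; this holds because $i_0\in b$ makes $\a^{i_0}$ feasible and $\FC$ is $T$-convex, and it is precisely what Corollary~\ref{cor-thru} supplies). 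Where you genuinely differ is linear independence: the paper simply declares it clear (copying Braden--Licata--Proudfoot--Webster), whereas you give a real proof of non-vanishing by passing through the isomorphism $A(\PB,U)\cong B(\PB^\vee,U^\perp)$ of Theorem~\ref{thm-A-B}, computing $S^{\a^i}_{\a\b}=\{i\}$ for $i\in b$ and $\b\preceq\a$, and concluding that $K_\a e_\b$ maps into the positive-degree ideal $(t_i\mid i\in b)R^\vee_{\a\b}$ while the taut path maps to the degree-zero element $1_{\a\b}$. The one step you should make explicit is that $1_{\a\b}\neq 0$, i.e.\ $R^\vee_{\a\b}\neq 0$: this is where $\b\preceq\a$ is used a second time, via Proposition~\ref{prop-compslack}, which exhibits $X_{b^c}$ as a common feasible cocircuit face of $T^\vee_\a$ and $T^\vee_\b$, so that by Lemma~\ref{lem-size} the ring $\wt R^\vee_{\a\b}$ is a nonzero free $\Sym U^\perp$-module and its degree-zero part survives reduction modulo $U^\perp$. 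With that sentence added your argument is complete, and it has the merit of establishing the independence claim honestly even in the non-Euclidean case, where the paper otherwise only recovers $\dim V_\a$ indirectly through the counting arguments of Theorems~\ref{thm-projfilt} and~\ref{thm-Kgroup}.
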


\begin{proof}
We simply copy the argument from~\cite[Lemma 5.21]{GDKD}.  It is clear that the collection of such taut paths is linearly independent.  We now show that the image of any other path is trivial in $V_\a$.  

Let $b = \mu^{-1}(\a) \in \BM$.

Suppose $p$ is a taut path from $\a$ to $\g \in \FC$ and $\g \not \preceq \a$.  Then for some $i\in b$, $\g(i) \neq \a(i)$ and $\a^i \in \FC$.  By Corollary~\ref{cor-thru} $p$ can be replaced by one of the form $p(\alpha,\alpha^i) \cdot x \in K_\alpha$. Thus $p=0$ as an element of $V_\a$.

If $p$ is a non-taut path, then by Proposition~\ref{prop-tautification}, we can write 
    \eq{p = p' \cdot \prod_{i} u_i^{a_i}=\left(\prod_{i} e_\a u_i^{a_i}\right)\cdot p'}
where $p'$ is taut (with the same endpoint as $p$) and $a_i>0$ for some $i$. Corollary~\ref{cor-lin combo} implies that for all $i\in E$ and some $c_{i,j}\in k$, we have
    \eq{e_\a u_i=\sum_{j\in b}c_{i,j} p(\a,\a^j,\a)\in K_\alpha.}
Thus, $p \in K_\alpha$ and we are done.
\end{proof}

\begin{cor}\label{cor-hwStandard}
The kernel of $V_\a\fib L_\a$ has a filtration with subquotients isomorphic to $L_\b$ for $\b\prec \a$, each appearing exactly once.
\end{cor}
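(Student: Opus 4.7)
The plan is to exploit the natural non-negative grading on $A$ (with $\deg(e_\b)=0$, $\deg(p(\b,\g))=1$, $\deg(u_i)=2$) together with the explicit basis of $V_\a$ provided by Lemma~\ref{lem-standard}. First I would observe that $K_\a = \sum_{i\in b} p(\a,\a^i)\cdot A$ is generated by homogeneous elements of degree~$1$, so it is a graded submodule of $P_\a = e_\a A$, and $V_\a$ inherits a grading. Because every taut path from $\a$ to $\b$ has length exactly $d_{\a\b}$, Lemma~\ref{lem-standard} then yields
\[
(V_\a)_k = \bigoplus_{\substack{\b \preceq \a \\ d_{\a\b} = k}} k\cdot p_\b,
\]
and in particular $(V_\a)_0 = k\cdot e_\a$.

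Next I would consider the descending filtration $F^k V_\a := \bigoplus_{j\geq k}(V_\a)_j$ by graded $A$-submodules. Since $A$ is finite-dimensional and positively graded with $A_0 = \bigoplus_{\g\in\PC} k\cdot e_\g$ semisimple, its Jacobson radical equals $A_{\geq 1}$. This radical annihilates the single-degree quotient $F^k V_\a/F^{k+1}V_\a = (V_\a)_k$, so $(V_\a)_k$ is naturally a module over $A_0$, and the relation $p_\b \cdot e_\g = \delta_{\b\g}p_\b$ identifies
\[
F^k V_\a/F^{k+1}V_\a \;\cong\; \bigoplus_{\substack{\b \preceq \a \\ d_{\a\b} = k}} L_\b\langle k\rangle.
\]

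Finally, since each graded piece is already a direct sum of simple modules, any total refinement of the filtration by degree gives a filtration of $V_\a$ whose successive subquotients are the simples $L_\b$, with each $\b \preceq \a$ appearing exactly once. The unique copy of $L_\a$ arises from $(V_\a)_0$ and realizes the quotient $V_\a \onto L_\a$, so the induced filtration on the kernel is precisely the one asserted. I do not anticipate a substantial obstacle here: the only non-routine input is the identification $\rad(A) = A_{\geq 1}$, which is the standard fact for finite-dimensional positively graded algebras whose degree-zero part is semisimple.
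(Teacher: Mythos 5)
Your proof is correct and is essentially the argument the paper leaves implicit: the corollary is stated as an immediate consequence of Lemma~\ref{lem-standard}, whose basis of taut paths (each homogeneous of degree $d_{\alpha\beta}$ and supported at the single idempotent $e_\beta$ with $\beta\preceq\alpha$) forces the composition factors to be exactly the $L_\beta$, $\beta\preceq\alpha$, once each. Your degree filtration together with the standard identification $\rad(A)=A_{\geq 1}$ is just the routine way of organizing those basis vectors into a filtration with simple subquotients, so there is no gap and no genuinely different route.
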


\subsection{$A$ is a quasi-hereditary algebra when $\PB$ is Euclidean} 
Recall that a finite-dimensional algebra is \textit{quasi-hereditary} if its category of finitely generated modules is \textit{highest weight} in the following sense.

\begin{defi}\label{def-hwc}
Let $\CC$ be an abelian, artinian $k$-linear category and let $\IC$ be the set indexing the isomorphism classes of simple objects $\{L_\a \mid \a \in \IC\}$ and indecomposable projective objects $\{P_\a \mid \a \in \IC\}$.  Then $\CC$ is a \textit{highest weight category} if the set $\IC$ can be endowed with a partial order $\leq$ and there exists a collection of objects $\{V_\a \mid \a \in \IC\}$ with surjections
$$ P_\a \to V_\a \to L_\a $$
that satisfy:
\begin{itemize}
  \item[(i)] the kernel of $V_\a\to L_\a$ has a filtration for which each subquotient is isomorphic to $L_\g$ for some $\g<\a$, and
  \item[(ii)] The kernel of $P_\a\to V_\a$ has a filtration for which each subquotient is isomorphic to $V_\b$ for some $\b>\a$.
\end{itemize}
\end{defi}

Now consider the category of finitely generated $A(\PB,U)$-modules.  As discussed above, the isomorphism classes of simple modules are indexed by the set $\PC$ of bounded feasible topes.
For $A=A(\PB,U)$ to be a quasi-hereditary algebra, we will assume that the oriented matroid program $\PB$ is Euclidean.  Recall from  Section~\ref{subsec-euclid}, that this implies there is partial order $\leq$ on $\PC$ defined by: $\a\leq\b$ when there exists a directed sequence of edges from $\mu^{-1}(\a)$ to $\mu^{-1}(\b)$ in the graph $G_\PB$ of the program.  By Lemma~\ref{lem-EucCone}, this is the same partial order as that defined by the transitive closure of the cone relation $\preceq$.

Suppose $\PB$ is Euclidean.  Then the category of $A(\PB,U)$-modules and the poset $\IC=(\PC,\leq)$ satisfies condition (i) of Definition~\ref{def-hwc} by Corollary~\ref{cor-hwStandard}.  To show that $A$ is quasi-hereditary it remains to show condition (ii).

We will use the following simple lemma.

\begin{lem}\label{lem-icross}
Suppose $\b \in \PC$ and $i \in \mu^{-1}(\b)$.  Then the feasible sign vector $\b^i$ is either unbounded or $\b^i \succeq \b$.
\end{lem}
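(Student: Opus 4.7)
The plan is to handle the disjunction by reducing to the substantive case where $\b^i$ is bounded and feasible, i.e., $\b^i \in \PC$. First I would note that $\b^i$ is automatically feasible: writing $b := \mu^{-1}(\b)$, the optimal cocircuit $Y_b$ of $\b$ vanishes at $i$ (since $i \in b = z(Y_b)$), so $Y_b$ remains a feasible cocircuit face of $\b^i$. Setting $c := \mu^{-1}(\b^i)$ (in the case $\b^i \in \PC$), unfolding $\b^i \succeq \b$ and using that $\b$ and $\b^i$ agree outside the coordinate $i$, the defining condition $\b \in \BC_c$ collapses to the single requirement $i \notin c$. The lemma therefore reduces to showing: if $i \in b$ and $\b^i \in \PC$, then $i \notin c$.

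I would argue by contradiction. Suppose $i \in c$. The same argument that made $Y_b$ a face of $\b^i$ now makes $Y_c$ a cocircuit face of $\b$. Moreover $Y_b \neq Y_c$, for otherwise $b = c$ and the bijection of Corollary~\ref{cor-bij} would force $\b = \b^i$. Since Theorem~\ref{thm-optimal} singles out $Y_c$ as the \emph{unique} optimal cocircuit of $\b^i$, the feasible cocircuit face $Y_b$ of $\b^i$ is not optimal for $\b^i$, yielding a direction $Z \in \AC^\infty$ with $Z(f) = +$ such that $Z \circ Y_b$ is a face of $\b^i$.

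The main and decisive step is to show that such a $Z$ is automatically an $f$-increasing recession direction of $\b^i$, directly contradicting boundedness of $\b^i$. Unpacking the face condition coordinate by coordinate yields $Z(j) \in \{0, \b^i(j)\}$ for every $j \in E_n$: at $j \notin b$ this is automatic from $Y_b(j) = \b(j) = \b^i(j)$; at $j \in b \setminus \{i\}$ the face condition forces $Z(j) \in \{0, \b(j)\} = \{0, \b^i(j)\}$; and at $j = i$ it forces $Z(i) \in \{0, \b^i(i)\} = \{0, -\b(i)\}$. Hence $Z|_{E_n} \circ \b^i = \b^i$ with $Z(f)=+$, so $\b^i$ is unbounded, the desired contradiction. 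The delicate point, and the only place one might go astray, is the bookkeeping at $j = i$: the sign flip from $\b(i)$ to $\b^i(i) = -\b(i)$ matches precisely the freedom provided by $Y_b(i) = 0$, so the hypothesis $i \in b$ is exactly what makes the contradiction go through.
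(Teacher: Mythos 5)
Your proof is correct, and its overall skeleton matches the paper's: both arguments reduce to the dichotomy on whether $i$ lies in $c:=\mu^{-1}(\b^i)$, with the case $i\notin c$ giving $\b\preceq\b^i$ by the same one-line verification. Where you diverge is in ruling out $i\in c$. The paper stays with the covector $\b\wedge\b^i$: since $i\in b\cap c$, both optimal cocircuits $Y_b,Y_c$ are faces of this common subtope, which is a bounded feasible tope of the (generic) contraction $\PB/\{i\}$; uniqueness of its optimal solution (Theorem~\ref{thm-optimal} applied to the contraction) forces $Y_b=Y_c$, hence $b=c$, contradicting bijectivity of $\mu$ from Corollary~\ref{cor-bij}. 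You instead stay in the original program: $Y_b$ is a feasible cocircuit face of $T_{\b^i}$ distinct from its unique optimal cocircuit $Y_c$, so it admits a feasible increasing direction $Z$, and the face condition $Z\circ Y_b\le T_{\b^i}$ forces $Z(j)\in\{0,\b^i(j)\}$ for every $j\in E_n$, making $Z$ an increasing recession direction of $\b^i$ and contradicting $\b^i\in\PC$. Both routes ultimately rest on the genericity-based uniqueness in Theorem~\ref{thm-optimal}; yours buys a self-contained contradiction that avoids the paper's implicit steps (that optimality passes to common subtopes and that $\PB/\{i\}$ is again generic), at the cost of a slightly longer coordinate check, while the paper's is more compact once those facts are granted. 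One phrasing quibble: at $j\notin b$ the containment $Z(j)\in\{0,\b^i(j)\}$ is not ``automatic'' from $Y_b(j)=\b^i(j)$ alone --- it still comes from the face condition, exactly as at the other coordinates --- but this does not affect the validity of the argument.
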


\begin{proof}
Suppose $\b^i$ is bounded.  As $\b^i(j)=\b(j)$ for all $j\neq i$, if $i \not\in \mu^{-1}(\b^i)$, then for all $j \in \mu^{-1}(\b^i)$, $\b^i(j) = \b(j)$.  Thus $\b \in \BC_{\mu^{-1}(\b^i)}$ or equivalently $\b \preceq \b^i$.

If $\b^i$ is bounded and $i \in \mu^{-1}(\b^i)$, then the optimal solutions of $\b$ and ${\b^i}$ are also optimal solutions of their common subtope $Y = \b \wedge \b^i$.  But $Y$ is a tope of $\PB/\{i\}$ and so $Y$ has a unique optimal solution.  Thus $\mu^{-1}(\b)=\mu^{-1}(\b^i)$, which contradicts the fact that $\mu$ is bijection.  
\end{proof}

\begin{thm}\label{thm-projfilt}
Assume $\PB$ is Euclidean. Then the kernel of the quotient homomorphism $P_\a\fib V_\a$ has a filtration with each successive subquotient isomorphic to $V_\b$ for $\b\succ \a$. Each of these standard modules appears exactly once.

In particular, $A$ is quasihereditary.
\end{thm}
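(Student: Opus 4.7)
The Euclidean hypothesis, together with Lemma~\ref{lem-EucCone}, makes the transitive closure $\leq$ of $\preceq$ a genuine partial order on $\PC$. The plan is to fix a linear extension $\a = \b_0, \b_1, \ldots, \b_r$ of the subposet $\{\b \in \PC : \b \succeq \a\}$, arranged so that strictly larger poset elements appear later, and to consider the submodule chain
\[
F_j \;:=\; \sum_{k \geq j} p(\a, \b_k) \cdot A \;\subseteq\; P_\a,
\]
for $0 \leq j \leq r+1$, where $p(\a, \b_k) \in e_\a A e_{\b_k}$ denotes a chosen taut path (with $p(\a, \a) := e_\a$), well-defined in $A$ up to type~(ii) elementary homotopies by Proposition~\ref{prop-taut} together with relation $(A2)$. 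I will verify that $P_\a = F_0 \supseteq F_1 \supseteq \cdots \supseteq F_{r+1} = 0$ realizes the desired filtration with $F_j/F_{j+1} \cong V_{\b_j}$.

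First I would check $F_1 = K_\a$, yielding $F_0/F_1 = V_\a$. The containment $K_\a \subseteq F_1$ is immediate: each generator $p(\a, \a^i)$ of $K_\a$ (with $i \in b := \mu^{-1}(\a)$ and $\a^i \in \PC$) is by Lemma~\ref{lem-icross} a taut path to some $\b_k$ with $k \geq 1$. For the reverse inclusion, antisymmetry of the Euclidean poset forces $\b_k \not\preceq \a$ for every $k \geq 1$, so $\b_k$ disagrees with $\a$ at some $i \in b$; Proposition~\ref{prop-taut} then lets me choose $p(\a, \b_k)$ to begin with the $i$-flip, placing it in $p(\a, \a^i) \cdot A \subseteq K_\a$. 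For $j \geq 1$, I would define the right $A$-module map $\psi_j : P_{\b_j} \to F_j/F_{j+1}$ by $e_{\b_j} \mapsto p(\a, \b_j) + F_{j+1}$. This is surjective by construction, and it factors through $V_{\b_j} = P_{\b_j}/K_{\b_j}$ because each generator $p(\b_j, \b_j^{i'})$ of $K_{\b_j}$ has $\b_j^{i'} = \b_k$ for some $k > j$ (by Lemma~\ref{lem-icross} and the linear extension), and then Propositions~\ref{prop-taut} and~\ref{prop-tautification} together with Corollary~\ref{cor-thru} yield $p(\a, \b_j) \cdot p(\b_j, \b_j^{i'}) \cdot y \in p(\a, \b_k) \cdot A \subseteq F_{j+1}$ for every $y$.

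The main obstacle will be to show that the resulting surjection $\bar\psi_j : V_{\b_j} \twoheadrightarrow F_j/F_{j+1}$ is injective. My approach is a dimension count: Theorem~\ref{thm-A-B} identifies $A$ with $B(\PB^\vee, U^\perp)$, so Lemma~\ref{lem-size} computes $\dim_k e_\a A e_{\b_j}$ combinatorially, while Lemma~\ref{lem-standard} gives $\dim_k V_{\b_j} = |\{\g \in \PC : \g \preceq \b_j\}|$. A careful assembly should show $\dim_k P_\a = \sum_{j=0}^r \dim_k V_{\b_j}$, which together with the surjectivity of each $\bar\psi_j$ forces every one to be an isomorphism. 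The Euclidean hypothesis is essential throughout: without antisymmetry the linear extension does not exist, and $\preceq$-successors of $\b_j$ could loop back to $\a$ or below, breaking the argument that $K_{\b_j}$-generators map into $F_{j+1}$. Combined with Corollary~\ref{cor-hwStandard}, which supplies the first axiom of Definition~\ref{def-hwc}, the resulting filtration will verify the second axiom, so $A$ is quasi-hereditary and each $V_\b$ appears exactly once in the standard filtration of $P_\a$.
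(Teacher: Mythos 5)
Your overall architecture is the same as the paper's (a filtration of $P_\alpha$ by submodules of paths through higher topes, surjections from standard modules onto the subquotients, and a dimension count via Theorem~\ref{thm-A-B}, Lemma~\ref{lem-size} and Proposition~\ref{prop-compslack} to force these surjections to be isomorphisms), and your verification that $F_1=K_\alpha$ and that each $\psi_j$ is surjective is fine. But there is a genuine gap at the step where you factor $\psi_j$ through $V_{\beta_j}$: you claim that each generator $p(\beta_j,\beta_j^{i'})$ of $K_{\beta_j}$ has $\beta_j^{i'}=\beta_k$ for some $k>j$ ``by Lemma~\ref{lem-icross} and the linear extension.'' Lemma~\ref{lem-icross} only gives that $\beta_j^{i'}$ is unbounded or $\beta_j^{i'}\succeq\beta_j$; since the cone relation $\preceq$ is \emph{not} transitive (even in the realizable case -- only its transitive closure $\leq$ is a partial order), $\beta_j^{i'}\succeq\beta_j\succeq\alpha$ does not imply $\beta_j^{i'}\succeq\alpha$, so $\beta_j^{i'}$ need not belong to your index set $\{\beta\in\PC:\beta\succeq\alpha\}$ at all. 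Consequently the containment $p(\alpha,\beta_j)\cdot K_{\beta_j}\subseteq F_{j+1}$ is not established, and without it your dimension count cannot close: each $F_j/F_{j+1}$ is then only known to be a cyclic quotient of $P_{\beta_j}$, and equality of total dimensions does not force it to be $V_{\beta_j}$.

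This is exactly the subtlety the paper's proof is organized around: there the filtration is indexed by the transitive closure $\leq$, with submodules $P_\alpha^\gamma$ generated by \emph{all} paths through $\gamma$ and $K_\alpha^\beta=\sum_{\gamma>\beta}P_\alpha^\gamma$, and one proves separately that the subquotient $M_\alpha^\beta$ vanishes whenever $\beta\not\succeq\alpha$, by choosing $i\in\mu^{-1}(\beta)$ with $\alpha(i)\neq\beta(i)$ and using Corollary~\ref{cor-thru} to reroute through $\beta^i$, which by Lemma~\ref{lem-icross} is either unbounded (killing the path) or strictly higher in $\leq$. Your argument needs this same ingredient: when $\beta_j^{i'}\not\succeq\alpha$, the element $p(\alpha,\beta_j)\,p(\beta_j,\beta_j^{i'})$ must be rewritten, iteratively pushing strictly upward in the finite poset $(\PC,\leq)$, until the path either passes through some $\beta_k\succeq\alpha$ with $\beta_k>\beta_j$ (hence $k>j$) or reaches an unbounded sign vector and is zero. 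Once you insert that inductive rerouting step (or simply adopt the paper's larger filtration and prove the extra pieces vanish), the rest of your argument goes through and is essentially the paper's proof.
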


\begin{proof} 
For any $\g\in\PC$, we define $P_\a^\g\subset P_\a$ to be the submodule generated by paths which pass through $\g$. For any $\b\in\PC$, let 
\[K_\a^\b:=\sum_{\g>\b}P_\a^\g.\] 
After choosing a total order on $\{\b\in\PC\mid\a\leq\b\}$ refining $\leq$, the set of submodules $P_\a^\b+K_\a^\b$ with $\b\geq\a$ forms a filtration of $K_{\a}$ with successive subquotients  $$M_\a^\b:= \left(P_\a^{\b}+K_\a^\b\right)/K_\a^{\b}.$$ 

We pause to note that to make the definitions above, it is essential that $\PB$ is Euclidean, for if $\PB$ were not Euclidean there would be no partial (or total) order refining the cone relation on $\PC$.

Our goal now is to prove that $M_\a^\b$ is zero if $\b\not\succeq \a$, and is isomorphic to $V_\b$ if $\b\succeq \a$. Notice that $M_\a^\a=V_\a$. 

If $\b\not\succeq \a$, then there is an index $i\in \mu^{-1}(\b)$ such that $\a(i)\neq\b(i)$. By Proposition \ref{prop-tautification}, any path starting at $\a$ and passing through $\b$ can be written as $p_{\a,\b} \cdot r$ in $P_\a^{\geq\b}$, where $p_{\a,\b}$ represents a taut path from $\a$ to $\b$ and $r$ represents a path starting at $\b$.  We may then apply Corollary \ref{cor-thru} to the taut path $p_{\a,\b}$ and $\g = \b^i$, to show that $p_{\a,\b}$ can be chosen to pass through $\b^i\in\FC$. By Lemma~\ref{lem-icross}, $P_\a^\b\subset P_\a^{\b^i}\subset K_\a^\b$, so $M_\a^\b=0$. 

On the other hand, assume that $\b\succ \a$. There is a natural map $P_\b\to P_\a^\b$ given by composing any element of $P_\b$ with a fixed taut path $p_{\a,\b}$ from $\a$ to $\b$.  This induces a homomorphism $V_\b \to M_\a^\b$ that we wish to show is an isomorphism. 

By Proposition~\ref{prop-tautification}, any path starting at $\a$ and passing through $\b$ can be expressed as a product of an element in $P_\b$ with some taut path from $\a$ to $\b$ and by Proposition~\ref{prop-taut} the taut path can be chosen to be the one we have fixed. It follows that the map $P_\b\to P_\a^\b$ is surjective and thus the induced map $V_\b \to M_\a^\b$ is surjective as well.

Finally, we need to show that $V_\b\to M_\a^\b$ is injective. We proceed by showing that they have the same dimension. The surjectivity of the map implies $$\dim_k M_\a^\b\leq\dim_k V_\b=|\{\g \in \PC \mid \g \preceq\b\}|,$$ so that 
	\[\dim_k P_\a=\sum_{\b \succeq \a}\dim_k M_\a^\b \leq |\{(\g,\b)\in\PC\times\PC \mid \a,\g \preceq \b\}|.\]
We're done if we can show this is an equality. As $A=\sum_{\a\in\PC} P_\a$, it suffices to prove that 
	\[ \dim_k A = \sum_{\a\in\PC} \dim_k P_\a = \{(\a,\g,\b)\in\PC\times\PC\times\PC \mid \a,\g\preceq \b\}. \]
But recall that $$A=A(\PB,U)\simeq B(\PB^\vee,U^\perp)=\bigoplus_{(\a,\g)\in\PC^\vee\times\PC^\vee} R_{\a\g}^\vee$$ and so by Lemma \ref{lem-size}
	\[ \dim_k B(\PB^\vee,U^\perp)
	=\sum_{(\a,\g)\in\PC^\vee\times\PC^\vee}|\{\text{common feasible cocircuit faces of $\a$ and $\g$}\}|. \]
 We are then reduced to showing that the number of common feasible cocircuit faces in $\PB^\vee$ of $\a$ and $\g$ is equal to the number of bounded feasible topes $\b$ of $\PB$ such that $\a\preceq \b$ and $\g\preceq \b$.
This follows from Complementary Slackness (Proposition~\ref{prop-compslack}).
\end{proof}

\subsection{The structure of projectives when $\PB$ is not Euclidean}\label{subsec-quasihereditary needs Euclidean}
Note that the definition of the standard modules makes sense for any $\PB$ and Lemma \ref{lem-standard} holds even in the non-Euclidean case.  However, when $\PB$ is not Euclidean, the transitive closure of the cone relation is not a poset and so the standard modules are not part of a highest weight structure.

Nonetheless, one might still hope for a version of Theorem~\ref{thm-projfilt}: that the kernel of $P_\a\fib V_\a$ has a filtration with successive subquotients isomorphic to $V_\b$ for $\b\succ \a$.

In this section we observe that this is too optimistic a hope, but that it does hold on the level of graded Grothendieck groups.

\medskip

Recall from Lemma~\ref{lem-size}, that for any $\a\in \PC$, the dimension $\dim_k R_\a$ is equal to the number of feasible cocircuit faces of $\a$.  We begin with a graded refinement of this statement.

\begin{lem}\label{lem-gradedsize}
Let $(h_0,h_1 \ldots, h_{d-1})$ denote the $h$-vector of $z(\D_{\a})$ or equivalently $h_i$ is equal to the dimension of the graded piece of $R_\a$ of degree $2i$.  Then $h_i$ is equal to the number of feasible vertices of $\a$ with $i$ outgoing edges.
\end{lem}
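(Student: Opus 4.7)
The first equivalence, that $h_i = \dim_k (R_\a)_{2i}$, will follow from standard commutative algebra of face rings. By Lemma~\ref{lem-size}, $\wt R_\a = k[z(\D_\a)]$ is a free module over $\Sym U$, hence Cohen--Macaulay with $U$ as parameter space. With the grading convention $\deg t_j = 2$, its Hilbert series is then $\bigl(\sum_i h_i q^{2i}\bigr)/(1-q^2)^d$, and quotienting by $U$ yields $R_\a$ with Hilbert series $\sum_i h_i q^{2i}$. This reduces the lemma to the combinatorial identification of $h_i$.

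For that identification, the plan is to exhibit a shelling of $z(\D_\a)$ whose restriction sets match up with outgoing edges. The facets of $z(\D_\a)$ are precisely the bases $z(Y)$ as $Y$ ranges over the feasible cocircuit faces of $\a$. I will order these by choosing a linear extension of the subdigraph of $G_\PB$ obtained by restricting to the cocircuits of $\a$; this subdigraph is acyclic regardless of whether $\PB$ is Euclidean, since a directed cycle among cocircuit faces of $\a$ would force a closed walk along which $f$ strictly increases. Such a facet ordering is the oriented matroid analogue of a Bruggesser--Mani line-shelling, and I expect it to be a valid shelling via the recursive coatom orderings of $\flem^\FC(\a)$ established in~\cite[Sec.~4.5]{OMbook}, dualized to yield a shelling of $z(\D_\a)$ through the bijection $Y \mapsto z(Y)$.

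Granting such a shelling, I claim the restriction $R(F_Y)$ of a facet $F_Y = z(Y)$ equals the set of outgoing directions at $Y$ inside $\a$. Indeed, for $i \in z(Y)$, the set $z(Y) \setminus \{i\}$ is the zero set of the unique edge $E_i$ of $\a$ at $Y$ that flips the $i$-th coordinate, and the other endpoint $Y'_i$ is the feasible cocircuit face with $z(Y'_i) \supset z(Y) \setminus \{i\}$. Hence $F_Y \setminus \{i\}$ is contained in a previously shelled facet precisely when $Y'_i$ comes before $Y$ in the order, which by our choice of ordering occurs exactly when the edge $E_i$ is outgoing at $Y$. The shelling formula $h_i = |\{Y : |R(F_Y)| = i\}|$ (cf.~\cite[Sec.~III.2]{StanleyComb}) will then deliver the desired combinatorial count.

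The main obstacle will be rigorously verifying the shelling property for the proposed ordering in the full generality of a generic (not necessarily Euclidean) program. In the realizable case this is classical (Bruggesser--Mani); in the oriented matroid setting one needs to carefully transport a recursive coatom ordering of $\flem^\FC(\a)$ across to the simplicial complex $z(\D_\a)$ and check that the coatom ordering so obtained is induced by a linear extension of the $G_\PB$-digraph on cocircuits of $\a$. Once this is in place, the identification of restrictions with outgoing edges is essentially formal.
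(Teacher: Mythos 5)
Your first paragraph (free-ness over $\Sym U$ from Lemma~\ref{lem-size}, hence $h_i=\dim_k(R_\a)_{2i}$) is fine, but the combinatorial half of your argument has a genuine gap. The acyclicity you assert for the $f$-oriented subdigraph of $G_\PB$ on the cocircuit faces of $\a$ is exactly the kind of statement that fails for general oriented matroid programs: your justification, that a directed cycle ``would force a closed walk along which $f$ strictly increases,'' presupposes an actual objective function, i.e.\ realizability. Combinatorially, every edge of $G_\PB$ is ``increasing'' by definition, and non-Euclidean programs are precisely those in which directed cycles nevertheless occur; moreover such cycles can lie on the boundary of a single (bounded, feasible) cell, as in the Edmonds--Fukuda--Mandel type examples behind Example~\ref{ex-EFM(8)}. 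Since the lemma is stated (and later used, in Theorems~\ref{thm-Kgroup} and~\ref{thm-numKoszul}) for arbitrary generic programs, you cannot assume Euclideanness, and even restricted to one tope the acyclicity claim is unproven and, I believe, false in general. On top of this, you explicitly defer the verification that your proposed facet order is a shelling of $z(\D_\a)$; so even in the Euclidean case the core of the argument (Bruggesser--Mani transported to the oriented matroid setting with the stated restriction sets) is not actually established.

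The paper sidesteps both issues by using \emph{partitionability} rather than shellability, which requires no ordering of the facets at all. By Theorem~\ref{thm-optimal} (valid for any generic $\PB$), every feasible face $Y$ of $\a$ has a unique optimal cocircuit; assigning each face of $z(\D_\a)$ to the optimal cocircuit $X_j$ of the corresponding covector partitions $z(\D_\a)$ into intervals $[G_j,z(X_j)]$, where $G_j$ is the zero set of the join of the incoming edges at $X_j$, so that $|G_j|$ equals the number of outgoing edges at $X_j$. Stanley's Proposition~III.2.3 then gives $h_i=\#\{j:|G_j|=i\}$ directly from the partition. If you want to rescue your write-up, replace the shelling order by this partition-by-optimal-vertex argument (your identification of ``restriction sets'' with outgoing edges is essentially the computation of $|G_j|$); as written, the proof does not go through.
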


\begin{proof}
We proceed by showing that $z(\D_{\a})$ is partitionable.  Recall that a pure simplicial complex $\D$ is \textit{partitionable}, if it can be expressed as a disjoint union of closed intervals of the form
\[ \D = [G_1,F_1] \sqcup \ldots \sqcup [G_s,F_s], \]
where each $F_i$ is a facet of $\D$.  By \cite[Proposition III.2.3]{StanleyComb} the $h$-polynomial of such a simplicial complex is given by
\[ h_i = \#\{ j: |G_j| = i \}. \] 

Recall that $z(\a)=\varnothing$ and $z(\D_\a)$ is isomorphic as a poset to $\fllv^\FC(\a) \backslash \{0\}$.  Thus we may identify a face of $\a$ with the faces of the abstract simplicial complex $z(\D_\a)$.

The facets $F_1,\ldots,F_s$ of $z(\D_\a)$ are the zero sets of the feasible vertices (i.e., feasible cocircuit faces) of $\a$.
If $F_i=z(X_i)$ for a feasible vertex $X_i$, let $G_i$ be (the zero set of) the meet in $\D_\a$ of the incoming edges of $X_i$.

Recall that each face of $\a$ has a unique optimal solution (this follows from Theorem~\ref{thm-optimal}).  For each feasible face $Y$ of $\a$, the face $z(Y) \in \D_\a$ is in the interval $[G_j,F_j]$ if and only if $X_j$ is the optimal solution of $Y$.  Thus $[G_1,F_1] \sqcup \ldots \sqcup [G_s,F_s]$ is a partition of $z(\D_{\a})$.  Note that 
\[|G_j| = d - \#\{\textrm{incoming~edges~to~}X_j\} =  \#\{\textrm{outgoing~edges~from~}X_j\}.\]
We conclude that
\[ h_i = \#\{ j: |G_j| = i \} = \#\{\textrm{feasible~vertices~of~}Y \textrm{~with~}i \textrm{~outgoing~edges}\}.\] 
\end{proof}

\begin{cor}\label{cor-gradedsize}
Let $(h_0,h_1 \ldots, h_{d-1})$ denote the $h$-vector of $z(\D_{\a\b})$ or equivalently $h_i$ is equal to the dimension of the graded piece of $R_{\a\b}$ of degree $2i$.  Then $h_i$ is equal to the number of feasible vertices of $\a\wedge \b$ with $i$ outgoing edges of $\a\wedge\b$.
\end{cor}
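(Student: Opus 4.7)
The plan is to reduce the corollary to Lemma~\ref{lem-gradedsize} applied to a suitable contraction. Set $Y := \a \wedge \b$ and $S := z(Y) = \{i \in E_n : \a(i) \neq \b(i)\}$. First I would observe that $S$ is contained in a basis of $M$: any feasible cocircuit face $X$ of $Y$ has $z(X) \supseteq S$ with $z(X) \in \BM$ by Lemma~\ref{lem-BasFCocirc}. By the discussion following Remark~\ref{rem-equality}, the contraction $\PB/S$ is then a generic oriented matroid program of rank $d+1-|S|$. Next, I would verify that the image $\overline{Y}$ of $Y$ in $\PB/S$ is a bounded feasible tope: it coincides with the images of $\a$ and $\b$ (which agree on $E_n \setminus S$) and so has no zero entries; feasibility follows from $g \notin S$; boundedness follows because an increasing direction preserving $\overline{Y}$ in $\PB/S$ would lift, by inserting zeros on $S$, to an increasing direction preserving $\a$ in $\PB$, contradicting the boundedness of $\a$.

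Next, I would establish that the natural restriction $X \mapsto \overline{X}$ defines a bijection between feasible cocircuit faces of $Y$ in $\NC$ and feasible cocircuits of $\overline{Y}$ in $\NC/S$, and that this extends to a bijection on edges which preserves the orientation with respect to $f$. Both sets of edges at $X$ (resp.\ $\overline{X}$) are indexed by $i \in z(X) \setminus S$: the edge in direction $i$ is uniquely determined by the constraint $E(i) = Y(i) \neq 0$, while indices $i \in S$ yield no edge of $Y$ because $Y(i) = 0$. Because $f \notin S$, the sign of $f$ at every cocircuit is preserved under contraction, and hence the outgoing edge count at $X$ in $Y$ equals the outgoing edge count at $\overline{X}$ in $\overline{Y}$ in $\PB/S$.

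Finally, I would match Hilbert series. As noted in the proof of Lemma~\ref{lem-size}, the simplicial complex $z(\D_{\a\b})$ is the $|S|$-fold cone over $z(\D_{\overline{Y}})$ with cone vertices $S$; equivalently, every facet $z(X) = b$ splits as $S \sqcup (b \setminus S)$ with $b \setminus S$ a basis of $M/S$. Since the $h$-polynomial of a simplicial complex is unchanged by taking cones (a standard computation for Stanley--Reisner rings), the $h$-vectors of $z(\D_{\a\b})$ and $z(\D_{\overline{Y}})$ coincide. By the Cohen--Macaulay property established in Lemma~\ref{lem-size}, these $h$-vectors are the Hilbert series of $R_{\a\b}$ and $R_{\overline{Y}}$ respectively. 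Applying Lemma~\ref{lem-gradedsize} to $\overline{Y}$ as a bounded feasible tope of $\PB/S$ and transferring through the edge bijection then yields the desired count. The main obstacle, if any, is the careful verification of the orientation-preserving edge bijection, but this is immediate once one observes that $f \notin S$; the remaining ingredients are essentially bookkeeping combined with the already established Lemma~\ref{lem-gradedsize}.
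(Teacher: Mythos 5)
Your proof is correct and follows essentially the same route as the paper: contract by $S=z(\a\wedge\b)$, observe that $z(\D_{\a\b})$ is the join of $z(\D_{\overline Y})$ with the simplex on $S$ so the $h$-vector is unchanged, and apply Lemma~\ref{lem-gradedsize} to the contracted program. The additional verifications you spell out (that $\overline Y$ is a bounded feasible tope of the generic program $\PB/S$, and that the $f$-oriented vertex/edge data of $\a\wedge\b$ matches that of $\overline Y$ under contraction) are details the paper's proof leaves implicit.
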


\begin{proof}
Let $\g$ be the tope in $\PB/z(\a\wedge \b)$ given by the restriction of $\a\wedge\b$.  Then the simplicial complex $z(\D_{\a\b})$ is equal to the simplicial join $z(\D_\g)* \G$ of $z(\D_\g)$ with the $(d_{\a\b}-1)$-simplex $\G$ on the set $z(\a\wedge \b)$.  By standard properties of the $h$-polynomial, we have:
\[ h(z(\D_{\a\b}),x) = h(z(\D_\g)* \G, x) = h(z(\D_\g),x) h(\G,x) = h(z(\D_\g),x).\]
We conclude that the $h$-vector of $z(\D_{\a\b})$ is equal to that of $z(\D_\g)$.  The result then follows from Lemma~\ref{lem-gradedsize}.
\end{proof}

For an $A$-module $M$, let $[M]$ denote the class of $M$ in the Grothendieck group of $A$-modules.  We will consider the Grothendieck group of the category of graded $A$-modules as a $\ZM[q,q^{-1}]$-module, where 
\[ [M\langle -k\rangle ] = q^k [M]. \]
For a graded vector space $V = \oplus_i V_i$, we denote the graded dimension of $V$ by
\[ \grdim V = \sum_i (\dim V_i)~q^i. \]

\begin{thm}\label{thm-Kgroup}
For any generic oriented matroid program $\PB$ and any $\a \in \PC$, the class of the indecomposable projective $P_\a$ in the Grothendieck group can be expressed as the sum:
\[ [P_\a] = \sum_{\g \succeq \a} q^{d_{\a\g}}[V_\g].\]
\end{thm}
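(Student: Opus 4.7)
The plan is to work in the graded Grothendieck group and reduce the theorem to an equality of graded dimensions. By Lemma~\ref{lem-standard}, $[V_\g]=\sum_{\b\preceq\g}q^{d_{\g\b}}[L_\b]$, and since $[P_\a]=\sum_\b\grdim(e_\a A e_\b)\,[L_\b]$, the theorem is equivalent to the identity
\[\grdim(e_\a A e_\b)=\sum_{\g\in\PC\,:\,\g\succeq\a,\,\g\succeq\b}q^{d_{\a\g}+d_{\g\b}}\]
for each pair $\a,\b\in\PC$.

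To analyze the left-hand side, Theorem~\ref{thm-A-B} identifies $A(\PB,U)\cong B(\PB^\vee,U^\perp)$, whence $e_\a A e_\b\cong R_{\a\b}(\PB^\vee,U^\perp)\langle-d_{\a\b}\rangle$ and $\grdim(e_\a A e_\b)=q^{d_{\a\b}}\grdim R_{\a\b}(\PB^\vee,U^\perp)$. Corollary~\ref{cor-gradedsize} applied to $\PB^\vee$ then rewrites this as $q^{d_{\a\b}}\sum_i h_i q^{2i}$, where $h_i$ is the number of feasible cocircuit faces of $Y:=\a\wedge^\vee\b$ in $\PB^\vee$ with exactly $i$ outgoing edges of $Y$. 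For the right-hand side, using the identity $d_{\a\g}+d_{\g\b}-d_{\a\b}=2|S^\g_{\a\b}|$ and Complementary Slackness (Proposition~\ref{prop-compslack})---which gives a bijection $\g\leftrightarrow X_{\mu^{-1}(\g)^c}$ between $\{\g\in\PC:\g\succeq\a,\g\succeq\b\}$ and feasible cocircuit faces of $Y$ in $\PB^\vee$---the problem reduces to the combinatorial claim that, for each feasible cocircuit face $X=X_{b^c}$ of $Y$ in $\PB^\vee$ (with $\g=\mu(b)$), the number of outgoing edges of $Y$ at $X$ in $\PB^\vee$ equals $|S^\g_{\a\b}|$.

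To establish this claim, I would observe that the edges of $Y$ at $X$ in $\PB^\vee$ are parametrized by $z(X)\setminus z(Y)=b^c\cap\{i:\a(i)=\b(i)\}$ via $i\mapsto Z_i$, where $Z_i$ is the unique covector of $\NC^\vee$ with $z(Z_i)=z(X)\setminus\{i\}$ and $X\leq Z_i\leq Y$. The heart of the argument is to show that $Z_i$ is outgoing in $\PB^\vee$---that is, its direction at infinity has positive $g$-value---if and only if $\g(i)\neq\a(i)$, equivalently $i\in S^\g_{\a\b}$. This is a local rank 2 calculation: passing to the contracted rank 2 oriented matroid program $(\wt\MC^\vee/(b^c\setminus\{i\}),f,g)$ on the ground set $b\cup\{i,g,f\}$, both $Z_i$ and its unique direction at infinity (the cocircuit with zero set $\{f\}$ on the same side as $Z_i$) admit explicit sign descriptions determined by $\a|_b$, $\a(i)$, and the signs of the cocircuits used to construct $\g$ from $Y_b$ in the proof of Theorem~\ref{thm-optimal}. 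The resulting sign check, relating the $g$-coordinate of the direction to the comparison of $\g(i)$ with $\a(i)$, is the main obstacle; once it is carried out, summing over $X$ yields the desired identity and the theorem follows.
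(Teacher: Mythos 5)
Your reduction mirrors the paper's proof step for step: the paper likewise writes $[P_\a]=\sum_\b(\grdim e_\a A e_\b)[L_\b]$, invokes Theorem~\ref{thm-A-B} and Corollary~\ref{cor-gradedsize} for $\PB^\vee$, uses $d_{\a\g}+d_{\g\b}=d_{\a\b}+2|S^\g_{\a\b}|$ together with Proposition~\ref{prop-compslack} to match the two sides (this is exactly equation~(\ref{eq-grdim})), and then everything hinges on the same combinatorial claim you isolate: at the feasible cocircuit face $X_{b^c}$ of $Y=T^\vee_\a\wedge T^\vee_\b$, the number of outgoing edges equals $|S^{\g}_{\a\b}|$ for $\g=\mu(b)$. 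The problem is that you do not prove this claim; you explicitly defer the sign verification as ``the main obstacle.'' Since all the preceding steps are bookkeeping with results already established in the paper, this deferred check is the entire mathematical content of the theorem, so as written the proposal has a genuine gap rather than a complete proof.

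The missing step does not require the explicit rank-$2$ sign bookkeeping you envision, and in particular does not need to route back through the primal construction of $\g$ from $Y_b$ in Theorem~\ref{thm-optimal} (that would force another pass through complementary slackness). The clean argument is dual optimality plus genericity: by Propositions~\ref{prop-comp} and~\ref{prop-compslack}, $X_{b^c}$ is the unique optimal cocircuit of the tope $T^\vee_\g$ in $\PB^\vee$. For $i\in z(X_{b^c})\setminus z(Y)$ there are exactly two rank-$2$ covectors through $X_{b^c}$ with zero set $b^c\setminus\{i\}$; they carry opposite signs at $i$ and opposite directions at infinity, and genericity excludes the constant direction, so exactly one of them is increasing with respect to the objective $g$. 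The one whose sign at $i$ is $\g(i)$ is an edge of $T^\vee_\g$ at its optimal vertex, hence decreasing; since your edge $Z_i\leq Y$ satisfies $Z_i(i)=Y(i)=\a(i)=\b(i)$, it is increasing if and only if $\a(i)\neq\g(i)$, i.e.\ $i\in S^\g_{\a\b}$, which is what you needed. (The paper runs this same computation after contracting by $z(\a\wedge\b)$, phrasing the count of outgoing edges as the distance from the contracted tope to the restriction of the tope optimized at that vertex.) With this argument supplied, your proof is correct and coincides with the paper's.
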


\begin{proof}
For any $\b\in \PC$ the graded composition series multiplicity of the simple $L_\b$ in the projective $P_\a$ is equal to the graded dimension of the space of paths in $A$ that start at $\a$ and end at $\b$.  In other words we have: 
\[[P_\a] = \sum_{\b\in \PC} (\grdim ~P_\a e_\b) \cdot [L_\b] =  \sum_{\b\in \PC} (\grdim ~e_\a A e_\b)\cdot [L_\b].\]
By Theorem~\ref{thm-A-B}, 
\[\grdim~e_\a A e_\b = \grdim ~R^\vee_{\a\b}\langle -d_{\a\b} \rangle = q^{ d_{\a\b}} \cdot \grdim~ R^\vee_{\a\b}.\]

By Corollary~\ref{cor-gradedsize}, we can express the graded dimension of $R^\vee_{\a\b}$ as
\[ \grdim ~R^\vee_{\a\b} = \sum_{i=0}^d  \#\{ \mathrm{feasible~vertices~of~}\a\wedge\b \mathrm{~in~}\PB^\vee \mathrm{~with~}i\mathrm{~outgoing~edges}\} \cdot q^{2i}. \]

 Observe that by Proposition~\ref{prop-compslack}  the feasible vertices of $\a\wedge\b$ in $\PB^\vee$ (i.e., common feasible vertices of both $\a$ and $\b$) are in bijection with the bounded feasible topes $\d$ of $\PB$ such that $\a \preceq \d$ and $\b\preceq \d$.  We claim that the number of outgoing edges of $\a\wedge\b$ of the feasible vertex corresponding to $\d$ is equal to $|S^\d_{\a\b}|$. As in the proof of Corollary~\ref{cor-gradedsize}, let $\g$ be the bounded feasible tope $(\a\wedge\b) |_{\un{\a\wedge \b}}$ in the contraction $\PB^\vee/z(\a\wedge \b)$.  Then the number of outgoing edges of the vertex of $\g$ corresponding to $\d$ is equal to the distance between $\g$ and the restriction $\ov\d$ of $\d$ to $\un{\a\wedge\b}$.  But $\un{\a\wedge\b}= \{i \in E \mid \a(i)=\b(i)\}$ and so the distance between $\g$ and $\ov\d$ is equal to the cardinality of the difference set
 $$S(\g,\ov\d) = \{ i \in E \mid \a(i)=\b(i) \neq \d(i) \} = S_{\a\b}^\d.$$

Rewriting the sum over topes $\d$ of $\PB$ such that $\a \preceq \d$ and $\b\preceq \d$ and using the formula $d_{\a\d}+d_{\d\b} = d_{\a\b}+2|S_{\a\b}^\d|$, we find:
\begin{equation} \label{eq-grdim}
 \grdim~e_\a A e_\b = q^{d_{\a\b}} \grdim ~R^\vee_{\a\b} = \sum_{\d \succeq \a,\b}  q^{d_{\a\b} + 2|S_{\a\b}^\d|} =  \sum_{\d \succeq \a,\b}  q^{d_{\a\d}+d_{\d\b}}.
 \end{equation}

Putting it all together, 
\[[P_\a] = \sum_\b   \sum_{\d \succeq \a,\b}  q^{d_{\a\d}+d_{\d\b}} [L_\b] \]
\[= \sum_{\d \succeq \a} q^{d_{\a\d}} \sum_{\b \preceq \d} q^{d_{\b\d}}[L_\b] \]
\[= \sum_{\g \succeq \a} q^{d_{\a\d}} [V_\g] ,\]
as we wished to show.
\end{proof}

We conclude this section with an example of a generic non-Euclidean program and sign vector $\a$ for which the kernel of $P_\a\fib V_\a$ does not admit a filtration with successive standard subquotients.

\begin{ex}\label{ex-EFM(8)}
Let $\PB=(\text{EFM}(8),g,f)$ be the generic non-Euclidean program defined in \cite[Section 10.4]{OMbook}.
Then $M=\un{\text{EFM}(8)/g\backslash f}$ is the uniform matroid of rank 3 on $E_6$. As short hand, we simply write $ijk$ for the basis $\{i,j,k\}$ of $M$.  We denote the sign vector of a bounded feasible tope $\a: E_6\to\{0,+,-\}$ using the string of signs $$\a(1)\a(2)\a(3)\a(4)\a(5)\a(6).$$ The bijection $\mu$ between $\BM$ and $\PC$ can described as follows, where we have listed the pairs $(b,\mu(b))\in \BM\times\PC$ for $\PB$:
\eq{(123,++++++)\
    (124,+++-+-)\
    (123,++-+++)\
    (126,+-+++-)\\
    (134,+-++++)\
    (135,-+++-+)\
    (136,++++--)\
    (145,-++---)\\
    (146,+-+-+-)\
    (156,+++++-)\
    (234,++--++)\
    (235,+++--+)\\
    (236,-+++++)\
    (245,++---+)\
    (246,+++-++)\
    (256,+-+---)\\
    (345,++++-+)\
    (346,++----)\
    (356,-+++--)\
    (456,+++---).
  }
Using this table one can deduce the cone relation $\preceq$ on $\PC$ from the fact that $\mu(b) \preceq \mu(b')$ if $\mu(b)(i)=\mu(b')(i)$ for any $i \in b'$.  For example, if $\mu(b) \prec \mu(456)$, then $b=346,145$ or $256$.

Let $\a=+++--- \in\PC$. Recall the notation $\a^S$ denoting the sign vector of a tope which differs from $\a$ on exactly the set $S\subset E_6$. Using the above list, we find that
 \eq{\{\b \in \PC \mid \b \succeq \a\}=\{\a,\a^4,\a^5,\a^6,\a^{\{1,4\}},\a^{\{2,5\}},\a^{\{3,6\}},\a^{\{4,5,6\}}\}
    }

Suppose there were a filtration $0 \subset F_1 \subset \ldots \subset F_6 \subset K_\a$ of the kernel $K_\alpha:=\sum_{i\in b}p(\alpha,\alpha^i)\cdot A$ of $P_\a\to V_\a$ with nonzero successive standard subquotients $\{V_\g\mid \g\in\CC_\a \backslash \{\a\} \}$ as in the proof of Theorem \ref{thm-projfilt}.  Let $V_\g=K_\a/F_6$ be the final standard subquotient and suppose that $\g=\a^S$.

Let $p$ be a taut path from $\a$ to $\a^S$.  If $p\in F_6$, then $(K_\a/F_6)e_{\a^S}=0$, which is a contradiction.  Thus we may assume that $p \not\in F_6$.  For any $i \in S$, $p=p(\a,\a^i)q$ where $q$ is a taut path from $\a^i$ to $\a^S$.  As $p \not \in F_6$, it follows that $p(\a,\a^i) \not\in F_6$ and $L_{\a^i}$ is a quotient of $V_\g$.  This is a contradiction unless $S=\{i\}$.  Thus $S$ is either $\{4\}, \{5\}$ or $\{6\}$. 

Suppose $S=\{4\}$, so $\g=\a^4=++++--$. Note that $\a^6 \prec \a^4$, so $(V_{\a^4})e_{\a^6}=(K_\a/F_6)e_{\a^6} \neq 0$ and thus $p(\a,\a^6) \not\in F_6$.  But this would mean that $L_{\a^6}$ is a quotient of $V_{\a^4}$, which is a contradiction.

After permuting indices, the same argument shows that neither $V_{\a^5}$ nor $V_{\a^6}$ is a quotient of $K_\a$.  We conclude that $P_\a$ does not have the expected filtration.  More generally, we will see below in the proof of Theorem~\ref{thm-numKoszul} that the change of basis matrix between the standard and simple bases for the Grothendieck group is invertible, so $[P_\a]$ cannot be expressed as a different sum of standard classes.  Thus $P_\a$ does not admit a filtration by standards.
\end{ex}

\subsection{$A$ is a Koszul algebra when $\PB$ is Euclidean}

Recall the notion of a Koszul algebra:

\begin{defi}
Let $M=\bigoplus_{\ell\geq0}M_\ell$ be a graded $k$-algebra. A complex
  \[\dots\to P_3\to P_2\to P_1\to P_0\] 
of graded projective right $M$-modules is \emph{linear} if each $P_\ell$ is generated in degree $\ell$. We say that $M$ is \emph{Koszul} if every simple right $M$-module has a linear projective resolution.
\end{defi}

\begin{thm}\label{thm-KoszulStan}
Assume $\PB$ is Euclidean. Then for all $\a\in\PC$, the standard module $V_\a$ has a linear projective resolution.
\end{thm}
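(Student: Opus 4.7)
The plan is to induct on the poset $(\PC,\leq)$ --- an honest partial order by the Euclidean assumption together with Lemma~\ref{lem-EucCone} --- from the top down. For the base case, if $\a$ is maximal, then by Theorem~\ref{thm-projfilt} we have $K_\a=\ker(P_\a\twoheadrightarrow V_\a)=0$, so $V_\a=P_\a$ is trivially a linear projective resolution of length zero.

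For the inductive step, assume every $V_\b$ with $\b\succ\a$ admits a linear projective resolution. By Theorem~\ref{thm-projfilt}, $K_\a$ has a filtration whose successive subquotients, each appearing once, are the shifted standards $V_\b\langle -d_{\a\b}\rangle$ for $\b\succ\a$; the internal-degree shift $d_{\a\b}$ is forced because the generator of each copy of $V_\b$ inside $P_\a$ sits at the end of a taut path of length $d_{\a\b}$. The key observation is that these shifts are exactly what linearity demands: if the shifted linear resolution of $V_\b\langle -d_{\a\b}\rangle$ is placed so that its zeroth term lies in cohomological position $d_{\a\b}-1$ of a candidate resolution of $K_\a$, then each of its contributions at cohomological position $j$ lands in internal degree $j+1$. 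Prepending $P_\a$ via $0\to K_\a\to P_\a\to V_\a\to 0$ will then produce a complex of projectives whose $i$-th term is generated in internal degree $i$, as required.

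To build this resolution concretely, I would totally order $\{\b\succ\a\}=\{\b_1,\ldots,\b_N\}$ by a refinement of $\leq$ and consider the corresponding filtration $F_0\subset F_1\subset\cdots\subset F_N=K_\a$, with $F_i/F_{i-1}\cong V_{\b_i}\langle -d_{\a\b_i}\rangle$. Inductively on $i$, the short exact sequences
\begin{equation*}
0\to F_{i-1}\to F_i\to V_{\b_i}\langle -d_{\a\b_i}\rangle\to 0
\end{equation*}
together with the horseshoe lemma combine the linear projective resolutions of $F_{i-1}$ (inductive) and $V_{\b_i}\langle -d_{\a\b_i}\rangle$ (from the outer induction) into a resolution of $F_i$. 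The main obstacle is to verify that the horseshoe construction preserves linearity --- equivalently, that the connecting maps in the long exact $\Ext$-sequence do not create syzygies in incorrect internal degrees. I expect this to reduce, via the filtration, to an $\Ext$-vanishing statement between standards of the form $\Ext^j(V_{\b_i},V_{\b_k})$ sitting in the expected internal degree, which should follow from the quasi-hereditary structure (Theorem~\ref{thm-projfilt}) together with the inductively supplied linear resolutions. Coordinating the filtration order with the degree shifts so that this vanishing holds at each splicing step is the critical technical point of the argument.
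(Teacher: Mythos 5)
Your plan founders at the step you yourself flag as the critical one, and the obstruction is not a technical Ext computation but a structural impossibility. The horseshoe lemma applied to the short exact sequences
\begin{equation*}
0\to F_{i-1}\to F_i\to V_{\b_i}\langle -d_{\a\b_i}\rangle\to 0
\end{equation*}
necessarily places the zeroth term of the chosen resolution of $V_{\b_i}\langle -d_{\a\b_i}\rangle$ in homological position $0$ of the resolution of $F_i$; it cannot ``stagger'' it to position $d_{\a\b_i}-1$. With this forced alignment, linearity fails at the very first term as soon as some $d_{\a\b_i}\geq 2$, since that summand contributes a projective generated in internal degree $d_{\a\b_i}>1$. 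And the staggered complex you would like to have instead cannot exist: over a positively graded algebra with semisimple degree-zero part, a linear complex of projectives has all differential entries in degree $1$, hence in the radical, so any linear resolution is automatically minimal. But the minimal resolution of $V_\a$ does not contain $P_\b$ for every $\b\succ\a$ --- its $j$-th term involves only the projectives $P_{\a^S}$ with $S\subseteq\mu^{-1}(\a)$, $|S|=j$ and $\a^S\in\PC$. Already in the five-chamber example of the introduction, the bottom tope $\epsilon$ has five standard subquotients in $P_\epsilon$ while there are at most $2^d=4$ sign vectors of the form $\epsilon^S$, so some $\b\succ\epsilon$ appears in the standard filtration of $K_\epsilon$ but its projective cover cannot appear in any linear (equivalently minimal) resolution; equivalently, $\Ext^j(V_\a,L_\b)$ vanishes for such $\b$, contradicting the pattern your staggered splicing would force. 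So the standard filtration of $K_\a$ by itself, plus induction down the poset, does not determine the linear resolution; the real content is precisely the cancellation of those ``non-cube'' standards.

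The paper's proof supplies this content by a different mechanism: it writes down an explicit candidate resolution, namely the total complex of the multicomplex $\Pi_\a=\bigoplus_{S\subseteq\mu^{-1}(\a)}P_{\a^S}$ with commuting differentials $\partial_i$ given by left multiplication by the arrows $p(\a^{S\backslash i},\a^S)$ (commutativity is relation $(A2)$). This complex is visibly linear and has $H_0=V_\a$, and exactness in positive degrees is proved by filtering $\Pi_\a$ by the submodules of paths passing through $\g\geq\b$ (the same filtration underlying Theorem~\ref{thm-projfilt}): the $\a$-piece of the associated graded is $V_\a$ in degree zero, and for $\b\neq\a$ one picks $i\in\mu^{-1}(\a)\setminus\mu^{-1}(\b)$ and checks that $\partial_i$ pairs the pieces indexed by $S$ and $S\cup\{i\}$ isomorphically, so each such piece is acyclic. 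If you want to salvage your top-down induction, you would need to first identify the correct terms of the resolution (the cube indexed by $S\subseteq\mu^{-1}(\a)$) and prove the relevant $\Ext$-vanishing --- which in practice amounts to reconstructing the paper's filtration argument rather than bypassing it.
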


\begin{proof}
We follow the proof of \cite[Theorem 5.24]{GDKD}.

Let $a$ be the basis corresponding to the optimal cocircuit for $\a$. We will define the promised resolution as the total complex of the following multicomplex. 

For any $S\subset a$, let $\a^S\in\{+,-\}^E$ be the sign vector which disagrees with $\a$ on exactly the entries in $S$. For example, $\a^\varnothing=\a$, and $\a^{\{i\}}=\a^i$ for any $i\in a$. Notice that if $i\in S\subset a$ and $\a^S,\a^{S\backslash i}\in\PC$, then there is a degree one map 
\[
\begin{array}{cccc}
\phi_{S,i}\colon & P_{\a^{S}} &\longrightarrow & P_{\a^{S\backslash i}}, \\
 & q & \mapsto & p(\a^{S\backslash i},\a^S)\cdot q.
\end{array}
\]
We extend this to all $S\subset a$ and $i\in a$ by declaring that $P_{\a^S}=0$ if $\a^S\not\in \PC$ and $\phi_{S,i}=0$ if $i\not\in S$. Consider the module
 \[ \Pi_\a:=\bigoplus_{S\subset a} P_{\a^S}, \]
which we view as being graded by the free abelian group $\ZM\{\e_i\mid i\in a\}$ where the summand $P_{\a^S}$ is given degree $\e_S:=\sum_{i\in S} \e_i$. For each $i \in a$, consider the differential $\partial_i\colon \Pi_\a\to\Pi_\a$ of degree $-\e_i$ defined as the sum
 \[ \partial_i:=\sum_{S\subset a}\phi_{S,i}. \]
Observe that $\partial_i\partial_j=\partial_j\partial_i$ for any $i,j\in a$ by relation ($A2$) and so we can view $\Pi_\a$ as a multi-complex with differentials $\partial_i$ for each $i \in a$. 

Let $\mathbf{\Pi}_\a^\bullet$ denote the total complex of $\Pi_\a$.  Then $\mathbf{\Pi}_\a^\bullet$ is a linear complex of projective modules and $H_0(\mathbf{\Pi}_\a^\bullet)=V_\a$. It remains to show that the complex $\mathbf{\Pi}_\a^\bullet$ is exact in positive degrees.

To do so, we will filter the multicomplex $\Pi_\a$.  For each $\b \in \PC$, let $(\Pi_\a)^\b \subset \Pi_\a$ be the submodule whose $\e_S$-graded part is defined as 
\[ \sum_{\g \geq \b, \a^S} P^{\g}_{\a^S} \subset P_{\a^S}, \]
that is, the submodule consisting of all paths from $\a^S$ passing through some $\g\in \PC$ where $\g\geq \a^S$ and $\g\geq \b$.
Observe that the differentials $\partial_i$ for $i\in a$ are compatible with the submodules $(\Pi_\a)^\b$ and so we have defined a filtration of $\Pi_\a$ by the poset $\PC$.

Computing the associated graded of this filtration yields a multi-complex 
\[\wt\Pi_\a = \bigoplus_{\beta \in \PC} (\Pi_\a)^\b/(\Pi_\a)^{>\b} = \bigoplus_{\b \in \PC} \left( \bigoplus_S M^\b_{\a^S}\right),\]
where $M^\b_{\a^S}$ is the subquotient of $P_{\a^S}$ defined as in the proof of Theorem~\ref{thm-projfilt}.  

Consider the resulting quotient multi-complexes for each $\b \in \PC$.  Let $b=\mu^{-1}(\b)$.  Recall from the proof of Theorem~\ref{thm-projfilt} that $M^\b_{\a^S}$ is non-zero if and only if $\a^S \in \BC_b$.

If $\b =\a$, then $M^\b_{\a^S}=M^\a_{\a^S}=0$ for any non-empty $S \subset a=b$.  Thus the only non-zero summand of the $\alpha$-subquotient is $M^\a_\a=V_\a$ in total degree zero.

If $\b \neq \a$, choose an element $i \in a$ such that $i \not\in b$.  Consider those subsets $S \subset a$ such that $i\not\in S$.  Then we have $\a^S \in \BC_b$ if and only if $\a^{S\cup\{i\}} \in \BC_b$.  If $\a^S \in \BC_b$, then $M_{\a^{S\cup i}}^\b\simeq V_\b\simeq M_{\a^S}^\b$ and the differential induced by $\partial_i$ is the isomorphism given by left multiplication with $p(\a^S,\a^{S\cup \{i\}})$.  On the other hand, if $\a^S \not\in \BC_b$, then $M_{\a^{S\cup i}}^\b =0= M_{\a^S}^\b$. In particular, the differential induced by $\partial_i$ on the $\beta$-component of the associated graded multi-complex is exact.

Recall that if any differential of a multi-complex is exact then the total complex of the multi-complex is also exact.  We conclude that the total complex of the associated graded multi-complex is exact in positive degree.  It then follows that the total complex of the original multi-complex must also be exact in positive degrees.
\end{proof}

\begin{thm}\label{thm-Koszul}
Assume $\PB$ is Euclidean.  Then $A$ and $B$ are Koszul algebras and $A$ is Koszul dual to $B$.
\end{thm}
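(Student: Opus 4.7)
The plan combines four ingredients that are already in place. First, I will assemble the quadratic-dual identification: Theorem~\ref{thm-quad} gives a canonical isomorphism $A(\PB,U)^! \cong A(\PB^\vee, U^\perp)$, and Theorem~\ref{thm-A-B} gives $A(\PB^\vee, U^\perp) \cong B(\PB,U)$. Composing these identifies $A^! \cong B$ as graded algebras, so the statement that $A$ is Koszul dual to $B$ will follow automatically once Koszulity of $A$ is established, and then $B$ will inherit Koszulity from the symmetry of Koszul duality.

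Second, I will establish standard Koszulity of both $A$ and $A^!$ simultaneously. By Proposition~\ref{prop-dualEuc}, the dual program $\PB^\vee$ is Euclidean whenever $\PB$ is, so Theorems~\ref{thm-projfilt} and~\ref{thm-KoszulStan} apply to the pair $(\PB^\vee, U^\perp)$ as well: $A(\PB^\vee, U^\perp)$ is quasi-hereditary, and its standard modules have linear projective resolutions. Together with the corresponding statements for $A$ itself, we obtain that both $A$ and its quadratic dual $A^!$ are quasi-hereditary algebras whose standard modules admit linear projective resolutions.

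With these two ingredients in place, the final step is to invoke the Ágoston--Dlab--Lukács theorem: a quasi-hereditary algebra which is standard Koszul (i.e.\ has linear projective resolutions of all standards) and whose quadratic dual is also quasi-hereditary is Koszul. Both hypotheses are satisfied here, so $A$ is Koszul. Because Koszul duality agrees with quadratic duality for a Koszul algebra, the Koszul dual of $A$ is $A^! \cong B$, and $B$ is Koszul as well. This yields the full statement of the theorem.

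The only real obstacle is ensuring the Koszulity criterion we cite applies in exactly the form we need; fortunately, our hypotheses (standard-Koszulity on both sides plus bilateral quasi-heredity) match the ADL setup directly, so the abstract machinery can be applied as a black box. The core combinatorial work has already been done in Theorems~\ref{thm-projfilt} and~\ref{thm-KoszulStan} and their dual incarnations; all that remains here is this formal passage from standard-Koszulity to Koszulity.
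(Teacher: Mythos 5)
Your proposal is correct and follows essentially the same route as the paper: the paper likewise deduces Koszulity of $A$ from quasi-heredity (Theorem~\ref{thm-projfilt}) plus the linear projective resolutions of standards (Theorem~\ref{thm-KoszulStan}) via the Ágoston--Dlab--Lukács criterion, obtains Koszulity of $B\cong A(\PB^\vee,U^\perp)$ from the dual program being Euclidean, and reads off the Koszul duality from the quadratic duality of Theorem~\ref{thm-quad} combined with Theorem~\ref{thm-A-B}. Your extra verification of quasi-heredity and standard-Koszulity on the dual side is harmless and only makes the appeal to the ADL result more robust.
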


\begin{proof}
By \cite[Theorem 1]{quasi-hered2003} a quasi-hereditary algebra is Koszul if the standard modules have linear projective resolutions. Such resolutions exist for $A$ by the previous theorem. Theorem~\ref{thm-A-B} implies that $B \cong A(\PB^\vee,U^\perp)$ must also be Koszul.  Finally the Koszul duality follows from the quadratic duality statement of Theorem~\ref{thm-quad}.
\end{proof}

\subsection{Numerical identity for Hilbert polynomials}

We do not know whether or not the Euclidean condition on $\PB$ is necessary for $A$ to be Koszul.  In this section we prove that for any generic oriented matroid program $\PB$ the Hilbert polynomial of the algebra $A=A(\PB,U)$ satisfies the following numerical identity.  

Let $H(A,q)$ denote the Hilbert polynomial of $A$, which is the $\PC \times \PC$-matrix with entries
\[ H(A,q)_{\a,\b} = \grdim~e_\a A e_\b. \]
Recall~\cite[Lemma 2.11.1]{BGS} that if $A$ is Koszul, then there is an equality of matrices
\[ H(A,q) H(A^!,-q)^T = I.\]

\begin{thm}\label{thm-numKoszul}
For any generic oriented matroid program $\PB$, the algebra $A=A(\PB,U)$ satisfies the numerical identity above, that is, the Hilbert polynomials of $A$ and its quadratic dual $A^!$ satisfy the matrix equation
\[ H(A,q) H(A^!,-q)^T = I.\]
\end{thm}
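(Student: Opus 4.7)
The plan is to factor the Cartan matrix of $A$ using the standard-filtration data recorded in Lemma~\ref{lem-standard} and Theorem~\ref{thm-Kgroup}, and then reduce the proposed identity to a single combinatorial vanishing. Let $S(q)$ be the $\PC\times\PC$ matrix with entries $S(q)_{\a\b}=q^{d_{\a\b}}\mathbf{1}[\b\preceq\a]$. Combining $[V_\a]=\sum_{\b\preceq\a}q^{d_{\a\b}}[L_\b]$ from Lemma~\ref{lem-standard} with $[P_\a]=\sum_{\g\succeq\a}q^{d_{\a\g}}[V_\g]$ from Theorem~\ref{thm-Kgroup} shows $H(A,q)=S(q)^T S(q)$; the analogous reasoning applied to $A^!\cong A(\PB^\vee,U^\perp)$ yields $H(A^!,q)=S^!(q)^T S^!(q)$ with $S^!(q)_{\a\b}=q^{d_{\a\b}}\mathbf{1}[\b\preceq^\vee\a]$. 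Both Cartan matrices are symmetric, so the goal is $H(A,q)\,H(A^!,-q)=I$. Each of $S(q)$ and $S^!(q)$ is the identity plus a strictly positive-degree matrix, hence invertible over $\ZM[[q]]$, and a direct manipulation then shows it is enough to prove the sharper identity
\[S(q)^T\,S^!(-q)\;=\;I.\]

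Expanding the $(\a,\g)$-entry gives
\[(S(q)^T\,S^!(-q))_{\a\g}=\sum_{\b\in\PC,\;\a\preceq\b,\;\g\preceq^\vee\b}q^{d_{\b\a}}(-q)^{d_{\b\g}}.\]
Applying Complementary Slackness (Proposition~\ref{prop-compslack}) to both $\PB$ and $\PB^\vee$, each such $\b=\mu(b)$ is parameterized by a basis $b\in\BM$ for which $\mu(b)$ agrees with $\a$ on $b$ and with $\g$ on $b^c$; equivalently, by a pair consisting of a feasible cocircuit face $Y_b$ of $\g$ in $\PB$ and a feasible cocircuit face $X^\vee_{b^c}$ of $\a$ in $\PB^\vee$ whose supports partition $E_n$. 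Writing $D=\{i\in E_n:\a(i)\neq\g(i)\}$ and $\mathcal{B}_{\a,\g}$ for this set of bases, a direct computation yields $d_{\mu(b)\a}=|b^c\cap D|$ and $d_{\mu(b)\g}=|b\cap D|$, so the sum collapses to $q^{d_{\a\g}}\sum_{b\in\mathcal{B}_{\a,\g}}(-1)^{|b\cap D|}$ and the identity reduces to
\[\sum_{b\in\mathcal{B}_{\a,\g}}(-1)^{|b\cap D|}=\delta_{\a\g}.\]
The diagonal case $\a=\g$ is immediate since then $D=\varnothing$ and $\mathcal{B}_{\a,\a}=\{\mu^{-1}(\a)\}$.

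For $\a\neq\g$, the plan is to exhibit a fixed-point-free sign-reversing involution on $\mathcal{B}_{\a,\g}$. For any chosen $i\in D$, the involution should pair a basis $b$ with the basis obtained by the oriented-matroid pivot at $i$: combinatorially, the unique exchange produced by the circuit of $\wt\MC/g$ containing $i$ and supported on $(E_n\setminus b)\cup\{i\}$, and geometrically, the move from $\mu(b)$ to $\mu(b)^i$ across the pseudohyperplane at coordinate $i$. Since this move flips whether $i$ lies in the basis, it flips $|b\cap D|$ modulo $2$. The main obstacle is verifying that the involution is well-defined on $\mathcal{B}_{\a,\g}$: the partner element exchanged with $i$ must always lie in $E_n\setminus D$, so that the resulting basis still satisfies the double cone conditions defining $\mathcal{B}_{\a,\g}$; and the boundary cases in which the naive pivot would exit $\PC$ altogether must be handled either by a direct matching argument or by a careful use of the feasible Edmonds--Mandel and Las Vergnas face lattices introduced in Section~\ref{sec-comb}. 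Both points are expected to follow from genericity of $\PB$ together with the interplay between cocircuit faces of $\g$ in $\PB$ and of $\a$ in $\PB^\vee$, but this careful verification is where the substantive content of the proof lies.
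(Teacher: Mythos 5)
Your setup is sound and, up to reindexing, it is the same as the paper's: the factorizations $H(A,q)=S(q)^T S(q)$ and $H(A^!,-q)=S^!(-q)^T S^!(-q)$ are exactly the paper's $H(A,q)=XX^T$, $H(A^!,-q)=YY^T$ (obtained from the graded class computation in Theorem~\ref{thm-Kgroup} and complementary slackness), the reduction to the single mixed product $S(q)^T S^!(-q)=I$ is the paper's reduction to $X^TY=I$ (the two are transposes of one another, hence equivalent), and your further reduction to $\sum_{b\in\mathcal{B}_{\a,\g}}(-1)^{|b\cap D|}=\delta_{\a\g}$, including the computation $d_{\mu(b)\a}=|b^c\cap D|$, $d_{\mu(b)\g}=|b\cap D|$ and the diagonal case, is correct.

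However, the off-diagonal vanishing is left unproven, and this is a genuine gap rather than a routine verification: it is precisely the step where genericity must enter, and your sketch does not supply the needed input. Concretely, your involution "pivot at a chosen $i\in D$" is not known to be well-defined: the element exchanged with $i$ need not avoid $D$ (so the parity of $|b\cap D|$ need not flip), the exchange need not stay inside $\mathcal{B}_{\a,\g}$ (the new tope can fail to be bounded or feasible, or can violate one of the two cone conditions), and the rule must be checked to be an involution at all --- and you acknowledge all of this without resolving it. For comparison, the paper avoids a basis-by-basis pivot: for the entry indexed by $\a\neq\b$ it first disposes of the case where the summation set is empty, then parameterizes the remaining summation set explicitly as a Boolean lattice $\{(\b^K)^S \mid S\subseteq J\}$ with $J=\mu^{-1}(\b)\setminus\mu^{-1}(\a)$, proves a key lemma (using uniqueness of optimal cocircuits in a deletion--contraction minor, i.e.\ genericity) that there exists $t\in J$ with $\a(t)\neq\b(t)$, and then cancels terms by the single global pairing $S\leftrightarrow S\sqcup\{t\}$, which works because $\a(t)\neq\b(t)$ forces $S^{\b^{S\sqcup\{t\}}}_{\a\b}=S^{\b^S}_{\a\b}$. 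If you want to complete your version, the analogous move is to find one distinguished index (your lemma would be the existence of a suitable element of $D$ lying in the symmetric difference of the relevant bases, proved via uniqueness of optimal solutions) and pair the summands by flipping membership of that single index, rather than attempting an oriented-matroid basis exchange whose partner element and feasibility you cannot control.
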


\begin{remark} This identity does not necessarily imply that $A$ is Koszul.  See~\cite{Posits} for an example of a non-Koszul quadratic algebra whose Hilbert series satisfies the numerical identity.
\end{remark}

\begin{proof}
Using equation (\ref{eq-grdim}) in the proof of Theorem~\ref{thm-Kgroup}, the $(\a,\b)$-entry of $H(A,q)$ is given by
\[ H(A,q)_{\a,\b} = \grdim~e_\a A e_\b = \sum_{\g \succeq \a,\b}  q^{d_{\a\g}+d_{\g\b}}.\]
In particular, $H(A,q)$ factors as the product $H(A,q) = X X^T$, where $X$ is the $\PC\times\PC$-matrix with $(\a,\b)$-entry given by
\[X_{\a,\b} = \begin{cases}
q^{d_{\a\b}} & \mathrm{if~}\b \succeq \a \\
0 & \mathrm{otherwise}.
\end{cases}
\]

Dually, using Proposition~\ref{prop-compslack} we find that the $(\a,\b)$-entry of $H(A^!,-q)$ is given by
\[ H(A^!,-q)_{\a,\b} = \sum_i (-q)^i \dim e_\a A^!_i e_\b = \sum_{\g \supset Y_{\mu^{-1}(\a)},Y_{\mu^{-1}(\b)}}  (-q)^{d_{\a\g}+d_{\g\b}},\]
in other words the sum runs over all $\g \in \PC$ for which the optimal solution (cocircuit) of both $\a$ and $\b$ are faces of $\g$.  Again this factors as a product $H(A^!,-q) = Y Y^T$, where $Y$ is the $\PC \times \PC$-matrix with $(\a,\b)$-entry given by
\[Y_{\a,\b} = \begin{cases}
(-q)^{d_{\a\b}} & \mathrm{if~}Y_{\mu^{-1}(\b)} \mathrm{~is~a~face~of~}\a \\
0 & \mathrm{otherwise}.
\end{cases}\]

We wish to show that 
\[ H(A,q) H(A^!,-q)^T = XX^T YY^T= I.\]
Note that it suffices to show $X^T Y = I$.

Computing the product $X^T Y$, we find that its $(\a,\b)$-entry is given by
\[ (X^T Y)_{\a,\b} = \sum_{\g\in Q} q^{d_{\a\g}} (-q)^{d_{\g\b}},\] where $Q$ is the set of all $\g \in \PC$ such that $\a \succeq \g$ and $Y_{\mu^{-1}(\b)}$ is a face of $\g$.  In other words, $Q$ consists of all $\g\in \PC$ such that 
\begin{equation} \label{eq-Q}
 \g(i) = \a(i) \mathrm{~if~} i \in \mu^{-1}(\a) \quad \mathrm{~and}  \quad
 \g(i) = \b(i)  \mathrm{~if~} i \not\in \mu^{-1}(\b). \end{equation}

We wish to show that 
\[(X^T Y)_{\a,\b} = \begin{cases}
1 & \mathrm{if}~\a=\b \\
0 & \mathrm{otherwise}.
\end{cases} \]

If $\a=\b$, then $Q=\{\a\}$ and the sum is equal to $q^{d_{\a\a}}(-q)^{d_{\a\a}}=1$.

\medskip

Now assume that $\a \neq \b$ and let
$$J :=\mu^{-1}(\b) \backslash \mu^{-1}(\a) \quad \mathrm{and} \quad J' :=\mu^{-1}(\a) \backslash \mu^{-1}(\b)$$
so that
$$ J \sqcup J' = (\mu^{-1}(\a) \cup \mu^{-1}(\b)) \backslash (\mu^{-1}(\a) \cap \mu^{-1}(\b)).$$
As we have assumed that $\a \neq \b$, $J$ and $J'$ are nonempty.
    
 Note that if $\a(i)\neq \b(i)$ for some $i \in J'$, then by the conditions $(\ref{eq-Q})$ $Q$ is empty  and $(X^T Y)_{\a,\b}=0$ as desired.  Thus we will assume that $\a(i)= \b(i)$ for all $i\in J'$.

Let $$K:=\{ i \in \mu^{-1}(\a)\cap \mu^{-1}(\b) \mid \a(i)\neq \b(i)\}.$$

For $\d \in \PC$, $Y_{\mu^{-1}(\b)}$ is a face of $\d$ if and only if $\d = \b^W$ for some subset $W \subset \mu^{-1}(\b)$.  On the other hand, $\d= \b^W \preceq \a$ if and only if $K \cup J \supset W \supset K$.  Thus $Q = \{(\b^K)^S \mid S \subset J \}$ and
\[ (X^T Y)_{\a,\b} = \sum_{S \subset J} q^{d_{\a,(\b^K)^S}} (-q)^{d_{(\b^K)^S,\b}} = (-1)^{|K|} \sum_{S \subset J} (-1)^{|S|} q^{d_{\a,(\b^K)^S} + d_{(\b^K)^S,\b}}\] 
\[ = (-1)^{|K|} \sum_{S \subset J} (-1)^{|S|} q^{d_{\a,\b} + 2|S^{(\b^K)^S}_{\a,\b}|}= (-1)^{|K|} q^{d_{\a,\b}} \sum_{S \subset J} (-1)^{|S|} q^{2|S^{\b^S}_{\a,\b}|},\] 
where in the last line we have used $S^{(\b^K)^S}_{\a,\b} = S^{\b^S}_{\a,\b} = \{i\in S \mid \a(i) = \b(i)\}$. We will need the following lemma to finish this proof.

\begin{lem}
Assume as above that $\a\neq\b$ and $\a(i)= \b(i)$ for all $i\in J'$.  Then there exists an element $t\in J$ such that $\a(t)\neq \b(t)$.
\end{lem}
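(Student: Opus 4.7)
The plan is to argue by contradiction. Suppose in addition that $\a(i) = \b(i)$ for every $i \in J$, so that together with the standing hypothesis, $\a$ and $\b$ agree on the entire symmetric difference $J \sqcup J' = b_\a \triangle b_\b$ of their bases, where $b_\a := \mu^{-1}(\a)$ and $b_\b := \mu^{-1}(\b)$. I will reduce to a smaller generic oriented matroid program in which this agreement directly contradicts the bijectivity of the basis/tope correspondence of Corollary~\ref{cor-bij}.

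Concretely, first contract $S := b_\a \cap b_\b$ (permissible since $S \subseteq b_\a$) and then delete $S' := E_n \setminus (b_\a \cup b_\b)$ (permissible since $S' \cap J' = \varnothing$ and $J' = b_\a \setminus S$ is a basis of $\MC/S$). The resulting program $\PB' := (\PB/S)\backslash S'$ is then generic of rank $d+1-|S|$ on ground set $J \sqcup J'$, in which $J$ and $J'$ are disjoint, distinct bases of the underlying matroid. The essential compatibility to verify is that the bijection $\mu_{\PB'}$ for $\PB'$ sends $J'$ and $J$ to sign vectors that agree on $J \sqcup J'$ with $\a$ and $\b$ respectively. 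For deletion this is immediate because topes restrict to topes with unchanged signs. For contraction, it follows from the explicit construction of $\mu(b)$ in the proof of Theorem~\ref{thm-optimal}: for $i \notin b$ the value $\mu(b)(i) = Y_b(i)$ is preserved by restriction of the optimal cocircuit, while for $i \in b \setminus S$ the value is computed in the rank-$2$ contraction $\PB/(b \setminus \{i\})$, which coincides with $(\PB/S)/((b \setminus S) \setminus \{i\})$.

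Granting these compatibilities, the combined hypothesis $\a(i) = \b(i)$ on $J \sqcup J'$ yields the equality of sign vectors $\mu_{\PB'}(J') = \mu_{\PB'}(J)$ in $\PB'$. But Corollary~\ref{cor-bij} applied to $\PB'$ asserts that this map is a bijection, so we must have $J = J'$, contradicting $J \cap J' = \varnothing$ together with $J, J' \neq \varnothing$.

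The main obstacle is carefully verifying the compatibility of $\mu$ with contraction, ensuring that the optimal-cocircuit construction in the proof of Theorem~\ref{thm-optimal} descends cleanly to the contracted program. An alternative approach would be to work entirely inside $\PB$ and apply cocircuit elimination between $Y_{b_\a}$ and $-Y_{b_\b}$ at $g$ to extract a contradiction directly from the genericity conditions of Definition~\ref{def-generic}; however, controlling the elimination on $E_n \setminus (b_\a \cup b_\b)$, where $\a$ and $\b$ may disagree, appears considerably more delicate.
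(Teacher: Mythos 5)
Your proposal is correct and follows essentially the same route as the paper: assuming agreement on $J$ as well, both arguments pass to the minor $\left(\PB/(\mu^{-1}(\a)\cap\mu^{-1}(\b))\right)\backslash(\mu^{-1}(\a)\cup\mu^{-1}(\b))^c$ on $J\sqcup J'$ and derive a contradiction from the uniqueness statement of Theorem~\ref{thm-optimal}/Corollary~\ref{cor-bij} -- the paper says the restrictions of the two optimal cocircuits (with zero sets $J'\neq J$) would both be the unique optimal solution of the common restricted tope, which is exactly your failure of injectivity of $\mu$ for the minor program. The only difference is bookkeeping: you check compatibility of $\mu$ with the minor via the explicit construction in the proof of Theorem~\ref{thm-optimal} (note your deletion step merits the same construction-based check rather than just ``topes restrict to topes''), whereas the paper asserts directly that the optimal cocircuits restrict to the optimal cocircuit of the restricted tope.
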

\begin{proof}
Suppose for the sake of contradiction that $\a(i)=\b(i)$ for all $i \in J$, then $\a(i)= \b(i)$ for all $i\in J \sqcup J'$.   In the deletion-contraction program 
$$\left(\PB/(\mu^{-1}(\a)\cap \mu^{-1}(\b))\right)\setminus (\mu^{-1}(\a) \cup \mu^{-1}(\b))^c$$
defined on the set $J \sqcup J'$,
 the restrictions of the sign vectors of $\a$ and $\b$ are then equal and so describe the same tope $T$.  Now $Y_{\mu^{-1}(\a)}$ is the optimal solution for $\a$ and $Y_{\mu^{-1}(\b)}$ is the optimal solution for $\b$, so the restrictions $Y_\a$ and $Y_\b$ of $Y_{\mu^{-1}(\a)}$ and $Y_{\mu^{-1}(\b)}$ to $J \sqcup J'$ should both be the unique optimal solution of the tope $T$.  But $z(Y_\a) = J' \neq J = z(Y_\b)$, which is a contradiction.  Thus there exists a $t \in J$ such that $\a(t)\neq \b(t)$ as desired.
\end{proof}

In particular if $S \subset J\backslash\{t\}$, we have $S^{\b^{S\sqcup\{t\}}}_{\a,\b} = S^{\b^{S}}_{\a,\b}$. 

Using this fact we rewrite the sum above:
\[ (X^T Y)_{\a,\b} = (-1)^{|K|} q^{d_{\a,\b}} \sum_{S \subset J} (-1)^{|S|} q^{ 2|S^{\b^S}_{\a,\b}|},\]
\[ = (-1)^{|K|} q^{d_{\a,\b}} \sum_{S \subset J \backslash \{t\} } \left((-1)^{|S|} q^{ 2|S^{\b^S}_{\a,\b}|} + (-1)^{|S\sqcup\{t\}|}  q^{2|S^{\b^{S\cup\{t\}}}_{\a,\b}|}\right)\]
\[ = (-1)^{|K|} q^{d_{\a,\b}} \sum_{S \subset J \backslash \{t\} } \left((-1)^{|S|} q^{2|S^{\b^S}_{\a,\b}|} - (-1)^{|S|} q^{2|S^{\b^{S}}_{\a,\b}|}\right)=0.\] 
\end{proof}

\subsection{Self-dual projectives}
Consider the duality functor 
\[\textrm{d}:A\text{-mod}\to A\text{-mod}\] 
defined by composing the equivalence $A^{\text{op}}\text{-mod} \simeq A\text{-mod}$ induced by the isomorphism $A\cong A^{\text{op}}$ given by reversing the arrows of the quiver $D_E$ in Section \ref{section-quiver} with the induced functor $A^{\text{op}}$-mod$\to A$-mod coming from vector space duality.

In the following result, for a fixed sign vector $\a\in \PC= \PC^\vee$ we will need to refer to both the corresponding bounded feasible tope in the affine space of $\PB$ and the corresponding bounded feasible tope in the affine space of $\PB^\vee$.  To distinguish these two topes, we write $T_\a$ to denote the tope in $\PB$ and $T_\a^\vee$ for the tope in $\PB^\vee$.

\begin{thm}\label{thm-selfdual}
For any generic oriented matroid program $\PB$ and $\a\in\PC$. The following are equivalent:
\begin{enumerate}
 \item The projective $P_\a$ is injective.
 \item The projective $P_\a$ is self-dual.
 \item The simple $L_\a$ is contained in the socle of some standard module $V_\b$.
 \item The bounded feasible tope $T_\a$ covers an infeasible subtope $X$, meaning $X(g)=0$.
 \item The bounded feasible tope $T_\a^\vee$ in the dual program $\PB^\vee$ is in the core of the affine space for $\PB^\vee$.  In other words $(T_\a^\vee)^\infty=0$ or equivalently the cocircuit faces of the tope $T_\a^\vee$ are all feasible.
\end{enumerate}
When $\PB$ is Euclidean, and so $A$ is quasi-hereditary by Theorem~\ref{thm-projfilt}, then the statements above are also equivalent to the following: 
\begin{enumerate}
\item[(6)] The projective $P_\a$ is tilting.
\end{enumerate}
 \end{thm}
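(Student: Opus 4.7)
The plan is to pivot the proof around condition $(5)$: prove $(1) \Leftrightarrow (2)$ via functorial duality, $(2) \Leftrightarrow (5)$ via Stanley--Reisner theory, $(4) \Leftrightarrow (5)$ via oriented matroid duality, and $(3) \Leftrightarrow (1)$ via socle analysis of the standards. For $(1) \Leftrightarrow (2)$: the contravariant equivalence $\textrm{d}$ fixes each simple $L_\a$, since the ring isomorphism $A \cong A^{\op}$ that reverses arrows in the quiver preserves the vertex idempotents, and it exchanges indecomposable projectives with injectives; hence $\textrm{d}(P_\a) = I_\a$, and $P_\a$ is self-dual iff $P_\a \cong I_\a$. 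In the Euclidean case, $(1) \Leftrightarrow (6)$ follows because tilting in a quasi-hereditary category is equivalent to admitting both a standard and a costandard filtration: Theorem \ref{thm-projfilt} gives the former for any indecomposable projective, and applying $\textrm{d}$ to the dual statement gives the latter for any indecomposable injective, so $P_\a$ is tilting iff it is both projective and injective.

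The key technical step is $(2) \Leftrightarrow (5)$. Via Theorem \ref{thm-A-B} identify $A(\PB,U) \cong B(\PB^\vee, U^\perp)$, giving the graded vector space decomposition $P_\a = \bigoplus_\b R^\vee_{\a\b}\langle -d_{\a\b}\rangle$. The right action of the degree-one generators is computed from the $\star$-product: $p(\b,\b^i)$ acts on $R^\vee_{\a\b}$ as the restriction $R^\vee_{\a\b} \to R^\vee_{\a\b^i}$, followed (when $\a(i) \neq \b(i)$) by multiplication by $t_i$. The strategy is to show that $P_\a$ is self-dual iff its socle is concentrated in the $\a$-summand and equals, up to grading shift, the top degree piece of $R^\vee_{\a\a}$; this socle is one-dimensional precisely when the Stanley--Reisner ring $R^\vee_{\a\a}$ is Gorenstein. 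Since $T_\a^\vee$ is a tope, $z(T_\a^\vee) = \varnothing$ and $R^\vee_{\a\a} = k[\Delta^\vee_\a]$, so Stanley--Reisner theory combined with Lemma \ref{lem-feas Las Vergnas} applied to $\PB^\vee$ shows that Gorensteinness of $R^\vee_{\a\a}$ is equivalent to $\Delta^\vee_\a$ being a PL sphere, i.e., to $(T_\a^\vee)^\infty = 0$, which is $(5)$.

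Finally, $(4) \Leftrightarrow (5)$ is an oriented-matroidal duality: unpack $(4)$ as the existence of a codimension-one face of $T_\a$ in $\NC$ with zero set exactly $\{g\}$, and $(5)$ as the nonvanishing of the $f$-coordinate on every rank-one face of $T_\a^\vee$ in $\NC^\vee$, then match them using Complementary Slackness (Propositions \ref{prop-comp} and \ref{prop-compslack}) together with the deletion/contraction duality $(\wt\MC \setminus f)/g = \MC$. For $(3) \Leftrightarrow (1)$: by Lemma \ref{lem-standard}, $V_\b = P_\b/K_\b$ has a basis consisting of the taut paths $p(\b,\g)$ for $\g \preceq \b$, so an embedding $L_\a \hookrightarrow V_\b$ corresponds to a class $[p(\b,\a)]$ annihilated by $\rad(A)$; combinatorially, this requires that every extension $p(\b,\a) \cdot p(\a,\a^j) \in P_\b$ lies in $K_\b$. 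Translating this condition through the $B$-description and comparing with the socle computation from step $(2) \Leftrightarrow (5)$ identifies it with $(1)$. The principal obstacle is the socle analysis in step $(2) \Leftrightarrow (5)$: one must verify that the $\star$-multiplication propagates the Gorenstein property of $R^\vee_{\a\a}$ across the off-diagonal summands so that no unexpected socle elements appear outside the $\a$-component.
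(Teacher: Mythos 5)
Your overall architecture is close in spirit to the paper's (the $(1)\Leftrightarrow(2)\Leftrightarrow(6)$ reductions are the same standard facts, and the Gorenstein/sphere analysis of the diagonal piece $R^\vee_{\alpha\alpha}$ via Lemma~\ref{lem-feas Las Vergnas} is exactly how the paper begins its proof of $(5)\Rightarrow(2)$), but the proposal leaves the genuinely hard steps unproved. The crux is $(5)\Rightarrow(2)$: Gorensteinness of $e_\alpha A e_\alpha\cong R^\vee_\alpha$ only gives a perfect pairing on the diagonal summand. To conclude $\mathrm{d}(P_\alpha)\cong P_\alpha$ one must show that the trace pairing $\langle p,q\rangle=\int pq$ on $e_\alpha A\times Ae_\alpha$ is nondegenerate, which amounts to proving that right multiplication by a taut path, $e_\alpha A e_\beta\to e_\alpha A e_\alpha$, is injective for \emph{every} $\beta$ — precisely the ``propagation across off-diagonal summands'' you name as the principal obstacle and then do not carry out. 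The paper's proof does real work here: on the $B$-side it shows $\cdot\, u_{S^\beta_{\alpha\alpha}}\colon \widetilde R^\vee_{\alpha\beta}\to\widetilde R^\vee_\alpha$ is injective with cokernel the face ring of the complement of an open star in the PL sphere $z(\Delta^\vee_\alpha)$, hence of a PL ball; that cokernel is therefore Cohen--Macaulay and free over $\Sym U^\perp$, so the injection splits and injectivity survives $-\otimes_{\Sym U^\perp}k$. Without this (or an equivalent argument), the claimed equivalence $(2)\Leftrightarrow(5)$ is not established.

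The other links are also incomplete. For $(2)\Rightarrow(5)$ you assert that the socle of $P_\alpha$ is one-dimensional ``precisely when'' $R^\vee_{\alpha\alpha}$ is Gorenstein, but no argument is given that self-duality forces $\Delta^\vee_\alpha$ to be a sphere rather than a ball; the paper avoids this issue entirely by routing $(2)\Rightarrow(3)$ through the graded Grothendieck-group identity $[P_\alpha]=\sum_{\beta\succeq\alpha}q^{d_{\alpha\beta}}[V_\beta]$ (Theorem~\ref{thm-Kgroup}), so that the top-degree simple of $P_\alpha$ must be the top-degree simple of some $[V_\beta]$, and then proves $(3)\Rightarrow(4)$ by the covector-composition argument with $T=(-Y_b)\circ T_\alpha$ and $(4)\Rightarrow(5)$ by the reorientation/duality argument. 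Your $(3)\Leftrightarrow(1)$ step is only a promise (``translating through the $B$-description \dots identifies it with $(1)$''), and your $(5)\Rightarrow(4)$ direction is likewise asserted via Complementary Slackness without the actual matching of faces; note the paper never needs $(5)\Rightarrow(4)$ directly, since it closes the cycle $(2)\Rightarrow(3)\Rightarrow(4)\Rightarrow(5)\Rightarrow(2)$. As it stands the proposal is a plausible outline with the two load-bearing arguments (the off-diagonal injectivity and the self-dual-implies-sphere direction) missing.
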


\begin{proof}
The implications $(1)\iff(2)~ (~\iff(6)$, if $A$ is quasi-hereditary) are standard facts.

$(2) \implies (3):$  If $P_\a$ is self-dual, then the socle of $P_\a$ is isomorphic to the cosocle of $P_\a$, which is $L_\a$.  Therefore when expressing $[P_\a]$ as a sum of simple classes times powers of $q$ in the Grothendieck group, $L_\a$ is the only simple to appear in the top degree.  On the other hand, by Theorem~\ref{thm-Kgroup}, 
$$[P_\a] = \sum_{\b \in C_\a} q^{d_{\a\b}} [V_\b], $$
so the unique simple class appearing in the highest degree must also appear as the highest degree term of some $[V_\b]$.  We conclude that $L_\a$ is the socle of $V_\b$.

$(3)\implies(4):$ Let $b=\mu^{-1}(\b)$. Lemma ~\ref{lem-standard} says that $V_\b$ is spanned as a vector space by taut paths $p_\g$ from $\b$ to $\g \preceq \b$.  A taut path $p_\a$ is in the socle of $V_\b$ if there does not exist a longer taut path $p_\g$ that factors through $p_\a$.  Note that this is equivalent to the condition: if $i\not\in b$ and $\a$ has a feasible face $Y$ such that $Y(i)=0$, then $\a(i)\neq\b(i)$. 
 
Recall that $Y_b \in \AC$ denotes the feasible cocircuit of $\NC = \wt\MC\backslash f$ that is the optimal solution of the tope $T_\b$.  By the covector axioms of an oriented matroid, the composition $T := (-Y_b)\circ T_\a$ is also a covector of $\NC$ and in particular an infeasible tope such that $T(i)=\a(i)$ for all $i\in b$ and for all $i$ which are zero on a feasible face of $\a$. 
 A taut path $p$ from $T_\a$ to $T$ exists in the tope graph of $\wt\MC\backslash f$, and this path cannot change the sign corresponding to any feasible facet of $T_\a$. Thus the first sign change of the path $p$ must be infeasible, which means that $T_\a$ covers a subtope $X$ with $X(g)=0$. 

$(4)\implies(5)$: 
If the bounded tope $T_\a$ in $\wt\MC\backslash f$ covers a subtope $X$ with $X(g)=0$ then $\a$ is a bounded feasible sign vector for both the original program $\PB$ as well as the reoriented program ${}_{-g}\PB = ({}_{-g}\wt\MC,-g,f)$. Dually, this means that the tope $T_\a^\vee$ in $\wt\MC^\vee\backslash g=({}_{-g} \wt\MC^\vee)\backslash (-g)$ is bounded and feasible in both dual programs $\PB^\vee$ and ${}_{-g}\PB^\vee=({}_{-g}\wt\MC^\vee,f,-g)$. In particular, the tope $\a\in\PC^\vee$ does not have any cocircuit face $Y$ with $Y(f)=0$, since this would imply $T_\a^\vee$ was unbounded in one of these generic programs.

$(5)\implies(2)$: If $(T_\a^\vee)^\infty=0$, then $e_\a A e_\a\simeq R_\a^\vee=k[z(\D_\a^\vee)]/(U^\perp)$, where $\D_\a^\vee = \fllv(T_\a^\vee) \backslash \{0\}$. By Lemma~\ref{lem-feas Las Vergnas}, $\|\D_\a^\vee\|=\|\fllv(T_\a^\vee) \backslash \{0\} \|$ is homeomorphic to a $(n-d-1)$-sphere and so a result of Munkres (see \cite[Theorem II.4.3]{StanleyComb}) implies that $R_\a^\vee$ is Gorenstein, meaning that there is an isomorphism
  $\int : (e_\a A e_\a)_{n-d-1}\to k$
such that $\langle x,y\rangle=\int xy$ defines a perfect pairing on $e_\a A e_\a$. 

We wish to produce an isomorphism of $A$-modules $\textrm{d}(P_\a) \cong P_\a$.  To do so, we will show that the map
\[
\langle -,-\rangle: e_\a A\times Ae_\a \to k 
\]
\[
\qquad \qquad \qquad (p,q) \mapsto \int pq
\]
defines a perfect pairing and so it will follow that $\textrm{d}(P_\a)=(e_\a A)^*\cong Ae_\a= P_\a$ as right $A$-modules.

To prove that $\langle -,-\rangle$ is perfect, we first observe that it suffices to show that the map $\cdot p_{\b,\a}:e_\a A e_\b\to e_\a A e_\a$ is injective for some taut path $p_{\b\a}$ from $\b$ to $\a$.  This is because for any nonzero $x\in e_\b Ae_\a$, if $x\cdot p_{\b\a}\neq 0$, then there exists $y\in e_\a A e_\a$ such that $\int (xp)y=\int x(py)=\langle x,py\rangle \neq 0$. On the $B$ side, this means showing that 
  \eq{ \cdot u_{S_{\a\a}^\b}: R_{\a\b}^\vee\to R_\a^\vee }
is injective. We proceed by showing 
 \eq{ \cdot u_{S_{\a\a}^\b}:\wt R_{\a\b}^\vee\to\wt R_\a^\vee}
 is the injective map in a split short exact sequence of $\Sym U^\perp$-modules, which proves the claim by applying the functor $-\otimes_{\Sym U^\perp} k$. 

 The claim is obvious if $\a=\b$ or $T^\vee_\a\wedge T^\vee_\b$ is not feasible, so we assume $\a\neq\b$ and $T^\vee_\a\wedge T^\vee_\b$ is a proper non-empty face of $T_\a^\vee$. 
To see the monomial map $\cdot u_{S_{\a\a}^\b}:\wt R_{\a\b}^\vee\to\wt R_\a^\vee$ is injective and to determine its cokernel, consider the image of any non-zero monomial $m=\prod_{i\in S} u_i^{s_i}$ in $\wt R^\vee_{\a\b}$, where $s_i >0$ for any $i\in S$.  As $m$ is non-zero in $\wt R^\vee_{\a\b}$, there exists $Y \in \D_{\a\b}^\vee$ such that $S \subset z(Y)$.  Note that $S^\b_{\a\a} = z(T^\vee_\a\wedge T^\vee_\b) \subset z(X)$ for any $X \in \D^\vee_{\a\b}$.  Thus $S \cup S^\b_{\a\a} \subset z(Y)$ and the product $m\cdot u_{S_{\a\a}^\b}=\prod_{i\in S\cup S^\b_{\a\a}} u_i^{t_i}$, where $t_i>0$ for $i\in S\cup S^\b_{\a\a}$,  is non-zero in $\wt R^\vee_\a$.

The computation above also shows that the cokernel of the map $\cdot u_{S_{\a\a}^\b}:\wt R^\vee_{\a\b}\to\wt R_\a^\vee$ is the face ring $k[\D]$ of 
 \[ \D=  \{S\subset E\mid S\subset z(Y)\;\text{for some}\:Y\in\D_{\a}\;\text{and}\:S^\b_{\a\a} \not\subset S\}. \] 
\[=z(\D_\a)\backslash \{S \in z(\D_\a) \mid S^\b_{\a\a} \subset S\}. \]
Recall from Lemma~\ref{lem-feas Las Vergnas} that the geometric realization of the simplicial complex $z(\D_\a)$ is a PL $(d-1)$-sphere.  The subset of simplices $\{S \in z(\D_\a) \mid S^\b_{\a\a} \subset S\}$ is the open star of the simplex on the set $S^\b_{\a\a}$ and thus its complement $\D$ in $z(\D_\a)$ is a PL $(d-1)$-ball.  It follows that $k[\D]$ is Cohen-Macaulay, again with parameter space $U^\perp$. Thus, $k[\D]$ is a free $\Sym U^\perp$-module, and therefore the exact sequence 
 \eq{\wt R_{\a\b}^\vee\hookrightarrow\wt R_\a^\vee \fib k[\D]}
 splits.
\end{proof}

\section{Derived Morita equivalence}\label{sec-derived}

We conclude with a proof of Theorem~\ref{thm-derivedequiv}.  Recall that $\MC$ is an oriented matroid, $U$ a parameter space for $M=\un\MC$, and $\PB_1=(\wt\MC_1,g_1,f_1), \PB_2=(\wt\MC_2,g_2,f_2)$ and $\PB_{\text{mid}}=(\wt\MC_{\text{mid}},g_2,f_1)$ are Euclidean generic oriented matroid programs such that 
\[ \MC= (\wt\MC_1/g_1)\backslash f_1 = (\wt\MC_2/g_2) \backslash f_2\]
and
\[\wt\MC_{\text{mid}}/g_2=\wt\MC_1/g_1, \qquad \wt\MC_{\text{mid}}\backslash f_1=\wt\MC_2\backslash f_2.\]
We wish to show there is an equivalence of categories
\[D(A(\PB_1,U))\cong D(A(\PB_2,U)),\]
where $D(A)$ denotes the bounded derived category of graded finitely generated $A$-modules.

\begin{remark}
Note that if $\PB_1$ and $\PB_2$ are Euclidean, it is not automatic that $\PB_{\text{mid}}$ will be Euclidean as well.  For example, one could take $\text{EFM}(8)$ (see Example~\ref{ex-EFM(8)}) and then change the choice of $g$ and $f$ separately to obtain two realizable (and hence Euclidean) generic oriented matroid programs $\PB_1$ and $\PB_2$ such that $\PB_{\text{mid}}$ is the non-Euclidean program $\text{EFM}(8)$.
\end{remark}

We will prove Theorem~\ref{thm-derivedequiv} by reducing it to the following claim.

\begin{prop}\label{prop-equiv}
Suppose $\PB_1=(\wt\MC_1,g_1,f)$ and $\PB_2=(\wt\MC_2,g_2,f)$ are generic Euclidean programs extending $\MC$ such that $\MC_1/g_1=\MC_2/g_2$. Then there is an equivalence of categories $$D(A(\PB_1,U))\simeq D(A(\PB_2,U)).$$
\end{prop}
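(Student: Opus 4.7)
The plan is to exhibit a tilting complex in $D(A(\PB_1, U))$ whose endomorphism dg-algebra is quasi-isomorphic to $A(\PB_2, U)$ and then invoke Rickard's Morita theorem. I read the hypothesis as $\wt\MC_1/g_1 = \wt\MC_2/g_2$, so that both programs share the underlying matroid $\MC$, the element $f$, and the single-element extension $\MC^+$ of $\MC$ by $f$. By Definition~\ref{def-bounded cone} and the observation there that the signs $\mu(b)(i)$ for $i\in b$ depend only on the cocircuits of $\wt\MC/g$, the bounded set $\BC$ and the cone preorder on $\BM$ agree for $\PB_1$ and $\PB_2$. Combined with the Euclidean hypothesis and Lemma~\ref{lem-EucCone}, this makes $A_1 := A(\PB_1,U)$ and $A_2 := A(\PB_2,U)$ into quasi-hereditary algebras indexed by the same poset on $\BM$, identified via $\mu_1$ and $\mu_2$.

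Next I would connect $\PB_1$ to $\PB_2$ by a chain of generic Euclidean programs differing pairwise by an \emph{elementary modification} of the lift $g$, each corresponding to a single wall-crossing that alters $\FC$ by exactly one sign vector while preserving $\MC^+$. To each elementary step I would associate a twisting (reflection) functor on the derived category, constructed as a cone over an adjunction unit or counit in a two-term complex of projectives, imitating the wall-crossing functors used for hypertoric category $\OC$ in \cite{GDKD}. The composition of these functors furnishes the candidate derived equivalence, and its image of a progenerator of $A_2$ is the promised tilting complex.

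The main obstacle will be the elementary-mutation step: showing that each such twisting functor is a derived equivalence whose effect on the quiver-with-relations presentation matches what happens in $A$ when $\FC$ changes by a single sign vector. I would translate this via Theorem~\ref{thm-A-B} into the face-ring side, identifying the effect of a single wall-crossing on the rings $R_{\a\b}$. The local topology of the feasible Las Vergnas face lattice (Lemma~\ref{lem-feas Las Vergnas}) together with the limit-style descriptions of centers developed in Section~\ref{section-center} should provide the tools to make these comparisons explicit, since a single elementary change of $g$ modifies $\AC^\infty$ in a topologically controlled way. A secondary task is ensuring that the connecting chain can be chosen so that every intermediate program is both generic and Euclidean; this is where the assumption that both endpoints $\PB_1$ and $\PB_2$ are Euclidean (and the flexibility in choosing intermediate lifts of $\MC^+$) plays a decisive role, and where I expect an inductive argument on the symmetric difference between the two lifts to be necessary.
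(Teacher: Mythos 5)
Your overall target (a derived equivalence realized by a tilting object/bimodule) is reasonable, but the route you propose has a gap that I do not see how to close, and it is precisely the point the paper's argument is designed to avoid. Your plan hinges on connecting $\PB_1$ and $\PB_2$ by a chain of generic \emph{Euclidean} programs differing by a single ``wall-crossing'' of the lift $g$. In the realizable case one can move $\eta$ continuously and cross one wall at a time, but in the oriented matroid setting there is no such deformation: the two programs are two single-element lifts of the common contraction $\wt\MC_1/g_1=\wt\MC_2/g_2$, and the existence of a path between them through generic lifts changing $\FC$ by one sign vector at a time is a nontrivial connectivity statement about the space of single-element lifts (equivalently, extensions of the dual), of the kind that is known to be delicate and can fail for general oriented matroids. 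On top of that you must keep every intermediate program Euclidean; the paper's own remark before Proposition~\ref{prop-equiv} (the $\text{EFM}(8)$ example showing $\PB_{\text{mid}}$ need not be Euclidean even when both endpoints are) illustrates how fragile this property is, and you give no mechanism for guaranteeing it along your chain. Finally, the per-step ``twisting functor as a cone over an adjunction unit'' is only named, not constructed: you would still have to produce the bimodule or complex realizing each step and prove it is an equivalence, which is essentially the whole difficulty.

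The paper's proof sidesteps all of this by working with a single explicit bimodule rather than a chain: it sets $N=e_{g_1}A_{\mathrm{ext}}e_{g_2}$, where $A_{\mathrm{ext}}$ depends only on $\wt\MC/g$ (equivalently $N=\bigoplus_{(\a,\b)\in\PC_1\times\PC_2}R^\vee_{\a\b}[-d_{\a\b}]$ on the $B$-side), and studies $\Phi=-\overset{L}{\otimes}_{A_1}N$ directly. The key inputs are that $\Phi(P^1_\a)$ carries a standard filtration (Lemma~\ref{lem-projfiltfunctor}), that in the special case $\PB_2=\overline{\PB_1}$ (reorientation of $g$) the images of projectives are self-dual tilting modules so that $\Phi^-$ is an equivalence realizing Ringel duality (Theorem~\ref{thm-ringel}, using Theorem~\ref{thm-selfdual}), and that $\Phi^-\cong\Phi_{2\overline{1}}\circ\Phi_{12}$ (Lemma~\ref{lem-factor}); a two-out-of-three argument, applied also to $\Phi_{12}\circ\Phi_{\overline{2}1}$, then shows $\Phi_{12}$ is an equivalence, with only the Euclideanness of $\PB_1$, $\PB_2$ and their $g$-reorientations needed. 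If you want to salvage your approach, you would need to either prove the required mutation-connectivity through generic Euclidean lifts (which I expect is out of reach) or abandon the chain and construct the comparison functor globally, as the paper does.
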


Before giving a proof of this Proposition, we will use it to deduce Theorem~\ref{thm-derivedequiv}.

\begin{proof}[Proof that Proposition~\ref{prop-equiv} implies Theorem~\ref{thm-derivedequiv}]
Under the assumptions of  Theorem~\ref{thm-derivedequiv}, $\wt\MC_{\text{mid}}/g_2=\wt\MC_1/g_1$. Then by Proposition~\ref{prop-equiv} it follows that 
\begin{equation}\label{eq-equiv1}
D(A(\PB_1,U)) \simeq D(A(\PB_{\text{mid}},U).
\end{equation}
On the other side, duality together with the assumptions of  Theorem~\ref{thm-derivedequiv} give:  $$\wt\MC_{\text{mid}}^\vee/ f_1 = (\wt\MC_{\text{mid}}\backslash f_1)^\vee = (\wt\MC_2\backslash f_2)^\vee = \wt\MC_2^\vee/ f_2.$$
Viewing $f_1$ and $f_2$ as playing the role of $g$ in the Euclidean programs $\wt\MC_{\text{mid}}^\vee$ and $\wt\MC_2^\vee$ respectively, we can again apply Proposition~\ref{prop-equiv} to find: 
\begin{equation}\label{eq-equiv2}
D(A(\PB_2^\vee,U^\perp)) \simeq D(A(\PB_{\text{mid}}^\vee,U^\perp)).
\end{equation}
Putting these equivalences~(\ref{eq-equiv1}) and (\ref{eq-equiv2}) together with the equivalences from Koszul duality:
$$D(A(\PB_{\text{mid}},U)) \simeq D(A(\PB_{\text{mid}}^\vee,U^\perp)) \quad \text{and} \quad D(A(\PB_2,U)) \simeq D(A(\PB_2^\vee,U^\perp)) ,$$
gives the desired result:
$$D(A(\PB_1,U))\simeq D(A(\PB_2,U)).$$
\end{proof}

\subsection{The definition and properties of the functor}\label{subsec-functor}
It remains to prove Proposition \ref{prop-equiv}.  For the remainder of the paper we will let
 \[\PB_1=(\wt\MC_1,g_1,f) \quad \text{and} \quad \PB_2=(\wt\MC_2,g_2,f)\]
be two Euclidean generic oriented matroid programs such that $\wt\MC_1/g_1=\wt\MC_2/g_2$.

For $\ell=1,2$ let $A_\ell = A(\PB_\ell,U)$, $B_\ell = B(\PB_\ell,U)$ and $\PC_\ell$ be the set of bounded, feasible sign vectors of $\PB_\ell$.  Note that the set of bounded sign vectors $\BC$ is the same for $\PB_1$ and $\PB_2$.

As in \cite[Section 6]{GDKD}, the desired equivalence will come from a derived tensor product with a certain bimodule $N$.

It is slightly easier to define the bimodule $N$ on the $B$-side, using the isomorphisms $A_\ell\simeq B_\ell^\vee$ of Theorem \ref{thm-A-B} for $\ell=1,2$. Namely, let
 \[N=\bigoplus_{(\a,\b)\in\PC_1\times\PC_2} R_{\a\b}^\vee[-d_{\a\b}]\]
with the natural left $B_1^\vee$-action and right $B_2^\vee$-action given by the $\star$ operation. 

To translate this to the $A$-side, recall the following the alternative definition of $A$ from Section \ref{section-quiver}, 
$$ A(\PB,U) = e_{\PC} D_E e_{\PC}/\langle e_f e_{\PC} \rangle + \langle \vartheta(U^\perp)e_{\PC} \rangle. $$
We consider an extended version of $A$ that only depends on $\wt\MC/g$ by replacing $e_{\PC}$ by $e_\BC=\sum_{\alpha\in\BC}e_\alpha$. That is, let 
$$ A_\ext(\PB,U) = e_\BC D_E e_\BC/\langle e_f e_\BC \rangle + \langle \vartheta(U^\perp)e_\BC \rangle. $$

As $A_\ext(\PB,U)$ only depends on $\wt\MC/g$ and we have assumed that $\wt\MC_1/g_1=\wt\MC_2/g_2$, let
  \[A_\ext:=A_\ext(\PB_1,U)=A_\ext(\PB_2,U).\]

When viewed as an $(A_1,A_2)$-bimodule, $N$ can be described as
 \[ N=e_{g_1} A_\ext e_{g_2},\]
where $e_{g_\ell}=\sum_{\a\in\PC_\ell}e_\a$ for $\ell=1,2$.

To check that these definitions of $N$ coincide, consider the graded vector space
  \[B_\ext(\PB,U)=\bigoplus_{(\a,\b)\in\FC\times\FC} R_{\a\b}[-d_{\a\b}], \]
made into an algebra via $\star$, as in the definition of $B(\PB,U)$ from Section \ref{section-B}. Then an easy extension of the proof of Theorem \ref{thm-A-B} gives us the following lemma.

\begin{lem}
There is an isomorphism $A_\ext(\PB,U)\simeq B_\ext(\PB^\vee,U^\perp)$. Combining this isomorphism with the isomorphisms $A_\ell\simeq B_\ell^\vee$, we obtain an equivalence between the two definitions of $N$.
\end{lem}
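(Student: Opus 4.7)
The plan is to prove both statements by adapting the proof of Theorem~\ref{thm-A-B} to the extended setting and then tracking how the various isomorphisms interact with the idempotents $e_{g_1}, e_{g_2}$.

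First, for the isomorphism $A_\ext(\PB,U) \simeq B_\ext(\PB^\vee,U^\perp)$, I would mimic the construction of the map $\phi$ in the proof of Theorem~\ref{thm-A-B}, but now defined on all sign vectors in $\BC = \FC^\vee$ rather than only on $\PC^\vee$. Explicitly, send $e_\a \mapsto 1_{\a\a} \in R^\vee_{\a\a}$ for each $\a \in \BC$, send $p(\a,\b) \mapsto 1_{\a\b} \in R^\vee_{\a\b}\langle -1\rangle$ for each adjacent pair in $\BC$, and send the parameters through the map $\zeta$. The well-definedness check against the defining relations of $A_\ext$ is local at adjacent pairs of sign vectors and proceeds exactly as in Theorem~\ref{thm-A-B}: the only modification is that wherever the original proof distinguished ``$\a^i \in \PC^\vee$'' versus ``$\a^i \in \FC^\vee \setminus \PC^\vee$'', we now distinguish ``$\a^i \in \FC^\vee$'' versus ``$\a^i \notin \FC^\vee$''.

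Next, I would verify surjectivity by the same taut-path argument, which is valid on the tope graph restricted to $\FC^\vee$ since this set is $T$-convex. Injectivity is proved by constructing the surjection $\wt R^\vee_{\a\b} \twoheadrightarrow e_\a A_\ext(\PB,U) e_\b$ and comparing graded dimensions. The crucial vanishing step is to show $\chi(t_S) = 0$ whenever $S$ is not contained in $z(Y)$ for any $Y \in \D^\vee_{\a\b}$. As in Theorem~\ref{thm-A-B}, Corollary~\ref{cor-thru} reduces this to producing a sign vector $\g$ with $\g(i) = \a(i)$ for all $i \notin S$ and $\g \notin \BC$, which is exactly the condition that $\g$ makes $e_\g = 0$ in $A_\ext(\PB,U)$. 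Passing to the contraction $\PB/S$, the restriction of $\a$ becomes infeasible (dually, unbounded) and we lift to obtain the required $\g$.

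For the identification of the two definitions of $N$, I would observe first that $A_\ext$ is unambiguously defined since its description as a quotient of $e_\BC D_n e_\BC$ depends only on $\wt\MC/g$, which is common to both programs. Under the isomorphism $A_\ext \simeq B_\ext(\PB^\vee, U^\perp)$ just established (applied to either $\PB_1$ or $\PB_2$ — the target agrees since $\wt\MC_1^\vee \setminus g_1 = \wt\MC_2^\vee \setminus g_2$), the idempotent $e_{g_\ell} = \sum_{\a \in \PC_\ell} e_\a$ on the $A$-side corresponds to the sum $\sum_{\a \in \PC_\ell} 1_{\a\a}$ on the $B$-side. Restricting to the $(e_{g_1}, e_{g_2})$-corner therefore yields
\[ e_{g_1} A_\ext e_{g_2} \;\simeq\; \bigoplus_{(\a,\b) \in \PC_1 \times \PC_2} R^\vee_{\a\b}\langle -d_{\a\b}\rangle, \]
matching the first definition of $N$ as a graded vector space. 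Composing with $A_\ell \simeq B_\ell^\vee$ from Theorem~\ref{thm-A-B} on each side shows that the left $A_1$-action and right $A_2$-action arising from multiplication in $A_\ext$ translate into the $\star$-actions of $B_1^\vee$ and $B_2^\vee$, because the isomorphisms of Theorem~\ref{thm-A-B} are compatible with the embedding of $B_\ell^\vee$ into $B_\ext(\PB^\vee, U^\perp)$ as the $(e_{g_\ell}, e_{g_\ell})$-corner.

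The main obstacle is purely bookkeeping: ensuring that the three algebra isomorphisms involved are mutually compatible on the corner $e_{g_1} A_\ext e_{g_2}$, so that the left and right actions on both definitions of $N$ genuinely agree rather than merely being intertwined by some nontrivial automorphism. Since the map $\phi$ in Theorem~\ref{thm-A-B} is defined by the same formulas on generators as its extended counterpart, the compatibility is formal, and no new conceptual ingredient beyond the proof of Theorem~\ref{thm-A-B} is required.
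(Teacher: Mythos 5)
Your overall route is exactly the one the paper intends: the paper gives no proof of this lemma beyond the remark that it is ``an easy extension of the proof of Theorem~\ref{thm-A-B}'', and your extension of $\phi$ to all of $\BC=\FC^\vee$, the well-definedness checks (with ``$\a^i\in\PC^\vee$ vs.\ $\FC^\vee\setminus\PC^\vee$'' replaced by ``$\a^i\in\FC^\vee$ vs.\ $\a^i\notin\FC^\vee$''), the taut-path surjectivity via $T$-convexity of $\FC^\vee$, and the corner/idempotent bookkeeping for the two descriptions of $N$ all match what is needed.

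There is one substantive slip, in the key vanishing step of the injectivity argument. Since the target is $B_\ext(\PB^\vee,U^\perp)$, the hypothesis on $S$ concerns the \emph{feasible faces for $\PB^\vee$} of $T^\vee_\a$ (the complex $\D^\vee_{\a\b}$), and the sign vector $\g$ you must produce has to satisfy $\g\notin\BC$, i.e.\ $\g$ unbounded for $\PB$ --- as you correctly state. But then the relevant degeneration is the contraction $\PB^\vee/S$ (equivalently, the deletion $\PB\backslash S$), not $\PB/S$: the hypothesis says the restriction $\ov\a$ is infeasible for $\PB^\vee/S$, hence (by $\FC=\BC^\vee$ applied to that program) unbounded for $\PB\backslash S$, and lifting an increasing direction $Z$ witnessing this unboundedness to a direction $\wt Z\in\AC^\infty(\PB)$ and setting $\g=\wt Z|_{E_n}\circ\a$ gives the required $\g\notin\BC$ agreeing with $\a$ off $S$. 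If you instead contract $\PB/S$ as written and dualize, you produce a $\g$ that is \emph{infeasible} for $\PB$, which is not killed in $A_\ext(\PB,U)$ (only unbounded sign vectors are), so that version of the step fails; note also that $\a\in\BC$ need not be feasible for $\PB$, so ``the restriction of $\a$ in $\PB/S$ is infeasible'' carries no content. Once set up on the correct side, the argument is in fact \emph{simpler} than in Theorem~\ref{thm-A-B}: no boundedness of $\ov\a$ needs to be verified, and $\g$ need not be feasible for anything, since rerouting in $A_\ext$ takes place in the full cube $D_n$ rather than in the feasible quiver. (A cosmetic point: the dimension count should pair $\wt R^\vee_{\a\b}$ with $e_\a\wt A_\ext(\PB)e_\b$, i.e.\ the version before imposing $\vartheta(U^\perp)=0$, and then tensor down over $\Sym U^\perp$, just as in the proof of Theorem~\ref{thm-A-B}.)
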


We define the functor $\Phi:D(A_1)\to D(A_2)$ via
  \eq{\Phi(M)=M\overset{L}{\otimes}_{A_1} N.}
For $\ell=1,2$ and any $\a\in\PC_\ell$, let $P_\a^\ell$ and $V_\a^\ell$ be the corresponding projective and standard $A_\ell$-modules. Define $\nu:\PC_1\to\PC_2$ to be the composition 
  \[ \PC_1\overset{\mu_1^{-1}}{\longrightarrow}\BM\overset{\mu_2}{\longrightarrow}\PC_2. \]

\begin{prop}\label{prop-alpha in intersection}
  If $\a\in\PC_1\cap\PC_2$, then $\Phi(P_\a^1)=P_\a^2$.
\end{prop}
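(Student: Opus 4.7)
The plan is to argue directly using the alternative presentations of $A_1$, $A_2$, and $N$ through the extended algebra $A_{\text{ext}}$.  Recall from Subsection~\ref{subsec-functor} that $A_\ell = e_{g_\ell} A_{\text{ext}} e_{g_\ell}$ for $\ell = 1,2$, where $e_{g_\ell} = \sum_{\b \in \PC_\ell} e_\b$, and $N = e_{g_1} A_{\text{ext}} e_{g_2}$ as an $(A_1,A_2)$-bimodule (all inside $A_{\text{ext}}$, which depends only on the common contraction $\wt\MC_1/g_1 = \wt\MC_2/g_2$).

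First, since $\a \in \PC_1$, the idempotent $e_\a$ lies in $A_1$ and $P_\a^1 = e_\a A_1$ is a projective right $A_1$-module.  Hence the derived tensor product coincides with the ordinary tensor product:
\[
\Phi(P_\a^1) \;=\; e_\a A_1 \otimes_{A_1}^{L} N \;=\; e_\a A_1 \otimes_{A_1} N \;\cong\; e_\a N,
\]
using the standard identification $eA \otimes_A M \cong eM$ for an idempotent $e \in A$ and a left $A$-module $M$.

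Next I would substitute $N = e_{g_1} A_{\text{ext}} e_{g_2}$ and use the orthogonality of the idempotents $\{e_\b\}$ in $A_{\text{ext}}$.  Since $\a \in \PC_1$, we have $e_\a e_{g_1} = e_\a$, so
\[
e_\a N \;=\; e_\a \cdot e_{g_1} A_{\text{ext}} e_{g_2} \;=\; e_\a A_{\text{ext}} e_{g_2}.
\]

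Finally, I would use the hypothesis $\a \in \PC_2$ to recognize this as $P_\a^2$.  Because $\a \in \PC_2$, we have $e_{g_2} e_\a = e_\a = e_\a e_{g_2}$, and therefore
\[
e_\a A_{\text{ext}} e_{g_2} \;=\; e_{g_2} e_\a A_{\text{ext}} e_{g_2} \;\subseteq\; e_{g_2} A_{\text{ext}} e_{g_2} \;=\; A_2,
\]
so every element of $e_\a A_{\text{ext}} e_{g_2}$ lies in $e_\a A_2$, and conversely $e_\a A_2 = e_\a \cdot e_{g_2} A_{\text{ext}} e_{g_2} \subseteq e_\a A_{\text{ext}} e_{g_2}$.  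Hence $e_\a A_{\text{ext}} e_{g_2} = e_\a A_2 = P_\a^2$ as right $A_2$-modules, and the right $A_2$-action induced from the bimodule structure on $N$ agrees tautologically with multiplication in $A_2$.

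There is no real obstacle here: the argument is a direct bookkeeping exercise in the idempotent formalism, and the only input beyond unwinding definitions is the crucial fact (built into the setup of Subsection~\ref{subsec-functor}) that $A_{\text{ext}}$ depends only on $\wt\MC/g$, which is common to $\PB_1$ and $\PB_2$.
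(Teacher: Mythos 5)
Your proof is correct and follows essentially the same route as the paper: the paper's map $\G$ and its rewriting of simple tensors is just an explicit form of your chain $\Phi(P_\a^1)\cong e_\a A_1\otimes_{A_1}N\cong e_\a N=e_\a A_\ext e_{g_2}\cong e_\a A_2=P_\a^2$. The one caveat is that the identity $A_2=e_{g_2}A_\ext e_{g_2}$ is not literally stated in Subsection~\ref{subsec-functor} (only the bimodule description $N=e_{g_1}A_\ext e_{g_2}$ is), and it is not formal from the quiver presentations (corner of a quotient versus quotient of a corner); it is the corner of the isomorphism $A_\ext\simeq B_\ext(\PB^\vee,U^\perp)$ combined with Theorem~\ref{thm-A-B}, which is the same input the paper's own argument implicitly relies on, so this is a matter of citation rather than a gap.
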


\begin{proof}
Consider the natural map
  \[\G:P_\a^2=e_\a A_2\to e_\a A_1 \otimes_{A_1} e_{g_1} A_\ext e_{g_2}=P_\a^1\otimes_{A_1} N=\Phi(P_\a^1)\]
taking $e_\a$ to $e_\a\otimes e_{g_1}e_{g_2}$. For paths $p,q$ in the quiver $Q$ with $p$ only passing through nodes in $\PC_1$, the equality of the simple tensors
  \[e_\a p\otimes e_{g_1} q e_{g_2}=e_\a\otimes e_\a p e_{g_1} q e_{g_2}
  =e_\a\otimes e_{g_1}e_{g_2}\cdot e_\a p e_{g_1} q e_{g_2}\]
implies that $\G$ is an isomorphism.
\end{proof}

\begin{remark}
Note that the proposition above and its proof are valid without the assumption that $\PB_1$ and $\PB_2$ be Euclidean.
\end{remark}

\begin{lem}\label{lem-projfiltfunctor}
  For any $\a\in\PC_1$, the $A_2$-module $\Phi(P_\a^1)$ has a filtration with standard subquotients. For $b \in \BM$, if $\a\in\BC_b$ then the standard module $V^2_{\mu_2(b)}$ appears with multiplicity 1 in the associated graded, and otherwise it does not appear.\footnote{Recall that the set $\BC_b$ defined in Definition~\ref{def-bounded cone} only depends on $\wt\MC/g$.}
\end{lem}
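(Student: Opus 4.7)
The plan is to mimic the proof of Theorem~\ref{thm-projfilt}, but using the Euclidean poset structure of $\PB_2$ on the target side. Since $P_\a^1 = e_\a A_1$ is projective, the derived tensor product reduces to $\Phi(P_\a^1) \cong e_\a A_\ext e_{g_2}$ as a right $A_2$-module, matching $\bigoplus_{\b \in \PC_2} R_{\a\b}^\vee\langle -d_{\a\b}\rangle$ via the naturally extended version of Theorem~\ref{thm-A-B} ($A_\ext \cong B_\ext^\vee$).

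After refining $\leq_2$ to a total order on $\PC_2$, for each $\g \in \PC_2$ I would let $(e_\a A_\ext e_{g_2})^\g$ denote the submodule generated by paths through $\g$ and set $K_\a^\g := \sum_{\d >_2 \g}(e_\a A_\ext e_{g_2})^\d$, so the associated filtration has subquotients $M_\a^\g := ((e_\a A_\ext e_{g_2})^\g + K_\a^\g)/K_\a^\g$. To show $M_\a^\g = 0$ whenever $\a \notin \BC_{\mu_2^{-1}(\g)}$, pick an index $i \in \mu_2^{-1}(\g)$ with $\a(i) \neq \g(i)$; Corollary~\ref{cor-thru} allows one to rewrite any taut path from $\a$ to $\g$ inside $A_\ext$ as a path factoring through $\g^i$, and Lemma~\ref{lem-icross} applied in $\PB_2$ provides the dichotomy: either $\g^i \in \PC_2$ with $\g^i \succ_2 \g$ (in which case the rewritten path lies in $(e_\a A_\ext e_{g_2})^{\g^i} \subset K_\a^\g$), or $\g^i \notin \BC$ (in which case $e_{\g^i} = 0$ in $A_\ext$ and the rewritten path vanishes).

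For $\a \in \BC_{\mu_2^{-1}(\g)}$, Proposition~\ref{prop-compslack} shows that $\a$ and $\g$ share a common feasible cocircuit in $\PB^\vee$, and combining this with Lemma~\ref{lem-size} and the extended Theorem~\ref{thm-A-B} shows $e_\a A_\ext e_\g \neq 0$; in particular it contains a nonzero taut path $p_{\a\g}$ from $\a$ to $\g$. Left multiplication by $p_{\a\g}$ defines a surjection of right $A_2$-modules $P_\g^2 \twoheadrightarrow (e_\a A_\ext e_{g_2})^\g$ by Proposition~\ref{prop-tautification} and Proposition~\ref{prop-taut}; applying Lemma~\ref{lem-icross} again in $\PB_2$, the images of the generators of $\ker(P_\g^2 \to V_\g^2)$ lie in $K_\a^\g$, so the map descends to a surjection $V_\g^2 \twoheadrightarrow M_\a^\g$ of internal degree $d_{\a\g}$.

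To promote these surjections to isomorphisms, I would carry out a graded dimension count. Adapting the computation from the proof of Theorem~\ref{thm-Kgroup} via Proposition~\ref{prop-compslack} and Corollary~\ref{cor-gradedsize} yields
\[ \grdim\, e_\a A_\ext e_{g_2} \;=\; \sum_{\g \in \PC_2,\ \a \in \BC_{\mu_2^{-1}(\g)}} q^{d_{\a\g}}\grdim\, V_\g^2, \]
which matches the sum predicted by the conjectured filtration and hence forces each surjection $V_\g^2 \twoheadrightarrow M_\a^\g$ to be an isomorphism. The main obstacle will be carefully tracking the path rewrites in the extended algebra $A_\ext$ rather than in $A_1$ or $A_2$, since the adjacent topes $\g^i$ may sit outside $\BC$; fortunately, the design of $A_\ext$ (killing $e_\b$ for $\b \notin \BC$) causes these ``bad'' rewrites to vanish automatically, which is precisely the behavior needed to keep the argument parallel to Theorem~\ref{thm-projfilt}.
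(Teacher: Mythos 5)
Your proposal follows the paper's own proof essentially step for step: the same identification $\Phi(P^1_\a)\cong e_\a A_\ext e_{g_2}$, the same filtration by the Euclidean order on $\PC_2$ using submodules of paths passing through a given $\g\in\PC_2$, the same use of path rewriting together with Lemma~\ref{lem-icross} (with unbounded $\g^i$ killed in $A_\ext$) to show the subquotient vanishes when $\a\notin\BC_{\mu_2^{-1}(\g)}$ and to produce the surjection $V^2_\g\twoheadrightarrow M^\g_\a$ otherwise, and the same closing dimension count identifying $\dim e_\a A_\ext e_{g_2}$ with $\sum\dim R^\vee_{\a\g}$ via Lemma~\ref{lem-size} and Proposition~\ref{prop-compslack}. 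The only deviation is cosmetic: you close with a graded count for fixed $\a$ while the paper uses ungraded dimensions summed over $\a\in\PC_1$; your version is fine, but note that Corollary~\ref{cor-gradedsize} is stated for pairs of bounded feasible topes of a single program, so for mixed pairs $(\a,\g)\in\PC_1\times\PC_2$ you should either observe that the common feasible faces are faces of the bounded feasible tope $T^\vee_\g$ of $\PB_2^\vee$ (so the optimal-cocircuit partition argument still applies) or simply fall back on the ungraded Lemma~\ref{lem-size} as the paper does.
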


\begin{proof}
We have 
  \[\Phi(P_\a^1)=P_\a^1\otimes_{A_1} N=e_\a A_1\otimes_{A_1} e_{g_1} A_\ext e_{g_2},\]
so elements of $\Phi(P_\a^1)$ can be represented as linear combinations of paths in $\BC$ which begin at $\a$ and end at elements of $\PC_2=\BC\cap\FC_2$. For $\b\in\PC_2$, let $\Phi(P_\a^1)_\b$ be the submodule generated by paths $p$ such that $\b$ is the maximal element of $\PC_2$ (with respect to the ordering $\leq_2$ on $\PC_2$ coming from our Euclidean assumption on $\PB_2$) through which $p$ passes. Then let
  \[ \Phi(P_\a^1)_{>_2~ \b}=\bigcup_{\g>_2~ \b}\Phi(P_\a^1)_\g \quad\text{and}\quad  \Phi(P_\a^1)_{\geq_2~\b}=\bigcup_{\g\geq_2~\b}\Phi(P_\a^1)_\g.\]
We then obtain a filtration 
  \[\Phi(P_\a^1)=\bigcup_{\b\in\PC_2}\Phi(P_\a^1)_{\geq_2~ \b}.\]
Suppose $b\in \BM$ and let $\b=\mu_2(b)$. It suffices to show that the quotient $\Phi(P_\a^1)_{\geq_2~\b}/\Phi(P_\a^1)_{>_2~\b}$ is isomorphic to $V^2_\b$ if $\a\in\BC_b$, and is zero otherwise.

Our argument follows the proof of Theorem \ref{thm-projfilt}.

If $\a\not\in\BC_b$, then for some $i \in b$, $\a(i)\neq \b(i)$. Thus any path from $\a$ to $\b$ can be represented by one passing through $\b^i$.  By Lemma~\ref{lem-icross}, $\b^i >_2~\b$ and therefore $$\Phi(P_\a^1)_{\geq_2~\b}/\Phi(P_\a^1)_{>_2~\b}\cong 0.$$ 

Otherwise, precomposition with a taut path from $\a$ to $\b$ gives a surjective map
  \[V_\b^2\fib \Phi(P_\a^1)_{\geq_2~\b}/\Phi(P_\a^1)_{>_2~\b}.\]
Thus
\[\dim V_\b^2 \geq \dim~ \Phi(P_\a^1)_{\geq_2~\b}/\Phi(P_\a^1)_{>_2~\b}\]
and it suffices to show equality holds. After summing over all $\b\in \PC_2$:

\begin{equation*}
\begin{split}
\dim(\Phi(P^1_\a)) &= \sum_{\b \in \PC_2} \dim ~~\Phi(P_\a^1)_{\geq_2~\b}/\Phi(P_\a^1)_{>_2~\b} \\
& \leq \sum_{\a \in \BC_{b}}  \dim V^2_{\mu_2(b)} = \#\{(\g,b)\in \PC_2\times \BM\mid \a,\g\in\BC_b\} .
\end{split}
\end{equation*}

It suffices to show that equality holds after summing over all $\a \in \PC_1$.
As $N=\oplus_{\a\in\PC_1} \Phi(P_\a^1)$, we have
\[\sum_{\a\in\PC_1} \dim \Phi(P_\a^1) =\dim N = \sum_{(\a,\g)\in\PC_1\times\PC_2} \dim R_{\a\g}^\vee \]
\[ = \#\{(\a,\g,b)\in\PC_1\times\PC_2\times \BM\mid \a,\g\in\BC_b\}.\]
Here we are using Lemma \ref{lem-size} and Proposition \ref{prop-compslack} on each $(\a,\g)\in\PC_1\times\PC_2$.
\end{proof}

\begin{remark}
Notice that Theorem \ref{thm-projfilt} can be viewed as the special case $\PB_1=\PB_2$. 
\end{remark}

\begin{remark}\label{rem-Kgroup} We note that the above proof does not use the assumption that $\PB_1$ is Euclidean, and so for this result we need only assume that $\PB_2$ is Euclidean.  More generally, when $\PB_2$ is not Euclidean one can prove the result on the level of Grothendieck groups with a nearly identical proof as was given for the analogous Theorem~\ref{thm-Kgroup}.
\end{remark}

\begin{prop}\label{prop-groth}
For all $\a\in\PC_1$, we have $[\Phi(V_\a^1)]=[V_{\nu(\a)}^2]$ in the Grothendieck group of (ungraded) right $A_2$-modules. Thus $\Phi$ induces an isomorphism of Grothendieck groups.
\end{prop}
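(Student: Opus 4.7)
The plan is to compare two expressions for $[\Phi(P_\a^1)]$ in the Grothendieck group and then peel off one standard at a time via downward induction on the Euclidean partial order.

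First, since $\PB_1$ is Euclidean, Theorem~\ref{thm-projfilt} yields the projective filtration
\[
[P^1_\a] \;=\; \sum_{\b \succeq_1 \a} [V^1_\b]
\]
in $K_0(A_1\text{-mod})$, where $\succeq_1$ is the cone relation on $\PC_1$. Because $P^1_\a$ is projective, $\Phi(P^1_\a) = P^1_\a \otimes_{A_1} N$ is concentrated in degree zero, so applying $\Phi$ gives
\[
[\Phi(P^1_\a)] \;=\; \sum_{\b \succeq_1 \a} [\Phi(V^1_\b)].
\]

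On the other hand, Lemma~\ref{lem-projfiltfunctor} identifies
\[
[\Phi(P^1_\a)] \;=\; \sum_{b \,:\, \a \in \BC_b} [V^2_{\mu_2(b)}].
\]
Because $\BC_b$ only depends on $\wt\MC/g$ (which is common to $\PB_1$ and $\PB_2$), the condition $\a \in \BC_b$ for $\a\in\PC_1$ is exactly $\mu_1(b)\succeq_1 \a$. Reindexing by $\b=\mu_1(b)$ and recalling $\nu = \mu_2\circ\mu_1^{-1}$, this second expression becomes $\sum_{\b\succeq_1\a}[V^2_{\nu(\b)}]$. Equating the two formulas yields, for every $\a\in\PC_1$,
\[
\sum_{\b \succeq_1 \a} [\Phi(V^1_\b)] \;=\; \sum_{\b \succeq_1 \a} [V^2_{\nu(\b)}].
\]

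Next I would induct downward on the partial order $\leq_1$ (which is a genuine partial order because $\PB_1$ is Euclidean, by Lemma~\ref{lem-EucCone}). For a maximal $\a$ only the term $\b=\a$ survives, giving $[\Phi(V^1_\a)]=[V^2_{\nu(\a)}]$. For the inductive step, any $\b\succeq_1\a$ with $\b\neq\a$ satisfies $\b>_1\a$, so the inductive hypothesis applies; cancelling equal terms on both sides gives $[\Phi(V^1_\a)]=[V^2_{\nu(\a)}]$.

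For the isomorphism of Grothendieck groups: by Corollary~\ref{cor-hwStandard} the classes $\{[V^1_\a]\}_{\a\in\PC_1}$ are a unitriangular change of basis from the simples and hence a $\ZM$-basis of $K_0(A_1\text{-mod})$, and likewise $\{[V^2_{\nu(\a)}]\}_{\a\in\PC_1}$ is a $\ZM$-basis of $K_0(A_2\text{-mod})$ since $\nu=\mu_2\circ\mu_1^{-1}$ is a bijection. Thus $\Phi$ sends a basis to a basis and induces an isomorphism. No step looks like a serious obstacle here — the content is already packaged into Theorem~\ref{thm-projfilt} and Lemma~\ref{lem-projfiltfunctor}, and the Euclidean hypothesis on $\PB_1$ is exactly what makes the inductive unravelling succeed.
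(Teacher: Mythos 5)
Your proposal is correct and follows essentially the same route as the paper: equate the two standard-filtration expressions for $[\Phi(P^1_\a)]$ coming from Theorem~\ref{thm-projfilt} and Lemma~\ref{lem-projfiltfunctor} (both indexed by $\{b\in\BM \mid \a\in\BC_b\}$, which you correctly translate into the cone relation), then induct downward on the poset $\PC_1$ starting from maximal elements, and finish by noting that classes of standard modules form a $\ZM$-basis of the Grothendieck group.
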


\begin{proof}
For any $\a\in\PC_1$, the equalities
  \[\sum [V_{\mu_2(b)}^2]
  =[\Phi(P_\a^1)]
  =\sum[\Phi(V_{\mu_1(b)}^1)]\]
follow from Lemma \ref{lem-projfiltfunctor} and its special case Theorem \ref{thm-projfilt}, where both sums are taken over 
  \[\{b\in\BM\mid\a\in\BC_b\}.\]
The first claim then follows by induction on the poset $\PC_1$ with base case $\a\in\PC_1$ maximal, so 
  \[P_\a^1=V_\a^1 \quad \text{and} \quad \Phi(P_\a^1)=V^2_{\nu(\a)}.\]
The second statement follows from the fact that the classes of standard modules in a highest weight category form a $\ZM$-basis for the Grothendieck group.
\end{proof}

\begin{remark}
One can show that this result holds without the Euclidean condition on $\PB_1$ and $\PB_2$ by the second part of Remark~\ref{rem-Kgroup} and the fact that the matrix $X$ from the proof of Theorem~\ref{thm-numKoszul} is invertible.
\end{remark}

\begin{prop}
Let $\a\in\PC_1$. Then $\Phi(V_\a^1)$ is the quotient of $\Phi(P_\a^1)$ by the submodule generated by all paths changing their $i$-th coordinate for some $i\in\mu_1^{-1}(\a)$. In particular, $\Tor_m^{A_1}(V_\a^1,N)=0$ for all $m>0$.
\end{prop}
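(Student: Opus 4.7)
The first statement follows immediately from right exactness of $\Phi = -\otimes_{A_1}N$ applied to the defining short exact sequence $0\to K_\a\to P_\a^1\to V_\a^1\to 0$, where $K_\a=\sum_{i\in\mu_1^{-1}(\a)}p(\a,\a^i)\cdot A_1$. The induced presentation $\Phi(K_\a)\to\Phi(P_\a^1)\to\Phi(V_\a^1)\to 0$ exhibits $\Phi(V_\a^1)$ as the quotient of $\Phi(P_\a^1)$ by the image of $\Phi(K_\a)$, which is the $A_2$-submodule generated by the simple tensors $p(\a,\a^i)\otimes n$ with $i\in\mu_1^{-1}(\a)$ and $n\in N$, i.e., by the paths that begin with a flip of some coordinate $i\in\mu_1^{-1}(\a)$.

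For the Tor vanishing, the plan is to apply $\Phi$ term-by-term to the linear projective resolution $\mathbf{\Pi}_\a^\bullet$ of $V_\a^1$ constructed in Theorem~\ref{thm-KoszulStan} (the total complex of the Koszul-type multicomplex $\Pi_\a=\bigoplus_{S\subseteq a}P_{\a^S}^1$, with $a:=\mu_1^{-1}(\a)$ and Koszul differentials $\partial_i$ given by left multiplication by the edge $p(\a^{S\setminus\{i\}},\a^S)$) and to prove that $\Phi(\mathbf{\Pi}_\a^\bullet)$ remains exact in positive degrees. Since $\Phi(P_{\a^S}^1)$ is a direct summand of $N$ (hence flat-enough to compute $\Tor$ term-wise), this would simultaneously yield $\Tor_m^{A_1}(V_\a^1,N)=0$ for all $m>0$ and identify $\Phi(V_\a^1)$ with $H_0$ of the resulting complex.

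The strategy mirrors the proof of Theorem~\ref{thm-KoszulStan}, but now with the filtration placed on the ``$\PB_2$-side''. By Lemma~\ref{lem-projfiltfunctor} (applicable because $\PB_2$ is Euclidean), each $\Phi(P_{\a^S}^1)$ carries a $\PC_2$-filtration whose associated graded is $\bigoplus_{\b\in\PC_2}M_{\a^S}^\b$, where $M_{\a^S}^\b\cong V_\b^2$ exactly when $\a^S\in\BC_{\mu_2^{-1}(\b)}$ and is zero otherwise. Left multiplication by an edge preserves the set of intermediate topes a path visits, so this filtration is compatible with every $\partial_i$, producing a $\PC_2$-graded associated multicomplex. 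For each $\b\in\PC_2$ with $b:=\mu_2^{-1}(\b)$ I would verify that the $\b$-sub-multicomplex is exact in positive total degrees: if $b\neq a$ then $a\setminus b\neq\emptyset$, and picking $i\in a\setminus b$ the condition $\a^S\in\BC_b$ (which constrains only signs at positions in $b$) is indifferent to whether $i\in S$, so the induced $\partial_i$ is an isomorphism on the non-zero pairs and Koszul-type acyclicity in the $i$-direction kills that component; if $b=a$, then $\b=\nu(\a)=\mu_2(a)$, and since the signs $\mu_j(a)(i)$ for $i\in a$ depend only on $\wt\MC_j/g_j$ and these agree for $j=1,2$, one has $\a(i)=\b(i)$ for $i\in a$, which forces $S=\emptyset$ as the unique contributing index and concentrates the sub-multicomplex in total degree $0$ with value $V_{\nu(\a)}^2$.

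The main obstacle I anticipate is verifying, in the case $b\neq a$, that the induced $\partial_i$ on the $\b$-associated graded is actually the identity $V_\b^2\to V_\b^2$: under the identification $V_\b^2\cong M_{\a^S}^\b$ via a chosen taut path $\tau_S\colon\a^S\to\b$, this requires that the product $p(\a^{S\setminus\{i\}},\a^S)\cdot\tau_S$ equal the chosen $\tau_{S\setminus\{i\}}$ modulo the filtration's higher terms. Using Proposition~\ref{prop-taut} and Proposition~\ref{prop-tautification}, this reduces to a compatible choice of taut path representatives across all $S$ and a case analysis of how the edge $\a^{S\setminus\{i\}}\to\a^S$ sits relative to $\b$; this mirrors the analogous step in Theorem~\ref{thm-KoszulStan} and is where genericity and the Euclidean hypothesis on $\PB_2$ must be used carefully.
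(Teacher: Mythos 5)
Your proposal is correct and follows essentially the same route as the paper: the paper likewise applies $\Phi$ to the linear projective resolution of Theorem~\ref{thm-KoszulStan}, identifies the degree-zero homology with the stated quotient, and proves exactness in positive degrees by filtering each $\Phi(P^1_{\alpha^S})$ by standard modules as in Lemma~\ref{lem-projfiltfunctor}. As for your flagged obstacle, acyclicity only requires the induced $\partial_i$ on each $\beta$-component to be an isomorphism (not the identity), which holds once $i\in\mu_1^{-1}(\alpha)\setminus\mu_2^{-1}(\beta)$ is chosen with $\alpha(i)\neq\beta(i)$ --- such a choice exists by (the mirror image of) the lemma inside the proof of Theorem~\ref{thm-numKoszul} --- so no compatible system of taut-path representatives is actually needed.
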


\begin{proof}
Apply $\Phi$ to the linear projective resolution of $V_\a^1$ of Theorem \ref{thm-KoszulStan}. The degree zero homology of the resulting complex is the quotient promised. We wish to show that the resulting complex is a resolution of $V_\a^1\otimes_{A_1}N$. This claim follows from argument is analogous to the proof of Theorem \ref{thm-KoszulStan}, where for each $S\subset \mu_1^{-1}(\a)$ we filter each $A_2$-module $\Phi(P_{\a^S}^1)=P_\a^1\otimes_{A_1}N$ by standards as in the proof of Lemma \ref{lem-projfiltfunctor}.
\end{proof}

\begin{cor}\label{cor-torstd}
If a right $A_1$-module $M$ admits a filtration by standard modules, we have $\Tor_m^{A_1}(M,N)=0$ and therefore
\[\Phi(M)=M\otimes_{A_1}N.\]
\end{cor}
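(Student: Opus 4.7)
The plan is to deduce this corollary from the preceding proposition by a straightforward induction on the length of a standard filtration of $M$, using the long exact sequence for $\Tor$.

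More precisely, suppose $M$ admits a filtration
\[
0 = M_0 \subset M_1 \subset M_2 \subset \cdots \subset M_r = M
\]
such that each successive quotient $M_i/M_{i-1}$ is isomorphic to some standard module $V^1_{\a_i}$. I would prove by induction on $i$ that $\Tor_m^{A_1}(M_i, N) = 0$ for all $m > 0$. The base case $i = 0$ is trivial since $M_0 = 0$. For the inductive step, apply $-\otimes^L_{A_1} N$ to the short exact sequence
\[
0 \to M_{i-1} \to M_i \to V^1_{\a_i} \to 0
\]
to obtain a long exact sequence
\[
\cdots \to \Tor_m^{A_1}(M_{i-1},N) \to \Tor_m^{A_1}(M_i,N) \to \Tor_m^{A_1}(V^1_{\a_i},N) \to \Tor_{m-1}^{A_1}(M_{i-1},N) \to \cdots
\]
For $m \geq 1$, the outer terms vanish by the inductive hypothesis and by the preceding proposition respectively (noting that $\Tor_0^{A_1}(M_{i-1},N)$ is not involved for $m \geq 2$, and for $m = 1$ the map $\Tor_1^{A_1}(V^1_{\a_i},N) \to M_{i-1} \otimes_{A_1} N$ has source zero). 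Hence $\Tor_m^{A_1}(M_i, N) = 0$ for all $m > 0$, completing the induction.

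Taking $i = r$ gives $\Tor_m^{A_1}(M,N) = 0$ for all $m > 0$, which means the derived tensor product reduces to the ordinary one: $\Phi(M) = M \otimes^L_{A_1} N = M \otimes_{A_1} N$. There is no real obstacle here — the argument is formal once the previous proposition is in hand. The only small subtlety to be careful about is the $m=1$ case of the long exact sequence, but it is handled correctly as noted above.
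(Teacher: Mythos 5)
Your argument is correct and is exactly the standard induction on the standard filtration via the long exact sequence in $\Tor$ that the paper leaves implicit (the corollary is stated without proof, as it follows formally from the preceding proposition). Nothing is missing; the $m=1$ case is handled correctly since the relevant segment $\Tor_1(M_{i-1},N)\to\Tor_1(M_i,N)\to\Tor_1(V^1_{\a_i},N)$ already has vanishing outer terms.
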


\subsection{Ringel duality and composition of functors}
Suppose that $\PB=(\wt\MC,g,f)$ is a generic Euclidean extension of $\MC$. Consider the program $\overline{\PB}= {}_{-g}\PB = ({}_{-g}\wt\MC,-g,f)$ obtained from $\PB$ by reorientation of $g$.  In other words $\overline{\PB}$ is the program on the oriented matroid $\MC$ with feasible cocircuits equal to the negative of the feasible cocircuits of $\PB$. This program is also generic and Euclidean. We let $\overline{A}=A(\overline{\PB},U)$ and denote by
  \[\Phi^-:D(A)\to D(\overline{A}),\]
 the functor $\Phi$ for $\PB_1=\PB$ and $\PB_2=\overline\PB$. We will prove that $\Phi^-$ is an equivalence and relate it to Ringel duality. 

\begin{thm}\label{thm-ringel}
$\Phi^-$ is an equivalence, the algebras $A$ and $\overline{A}$ are Ringel dual, and the Ringel duality functor is $\text{d}\circ\Phi^-=\Phi^-\circ\text{d}$.
\end{thm}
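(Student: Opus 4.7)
The plan is to identify the bimodule $N$, viewed as a right $\overline A$-module, with the characteristic tilting module $\overline T = \bigoplus_{\b\in\overline\PC}\overline T_\b^2$ of $\overline A$. The Ringel duality of $A$ and $\overline A$ then follows from an identification $A \cong \End_{\overline A}(\overline T)^{\op}$ via the left $A$-action on $N$, and Rickard's theorem gives that $\Phi^- = (-)\otimes^L_A N$ is a derived equivalence.

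First, using Corollary~\ref{cor-torstd} one has $\Phi^-(V_\a^1) = V_\a^1 \otimes_{A_1} N$ in degree zero; the taut-path basis from Lemma~\ref{lem-standard} combined with the presentation $N = e_{g_1} A_\ext e_{g_2}$ should identify this combinatorially with $\overline V_{\nu^-(\a)}^2$, where $\nu^- := \overline\mu\circ\mu^{-1} : \PC \to \overline\PC$. The core of the argument is then to show $\Phi^-(P_\a^1) \cong \overline T_{\nu^-(\a)}^2$. Lemma~\ref{lem-projfiltfunctor} already provides a standard filtration of $\Phi^-(P_\a^1)$ with top $\overline V_{\nu^-(\a)}^2$ appearing exactly once, so what remains is to produce a costandard filtration. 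I would obtain this by establishing the commutation $\text{d}\circ\Phi^- = \Phi^-\circ\text{d}$ separately --- via a natural self-duality of the bimodule $N$ built from the Gorenstein pairings on each $R_{\a\b}^\vee$ (in the spirit of the $(5)\Rightarrow(2)$ argument in Theorem~\ref{thm-selfdual}) --- together with the dual analogue of Lemma~\ref{lem-projfiltfunctor}, which after applying $\text{d}$ yields the costandard filtration on $\Phi^-(P_\a^1) = \text{d}(\Phi^-(I_\a^1))$. The highest weight in the standard filtration then forces $\Phi^-(P_\a^1)\cong \overline T_{\nu^-(\a)}^2$. As a consistency check, Proposition~\ref{prop-alpha in intersection} gives $\Phi^-(P_\a^1) = \overline P_\a^2$ precisely on $\PC \cap \overline\PC$, which by Theorem~\ref{thm-selfdual} is the locus of projective-tilting modules in both $A$ and $\overline A$, and on which $\nu^-$ restricts to the identity.

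Putting this together, $N = \bigoplus_{\a\in\PC}\Phi^-(P_\a^1) \cong \bigoplus_{\b\in\overline\PC}\overline T_\b^2 = \overline T$, which yields both the Ringel duality and, by Rickard's theorem, the derived equivalence. The identity $\text{d}\circ\Phi^- = \Phi^-\circ\text{d}$ combined with the action on standards (mapped to costandards) and on tiltings (mapped to projectives) identifies $\text{d}\circ\Phi^-$ with the Ringel duality functor. The principal obstacle is the commutation $\text{d}\circ\Phi^- = \Phi^-\circ\text{d}$: making the self-duality of $N$ explicit as an $(A,\overline A)$-bimodule map is where the real work lies, whereas the remaining steps either apply results established earlier in the paper or reduce to routine verification.
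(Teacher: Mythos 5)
Your overall strategy coincides with the paper's: there too the key points are that each $\Phi^-(P_\a)$ has a standard filtration (Lemma~\ref{lem-projfiltfunctor}) and is self-dual because every $R^\vee_{\a\overline\b}$ is Gorenstein --- exactly the $(5)\Rightarrow(2)$ mechanism of Theorem~\ref{thm-selfdual} you invoke --- so that $N$ is a tilting module; the equivalence, the Ringel duality, and the identification of the Ringel duality functor with $\text{d}\circ\Phi^-=\Phi^-\circ\text{d}$ are then obtained by running the argument of \cite[Theorem 6.10]{GDKD} verbatim, with Proposition~\ref{prop-alpha in intersection} and Theorem~\ref{thm-selfdual} as inputs. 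So in substance you are reconstructing the same proof, at a comparable level of detail.

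One step of your routing does not work as written: you propose to produce the costandard filtration of $\Phi^-(P_\a)$ from $\Phi^-(P_\a)=\text{d}(\Phi^-(I_\a))$ together with a ``dual analogue of Lemma~\ref{lem-projfiltfunctor}''. But $\Phi^-(I_\a)=I_\a\overset{L}{\otimes}_{A}N$ need not be concentrated in degree zero: injectives are costandardly, not standardly, filtered, so the Tor-vanishing of Corollary~\ref{cor-torstd} does not apply, and the path-theoretic proof of Lemma~\ref{lem-projfiltfunctor} has no direct counterpart for $I_\a\otimes_A N$. The repair is already contained in your own plan: the bimodule self-duality of $N$ built from the Gorenstein pairings restricts, for each $\a$, to an isomorphism of right $\overline A$-modules $e_\a N\cong\text{d}(e_\a N)$, i.e.\ $\Phi^-(P_\a)$ itself is self-dual, and applying $\text{d}$ to its standard filtration gives the costandard filtration directly --- which is precisely how the paper argues, making the detour through injectives unnecessary. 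Two smaller cautions: for Ringel duality you only need $N$ to be a full tilting module with $\End_{\overline A}(N)\cong A$ (this is what the cited argument of \cite{GDKD} establishes), whereas the finer claim $\Phi^-(P_\a)\cong\overline T_{\nu^-(\a)}$ requires an additional argument (indecomposability, or that $\nu^-(\a)$ is the unique maximal weight among the $\overline\mu(b)$ with $\a\in\BC_b$); and the paper only proves $[\Phi(V_\a)]=[V_{\nu(\a)}]$ in the Grothendieck group (Proposition~\ref{prop-groth}), so an actual isomorphism $\Phi^-(V_\a)\cong\overline V_{\nu^-(\a)}$, which you use to describe the action on standards, would also need justification.
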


\begin{proof}
Notice that for any $\a\in\PC$, we have that the $B$-side description of $\Phi^-$ gives   
  \[\Phi^-(P_\a)=\bigoplus_{\overline\b\in\overline\PC}R_{\a\overline\b}^\vee[-d_{\a\overline\b}].\]
Then we have that the tope corresponding to the restrictions of both $\a$ and $\overline\b$ in the oriented matroid $\wt\MC^\vee/S_{\a\a}^{\overline\b}$ on $E\cup\{f\}$ has all cocircuit faces taking the value $+$ on $f$. As in the proof of Theorem \ref{thm-selfdual}, this implies that $R_{\a\overline\b}^\vee$ is Gorenstein and $\Phi^-(P_\a)$ is self-dual.  By Lemma \ref{lem-projfiltfunctor} it follows that $\Phi^-(P_\a)$ is tilting.  

It remains to show that $\Phi^-$ is an equivalence.  With Proposition \ref{prop-alpha in intersection} and Theorem \ref{thm-selfdual} in hand, one can repeat the proof of \cite[Theorem 6.10]{GDKD} word for word.
\end{proof}

To complete the proof of Proposition~\ref{prop-equiv} in the general case, we will need to study the composition of functors.

Let $\PB_1=(\wt\MC_1,g_1,f)$, $\PB_2=(\wt\MC_2,g_2,f)$, and $\PB_3=(\wt\MC_3,g_3,f)$ be generic Euclidean programs extending $\MC$ for which $\wt\MC_1/g_1=\wt\MC_2/g_2=\wt\MC_3/g_3$. We can then define the three functors 
  \[D(A_1)\overset{\Phi_{12}}{\longrightarrow} D(A_2) \overset{\Phi_{23}}{\longrightarrow} D(A_3) \quad \text{and} \quad D(A_1)\overset{\Phi_{13}}{\longrightarrow} D(A_3)\]
as before. We would like to compare $\Phi_{13}$ with the composition $\Phi_{23}\circ\Phi_{12}$. 

Notice that 
  \[N_{12}=\Phi_{12}(A_1)=\bigoplus_{\a\in\PC_1}\Phi_{12}(P_\a^1)\]
has a filtration by standard modules as a right $A_2$-module by Lemma \ref{lem-projfiltfunctor}. Then 
  \[\Phi_{23}\circ\Phi_{12}(M)
  =(M\overset{L}{\otimes}_{A_1} N_{12})\overset{L}{\otimes}_{A_2} N_{23}
  =M\overset{L}{\otimes}_{A_1}(N_{12}\otimes_{A_2} N_{23})\]
by Corollary \ref{cor-torstd}. The natural map $N_{12}\otimes_{A_2} N_{23}\to N_{13}$ given by concatenation of paths induces a natural transformation $\Phi_{23}\circ\Phi_{12}\to\Phi_{13}$. We also have that $\Phi_{23}\circ\Phi_{12}$ and $\Phi_{13}$ induce the same map on Grothendieck groups by Proposition \ref{prop-groth}. 
This implies that 
  \[\dim_k N_{12}\otimes_{A_2}N_{23}=\dim_k N_{13}\]
since the classes of $N_{12}\otimes_{A_2}N_{23}$ and $N_{13}$ agree in the Grothendieck group of $A_3$-modules.

We now combine this discussion with the equivalence we have already proved. Let $\PB_3=\overline{\PB_1}$, so that 
  \[\Phi_{13}=\Phi^-: D(A_1)\to D(\overline{A_1}) \quad \text{and} \quad \Phi_{23}=\Phi_{2\overline{1}}:D(A_2)\to D(\overline{A_1}).\]

\begin{lem}\label{lem-factor}
 $\Phi^-\cong\Phi_{2\overline{1}}\circ\Phi_{12}$.
\end{lem}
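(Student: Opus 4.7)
The strategy is to show that the natural transformation $\eta\colon \Phi_{2\bar 1}\circ\Phi_{12}\Rightarrow \Phi^-$ constructed in the discussion preceding the lemma, arising from the bimodule path-concatenation map $\mu\colon N_{12}\otimes_{A_2} N_{2\bar 1}\to N_{1\bar 1}$, is an isomorphism of functors. Since both sides are triangulated functors out of $D(A_1)$, it suffices to verify that $\eta(P_\a^1)$ is an isomorphism for each indecomposable projective $P_\a^1$, $\a\in\PC_1$.

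First I would observe that $\eta(P_\a^1)$ reduces to a genuine module homomorphism. By Lemma~\ref{lem-projfiltfunctor}, $\Phi_{12}(P_\a^1)$ admits a filtration by standard $A_2$-modules, so Corollary~\ref{cor-torstd} forces the subsequent tensor with $N_{2\bar 1}$ to coincide with the plain tensor product, with no higher Tor contributions. Similarly $\Phi^-(P_\a^1)$ is concentrated in degree zero. Hence $\eta(P_\a^1)$ is a concrete map of $\overline{A_1}$-modules realized as path concatenation inside $A_{\ext}$.

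Next I would compare Grothendieck classes. Iterating Proposition~\ref{prop-groth} for $\Phi_{12}$ and then for $\Phi_{2\bar 1}$, and using that the bijections $\nu_{12},\nu_{2\bar 1},\nu_{1\bar 1}$ all factor through the common parametrizing set $\BM$ so that $\nu_{2\bar 1}\circ\nu_{12}=\nu_{1\bar 1}$, the two sides $(\Phi_{2\bar 1}\circ\Phi_{12})(P_\a^1)$ and $\Phi^-(P_\a^1)$ represent the same class in the Grothendieck group of $\overline{A_1}$-modules, and therefore have equal $k$-dimension.

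It remains to prove that $\eta(P_\a^1)$ is surjective, from which isomorphism follows by comparing dimensions. Equivalently, one must show that every element of $e_\a A_{\ext} e_{\PC_{\bar 1}}$ can be written as a linear combination of products of paths factoring through some $\b\in\PC_2$. After using Propositions~\ref{prop-taut} and \ref{prop-tautification} to reduce to taut paths in the tope graph, one argues that because the set of bounded feasible topes $\PC_2$ is sufficiently large and the Euclidean structure of $\PB_2$ governs how taut paths behave in $\BC$, any taut path between $\PC_1$ and $\PC_{\bar 1}$ can be routed through $\PC_2$. The main obstacle is precisely this surjectivity step: the Tor-vanishing and Grothendieck class equality follow formally from earlier work, but establishing that $\mu$ hits all of $N_{1\bar 1}$ requires a genuine combinatorial argument about path factorization in $A_{\ext}$ through the intermediate idempotent $e_{\PC_2}$, exploiting that $\PC_1$, $\PC_2$, and $\PC_{\bar 1}$ all arise from the common contraction $\wt\MC/g$.
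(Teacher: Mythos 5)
Your setup is the same as the paper's: reduce to showing that the concatenation map $N_{12}\otimes_{A_2}N_{2\overline{1}}\to N_{1\overline{1}}$ is an isomorphism, note that the two sides have the same dimension because the functors agree on Grothendieck groups, and therefore reduce to surjectivity. But the surjectivity step is exactly where your argument stops: you say that "because $\PC_2$ is sufficiently large and the Euclidean structure of $\PB_2$ governs how taut paths behave," any taut path from $\PC_1$ to $\overline{\PC_1}$ can be routed through $\PC_2$, and then you acknowledge that this is the main obstacle and that it "requires a genuine combinatorial argument" which you do not supply. That is a genuine gap, not a detail: the factorization claim is false for an arbitrary target program (if it held in general, the map $N_{12}\otimes_{A_2}N_{23}\to N_{13}$ would always be an isomorphism and the paper would not need the roundabout composition argument in the proof of Proposition~\ref{prop-equiv}); what makes it work here is the specific reorientation relationship between $\PB_1$ and $\overline{\PB_1}$, which your heuristic never uses.

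The missing argument runs on the $B$-side. Fix $(\a,\b)\in\PC_1\times\overline{\PC_1}$ with $R^\vee_{\a\b}\neq 0$. Because $\a$ is bounded feasible for $\PB_1$ and $\b$ for the reoriented program $\overline{\PB_1}$, every nonzero cocircuit face of the common covector face $T^\vee_\a\wedge T^\vee_\b$ in $\wt\MC^\vee$ takes the value $+$ on $f$ (this is the same mechanism as in the proof of Theorem~\ref{thm-selfdual}). Since $z(T^\vee_\a\wedge T^\vee_\b)=S^\b_{\a\a}$, this face restricts to a bounded feasible tope of the generic program $\PB_2^\vee/S^\b_{\a\a}$; by Theorem~\ref{thm-optimal} it has an optimal cocircuit, which lifts to a unique feasible cocircuit of $\PB_2^\vee$, giving $\g\in\PC_2$ with $T^\vee_\a\wedge T^\vee_\b$ a face of $T^\vee_\g$. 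Then $R^\vee_{\a\g\b}=R^\vee_{\a\b}$, $S^\g_{\a\b}=\varnothing$, and $1^\vee_{\a\b}=1^\vee_{\a\g}\star 1^\vee_{\g\b}$, which is the surjectivity you need. Without an argument of this kind (or an equivalent one), your proposal establishes only the formal reductions already contained in the discussion preceding the lemma, not the lemma itself.
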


\begin{proof}
The conclusion follows from the discussion above if we can show that the map $N_{12}\otimes_{A_2}N_{2\overline1}\to N_{1\overline1}$ is an isomorphism. We have already observed that the source and target have the same dimension, so it will suffice to show that this map is surjective.  This means showing that for any $(\a,\b)\in\PC_1\times\overline{\PC_1}$, every path from $\a$ to $\b$ in $e_\a A_\ext e_\b$ can be represented as a path that passes through some sign vector $\g$ in $\PC_2$.  It suffices to do this for a taut path from $\a$ to $\b$.  Translating this to the $B$-side, we wish to show that $1_{\a\b}^\vee=1_{\a\g}^\vee\star 1_{\g\b}^\vee$ for some $\g \in \PC_2$.

Let $(\a,\b)\in\PC_1\times\overline{\PC_1}$ and suppose that $R_{\a\b}^\vee$ is nonzero. This means the maximal common covector face $T^\vee_\a\wedge T^\vee_\b$ of the topes $T^\vee_\a$ and $T^\vee_\b$ in 
  \[\wt\MC^\vee:=
  \wt\MC_1^\vee\backslash g_1
  =(\wt\MC_1/g_1)^\vee
  =(\wt\MC_{\overline1}/g_{\overline1})^\vee 
  =\wt\MC_{\overline1}^\vee\backslash g_{\overline1}\]
is nonzero and all of its nonzero cocircuit faces are feasible, i.e. they take the value $+$ on $f$.  Together with the fact that $z(T^\vee_\a\wedge T^\vee_\b)=S^\b_{\a\a}$, this implies that $T^\vee_\a\wedge T^\vee_\b$ restricts to a bounded feasible tope in the oriented matroid program $$\PB_2^\vee/S_{\a\a}^\b= (\wt\MC^\vee/S_{\a\a}^\b,f,g_2).$$

Let $Y$ be the optimal cocircuit face of $T^\vee_\a\wedge T^\vee_\b$ viewed as a covector in $\PB_2^\vee/S_{\a\a}^\b.$
  Then $Y$ lifts to a unique feasible cocircuit of $\PB^\vee_2$ and let $\g \in \PC_2^\vee = \PC_2$ be the corresponding sign vector.  By construction, $T^\vee_\a\wedge T^\vee_\b$ is a face of $T^\vee_\g$.

Thus $R^\vee_{\a\g\b}=R^\vee_{\a\b}, S^\g_{\a\b}=\varnothing$ and
\[1_{\a\b}^\vee=1_{\a\g}^\vee\star 1_{\g\b}^\vee.\]
\end{proof}

\begin{proof}[Proof of Proposition \ref{prop-equiv}]
We can set up everything as in Lemma \ref{lem-factor}, and we know that $\Phi^-$ is an equivalence by Theorem \ref{thm-ringel}. This gives us that $\Phi_{12}:D(A_1)\to D(A_2)$ is faithful while $\Phi_{2\overline{1}}$ is full and essentially surjective. 

Note that $\overline{\PB_2}$ is Euclidean if $\PB_2$ is Euclidean. Appealing to the same arguments as before, $\Phi_{12}\circ\Phi_{\overline{2}1}$ is an equivalence.  We conclude that $\Phi_{12}$ is also full and essentially surjective and thus an equivalence.
\end{proof}

\bibliographystyle{myalpha}
\bibliography{OM}{}

\newcommand{\etalchar}[1]{$^{#1}$}
\begin{thebibliography}{BLVS{\etalchar{+}}99}

\bibitem[{\'A}DL03]{quasi-hered2003}
I.~{\'A}goston, V.~Dlab, and E.~Luk{\'a}cs.
\newblock Quasi-hereditary extension algebras.
\newblock {\em Algebras and Representation Theory}, 6(1):97--117, Mar 2003.

\bibitem[BGS96]{BGS}
A.~Beilinson, V.~Ginzburg, and W.~Soergel.
\newblock Koszul duality patterns in representation theory.
\newblock {\em J. Amer. Math. Soc.}, 9(2):473--527, 1996.

\bibitem[BLPW10]{GDKD}
T.~Braden, A.~Licata, N.~Proudfoot, and B.~Webster.
\newblock Gale duality and {K}oszul duality.
\newblock {\em Adv. Math.}, 225(4):2002--2049, 2010.

\bibitem[BLPW12]{BLPWtorico}
T.~Braden, A.~Licata, N.~Proudfoot, and B.~Webster.
\newblock Hypertoric category $\mathcal{O}$.
\newblock {\em Adv. Math.}, 231(3-4):1487--1545, 2012.

\bibitem[BLPW16]{BLPWgco}
T.~Braden, A.~Licata, N.~Proudfoot, and B.~Webster.
\newblock Quantizations of conical symplectic resolutions {II}: category
  {$\mathcal O$} and symplectic duality.
\newblock {\em Ast\'{e}risque}, (384):75--179, 2016.
\newblock with an appendix by I. Losev.

\bibitem[BLVS{\etalchar{+}}99]{OMbook}
A.~Bj\"{o}rner, M.~Las~Vergnas, B.~Sturmfels, N.~White, and G.~M. Ziegler.
\newblock {\em Oriented matroids}, volume~46 of {\em Encyclopedia of
  Mathematics and its Applications}.
\newblock Cambridge University Press, Cambridge, second edition, 1999.

\bibitem[BM17]{BMMatroid}
T.~Braden and C.~Mautner.
\newblock Matroidal {S}chur algebras.
\newblock {\em J. Algebraic Combin.}, 46(1):51--75, 2017.

\bibitem[BM19]{BMHyperRingel}
T.~Braden and C.~Mautner.
\newblock Ringel duality for perverse sheaves on hypertoric varieties.
\newblock {\em Adv. Math.}, 344:35--98, 2019.

\bibitem[BPW16]{BPW1}
T.~Braden, N.~Proudfoot, and B.~Webster.
\newblock Quantizations of conical symplectic resolutions {I}: local and global
  structure.
\newblock {\em Ast\'{e}risque}, (384):1--73, 2016.

\bibitem[BV78]{1978Orientability}
R.~G. Bland and M.~L. Vergnas.
\newblock Orientability of matroids.
\newblock {\em J. Comb. Theory, Ser. B}, 24:94--123, 1978.

\bibitem[CM93]{CordMor}
R.~Cordovil and M.~L. Moreira.
\newblock A homotopy theorem on oriented matroids.
\newblock volume 111, pages 131--136. 1993.
\newblock Graph theory and combinatorics (Marseille-Luminy, 1990).

\bibitem[Cor82]{Cord}
R.~Cordovil.
\newblock Sur les matro\"{\i}des orient\'{e}s de rang {$3$} et les arrangements
  de pseudodroites dans le plan projectif r\'{e}el.
\newblock {\em European J. Combin.}, 3(4):307--318, 1982.

\bibitem[Los17]{Losev}
I.~Losev.
\newblock On categories {$\mathcal O$} for quantized symplectic resolutions.
\newblock {\em Compos. Math.}, 153(12):2445--2481, 2017.

\bibitem[LV80]{LV}
M.~Las~Vergnas.
\newblock Convexity in oriented matroids.
\newblock {\em J. Combin. Theory Ser. B}, 29(2):231--243, 1980.

\bibitem[Pos95]{Posits}
L.~E. Positselski.
\newblock The correspondence between {H}ilbert series of quadratically dual
  algebras does not imply their having the {K}oszul property.
\newblock {\em Funktsional. Anal. i Prilozhen.}, 29(3):83--87, 1995.

\bibitem[Rin55]{Ringel}
G.~Ringel.
\newblock Teilungen der {E}bene durch {G}eraden oder topologische {G}eraden.
\newblock {\em Math. Z.}, 64:79--102 (1956), 1955.

\bibitem[RS72]{RoSa}
C.~P. Rourke and B.~J. Sanderson.
\newblock {\em Introduction to piecewise-linear topology}.
\newblock Springer-Verlag, New York-Heidelberg, 1972.
\newblock Ergebnisse der Mathematik und ihrer Grenzgebiete, Band 69.

\bibitem[Sta04]{StanleyComb}
R.~Stanley.
\newblock {\em Combinatorics and Commutative Algebra}.
\newblock Combinatorics and Commutative Algebra. Birkh{\"a}user Boston, 2004.

\end{thebibliography}
\end{document}